\newtheorem{theorem}{Theorem}[section]
\newtheorem{proposition}[theorem]{Proposition}
\newtheorem{lemma}[theorem]{Lemma}
\newtheorem{corollary}[theorem]{Corollary}
\theoremstyle{definition}
\newtheorem{definition}[theorem]{Definition}
\newtheorem{question}[theorem]{Question}
\DeclareMathOperator{\res}{\upharpoonright}
\newcommand{\ran}{\operatorname{ran}}
\renewcommand{\mod}{\text{ }\textrm{mod}\text{ }}
\newcommand{\seq}[1]{\langle #1 \rangle}
\newcommand{\set}[1]{\{ #1 \}}
\renewcommand{\iff}{\leftrightarrow}
\newcommand{\Lang}{\mathsf{L}}
\newcommand{\RCA}{\mathsf{RCA}}
\newcommand{\ACA}{\mathsf{ACA}}
\newcommand{\WKL}{\mathsf{WKL}}
\newcommand{\B}{\mathsf{B}}
\newcommand{\I}{\mathsf{I}}
\newcommand{\RT}{\mathsf{RT}}
\newcommand{\COH}{\mathsf{COH}}
\newcommand{\CAC}{\mathsf{CAC}}
\newcommand{\ADS}{\mathsf{ADS}}
\newcommand{\SRT}{\mathsf{SRT}}
\newcommand{\TT}{\mathsf{TT}}
\newcommand{\D}{\mathsf{D}}
\newcommand{\GS}{\mathsf{GS}}
\newcommand{\GST}{\mathsf{GST}}
\newcommand{\wGS}{\mathsf{wGS}}
\newcommand{\SGST}{\mathsf{SGST}}
\newcommand{\SubP}{\text{-}\mathsf{Subset}}
\newcommand{\ncred}{\nleq_{\text{\upshape c}}}
\newcommand{\Tabove}{\geq_{\text{\upshape T}}}
\newcommand{\Tred}{\leq_{\text{\upshape T}}}
\newcommand{\Tequiv}{\equiv_{\text{\upshape T}}}
\newcommand{\height}{\mathrm{ht}}
\newcommand{\converges}{\operatorname{\downarrow}}
\newcommand{\diverges}{\operatorname{\uparrow}}
\newcommand{\G}{\mathtt{G}}
\renewcommand{\setminus}{\smallsetminus}
\newcommand{\cl}{\operatorname{cl}}
\newcommand{\topol}[1]{\mathcal{#1}}
\newcommand{\fres}[3]{\ran(#1) \res #2~[#3]}
\newcommand{\Lreq}{\mathcal{R}}
\newcommand{\Preq}{\mathcal{Q}}
\begin{document}

\title{The Ginsburg--Sands theorem and computability theory}

\author[Benham]{Heidi Benham}
\address{Department of Mathematics\\
University of Connecticut\\
Storrs, Connecticut U.S.A.}
\email{heidi.benham@uconn.edu}

\author[DeLapo]{Andrew DeLapo}
\address{Department of Mathematics\\
University of Connecticut\\
Storrs, Connecticut U.S.A.}
\email{andrew.delapo@uconn.edu}

\author[Dzhafarov]{Damir D. Dzhafarov}
\address{Department of Mathematics\\
University of Connecticut\\
Storrs, Connecticut U.S.A.}
\email{damir@math.uconn.edu}

\author[Solomon]{Reed Solomon}
\address{Department of Mathematics\\
University of Connecticut\\
Storrs, Connecticut U.S.A.}
\email{solomon@math.uconn.edu}

\author[Villano]{Java Darleen Villano}
\address{Department of Mathematics\\
University of Connecticut\\
Storrs, Connecticut U.S.A.}
\email{javavill@uconn.edu}

\begin{abstract}
	The Ginsburg--Sands theorem from topology states that every infinite topological space has an infinite subspace homeomorphic to exactly one of the following five topologies on $\omega$: indiscrete, discrete, initial segment, final segment, and cofinite. The original proof is nonconstructive, and features an interesting application of Ramsey's theorem for pairs ($\mathsf{RT}^2_2$). We analyze this principle in computability theory and reverse mathematics, using Dorais's formalization of CSC spaces. Among our results are that the Ginsburg--Sands theorem for CSC spaces is equivalent to $\mathsf{ACA}_0$, while for Hausdorff spaces it is provable in $\mathsf{RCA}_0$. Furthermore, if we enrich a CSC space by adding the closure operator on points, then the Ginsburg--Sands theorem turns out to be equivalent to the chain/antichain principle ($\mathsf{CAC}$). The most surprising case is that of the Ginsburg--Sands theorem restricted to $T_1$ spaces. Here, we show that the principle lies strictly between $\mathsf{ACA}_0$ and $\mathsf{RT}^2_2$, yielding arguably the first natural theorem from outside logic to occupy this interval. As part of our analysis of the $T_1$ case we introduce a new class of purely combinatorial principles below $\mathsf{ACA}_0$ and not implied by $\mathsf{RT}^2_2$ which form a strict hierarchy generalizing the stable Ramsey's theorem for pairs ($\SRT^2_2$). We show that one of these, the $\Sigma^0_2$ subset principle ($\Sigma^0_2\SubP$), has the property that it, together with the cohesive principle ($\COH$), is equivalent over $\RCA_0$ to the Ginsburg--Sands theorem for $T_1$ CSC spaces.
\end{abstract}

\thanks{The authors were partially supported by a Focused Research Group grant from the National Science Foundation of the United States, DMS-1854355. They thank Natasha Dobrinen, Denis Hirschfeldt, Ludovic Levy Patey, Theodore Slaman, and Keita Yokoyama for insightful discussions during the preparation of this article, and the anonymous referee for valuable comments and suggestions for improvement.}

\maketitle


\section{Introduction}\label{sec:intro}

In their 1979 paper ``Minimal infinite topological spaces'' \cite{GS-1979}, Ginsburg and Sands proved the following classification theorem for topological spaces, described by Marron and McMaster \cite[pp.~26--27]{MM-1995} as a ``[significant] application of Ramsey's theorem''. 

\begin{theorem}[Ginsburg and Sands \cite{GS-1979}]\label{thm:mainGS}
	Every infinite topological space has a subspace homeomorphic to one of the following topologies on $\omega$:
	\begin{enumerate}
		\item indiscrete (only $\emptyset$ and $\omega$ are open);
		\item initial segment (the open sets are $\emptyset$, $\omega$, and all intervals $[0,n]$ for $n \in \omega$);
		\item final segment (the open sets are $\emptyset$, $\omega$, and all intervals $[n,\infty)$ for $n \in \omega$);
		\item discrete (all subsets are open);
		\item cofinite (the open sets are $\emptyset$, $\omega$, and all cofinite subsets of $\omega$).
	\end{enumerate}
\end{theorem}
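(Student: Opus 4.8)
The plan is to reduce the classification to Ramsey-theoretic combinatorics, along the lines suggested by the abstract. First, it suffices to treat countable spaces: if $X$ is infinite and $S \subseteq X$ is countably infinite, then by transitivity of the subspace topology every subspace of $S$ is a subspace of $X$, so a copy of one of the five listed spaces sitting inside $S$ sits inside $X$. Assume then that $X$ is countably infinite and fix an enumeration $X = \{x_n : n \in \omega\}$; the induced linear order on the points is the only auxiliary structure the colorings will use.

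Second, I would dispose of the separation axioms. Write $x \approx y$ when $x$ and $y$ have the same open neighborhoods. If some $\approx$-class is infinite, that class carries the indiscrete subspace topology and we are done; otherwise every class is finite, so a transversal (one point per class, infinitely many classes) gives an infinite $T_0$ subspace and we may assume $X$ is $T_0$. Now use the specialization order: $x \preceq y$ iff every open set containing $x$ also contains $y$ (equivalently $x \in \overline{\{y\}}$), which is a partial order since $X$ is $T_0$. Color a pair $\{x_m, x_n\}$ with $m < n$ according to whether $x_m \prec x_n$, or $x_n \prec x_m$, or the two are $\preceq$-incomparable, and apply Ramsey's theorem for pairs with three colors (a consequence of $\RT^2_2$). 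On a homogeneous set of the first color the points form a $\prec$-chain $h_0 \prec h_1 \prec \cdots$; for each $k$ there is an open set containing $h_{k+1}$ but not $h_k$, and its trace on the homogeneous set is forced to be exactly the tail $\{h_{k+1}, h_{k+2}, \dots\}$, so the relative opens are precisely $\emptyset$ and these tails, and the subspace has the final segment topology. Symmetrically, the second color yields the initial segment topology, and the third color leaves an infinite $T_1$ subspace.

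It remains to show that every infinite $T_1$ space has a subspace that is discrete or has the cofinite topology, and this is the crux. If there is an infinite discrete subspace we are finished; otherwise --- equivalently, every infinite subset contains a point all of whose neighborhoods meet it infinitely --- I would run a second coloring. Enumerate the space, and for each point $h_n$ fix, using $T_1$-ness, an open neighborhood $W_n$ of $h_n$ disjoint from $\{h_0, \dots, h_{n-1}\}$; then $2$-color $\{h_m, h_n\}$ with $m < n$ by whether $h_n \in W_m$. On a homogeneous set of the negative color each $W_m$ meets the set only in $h_m$, so the subspace is discrete; on a homogeneous set of the positive color every tail of the set is open. This last condition does not by itself pin down the cofinite topology, so the positive case must be iterated and the resulting nested sequence of subspaces diagonalized by a cohesiveness argument to produce an honest cofinite subspace. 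This interleaving of an infinitely iterated subset selection with a $\COH$-style fusion is where I expect the real difficulty to lie: it is the genuinely nonconstructive heart of the theorem and, as the abstract signals, exactly the content that the reverse-mathematical analysis will isolate as $\Sigma^0_2\SubP$ together with $\COH$. Everything else --- the reductions to the countable $T_0$ case, the three specialization-order cases, and the neighborhood-trace computations --- is routine once the colorings are in place.
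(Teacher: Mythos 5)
Your reductions to the countable $T_0$ case and your single application of $\RT^2_3$ to the specialization order are correct, and they recover the first two parts of the argument: the ascending/descending/antichain trichotomy obtained in one Ramsey application is a legitimate variant of the paper's route through $\CAC$ followed by $\ADS$, your computation that a specialization chain has exactly the tails (resp.\ the initial segments) as relative opens is right, and a specialization antichain in a $T_0$ space is indeed $T_1$.

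The gap is the $T_1$ case, and flagging it as ``the crux'' does not fill it. Your second coloring gives, in the positive case, only a set $H$ on which the chosen neighborhoods $W_n$ trace out the tails; ``every tail of $H$ is relatively open'' is strictly weaker than $H$ having the cofinite topology, because arbitrary open sets of the ambient space can still trace out infinite, coinfinite subsets of $H$. The proposed repair --- iterate the coloring and ``diagonalize by a cohesiveness argument'' --- is not carried out, and as stated it is not even well-posed for the theorem being proved: the statement concerns arbitrary infinite topological spaces, whose topology need not be countably based, so there is no countable family of open sets (nor of traces on $H$) against which a $\COH$-style fusion can be run; in the paper, cohesiveness enters only after restricting to CSC spaces, and even there it is the substantive content of the $T_1$ analysis, not a routine step. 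The classical completion is simpler and runs in the opposite direction to your framing: assume the $T_1$ space has no infinite subspace with the cofinite topology; since finite sets are closed, every cofinite set is open, so this assumption means every infinite subspace $W$ is split by some open $U$ (that is, $U\cap W\neq\emptyset$ and $W\setminus U$ is infinite), and one then builds an infinite discrete subspace by a direct recursion, choosing $x_{n+1}\in U\cap W$ and replacing $U$ by $U\setminus\{x_0,\dots,x_n\}$, which is open by $T_1$. With that dichotomy (no cofinite subspace implies a discrete one) your case analysis closes; without it, the essential step of the theorem is missing from your proposal.
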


\noindent Since the result is not especially well-known, we include the original proof from \cite{GS-1979} in \Cref{sec:GS_proof} below. As Ginsburg and Sands explain \cite[p.~574]{GS-1979}, any infinite subspace of a topological space homeomorphic to one of the above five is itself homeomorphic to the same, and no two distinct such spaces can be homeomorphic to each other. Thus, the above topologies are ``minimal'' in the same sense that a homogeneous set is for a given coloring of tuples of integers: if a homogeneous set has color $i$ for some coloring then so does every subset, and no homogeneous set can simultaneously have colors $i$ and $j \neq i$.

In this article, we will analyze the effective and proof-theoretic content of the Ginsburg--Sands theorem using the frameworks of computability theory and reverse mathematics.

Computability theory is concerned with the question of which subsets of the natural numbers can be specified by an algorithm, and by extension, seeks to determine the algorithmic properties of mathematical theorems that can be phrased in terms of, or represented by, countable objects. Reverse mathematics, on the other hand, seeks to classify the logical strengths of mathematical theorems by identifying the minimal axioms necessary for their proofs, commonly using subsystems of second-order arithmetic as benchmarks. There is a well-understood relationship between these two endeavors, with notions and results in one often leading to notions and results in the other. This combined perspective often provides deeper insights into the logical underpinnings of mathematical theorems, particularly with respect to the combinatorial tools and methods needed to prove them (see, e.g., \cite{Hirschfeldt-2014}, p.~65).

We refer to Shore \cite{Shore-2010} for a thorough discussion of the interplay between computability and reverse mathematics. For general background on computability, see Soare \cite{Soare-2016} and Downey and Hirschfeldt \cite{DH-2010}. The standard reference on reverse mathematics is Simpson \cite{Simpson-2009}, with newer treatments by Hirschfeldt \cite{Hirschfeldt-2014} and Dzhafarov and Mummert \cite{DM-2022}. 

Ramsey's theorem, and many related principles from combinatorics, model theory, and set theory have been studied extensively in this context. A partial survey appears in \cite[Chapters 8 and 9]{DM-2022}. Topological theorems of various kinds have also been considered, but because of the need for countable representations a lot here depends on how exactly topological spaces are coded. Thus, different approaches exist for studying metric spaces (see \cite{DM-2022}, Sections 10.1--10.5 for an overview), certain uncountable spaces (see Mummert \cite{Mummert-2005} and Mummert and Lempp \cite{MS-2006}), and countable spaces with countably based topologies (see Dorais \cite{Dorais-2011}). Topology has also been studied using higher-order reverse mathematics (Hunter \cite{Hunter-2008}, Normann and Sanders \cite{NS-2019}, and Sanders \cite{Sanders-2020, Sanders-2021}). A more detailed discussion of these various viewpoints appears in \cite[Section 10.8]{DM-2022}.

Our investigation of the Ginsburg--Sands theorem will use Dorais's formalization of countable second-countable (CSC) spaces in classical (second-order) reverse mathematics. The original account \cite{Dorais-2011} of this is unpublished, so we review some of the main definitions and the features most relevant for our purposes in \Cref{sec:CSC}. Additional background can be found in \cite[Section 10.8.1]{DM-2022}. Other recent applications besides our own include the work of Shafer \cite{Shafer-2020} and DeLapo \cite{DeLapo-TA}.


Our goal is to better understand the Ginsburg--Sands theorem in isolation, but also with respect to Ramsey's theorem for pairs, which figures in its proof. For the remainder of this section, then, we recall the statement of Ramsey's theorem, and of some of its consequences. Throughout, our notation will be standard unless otherwise noted. We assume familiarity with the usual benchmark subsystems $\RCA_0$, $\WKL_0$, and $\ACA_0$ of second-order arithmetic. For $n \geq 1$, we use $\mathsf{I}\Sigma^0_n$ to refer to the \emph{$\Sigma^0_n$ induction scheme}, which consists of all formulas of the form
\[
	[\varphi(0) \wedge (\forall x)[\varphi(x) \to \varphi(x+1)]] \to (\forall x)\varphi(x)
\]
where $\varphi(x)$ is a $\Sigma^0_n$ formula in the language of second-order arithmetic. All implications between principles and schemes in the sequel will be assumed to be over $\RCA_0$, unless otherwise noted. We also follow the convention of formulating all definitions in $\RCA_0$ for generality, thereby using $\mathbb{N}$ in place of $\omega$ so as to refer to the set of natural numbers in a possibly nonstandard model.

\begin{definition}
	For $X \subseteq \mathbb{N}$ and $m,n \geq 1$, $[X]^n = \set{\seq{x_0,\ldots,x_{n-1}} : x_0 < \cdots < x_{n-1}}$, and an \emph{$m$-coloring of $[X]^n$} is a map $c : [X]^n \to m = \set{0,\ldots,m-1}$. We write $c(x_0,\ldots,x_{n-1})$ in place of $c(\seq{x_0,\ldots,x_{n-1}})$. A set $H \subseteq X$ is \emph{homogeneous} for $c$ if $c \res [H]^n$ is constant. $\RT^n_m$ is the statement that for every infinite set $X$ and every $c : [X]^n \to m$ there exists an infinite homogeneous set for $c$.
\end{definition}

\noindent It is easy to see that for each $m$, the principle $\RT^1_m$ is provable in $\RCA_0$. Hirst \cite[Theorem 6.4]{Hirst-1987} showed that $(\forall m)[\RT^1_m]$ is equivalent (over $\RCA_0$) to $\mathsf{B}\Sigma^0_2$, the \emph{$\Sigma^0_2$ bounding} scheme. We recall that if $\Gamma$ is any collection of formulas of $\Lang_2$, the scheme $\B\Gamma$ consists of all formulas of the form
\[
	(\forall z)[(\forall x < z)(\exists y)\varphi(x,y) \to (\exists w)(\forall x < z)(\exists y < w) \varphi(x,y)]
\]
for $\varphi(x,y)$ in $\Gamma$. By well-known results of Kirby and Paris \cite{PK-1978}, $\mathsf{B}\Sigma^0_2$ is strictly stronger than $\mathsf{I}\Sigma^0_1$, and equivalent to $\mathsf{B}\Pi^0_1$.

For any fixed, standard $m \geq 2$, the usual collapsing argument shows that $\RT^n_m$ is equivalent over $\RCA_0$ to $\RT^n_2$. By a result of Jockusch \cite[Lemma 5.9]{Jockusch-1972}, $\RT^3_2$ is equivalent over $\RCA_0$ to $\ACA_0$. By contrast, Seetapun (see \cite{SS-1995}, Theorem 2.1) showed that $\RT^2_2$ is strictly weaker than $\ACA_0$, while Liu \cite[Corollary 1.6]{Liu-2012} showed that it does not imply $\WKL_0$. A modern proof of Seetapun's theorem uses the so-called \emph{strong cone avoidance} of $\RT^1_2$ of Dzhafarov and Jockusch \cite[Lemma 3.2]{DJ-2009} (see also \cite{DM-2022}, Section 8.5.1).

Alongside $\WKL_0$, $\RT^2_2$ holds a special position among principles lying below $\ACA_0$, which have collectively come to be called the \emph{reverse mathematics zoo} (\cite{Dzhafarov-zoo}). Namely, virtually all of the known principles in the zoo are a provable consequence of $\WKL_0$ or $\RT^2_2$ or both, with the vast majority of these being a consequence of $\RT^2_2$ alone. While principles strictly weaker than $\ACA_0$ but strictly stronger than $\RT^2_2$ exist, only two natural examples are known to occupy this interval, the Chubb-Hirst-McNicholl tree theorem $(\TT^2_2$), introduced in \cite{CHM-2009}, and Milliken's tree theorem ($\mathsf{MTT}^2_2$), introduced in \cite{Milliken-1979,Milliken-1981}. Interestingly, both of these principles have their roots at least partially in logic, with $\TT^2_2$ being devised specifically in the context of computability theory and reverse mathematics, and $\mathsf{MTT}^2_2$ being originally motivated in part by its applications to consistency results in set theory (see also \cite{Milliken-1975}, Chapter 3, and \cite{HL-1966}, p.~360). (The reverse mathematical analysis of $\mathsf{MTT}^2_2$ was only performed recently, by Angl\`{e}s d'Auriac et al. \cite{ACDMP-2022}.) We will prove that the Ginsburg--Sands theorem for $T_1$ CSC spaces is another example of a principle strictly in-between $\ACA_0$ and $\RT^2_2$, and as such is arguably the first such theorem with no a priori connection to logic.

We mention, for completeness, that Angl\`{e}s d'Auriac et al. \cite[Theorem 4.15]{ACDMP-2022} established the fact that $\mathsf{MTT}^2_2$ does not imply $\ACA_0$. Dzhafarov and Patey \cite[Theorem 4.1]{DP-2017} showed that $\TT^2_2$ does not imply $\ACA_0$, which Chong et al. \cite[Theorem 1.1]{CLLY-2021} improved to show that $\TT^2_2$ does not imply even $\WKL_0$. Patey \cite[Theorem 1.3]{Patey-2016b} proved that $\RT^2_2$ does not imply $\TT^2_2$. It remains open whether $\mathsf{MTT}^2_2$ implies $\WKL_0$, or whether $\TT^2_2$ implies $\mathsf{MTT}^2_2$.

We now discuss two important restrictions of $\RT^2_2$, which are the \emph{stable Ramsey's theorem for pairs} and the \emph{$\Delta^0_2$ subset principle}.

\begin{definition}
	Given a set $X$, a coloring $c : [X]^2 \to 2$ is \emph{stable} if $(\forall x)[\lim_y c(x,y)$ exists$]$. A set $L \subseteq X$ is \emph{limit homogeneous} for $c$ if $(\exists i < 2)(\forall x \in L)[\lim_y c(x,y) = i]$.
	\begin{itemize}
		\item $\SRT^2_2$ is the restriction of $\RT^2_2$ to stable colorings.
		\item $\D^2_2$ is the statement that for every infinite set $X$ and every stable $c : [X]^2 \to 2$ there exists an infinite limit homogeneous set for $c$.
	\end{itemize}
\end{definition}

\noindent An infinite homogeneous set for a stable coloring is trivially limit homogeneous. Conversely, an infinite limit homogeneous set can be easily thinned to an infinite homogeneous set, but this thinning process in general requires $\mathsf{B}\Sigma^0_2$ (see Dzhafarov, Hirschfeldt, and Reitzes~\cite[Definition 6.1 and Proposition 6.2]{DHR-2022}). However, Chong, Lempp, and Yang \cite[Theorem 1.4]{CLY-2010} showed that $\D^2_2$ implies $\B\Sigma^0_2$, and hence that $\RCA_0$ proves $\SRT^2_2 \iff \D^2_2$. An alternative formulation of $\D^2_2$ is as the scheme consisting of all formulas of the form
\[
	(\forall x)[\varphi(x) \iff \neg \psi(x)] \to (\exists Y)[(\forall x)[x \in Y \to \varphi(x)] \vee (\forall x)[x \in Y \to \neg \varphi(x)]], 
\]
where $\varphi$ and $\psi$ range over $\Sigma^0_2$ formulas of the language of second-order arithmetic. In gist, this says that every $\Delta^0_2$-definable set (which may not exist) has an infinite subset in it or its complement (that does exist). The equivalence is provable in $\RCA_0$, and is a straightforward consequence of Shoenfield's limit lemma. We will return to this in \Cref{sec:T1}.

Another important principle in the study of $\RT^2_2$ is the \emph{cohesive principle}.

\begin{definition}
	For a family $\vec{R} = \seq{R_n : n \in \mathbb{N}}$ of subsets of $\mathbb{N}$, a set $Y$ is \emph{$\vec{R}$-cohesive} if $(\forall n)[Y \subseteq^* R_n \vee Y \subseteq^* \overline{R_n}]$. $\COH$ is the statement that for every family $\vec{R}$ of sets there exists an infinite $\vec{R}$-cohesive set.
\end{definition}

Famously, Cholak, Jockusch, and Slaman \cite[Lemma 7.11]{CJS-2001} (see also \cite{Mileti-2004}, Corollary A.1.4) showed that over $\RCA_0$, $\RT^2_2 \iff \SRT^2_2 + \COH$. This discovery allowed the study of $\RT^2_2$ to be divided separately into the study of the comparatively simpler principles $\SRT^2_2$ and $\COH$, and was a catalyst for much advancement in the area. The decomposition of principles into ``stable'' and ``cohesive'' parts has become standard practice, and we will give such a decomposition also for a version of the Ginsburg--Sands theorem below. Ours will have the surprising feature of being a decomposition into a $\Sigma^0_2$ subset principle, analogous to $\D^2_2$ above, and $\COH$.

In \cite[Theorem 9.1]{CJS-2001}, it was shown that $\COH$ does not imply $\SRT^2_2$, but whether or not $\SRT^2_2$ implies $\COH$ (or equivalently, $\RT^2_2$) was a major open question in reverse mathematics for many years. It was finally answered in 2014 by Chong, Slaman, and Yang \cite[Theorem 2.2]{CSY-2014}, who constructed a nonstandard model of $\RCA_0 + \SRT^2_2$ in which $\COH$ does not hold. More recently, Monin and Patey \cite[Theorem 1.4]{MP-2021}, improved this to a separation via an $\omega$-model. See \cite[Section 8.8]{DM-2022} for more on the history of this problem.

Hirschfeldt and Shore \cite{HS-2007} initiated the study of the \emph{chain/antichain principle} and the \emph{ascending/descending sequence principle}, which are both partition results concerning orderings, and are closely related to Ramsey's theorem for pairs.

\begin{definition}
	Given a partial order $(X,\leq_X)$, a subset $Y$ of $X$ is a \emph{chain} if $(\forall x,y \in Y)[x \leq_X y \vee y \leq_X x]$; an \emph{antichain} if $(\forall x,y \in Y)[x \neq y \to x \nleq_X y \wedge y \nleq_X x]$; an \emph{ascending sequence} if it is a chain and for all $x \in Y$, the set $\set{y \in Y : y \leq_X x}$ is finite; and a \emph{descending sequence} if it is a chain and for all $x \in Y$, the set $\set{y \in Y : y \geq_X x}$ is finite.
	\begin{itemize}
		\item $\CAC$ is the statement that for every infinite partial order $(P,\leq_P)$ there exists an infinite chain or antichain under~$\leq_P$.
		\item $\ADS$ is the statement that for every infinite linear order $(L,\leq_L)$ there exists an infinite ascending or descending sequence under $\leq_L$.
	\end{itemize}
\end{definition}

It is easy to see that $\RT^2_2$ implies $\CAC$, and that $\CAC$ implies $\ADS$. Hirschfeldt and Shore \cite[Corollaries 2.16 and 3.12]{HS-2007} exhibited an $\omega$-model satisfying $\RCA_0 + \CAC$ but not $\SRT^2_2$, as well as an $\omega$-model satisfying $\WKL_0$ but not $\ADS$. They also showed (\cite{HS-2007}, Proposition 2.10) that $\ADS$ implies $\COH$. This left open whether or not $\ADS$ implies $\CAC$ over $\RCA_0$, which was finally answered, in the negative and via an $\omega$-model, by Lerman, Solomon, and Towsner \cite[Theorem 1.7]{LST-2013}.

It is worth noting that $\CAC$ and $\ADS$ were also identified by Ginsburg and Sands \cite[p.~575]{GS-1979}, and it is these principles, rather than $\RT^2_2$, that they employed in their proof of \Cref{thm:mainGS}, which we present in the next section.

We follow the usual convention of also thinking of $\Pi^1_2$ principles such as those above as \emph{problems}, each having a class of \emph{instances}, and for each instance, a class of \emph{solutions} (see \cite[Chapter 3]{DM-2022} for a general discussion). Thus, for example, the instances of $\RT^2_2$ are all pairs $\seq{X,c}$ such that $X$ is an infinite set and $c : [X]^2 \to 2$ is a coloring, with the solutions to $c$ being all infinite homogeneous sets $H \subseteq X$.

\section{A proof of the Ginsburg--Sands theorem}\label{sec:GS_proof}

We present Ginsburg and Sands's proof of \Cref{thm:mainGS}, modified only to reflect the terminology defined in the previous section. As mentioned, we do this partly to showcase their argument, which is elementary and elegant. This makes it straightforward to formalize the proof in $\ACA_0$ for the types of topological spaces we will be concerned with. But more significantly, the proof breaks very clearly into three main parts which seem to involve different levels of non-constructivity. We give a schematic of these parts at the end of the section, and then in \Cref{sec:general,sec:T1} show that the distinctions between these parts is reflected in their proof-theoretic strengths when considered separately.

We proceed to the proof, recalling that the aim is to show that every infinite topological space has a subspace homeomorphic to either the indiscrete; initial segment; final segment; discrete; or cofinite topology on $\omega$.

\begin{proof}[Proof of \Cref{thm:mainGS} (\cite{GS-1979}, p.~576)]
	Fix an infinite topological space $X$. By passing to a subspace if needed, we may assume $X$ is countable. For each $x \in X$, let $\cl(x)$ denote the \emph{closure} of $x$ in $X$, i.e., the set of all $y \in X$ such that every open set containing $y$ also contains $x$. Then, for $x,y \in X$, define $x \sim y$ if $\cl(x) = \cl(y)$, which is an equivalence relation on $X$. If there is an $x$ such that the equivalence class $\set{y \in \omega : x \sim y}$ is infinite, then the subspace topology on this class is an indiscrete subspace of $X$. Moving forward, we can therefore assume that all equivalence classes under $\sim$ are finite. By choosing representatives, we may further assume that $\cl(x) \neq \cl(y)$ for all $x \neq y$ in $X$, so that $X$ itself is a $T_0$ space.
	
	We now define a relation $\leq_X$ on $X$, as follows. For $x,y \in X$, set $x \leq_X y$ if $x \in \cl(y)$. Clearly, $\leq_X$ is reflexive and transitive, and our assumption that $X$ is $T_0$ implies that $\leq_X$ is antisymmetric. So, $\leq_X$ is a partial ordering. By $\CAC$, there exists an infinite subset of $X$ that either forms a chain under $\leq_X$ or an antichain under $\leq_X$. By regarding this as a subspace, we may assume this subset is in fact of all of $X$ for simplicity.
	
	First, suppose $X$ is a chain under $\leq_X$. Applying $\ADS$, we can fix an infinite subset $Y$ of $X$ that is either an ascending or descending sequence under $\leq_X$. Say $Y$ is an ascending sequence under $\leq_X$, so that its elements may be listed as $x_0 <_X x_1 <_X \cdots$. We claim that $Y$ under the subspace topology is homeomorphic to the final segment topology on $\omega$ via the map $x_n \mapsto n$. To see this, note first that if $n < m$ then every open set in $X$ containing $x_n$ also contains $x_m$ since $x_n \in \cl(x_m)$. On the other hand, for every $n$ there is an open set containing $x_n$ but not any $x_m$ for $m < n$, since $x_n \notin \cl(x_m)$ for any such $m$. It follows that the open sets in $Y$ are precisely the sets of the form $\set{x_m : m \geq n}$ for $n \in \omega$, which proves the claim. The case where $Y$ is a descending sequence under $\leq_X$ is symmetric, and produces a subspace homeomorphic to the initial segment topology.
	
	We are thus left with the case where $X$ is an antichain. By definition of $\leq_X$, this means that $X$ is a $T_1$ space. Assume $X$ has no infinite subspace with the cofinite topology; we show it has an infinite discrete subspace. Since $X$ is $T_1$, every finite subset of $X$ is closed, and so every cofinite subset is open. Our assumption therefore implies that if $W$ is any infinite subspace of $X$, there exists an open set $U$ such that $U \cap W \neq \emptyset$ and $W \setminus U$ is infinite. Let $U_0$ be any such open set for $W = X$, and let $x_0$ be any element of $U_0$. Next, suppose inductively that, for some $n \in \omega$, we have defined $x_0,\ldots,x_n \in X$, along with open sets $U_0,\ldots,U_n$ satisfying the following: $X \setminus \bigcup_{i \leq n} U_i$ is infinite, and for all $i,j \leq n$, we have that $x_i \in U_j$ if and only if $i = j$. Letting $W = X \setminus \bigcup_{i \leq n} U_i$, there exists an open set $U$ such that $U \cap W \neq \emptyset$ and $W \setminus U$ is infinite. Let $x_{n+1}$ be any element of $U \cap W$, and let $U_{n+1} = U \setminus \set{x_0,\ldots,x_n}$. (Note that $U_{n+1}$ is open because $X$ is~$T_1$.) Clearly, the inductive conditions are maintained. In this way, we thus define an infinite subspace $Y = \set{x_n : n \in \omega}$ of $X$, and this subspace is discrete since for each $n$ we have that $U_n \cap Y = \set{x_n}$ by construction.
\end{proof}

Examining the proof, we can identify the following three main parts:
\begin{enumerate}
	\item the construction of the ordering $\leq_X$;
	\item the application of $\CAC$, followed by $\ADS$ in the chain case;
	\item the construction that handles the $T_1$ case.
\end{enumerate}
We will use this breakdown as a guide for how to organize our study of the Ginsburg--Sands theorem. We will then show that, for the formalization of topological spaces we employ, part (1) is equivalent over $\RCA_0$ to $\ACA_0$; part (2) to $\CAC$; and part (3) has a unique strength with respect to the standard subsystems of second-order arithmetic. The analysis of part (3) will consequently take the most work. As part of this, we will actually give two alternative proofs of this part.

\section{Background on CSC spaces}\label{sec:CSC}

\subsection{Spaces and subspaces}

Classically, a topological space is \emph{second-countable} if it has a countable basis. Restricting to spaces whose underlying set is countable, we obtain the notion of a \emph{countable second-countable (CSC)} space. We review the basic definitions of how such spaces are formalized in second-order arithmetic, following Dorais \cite{Dorais-2011}.

\begin{definition}
	A \emph{countable second-countable (CSC)} space is a tuple $\seq{X,\topol{U},k}$ as follows:
	\begin{enumerate}
		\item $X$ is a subset of $\mathbb{N}$;
		\item $\topol{U} = \seq{U_n : n \in \mathbb{N}}$ is a family of subsets of $X$ such that every $x \in X$ belongs to $U_n$ for some $n\in \mathbb{N}$;
		\item $k : X \times \mathbb{N} \times \mathbb{N} \to \mathbb{N}$ is a function such that for every $x \in X$ and all $m,n \in \mathbb{N}$, if $x \in U_m \cap U_n$ then $x \in U_{k(x,m,n)} \subseteq U_n \cap U_m$.
	\end{enumerate}
\end{definition}

\noindent We refer to $\topol{U}$ above a \emph{basis} for $\seq{X,\topol{U},k}$, and to each $U_n$ as a \emph{basic open set}. We write $U \in \topol{U}$ if $U = U_n$ for some $n$, and say $U$ is an \emph{element} of $\topol{U}$. We say $\seq{X,\topol{U},k}$ is an \emph{infinite} CSC space if $X$ is an infinite set. When arguing computability-theoretically, we say a CSC space $\seq{X,\topol{U},k}$ is \emph{computable} if $X$ is a computable subset of $\omega$, $\topol{U}$ is a uniformly computable sequence of sets, and $k$ is a computable function. For ease of notation, when no confusion can arise, we will identify $\seq{X,\topol{U},k}$ simply with $X$. Thus, we will refer, e.g., to the \emph{basic open sets} in $X$, etc.

The following lemma shows that the $k$ function can be extended to arbitrary finite intersections.

\begin{lemma}\label{lem:kStar}
	The following is provable in $\RCA_0$. Let $\seq{X,\topol{U},k}$ be a CSC space with $\topol{U} = \seq{U_n : n \in \mathbb{N}}$. There exists a function $\overline{k} : X \times \mathbb{N} \to \mathbb{N}$ such that for all $x \in X$ and $m \in \mathbb{N}$, if $F$ is the finite set coded by $m$ and $x \in \bigcap_{n \in F} U_n$, then $x \in U_{\overline{k}(x,m)} \subseteq \bigcap_{n \in F} U_n$.	
\end{lemma}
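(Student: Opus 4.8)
The plan is to obtain $\overline{k}$ by iterating the binary meet function $k$ along the elements of the finite set $F$ coded by $m$, exploiting the fact that $\RCA_0$ supports primitive recursion of bounded length. First I would dispose of the degenerate case $F = \emptyset$: here $\bigcap_{n \in F} U_n = X$ by convention, so I set $\overline{k}(x,m)$ equal to the least $n$ with $x \in U_n$, which exists by clause (2) of the definition of a CSC space. For $F = \set{n_0 < n_1 < \cdots < n_{j-1}}$ with $j \geq 1$, I define a finite sequence $a_0, a_1, \ldots, a_{j-1}$ by $a_0 = n_0$ and $a_{i+1} = k(x, a_i, n_{i+1})$ for $i < j - 1$, and put $\overline{k}(x,m) = a_{j-1}$. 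Since $k$ is total, this sequence is well-defined and unique for every $x$ and $m$, whether or not $x$ actually lies in the intersection; hence the graph of $\overline{k}$ is computable from (a code for) $\seq{X, \topol{U}, k}$, and $\overline{k}$ exists by $\Delta^0_1$ comprehension with parameters.

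It then remains to verify the stated property, which I would do by an induction internal to the recursion. Fix $x$ and $m$, let $F = \set{n_0 < \cdots < n_{j-1}}$ be coded by $m$ with $j \geq 1$, and assume $x \in \bigcap_{n \in F} U_n$. I claim, by induction on $i < j$, the statement $P(i)$: \emph{$x \in U_{a_i}$, and $U_{a_i} \subseteq U_{n_l}$ for every $l \leq i$}. The base case $P(0)$ is immediate since $a_0 = n_0$. For the inductive step, assume $P(i)$ with $i+1 < j$; since $x \in U_{n_{i+1}}$ by hypothesis and $x \in U_{a_i}$ by $P(i)$, clause (3) of the definition of $k$ applies and gives $x \in U_{k(x,a_i,n_{i+1})} \subseteq U_{a_i} \cap U_{n_{i+1}}$. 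As $a_{i+1} = k(x,a_i,n_{i+1})$, this together with $P(i)$ yields $P(i+1)$. Instantiating $P(j-1)$ and recalling $\overline{k}(x,m) = a_{j-1}$ gives exactly $x \in U_{\overline{k}(x,m)} \subseteq \bigcap_{n \in F} U_n$.

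I do not expect a genuine obstacle in this lemma; it is really a warm-up that licenses passing freely from binary to arbitrary finite meets later on. The only points needing any attention are proof-theoretic bookkeeping: each $P(i)$ is $\Pi^0_1$ (the finitely many containments $U_{a_i} \subseteq U_{n_l}$ are each $\Pi^0_1$, and $l$ ranges over a bounded set), and the induction runs only up to the fixed number $j$, so it is an instance of bounded $\Pi^0_1$ induction, available in $\RCA_0$ via $\I\Sigma^0_1$. Beyond that, one just needs to be a little careful to fix a convention for the empty intersection and to note that the recursion defining the $a_i$ is legitimate in $\RCA_0$ precisely because $k$ is a genuine (total) function.
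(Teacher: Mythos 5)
Your proof is correct and follows essentially the same route as the paper's: iterate the binary function $k$ along the elements of $F$ in increasing order and verify $x \in U_{a_i} \subseteq \bigcap_{l \leq i} U_{n_l}$ by a bounded induction, which is exactly the paper's recursion defining $g(x,m,s)$ with $\overline{k}(x,m) = g(x,m,|F|)$. The only (harmless) difference is your more careful treatment of $F = \emptyset$, where the paper simply outputs $0$ while you pick the least $n$ with $x \in U_n$.
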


\begin{proof}
	We first define a function $g : X \times \mathbb{N} \times \mathbb{N} \to \mathbb{N}$ by primitive recursion, as follows. Fix $x \in X$ and $m \in \mathbb{N}$ coding a finite set $F$. From $m$ we can determine whether or not $F$ is empty. If $F = \emptyset$, let $g(x,m,s) = 0$ for all $s$. Otherwise, let the elements of $F$ be $n_0 < \ldots < n_{|F|-1}$. Let $g(x,m,0) = n_0$, and for $s > 0$ let 
	\[
		g(x,m,s) =
		\begin{cases}
			k(x,g(x,m,s-1),n_s) & \text{if } s < |F|,\\
			g(x,m,s-1) & \text{if } s \geq |F|.			
		\end{cases}
	\]
	By induction on $s < |F|$, if $x \in \bigcap_{i \leq s} U_{n_i}$ then $x \in U_{g(x,m,s)} \subseteq \bigcap_{i \leq s} U_{n_i}$. Thus, if we define $\overline{k}(x,m) = g(x,m,|F|)$ we obtain the desired function.
\end{proof}

\noindent We will write $\overline{k}(x,F)$ in place of $\overline{k}(x,m)$ when $F$ is coded by $m$.

The next result allows us to build CSC spaces out of arbitrary collections of sets.

\begin{proposition}[Dorais \cite{Dorais-2011}, Proposition 2.12]\label{prop:dorais_generate}
	The following is provable in $\RCA_0$. Given a set $X \subseteq \mathbb{N}$ and a collection $\seq{V_n : n \in \mathbb{N}}$ of subsets $X$, there exists a CSC space $\seq{X,\topol{U},k}$ with $\topol{U} = \seq{U_n : n \in \mathbb{N}}$ as follows:
	\begin{enumerate}
		\item for every $n \in \mathbb{N}$, $V_n \in \topol{U}$;
		\item for every $m \in \mathbb{N}$, $U_m = \bigcap_{n \in F} V_n$, where $F$ is the finite set coded by $m$.
	\end{enumerate}
\end{proposition}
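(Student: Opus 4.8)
The plan is to define the basis $\topol{U}$ and the function $k$ by explicit primitive recursion and then check the three clauses of the definition of a CSC space directly; everything will be definable using only $\Delta^0_1$ comprehension, so the argument goes through in $\RCA_0$. Fix the standard coding of finite subsets of $\mathbb{N}$ by natural numbers, and for $n \in \mathbb{N}$ let $D_n$ denote the finite set coded by $n$, so that $D_n \subseteq \set{0,\ldots,n}$; recall that $n \mapsto D_n$, the map sending $(m,n)$ to a code for $D_m \cup D_n$, and the map sending $i$ to a code for $\set{i}$ are all primitive recursive, and that $D_{n_\emptyset} = \emptyset$ for some fixed code $n_\emptyset$.

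First I would set $U_n = \bigcap_{j \in D_n} V_j$ for each $n$, adopting the convention that the empty intersection is the ambient space, so $U_{n_\emptyset} = X$. Since $x \in U_n$ if and only if $x \in X$ and $(\forall j \in D_n)[x \in V_j]$, and the quantifier over $D_n$ is bounded, this is a $\Delta^0_1$ condition in the given parameters $X$ and $\seq{V_n : n \in \mathbb{N}}$; hence the sequence $\topol{U} = \seq{U_n : n \in \mathbb{N}}$ exists in $\RCA_0$. Condition (2) of the proposition then holds by the definition of $\topol{U}$, and condition (1) holds because, taking $m$ to be a code for $\set{n}$, we get $U_m = \bigcap_{j \in \set{n}} V_j = V_n \in \topol{U}$.

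Next I would verify the clauses of the definition of a CSC space. Clause (1), that $X \subseteq \mathbb{N}$, is part of the hypothesis. For clause (2), each $U_n$ is contained in $X$ (either $U_n = X$, or $U_n$ is an intersection of sets each contained in $X$), and every $x \in X$ lies in $U_{n_\emptyset} = X$, so $\topol{U}$ covers $X$. For clause (3), the key identity is that intersection of basic open sets corresponds to union of codes: for all $m,n$,
\[
	U_m \cap U_n = \Bigl( \bigcap_{j \in D_m} V_j \Bigr) \cap \Bigl( \bigcap_{j \in D_n} V_j \Bigr) = \bigcap_{j \in D_m \cup D_n} V_j = U_\ell,
\]
where $\ell$ is a code for $D_m \cup D_n$. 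So I would simply put $k(x,m,n) = \ell$, with the value independent of $x$; this $k$ is primitive recursive, hence exists in $\RCA_0$, and by the displayed equality $U_{k(x,m,n)} = U_m \cap U_n$, so whenever $x \in U_m \cap U_n$ we trivially get $x \in U_{k(x,m,n)} \subseteq U_m \cap U_n$, as required.

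There is no serious obstacle here; the two points meriting a routine check are that the $U_n$ and $k$ are definable using only $\Delta^0_1$ comprehension and primitive recursion — which holds because every quantifier over $D_m$ or $D_n$ is bounded — and that the empty-intersection convention is harmless. The latter is in fact exactly what puts $X$ itself into $\topol{U}$, which makes the covering clause of the CSC definition automatic and keeps condition (2) of the proposition uniformly true for every $n$.
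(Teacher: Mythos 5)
Your construction is correct and is exactly the intended one: the paper gives no proof of this proposition (it cites Dorais), and the standard argument is precisely yours --- take $U_n = \bigcap_{j \in D_n} V_j$ over the finite set $D_n$ coded by $n$ (empty code giving $X$), note this is $\Delta^0_1$-definable from $X$ and $\seq{V_n : n \in \mathbb{N}}$ since the quantifier over $D_n$ is bounded, and let $k(x,m,n)$ be a code for $D_m \cup D_n$, which makes clause (3) of the CSC definition hold with equality. Your empty-intersection convention is the right reading of item (2) and is what puts $X$ into $\topol{U}$, making the covering requirement automatic, consistent with how the proposition is applied later in the paper.
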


\noindent We call $\seq{X,\topol{U},k}$ above the \emph{CSC space on $X$ generated by} $\seq{V_n : n \in \mathbb{N}}$. Since every finite set is coded by a number, the basis $\topol{U}$ here is closed under intersections.

As noted by Dorais \cite[Section 2]{Dorais-2011}, $\RCA_0$ cannot prove the existence of arbitrary unions of elements of a basis. Thus, the open sets in a CSC space are defined somewhat indirectly.

\begin{definition}
	Let $\seq{X,\topol{U},k}$ be a CSC space with $\topol{U} = \seq{U_n : n \in \mathbb{N}}$.
	\begin{enumerate}
		\item An \emph{open code} of $\seq{X,\topol{U},k}$ is a function $O : \mathbb{N} \to \mathbb{N}$.
		\item The \emph{open set coded by $O$} refers to $O_{\topol{U}} = \bigcup_{n \in \ran(O)} U_n$.
	\end{enumerate}
\end{definition}

\noindent The phrase ``refers to'' in (2) above is intentional. Since $O_{\topol{U}}$ may not exist as an actual set, we formally understand all references to $O_{\topol{U}}$ as abbreviations. So, for example, by $x \in O_{\topol{U}}$ or $O_{\topol{U}} \subseteq Y$ we really mean $(\exists n \in \ran(O))[x \in U_n]$ and $(\forall x)[(\exists n \in \ran(O))[x \in U_n] \to x \in Y]$, respectively. Of course, if $O_{\topol{U}}$ happens to exist then we may also interpret these in the usual sense.

We move on to the definition of subspaces.

\begin{definition}\label{def:subspace}
	Let $\seq{X,\topol{U},k}$ be a CSC space with $\topol{U} = \seq{U_n : n \in \mathbb{N}}$.
	\begin{enumerate}
		\item For $Y \subseteq X$, let $\topol{U} \res Y = \seq{U_n \cap Y: n \in \mathbb{N}}$.
		\item A \emph{subspace} of $\seq{X,\topol{U},k}$ is a tuple $\seq{Y,\topol{U} \res Y, k}$ for some $Y \subseteq X$.
	\end{enumerate}
\end{definition}

\noindent Note that any subspace of a CSC space is itself a CSC space. Given a subspace $\seq{Y,\topol{U} \res Y, k}$ of a CSC space $\seq{X,\topol{U},k}$, we will typically simply say that \emph{$Y$ is a subspace of $X$}, and omit the notation $\topol{U} \res Y$. For clarity, we will sometimes say that a basic open set of $Y$ (i.e., an element of $\topol{U} \res Y$) is a \emph{basic open set of $Y$ in the subspace topology}.

Classically, a subspace is defined in terms of a restriction of the full topology, rather than just the basis. As a result, the definition of subspace may at first appear too strong. However, given a CSC space $\seq{X,\topol{U},k}$ and a set $Y \subseteq X$, the existence of the subspace $\seq{Y,\topol{U} \res Y, k}$ is easily provable in $\RCA_0$. Thus, we may restrict to subspaces in the sense of \Cref{def:subspace} without any loss of generality.

\subsection{Separation axioms}

We now define the first few degrees of Kolmogorov's classification of topological spaces, as they pertain to CSC spaces in our setting.

\begin{definition}
	Let $\seq{X,\topol{U},k}$ be a CSC space.
	\begin{enumerate}
		\item $X$ is \emph{$T_0$} if for all $x \neq y$ in $X$, there exists $U \in \topol{U}$ such that either $x \in U$ and $y \notin U$, or $y \in U$ and $x \notin U$.
		\item $X$ is \emph{$T_1$} if for all $x \neq y$ in $X$, there exist $U,V \in \topol{U}$ such that $x \in U \setminus V$ and $y \in V \setminus U$.
		\item $X$ is \emph{Hausdorff} (or $T_2$) if for all $x \neq y$ in $X$ there exist $U,V \in \topol{U}$ such that $x \in U$, $y \in V$, and $U \cap V = \emptyset$.
	\end{enumerate}	
\end{definition}

\noindent In $\RCA_0$, it is straightforward to prove the standard fact that every Hausdorff CSC space is $T_1$, and every $T_1$ CSC space is $T_0$.

\begin{proposition}\label{lem:T_1_finite_is_singleton}
	The following is provable in $\RCA_0$. Let $\seq{X,\topol{U},k}$ be a $T_1$ CSC space. Then for all $x \in X$, $\set{x} \in \topol{U}$ if and only if $x \in U$ for some finite $U \in \topol{U}$.
\end{proposition}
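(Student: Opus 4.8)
The plan is to prove the two implications separately. The implication from left to right is immediate: if $\set{x} \in \topol{U}$, then $\set{x}$ is itself a finite element of $\topol{U}$ to which $x$ belongs, so there is nothing to do.

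For the converse, suppose $x \in U$ for some finite $U \in \topol{U}$, say $U = U_m$, and let $b$ be a bound for $U$ (which exists since, provably in $\RCA_0$, every finite set is bounded). Using that $X$ is $T_1$, for each $y \in U$ with $y \neq x$ there is an index $j$ with $x \in U_j$ and $y \notin U_j$: indeed the $T_1$ condition supplies $V,W \in \topol{U}$ with $x \in V \setminus W$ and $y \in W \setminus V$, and then $V = U_j$ works. The idea now is to collect finitely many such indices — one for each $y \in U \setminus \set{x}$ — into a single finite set $G$, adjoin the index $m$ to obtain a finite set $F = G \cup \set{m}$, and observe that $x \in \bigcap_{n \in F} U_n$, while $\bigcap_{n \in F} U_n$ is disjoint from $U \setminus \set{x}$ (by the $G$ part) and contained in $U = U_m$ (by the $m$ part); hence $\bigcap_{n \in F} U_n = \set{x}$. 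Feeding $F$ to the function $\overline{k}$ of \Cref{lem:kStar} then yields a basic open set with $x \in U_{\overline{k}(x,F)} \subseteq \bigcap_{n \in F} U_n = \set{x}$, forcing $U_{\overline{k}(x,F)} = \set{x}$, i.e.\ $\set{x} \in \topol{U}$, as desired.

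The one step that requires care is the formation of $G$. The property ``$x \in U_j \wedge y \notin U_j$'' is a bounded formula in the parameters $X$ and $\topol{U}$, so by $\Sigma^0_1$ bounding — available in $\RCA_0$ — the witnessing indices $j$, as $y$ ranges over the bounded set $U \setminus \set{x}$, may be taken below a single bound $w$; then
\[
  G = \set{ j < w : x \in U_j \wedge (\exists y \leq b)[y \in U \wedge y \neq x \wedge y \notin U_j] }
\]
exists by $\Delta^0_0$ comprehension and has the required property. I expect this to be the main (and essentially the only) obstacle: in a nonstandard model $U \setminus \set{x}$ can have nonstandard size, so one cannot simply iterate the $T_1$ property ``finitely many times'' by hand, and a bounding argument (equivalently, a $\Sigma^0_1$ induction building up $F$ one element of $U$ at a time) is genuinely needed. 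Everything else is routine bookkeeping with $\overline{k}$.
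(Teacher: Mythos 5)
Your proof is correct and follows essentially the same route as the paper: use $T_1$ to separate $x$ from each point of the finite set $U \setminus \set{x}$ by basic open sets, collect the witnessing indices into a finite set (the $\B\Sigma^0_1$/bounding step you flag is exactly what the paper's phrase ``$\RCA_0$ can form the set $F$'' is silently using), and feed that set to $\overline{k}$ from \Cref{lem:kStar}. One small point in your favor: by adjoining the index $m$ of $U$ you ensure $\bigcap_{n \in F} U_n \subseteq U$, whereas the paper's $F$ consists only of the separating indices, so its assertion that $\bigcap_{n \in F} U_n = \set{x}$ strictly speaking also needs the index of $U$ (or an intersection with $U$) to rule out points outside $U$ --- your version supplies that detail.
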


\begin{proof}
	The left-to-right implication is trivial. For the right-to-left, let $\topol{U} = \seq{U_n : n \in \mathbb{N}}$. Fix $x$ and suppose $x \in U$ for some finite $U \in \topol{U}$. Since $X$ is $T_1$, for every $y \in U$ different from $x$ there is an $n$ such that $x \in U_n$ and $y \notin U_n$, and $\RCA_0$ can form the set $F$ containing the least such $n$ for each $y$. Thus, $\bigcap_{n \in F} U_n = \set{x}$, and so $x \in U_{\overline{k}(x,F)} \subseteq \bigcap_{n \in F} U_n = \set{x}$, where $\overline{k}$ is the function from \Cref{lem:kStar}. Since $U_{\overline{k}(x,F)} \in \topol{U}$, we are done.
\end{proof}

\subsection{Indiscrete and discrete spaces}

\begin{definition}\label{def:top_disc_indisc}
	Let $\seq{X,\topol{U},k}$ be a CSC space.
	\begin{enumerate}
		\item $X$ is \emph{indiscrete} if $U \in \topol{U}$ if and only if $U$ is $\emptyset$ or $X$.
		\item $X$ is \emph{discrete} if $\set{x} \in \topol{U}$ for every $x \in X$.
	\end{enumerate}
\end{definition}

Because there is in general no effective way, given $x \in X$, to find the index of a $U \in \topol{U}$ equal to $\set{x}$, Dorais \cite[Definition 5.1]{Dorais-2011} introduces the following stronger notion.

\begin{definition}\label{def:top_disc_indisc}
	A CSC space $\seq{X,\topol{U},k}$ with $\topol{U} = \seq{U_n : n \in \mathbb{N}}$ is \emph{effectively discrete} if it is discrete and there exists a function $d : X \to \mathbb{N}$ such that $U_{d(x)} = \set{x}$ for all $x \in X$.
\end{definition}

The following proposition encapsulates the gap in complexity between discrete and effectively discrete spaces. We include it here for general interest because it would seem to be a basic result concerning CSC space. The equivalence of (1) and (2) below is implicit in work of Darais~\cite[Example 7.3]{Dorais-2011}.

\begin{proposition}\label{prop:ACA_disc_to_eff_disc}
	The following are equivalent over $\RCA_0$.
	\begin{enumerate}
		\item $\ACA_0$.
		\item The statement that every discrete CSC space is effectively discrete.
		\item The statement that every infinite discrete CSC space has an infinite effectively discrete subspace.
	\end{enumerate}
\end{proposition}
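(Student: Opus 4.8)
The plan is to establish the cycle $(1)\Rightarrow(2)\Rightarrow(3)\Rightarrow(1)$, with the last implication carrying essentially all of the content.

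For $(1)\Rightarrow(2)$, work in $\ACA_0$ and fix a discrete CSC space $\seq{X,\topol{U},k}$ with $\topol{U}=\seq{U_n:n\in\mathbb N}$. The relation ``$U_m=\set x$'' is $\Pi^0_1$ in the set coding $\topol{U}$: it asserts $x\in U_m$ together with $(\forall y)(y\in U_m\to y=x)$. By arithmetic comprehension the set $A=\set{\pair{x,m}:U_m=\set x}$ exists, and discreteness says $(\forall x\in X)(\exists m)(\pair{x,m}\in A)$. Then $d(x)=\mu m\,[\pair{x,m}\in A]$ defines a total function on $X$, which $\RCA_0$ provides from $A$, and $U_{d(x)}=\set x$ for all $x$; so $X$ is effectively discrete. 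The implication $(2)\Rightarrow(3)$ is immediate, since an infinite discrete CSC space is an infinite effectively discrete subspace of itself.

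For $(3)\Rightarrow(1)$ I would argue in $\RCA_0+(3)$ and use the characterization of $\ACA_0$ as the statement that the range of every injection $f\colon\mathbb N\to\mathbb N$ exists. Given such an $f$, the idea is to build, in $\RCA_0$, a discrete CSC space $\mathbf{X}_f$ on $\mathbb N$ in which the index of the singleton of each point is hidden behind the $\Sigma^0_1$ question of membership in $\ran(f)$. The basic gadget is, for a point $n$, the set $V_n=\set n\cup\set{q>n:n\in f[q]}$, where $f[q]$ is the stage-$q$ approximation to $\ran(f)$: this is $f$-computable, it equals $\set n$ exactly when $n\notin\ran(f)$, and it is cofinite when $n\in\ran(f)$. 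A second family of gadgets supplies $\set n$ directly at the stage (if any) at which $n$ enters $\ran(f)$; together these make $\mathbf{X}_f$ genuinely discrete, each singleton being exhibited as a specific basic open or finite intersection, though not uniformly. Applying $(3)$ yields an infinite subspace $Y$ together with a function $d$ witnessing its effective discreteness, and one then shows in $\RCA_0$ that $f\oplus Y\oplus d$ uniformly decides ``$n\in\ran(f)$'': since $Y$ is infinite it meets every cofinite set, so a point of $Y$ lying in $\ran(f)$ cannot be isolated in the subspace topology by the $V$-gadgets alone, forcing its $d$-witness to involve the stage gadget and thereby expose the stage of entry.

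The main obstacle, I expect, is exactly this last step: making the coding robust under the passage from $\mathbf{X}_f$ to an \emph{arbitrary} infinite subspace. A subspace singleton $U_m\cap Y$ can be realized at cheaper indices than the full-space singleton $U_m=\set y$, and a single infinite $Y$ need not contain every point of $\mathbf{X}_f$, so one cannot just read off membership of each $n$ in $\ran(f)$ from a witness at $n$. The fix I would pursue is to have each point $p$ encode not merely ``$p\in\ran(f)$'' but the whole string $\ran(f)\cap[0,p]$, so that, $Y$ being unbounded, combining the witnesses at arbitrarily large $p\in Y$ recovers all of $\ran(f)$. Arranging the gadgets so that isolating $p$ in any infinite subspace genuinely forces a correct guess at $\ran(f)\cap[0,p]$, while keeping $\mathbf{X}_f$ provably discrete in $\RCA_0$, is the delicate part, and is where I would expect the construction implicit in Dorais's Example 7.3 (which underlies the equivalence of $(1)$ and $(2)$) to need the most strengthening.
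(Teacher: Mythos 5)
Your directions $(1)\Rightarrow(2)$ and $(2)\Rightarrow(3)$ are fine and agree with the paper. For $(3)\Rightarrow(1)$ you have correctly identified the target construction---points whose isolating index encodes a stage by which the initial segment of $\ran(f)$ below the point has settled, together with the observation that in an infinite subspace an isolating basic open set must be a \emph{finite} intersection-result and hence must include one of the ``revealing'' generators---and this is essentially the machinery the paper uses. But the step you flag as ``the delicate part'' is a genuine gap rather than a detail to be arranged: to prove in $\RCA_0$ that your strengthened space is discrete you must show, for every $p$, that there \emph{exists} a stage $s$ with $\fres{f}{p}{s} = \ran(f) \res p$, i.e.\ a single bound on the entry stages of all elements of $\ran(f)$ below $p$. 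That is an instance of $\Sigma^0_2$ (equivalently $\Pi^0_1$) bounding and is not available in $\RCA_0$. Your simpler two-gadget space avoids this problem and is provably discrete, but, as you yourself observe, it only decides membership in $\ran(f)$ for the points that happen to lie in the subspace $Y$, which does not suffice; so the proposal stalls exactly at the point you anticipated.

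The missing idea is a bootstrap: first show that statement $(3)$ itself implies $\mathsf{B}\Sigma^0_2$, and only then run the strengthened construction. The paper does this by taking $c : \mathbb{N} \to m$ with (supposedly) no infinite homogeneous set and generating a space from $V_{\seq{x,s}} = \set{x} \cup \set{z > s : c(z) = c(x)}$; discreteness is now free, since each color class is bounded by hypothesis. An infinite effectively discrete subspace then lets one extract points $x_0 < \cdots < x_m$ with pairwise distinct colors (each witness index bounds the stages relevant to the next choice), giving an injection of $m+1$ into $m$ and contradicting $\I\Sigma^0_1$. Hence $(\forall m)\RT^1_m$ holds, which is $\mathsf{B}\Sigma^0_2$ by Hirst's theorem. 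With $\mathsf{B}\Sigma^0_2$ in hand, your ``fix'' goes through exactly as in the paper: the space generated by $V_{\seq{x,s}} = \set{x} \cup \set{z > s : \fres{f}{x}{s} \neq \fres{f}{x}{z}}$ is provably discrete, every generator is either a singleton or contains a final segment of $\mathbb{N}$, so an isolating index for $x \in Y$ must contain some $\seq{x,s}$ whose largest such $s$ satisfies $\fres{f}{x}{s} = \ran(f) \res x$, and unboundedness of $Y$ then recovers $\ran(f)$ uniformly. Without this $\mathsf{B}\Sigma^0_2$ step the argument as you outline it cannot be completed in $\RCA_0$.
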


\begin{proof}
	The implication from (1) to (2) is straightforward, and (2) implies (3) trivially. To prove (3) implies (1), we argue in $\RCA_0$ and assume (3). To begin, we derive $\mathsf{B}\Sigma^0_2$, which we take in the guise of $(\forall m)\RT^1_m$ using Hirst's theorem. So fix $c : \mathbb{N} \to m$ for some $m \in \mathbb{N}$, and suppose towards a contradiction that $c$ has no infinite homogeneous set. For all $x,s \in \mathbb{N}$, define $V_{\seq{x,s}} = \set{x} \cup \set{z > s : c(z) = c(x)}$, and let $\seq{\mathbb{N},\topol{U},k}$ be the CSC space generated by $\seq{V_n : n \in \mathbb{N}}$. By assumption, $\set{z : c(z) = c(x)}$ is bounded by some $s$, meaning that $V_{\seq{x,s}} = \set{x}$. Thus, $\seq{\mathbb{N},\topol{U},k}$ is an infinite discrete space. Now let $Y \subseteq \mathbb{N}$ be an infinite effectively discrete subspace of $\seq{\mathbb{N},\topol{U},k}$, with witness $d : Y \to \mathbb{N}$. That is, if we write $\topol{U} = \seq{U_n : n \in \mathbb{N}}$ then $U_{d(x)} \cap Y = \set{x}$ for all $x \in Y$. We define a sequence $x_0 < \cdots < x_m$ in $Y$ as follows. Let $x_0 \in Y$ be arbitrary, and suppose we have defined $x_i$ for some $i < m$. By \Cref{prop:dorais_generate}~(2), we can decode $d(x_i)$ as a nonempty finite set $F$ such that $U_{d(x_i)} = \bigcap_{n \in F} V_n$. Now, each $n \in F$ is the form $\seq{z,s}$ for some $z$ and $s$, and since $x_i \in V_{\seq{z,s}}$ it follows that every $y > s$ with $c(y) = c(x_i)$ also belongs to $V_{\seq{z,s}}$. Choose the largest $s$ such that $\seq{z,s} \in F$ for some $z$ and let $x_{i+1}$ be the least element of $Y$ larger than $x_i$ and $s$. This completes the definition. By induction on $j \leq m$, $c(x_i) \neq c(y)$ for all $i < j$ and all $y \geq x_j$, so in particular, $c(x_i) \neq c(x_j)$. Hence, the map $x_i \mapsto c(x_i)$ is an injection of a finite set of size $m+1$ into $m$, which contradicts $\mathsf{I}\Sigma^0_1$. We conclude that $c$ has an infinite homogeneous set after all, and hence that $\mathsf{B}\Sigma^0_2$ holds.
	
	We now use (3) and $\mathsf{B}\Sigma^0_2$ to derive $\ACA_0$. To this end, it suffices to prove that the range of every one-to-one function $f : \mathbb{N} \to \mathbb{N}$ exists. Fix such an $f$. For $x,s \in \mathbb{N}$, let $\fres{f}{x}{s} = \ran(f \res s+1) \res x = \set{w < x : (\exists y \leq s)[f(y) = w]}$, and define
	\[
		V_{\seq{x,s}} = \set{x} \cup \set{z > s : \fres{f}{x}{s} \neq \fres{f}{x}{z}}.
	\]
	For each $w < x$ there is a $y$ such that either $w \notin \ran(f)$ or $f(y) = w$. Hence, by $\mathsf{B}\Sigma^0_2$ there is an $s$ such that for each $w < x$ there is such a $y$ below $s$. It follows that for this $s$ we have $\fres{f}{x}{s} = \ran(f) \res x$,
	and therefore $V_{\seq{x,s}} = \set{x}$. Consequently, the CSC space $\seq{\mathbb{N},\topol{U},k}$ generated by $\seq{V_n : n \in \mathbb{N}}$ is discrete. Apply (2) to get an infinite discrete subspace $Y \subseteq \mathbb{N}$ with witnessing function $d$, so that if $\topol{U} = \seq{U_n : n \in \mathbb{N}}$ then $U_{d(x)} \cap Y = \set{x}$ for all $x \in Y$. To complete the proof, we show for all $w$ that $\ran (f) \res w$ is uniformly $\Delta^0_1$ definable in $w$. Indeed, let $w$ be given and fix the least $x > w$ in $Y$. By \Cref{prop:dorais_generate}~(2), $d(x)$ codes a nonempty finite set $F$ such that $U_{d(x)} = \bigcap_{n \in F} V_n$. We claim that $\seq{x,s} \in F$ for some $s$, and that for the largest such $s$ we have $\ran(f) \res w = \fres{f}{w}{s}$. To see this, note that every element of $F$ is of the form $\seq{z,s}$ for some $z$ and $s$, and this has the property that if some $y \neq z$ belongs to $V_{\seq{z,s}}$ then so does every number larger than $y$. Since $V_n \subseteq U_{d(x)}$ for all $n \in F$, if all the $\seq{z,s} \in F$ satisfied $z \neq x$ we would have $[x,\infty) \subseteq U_{d(x)}$, hence $U_{d(x)} \cap Y$ could not be $\set{x}$ since $Y$ is infinite. Thus, $\seq{x,s} \in F$ for some $s$. If, for the largest such $s$, we had $y \in V_{\seq{x,s}}$ for some $y \neq x$, we would also have $[y,\infty) \subseteq V_{\seq{x,t}}$ for all $t \leq s$ since $V_{\seq{x,s}} \subseteq V_{\seq{x,t}}$. Hence, for this $y$, $[\max \set{x,y},\infty)$ would be a subset of $U_{d(x)}$, and so $U_{d(x)}$ would again not be a singleton. We conclude that $V_{\seq{x,s}} = \set{x}$, which means that $\fres{f}{x}{s} = \fres{f}{x}{z}$ for all $z > s$ and hence that $\fres{f}{x}{s} = \ran(f) \res x$. In particular, $(\fres{f}{x}{s}) \res w = \ran(f) \res w$. This proves the claim.
\end{proof}

\subsection{The initial segment and final segment topologies}

\begin{definition}\label{def:top_init_final}
	Let $\seq{X,\topol{U},k}$ be an infinite CSC space.
	\begin{enumerate}
		\item $X$ has the \emph{initial segment} topology if there is a bijection $h : \mathbb{N} \to X$ such that $U \in \topol{U}$ if and only if $U$ is $\emptyset$, $X$, or $\set{h(i) : i \leq j}$ for some $j \in \mathbb{N}$.
		\item $X$ has the \emph{final segment} topology if there is a bijection $h : \mathbb{N} \to X$ such that $U \in \topol{U}$ if and only if $U$ is $\emptyset$, $X$, or $\set{h(i) : i \geq j}$ for some $j \in \mathbb{N}$.
	\end{enumerate}
\end{definition}

\noindent We call the map $h$ in (1) and (2) a \emph{homeomorphism}, as usual.

Classically, one can define the initial segment and final segment topologies purely in terms of open sets, and without needing to explicitly reference a homeomorphism. From the point of view of reverse mathematics we might prefer such a definition because, while a homeomorphism is more convenient to work with, having an arithmetical definition rather than a $\Sigma^1_1$ one is less likely to inflate a principle's strength. To this end, we can define the following weaker forms of the above definitions.

\begin{definition}\label{def:weak_init_final}
	Let $\seq{X,\topol{U},k}$ be an infinite CSC space.
	\begin{enumerate}
		\item $X$ has the \emph{weak initial segment topology} if:
			\begin{enumerate}
			\item every $U \in \topol{U}$ is finite or equal to $X$;
			\item if $U,V \in \topol{U}$ then either $U \subseteq V$ or $V \subseteq U$;
			\item for each $s \in \mathbb{N}$, there is a finite $U \in \topol{U}$ such that $|U| = s$;
			\item each $x \in X$ belongs to some finite $U \in \topol{U}$.
		\end{enumerate}
		\item $X$ has the \emph{weak final segment topology} if:
			\begin{enumerate}
			\item every $U \in \topol{U}$ is cofinite in $X$ or equal to $\emptyset$;
			\item if $U,V \in \topol{U}$ then either $U \subseteq V$ or $V \subseteq U$;
			\item for each $s \in \mathbb{N}$, there is a nonempty $U \in \topol{U}$ such that $|X \setminus U| = s$;
			\item each $x \in X$ belongs to some $U \neq X$ in $\topol{U}$.
		\end{enumerate}
	\end{enumerate}	
\end{definition}

\begin{proposition}
	The following are equivalent over $\RCA_0$.
	\begin{enumerate}
		\item $\ACA_0$.
		\item Every CSC space with the weak initial segment topology has the initial segment topology.
		\item Every CSC space with the weak final segment topology has the final segment topology.
	\end{enumerate}
\end{proposition}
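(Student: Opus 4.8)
The plan is to prove $(1)\Rightarrow(2)$ and $(2)\Rightarrow(1)$, and then to obtain $(1)\Rightarrow(3)$ and $(3)\Rightarrow(1)$ by dualizing (systematically replacing a basic open set by its complement in $X$, and ``smallest'' by ``largest''). For $(1)\Rightarrow(2)$, let $\seq{X,\topol{U},k}$ with $\topol{U} = \seq{U_n : n \in \mathbb{N}}$ have the weak initial segment topology. By clauses (a) and (b) of \Cref{def:weak_init_final}, the finite basic open sets are pairwise $\subseteq$-comparable, and by clause (c) there is, for each $s$, a (necessarily unique) finite basic open set $I_s$ of size $s$. Arguing in $\ACA_0$, form the arithmetically defined function $s \mapsto n_s$, where $n_s$ is least with $|U_{n_s}| = s$ (so that $U_{n_s} = I_s$), and define $h \colon \mathbb{N} \to X$ by taking $h(j)$ to be the unique element of $I_{j+1}\setminus I_j$. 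Clause (d) makes $h$ surjective, and it is plainly injective; since the $I_s$ exhaust the finite basic open sets, one checks that $X$ has the initial segment topology, with homeomorphism $h$.

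For $(2)\Rightarrow(1)$ we argue in $\RCA_0$, assume $(2)$, and show that the range of an arbitrary one-to-one $f\colon\mathbb{N}\to\mathbb{N}$ exists, which suffices. Let $X = \set{\seq{n,f(n)} : n \in \mathbb{N}}$, a set since membership is decidable from $f$, and for each $s \in \mathbb{N}$ put $V_s = \set{\seq{n,f(n)} : f(n) < s}$; then $\seq{V_s : s \in \mathbb{N}}$ exists and is nested, since $s \le s'$ implies $V_s \subseteq V_{s'}$. Let $\seq{X,\topol{U},k}$ be the CSC space on $X$ generated by $\seq{V_s : s \in \mathbb{N}}$ via \Cref{prop:dorais_generate}; since the $V_s$ are nested, its basic open sets are exactly the $V_s$ together with $X$. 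We claim $\RCA_0$ proves this space has the weak initial segment topology. Clauses (a), (b), and (d) are routine: $V_s$ is finite, of size $|\set{n : f(n) < s}| \le s$ since $f$ is one-to-one, and $\seq{n,f(n)} \in V_{f(n)+1}$. For clause (c), observe that $|V_s| = |\set{n : f(n) < s}|$ is nondecreasing in $s$, increases by at most $1$ at each step (again by injectivity of $f$), and is unbounded (as $f(0),\dots,f(k-1)$ witness $|\set{n : f(n) < s}| \ge k$ once $s > \max\set{f(0),\dots,f(k-1)}$); hence, given $t$, the least $s$ with $|V_s| \ge t$ --- which exists by $\Sigma^0_1$ minimization, available in $\RCA_0$ --- satisfies $|V_s| = t$.

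Now apply $(2)$ to obtain a bijection $h\colon\mathbb{N}\to X$ witnessing the initial segment topology. Writing $a_0 < a_1 < \cdots$ for the increasing enumeration of $\ran(f)$, the distinct finite basic open sets are the sets $\set{\seq{n,f(n)} : f(n) < a_k}$, of size $k$; so $\set{h(i) : i \le j}$ is the one of size $j+1$, and consequently the second coordinate of $h(j)$ equals $a_j$. Thus the second coordinates of $h(0), h(1), \dots$ strictly increase and are unbounded, so, letting $p(x)$ be the least $j$ with second coordinate of $h(j)$ at least $x$, we have that $x \in \ran(f)$ if and only if $x$ is the second coordinate of $h(p(x))$. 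This is a $\Delta^0_1$ definition of $\ran(f)$ from $h$, so $\ran(f)$ exists. The implications $(1)\Rightarrow(3)$ and $(3)\Rightarrow(1)$ are obtained identically using complements in $X$; in particular, for the reversal one takes the space on $X$ generated by the cofinite sets $\set{\seq{n,f(n)} : f(n) \ge s}$ and uses the weak final segment hypothesis, the resulting homeomorphism again recovering the increasing enumeration of $\ran(f)$.

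The only step requiring real care is the verification, inside $\RCA_0$ (so with only $\I\Sigma^0_1$ available), that the constructed space satisfies the weak initial (resp.\ final) segment conditions --- concretely, clause (c), where injectivity of $f$ together with $\Sigma^0_1$ minimization are needed. Everything else amounts to routine bookkeeping about nested families of finite sets.
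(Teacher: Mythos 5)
Your proof is correct, and your forward direction $(1)\Rightarrow(2)$ is essentially the paper's: use arithmetical comprehension to extract the chain of finite basic open sets of every cardinality and let $h(j)$ be the point added at size $j+1$. The reversal, however, runs on a genuinely different instance. You place the space on the graph $X=\set{\seq{n,f(n)}:n\in\mathbb{N}}$ with the nested sets $V_s=\set{\seq{n,f(n)}:f(n)<s}$, so that the homeomorphism given by $(2)$ is forced to list the points of $X$ in increasing order of second coordinate; $\ran(f)$ is then $\Delta^0_1$-definable from $h$ (indeed $x\in\ran(f)$ iff some $j\leq x$ has $h(j)$ with second coordinate $x$, since the second coordinates strictly increase and so dominate $j$). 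The paper instead builds its space on $\mathbb{N}$ from the paired sets $V_{2x}=\set{w\leq x}\cup\set{y:f(y)<x}$ and $V_{2x+1}=\set{w\leq x}\cup\set{y:f(y)\leq x}$, and decides $x\in\ran(f)$ by comparing the positions $j_0,j_1$ at which $V_{2x}$ and $V_{2x+1}$ terminate in the $h$-enumeration ($x\in\ran(f)$ iff $j_1=j_0+1$). Both reversals follow the same template --- a nested family of finite sets realizing every cardinality, decoded through the homeomorphism --- but your coding is somewhat more direct, since $h$ literally enumerates $\ran(f)$ in increasing order; your attention to $\Sigma^0_1$ minimization and to the $\I\Sigma^0_1$-pigeonhole facts needed to see each $V_s$ is bounded matches the level of rigor of the paper's own verification.

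One caveat about your dual case $(3)$: as printed, clause (2)(d) of \Cref{def:weak_init_final} (``each $x\in X$ belongs to some $U\neq X$'') is incompatible with clauses (2)(b) and (2)(c) at $s=1$, and fails even for the final segment topology itself at the point $h(0)$; it is evidently intended to say that each $x\in X$ is \emph{omitted} by some nonempty $U\in\topol{U}$. Under the printed clause your space generated by $\set{\seq{n,f(n)}:f(n)\geq s}$ violates (d) at the pair whose second coordinate is $\min\ran(f)$, but under the intended clause it satisfies all four conditions (each pair is omitted by $W_{f(n)+1}$), and your dualization then goes through exactly as you describe. This is a defect of the stated definition, which the paper's own unwritten ``similar'' argument for $(3)$ would equally have to confront, rather than a gap in your proof.
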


\begin{proof}
	To see that (1) implies (2), let $\seq{X,\topol{U},k}$ be a CSC space with the weak initial segment topology. We define $h : \mathbb{N} \to X$ as follows. By properties (1b) and (1c), $\ACA_0$ can find a sequence of sets $U_0 \subset U_1 \subset \cdots$ with $|U_i| = i+1$ for all $i \in \mathbb{N}$. By property (1b), each finite $U \in \topol{U}$ is equal to $U_i$ for some $i$, so by property (1d), $X = \bigcup_{i \in \mathbb{N}} U_i$. Define $h : \mathbb{N} \to X$ by letting $h(i)$ be the unique element of $U_i \setminus U_{j < i} U_j$. Thus, $h$ is bijective, and by induction, $U_j = \set{h(i) : i \leq j}$ for each $j$. Now fix any $U \in \topol{U}$ other than $\emptyset$ and $X$. By property (1a), $U$ is finite, hence equal to $U_j$ for some $j$, which means $U = \set{h(i) : i \leq j}$. Thus, $h$ is a homeomorphism witnessing that $X$ has the initial segment topology. A similar argument shows that (1) implies (3).
	
	We now show that (2) implies (1); the proof that (3) implies (1) is similar. So, assume (2), and fix a one-to-one $f : \mathbb{N} \to \mathbb{N}$ with the aim of showing that $\ran(f)$ exists. By bounded $\Sigma^0_1$ comprehension, $\ran(f)$ is unbounded. For each $x$, define
	\[
		V_{2x} = \set{w : w \leq x} \cup \set{y : f(y) < x}
	\]
	and
	\[
		V_{2x+1} = \set{w : w \leq x} \cup \set{y : f(y) \leq x}.
	\]
	A straightforward induction in $\RCA_0$ shows that $V_n \subseteq V_{n+1}$ and $|V_{n+1} \setminus V_n| \leq 1$ for all $n$. Since $|V_0| = 1$, it follows that $|V_n| \leq n+1$ for all $n$, and so in particular that each $V_n$ is finite. We claim that for each $s > 0$ there is an $n$ such that $|V_n| = s$. Indeed, since $\ran(f)$ is unbounded there exist $x_0 < \cdots < x_s \in \ran(f)$. Because $f$ is injective, this means $|V_{2x_s+1}| \geq |\set{y : (\exists i \leq s)[f(y) = x_i]}| > s$. Thus, $\RCA_0$ can prove that there is a least $n$ such that $|V_n| > s$. Obviously we must have $s > 1$ and so $n \geq 1$. By minimality of $n$, we have $|V_{n-1}| \leq s$, hence by our observation above $|V_{n-1}| = |V_n|-1$. It follows that $|V_{n-1}| = s$, which proves the claim. Now let $\seq{\mathbb{N},\topol{U},k}$ be the CSC space generated by $\seq{V_n : n \in \mathbb{N}}$. The preceding argument implies that this space has the weak initial segment topology. So apply (2) to get a homeomorphism $h : \mathbb{N} \to \mathbb{N}$, and fix any~$x$. Since $V_{2x}$ and $V_{2x+1}$ are finite, we can find the least $j_0$ and $j_1$ such that $h(j_0+1) \notin V_{2x}$ and $h(j_1+1) \notin V_{2x+1}$, which means that $V_{2x} = \set{h(i) : i \leq j_0}$ and $V_{2x+1} = \set{h(i) : i \leq j_1}$. By construction, $j_0 \leq j_1 \leq j_0+1$, and we have that $x \in \ran(f)$ if and only if $j_1 = j_0+1$.
\end{proof}

We will see in the next section that all our results concerning the initial segment and final segment topologies lie at the level of $\ACA_0$. However, because our interest is in finding \emph{subspaces} with these topologies, the following result assures us that this complexity does not owe to the strength of the preceding equivalences. Thus, we may safely use the (stronger) notions of initial segment and final segment topologies from \Cref{def:top_init_final}.

\begin{proposition}
	The following is provable in $\RCA_0$.
	\begin{enumerate}
		\item Every infinite CSC space $\seq{X,\topol{U},k}$ with the weak initial segment topology has an infinite subspace with the initial segment topology.
		\item Every infinite CSC space $\seq{X,\topol{U},k}$ with the weak final segment topology has an infinite subspace with the final segment topology.
	\end{enumerate}	
\end{proposition}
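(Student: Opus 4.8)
The plan is to prove part~(1) by an explicit construction, computable from the space, of a suitable subspace~$Y$, and to obtain part~(2) by the dual construction. The key point is that while \emph{recognizing} a basic open set as being finite \emph{of a prescribed size} is a $\Sigma^0_2$ matter --- which is exactly what forces $\ACA_0$ in the preceding proposition --- building a \emph{subspace} only requires \emph{lazily} certifying finite basic open sets as the construction proceeds, never committing to their exact sizes. Throughout I will assume $\emptyset, X \in \topol{U}$; this is harmless (and for the weak final segment topology, $X \in \topol{U}$ is already forced by clause (c) with $s = 0$), and it is needed only so that $\emptyset$ and the whole space appear among the basic open sets of the subspace, as the definition of having the initial or final segment topology requires.

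For part~(1), I build an increasing sequence $x_0 < x_1 < \cdots$ in~$X$, together with indices $n_0, n_1, \ldots$, by a stagewise search. Having committed $x_0 < \cdots < x_l$ and $n_0, \ldots, n_{l-1}$, I look at stage~$s$ for the least pair $(n,y)$ with $n, y \le s$, $x_0, \ldots, x_l \in U_n$, $y \in X$, $y > x_l$, and $y \notin U_n$; when such a pair first appears I set $n_l := n$ and $x_{l+1} := y$, and I let $Y = \set{x_i : i \in \mathbb{N}}$. This search always eventually succeeds: by clause~(d) each $x_i$ with $i \le l$ lies in some finite basic open set, by clause~(b) these finitely many sets are $\subseteq$-comparable, so one of them, $U$, contains all of $x_0, \ldots, x_l$, and since $U$ is finite while $X$ is infinite there is a $y \in X$ with $y > x_l$ and $y \notin U$. (In $\RCA_0$ one collects the witnesses from clause~(d) via $\B\Sigma^0_1$ and runs the induction showing each $x_l$ gets defined via $\I\Sigma^0_1$; thereafter $Y$ exists by $\Delta^0_1$ comprehension and is infinite.)

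Next I verify that $Y$, with the subspace topology, has the initial segment topology via $h(i) = x_i$. The $U_{n_j}$ form a strictly increasing chain: $x_{j+1} \in U_{n_{j+1}}$ (as $n_{j+1}$ was chosen with $x_0, \ldots, x_{j+1} \in U_{n_{j+1}}$) while $x_{j+1} \notin U_{n_j}$ (by choice of $x_{j+1}$), so clause~(b) forces $U_{n_j} \subsetneq U_{n_{j+1}}$. Hence $U_{n_j} \cap Y = \set{x_0, \ldots, x_j}$: it contains $x_0, \ldots, x_j$ by the choice of $n_j$, and for $k \ge 1$ it omits $x_{j+k}$ because $x_{j+k} \notin U_{n_{j+k-1}} \supseteq U_{n_j}$. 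Finally, for an arbitrary basic open set $U_n$: if $U_n = X$ then $U_n \cap Y = Y$; otherwise $U_n$ is finite by clause~(a) and $\subseteq$-comparable to every $U_{n_j}$ by clause~(b), and a short argument --- trivial if $U_n \cap Y = \emptyset$, and otherwise comparing $U_n$ with $U_{n_{j^\ast-1}}$ and $U_{n_{j^\ast}}$, where $x_{j^\ast} = \max(U_n \cap Y)$ --- shows $U_n \cap Y$ equals $\emptyset$ or $\set{x_0, \ldots, x_j}$ for some $j$. Thus $\topol{U} \res Y = \set{\emptyset, Y} \cup \set{\set{x_0, \ldots, x_j} : j \in \mathbb{N}}$, which is exactly the initial segment topology with homeomorphism~$h$.

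Part~(2) is dual, trading ``finite'' for ``cofinite'' and the increasing chain for a decreasing one: one builds $y_0 < y_1 < \cdots$ and indices $n_1, n_2, \ldots$ so that $U_{n_s}$ avoids $y_0, \ldots, y_{s-1}$ and contains $y_s$, which by clause~(b) yields a strictly decreasing chain $U_{n_1} \supsetneq U_{n_2} \supsetneq \cdots$ with $U_{n_s} \cap Y = \set{y_i : i \ge s}$, while cofiniteness of the basic open sets keeps the search from stalling. The one genuine obstacle --- and the place the argument is not a verbatim dualization --- is that the construction must never commit a point $y_{l+1}$ lying in \emph{every} nonempty basic open set, since such a point could never be excluded at a later stage; the weak final segment hypothesis guarantees that every point of $X$ lies outside some nonempty basic open set, so one folds into the search the extra requirement that each newly chosen $y_{l+1}$ arrive together with an index of a nonempty basic open set avoiding it. With that safeguard the verification goes through as in part~(1), giving a subspace with the final segment topology via $h(i) = y_i$.
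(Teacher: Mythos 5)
Your proof is correct, and at its core it is the same argument as the paper's: a primitive recursion building points $x_0 < x_1 < \cdots$ together with a $\subseteq$-increasing chain of basic open sets whose traces on $Y$ are exactly the initial segments $\set{x_0,\ldots,x_j}$, with the comparability clause (b) doing all the work in checking that every other trace is $\emptyset$, $Y$, or one of these. The only genuine divergence is which hypothesis powers the recursion step: the paper uses clause (c) (a finite basic open set of size exceeding $\sum_{i<j}|U_i|$ must, by (b) and $\I\Sigma^0_1$, properly contain $\bigcup_{i<j}U_i$) and explicitly notes that clause (d) is not needed, whereas you use clause (d) together with (b) and never need (c) in part (1); both are legitimate, and your explicit treatment of $\emptyset$ and the whole space as basic open sets is in fact more careful than the paper, which passes over that point silently. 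One caution about part (2): the property you invoke --- that every point of $X$ lies outside some nonempty basic open set --- is not what clause (d) of the weak final segment definition literally says (it says every point belongs to some $U \neq X$). Your reading is surely the intended one, and the literal clause (d) is actually incompatible with clause (c) at $s=1$ (by comparability, the unique point missing from a basic open set of co-size $1$ lies in no proper nonempty basic open set), but you should flag that your termination argument for the dual search rests on that corrected reading of (d) rather than on the clause as printed.
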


\begin{proof}
	We prove (1); the proof of (2) is analogous. Fix $\seq{X,\topol{U},k}$ satisfying (1a)--(1d) in \Cref{def:weak_init_final}. (In fact, we will not need to use (1d).) We define sequences $x_0,x_1,\ldots \in X$ and $U_{n_0},U_{n_1},\ldots \in \topol{U}$ such that $x_i \in U_{n_j}$ if and only if $i \leq j$. We claim that $Y = \set{x_0,x_1,\ldots}$ is a subspace of $X$ with the initial segment topology, as witnessed by the map $h : i \mapsto x_i$. Note that $U_{n_j} \cap Y = \set{x_i : i \leq j}$ for each $j$, so we wish to show that the basic open sets in $Y$ in addition to $\emptyset$ and $Y$ are precisely the sets $U_{n_j} \cap Y$ for some $j$. Since $U_{n_j} \in \topol{U}$, it is certainly true that $U_{n_j} \cap Y$ is basic open in $Y$. Conversely, consider any basic open set in $Y$ different from $\emptyset$ and $Y$. Then this equals $U \cap Y$ for some $U \in \topol{U}$ different from $\emptyset$ and $X$. For any $j$ such that $x_j \notin U$ it follows by property (1b) that $U \subseteq U_{n_j}$ and hence also that $U \subseteq U_{n_j} \setminus \set{x_j}$. In particular, $U \cap Y \subseteq (U_{n_j} \setminus \set{x_j}) \cap Y$. We must therefore have $x_0 \in U$ since $U \cap Y \neq \emptyset$ by assumption. Now since $Y \neq X$, it follows by property (1a) that $U$ is finite, and we can consequently fix the largest $j$ such that $x_i \in U$ for all $i \leq j$. We conclude that $U_{n_j} \cap Y \subseteq U \cap Y$. And since $x_{j+1} \notin U$, the preceding argument implies that $U \cap Y \subseteq (U_{n_{j+1}} \setminus \set{x_{j+1}}) \cap Y = U_{n_j} \cap Y$. Thus $U \cap Y = U_{n_j} \cap Y$, desired.
	
	It remains to show how to construct the sequences $x_0,x_1,\ldots$ and $U_{n_0},U_{n_1},\ldots$, which we do by primitive recursion. Let $x_0$ be any element of $X$, and $U_{n_0}$ any element of $\topol{U}$ containing $x_0$. Now fix $j$, and suppose we have defined $x_i$ and $U_{n_i}$ for all $i \leq j$. By property (1b), and the fact that $x_j \in U_{n_j} \setminus U_{n_i}$ for all $i < j$ by inductive hypothesis, we must have $U_{n_i} \subseteq U_{n_j}$ for all $i \leq j$. Let $s = |U_j|$. Using property (1c) let $U \in \topol{U}$ be a finite set with at least $s+1$ many elements. Since $\mathsf{I}\Sigma^0_1$ proves that there is no injection from a larger finite set into a strictly smaller one, we cannot have $U \subseteq U_{n_j}$. Hence, $U_{n_j} \subseteq U$ by property (1b), and this containment must be strict. Fixing any $x \in U \setminus U_{n_j}$ thus means that $x_i \in U$ and $x \notin U_i$ for all $i \leq j$. Now by property (1a), being a finite element of $\topol{U}$ is definable by a $\Sigma^0_1$ formula (with parameters). Thus, the least $U$ and $x \in U$ satisfying this conclusion are definably by a $\Sigma^0_1$ formula, and we let $U_{n_{j+1}} = U$ and $x_{j+1} = x$. This completes the recursive definition and completes the proof.
\end{proof}

One additional remark about spaces with the initial segment and final segment topologies is that $\RCA_0$ understands the standard fact that, in such spaces, every open set is a basic open set.

\begin{proposition}\label{prop:O_init_in_U}
	The following is provable in $\RCA_0$. Let $\seq{X,\topol{U},k}$ be a CSC space that has the initial segment or final segment topology. If $O$ is any open code in $X$, then $O_{\topol{U}}$ exists and belongs to $\topol{U}$.
\end{proposition}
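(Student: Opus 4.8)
The plan is to argue in $\RCA_0$, using the homeomorphism $h \colon \mathbb{N} \to X$ supplied by \Cref{def:top_init_final} to transfer everything to a question about subsets of $\mathbb{N}$. Fix an open code $O \colon \mathbb{N} \to \mathbb{N}$, write $\topol{U} = \seq{U_n : n \in \mathbb{N}}$, and consider the $\Sigma^0_1$ formula $\sigma(i)$ asserting $(\exists n \in \ran(O))[h(i) \in U_n]$. Unwinding the definition of $O_{\topol{U}}$, for $x \in X$ we have $x \in O_{\topol{U}}$ exactly when $\sigma(i)$ holds for the unique $i$ with $h(i) = x$. So it suffices to show that the ``trace'' class $\set{i : \sigma(i)}$ coincides with an honest set that is $\emptyset$, $\mathbb{N}$, or (in the initial segment case) an initial segment $\set{i : i \leq l}$, since then $O_{\topol{U}}$ is equal to $\emptyset$, $X$, or $\set{h(i) : i \leq l}$ respectively, each of which is in $\topol{U}$ by \Cref{def:top_init_final}.

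The key step is the observation that $\RCA_0$ proves $\sigma$ is \emph{downward closed}: if $\sigma(j)$ and $i \leq j$ then $\sigma(i)$. Indeed, fix $n \in \ran(O)$ with $h(j) \in U_n$; since $X$ has the initial segment topology, $U_n$ is $\emptyset$, $X$, or $\set{h(p) : p \leq l}$ for some $l$, and in the last case $h(j) \in U_n$ forces $j \leq l$ because $h$ is injective, whence $i \leq l$ and $h(i) \in U_n$. Now $\RCA_0$ proves the dichotomy that a downward-closed $\Sigma^0_1$ class is either all of $\mathbb{N}$ or bounded: if some $i_0 \notin \set{i : \sigma(i)}$, then by downward closure no $j \geq i_0$ is in it either. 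In the ``all of $\mathbb{N}$'' case, $O_{\topol{U}}$ has the same elements as $X \in \topol{U}$. In the bounded case, bounded $\Sigma^0_1$ comprehension produces the set $S = \set{i < i_0 : \sigma(i)}$, which is again downward closed, hence equals $\emptyset$ or $\set{i : i \leq l}$ for $l = \max S$; correspondingly $O_{\topol{U}}$ has the same elements as $\emptyset$ or as $\set{h(i) : i \leq l}$, both in $\topol{U}$.

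For the final segment case I would run the dual argument: here $U_n$ is $\emptyset$, $X$, or $\set{h(p) : p \geq l}$, and the same reasoning (again using injectivity of $h$) shows $\sigma$ is \emph{upward} closed. Then either $\sigma$ fails everywhere, in which case $O_{\topol{U}}$ equals $\emptyset \in \topol{U}$, or, by the least-number principle for $\Sigma^0_1$ formulas, there is a least $m$ with $\sigma(m)$, and upward closure gives $\sigma(i) \iff i \geq m$, so $O_{\topol{U}}$ equals $\set{h(i) : i \geq m} \in \topol{U}$. I do not expect a substantive obstacle: the mathematical content is entirely the closure property together with the trivial dichotomy, and the only real care needed is bookkeeping in $\RCA_0$ — invoking bounded $\Sigma^0_1$ comprehension and the $\Sigma^0_1$ least-number principle rather than full comprehension, and discharging ``$O_{\topol{U}}$ exists'' by exhibiting an actual set with the same members.
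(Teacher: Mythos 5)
Your proposal is correct and takes essentially the same approach as the paper: pull the open code back along the homeomorphism, use the classification of basic open sets to see that the union is downward (resp.\ upward) closed on indices, and conclude by the bounded-or-everything dichotomy that $O_{\topol{U}}$ coincides with $\emptyset$, $X$, or a single basic open set, using only bounded $\Sigma^0_1$ comprehension and $\Sigma^0_1$ induction in $\RCA_0$. The paper phrases this by fixing a maximal $j_m$ among the indices $j_n$ with $U_n = \set{h(i) : i \leq j_n}$ for $n \in \ran(O)$ (and dismisses the final segment case as symmetric), but the underlying argument is the same.
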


\begin{proof}
	Write $\topol{U} = \seq{U_n : n \in \mathbb{N}}$. Suppose $X$ has the initial segment topology; the case where it has the final segment topology is symmetric. Let $h : \mathbb{N} \to X$ be the witnessing homeomorphism, and consider any open code $O$. If $O_{\topol{U}} = X$ then it clearly belongs to $\topol{U}$, so suppose otherwise. Then every $U_n$ for $n \in \ran(O)$ is either $\emptyset$ or has the form $\set{h(i) : i \leq j_n}$ for some $j_n$, and the numbers $j_n$ must be bounded else $O_{\topol{U}}$ would again be $X$. In $\RCA_0$, we can thus fix the least $m$ such that $j_n \leq j_m$ for all $n \in \ran(O)$ for which $j_n$ is defined. Then $O_{\topol{U}} = U_m \in \topol{U}$.
\end{proof}

\subsection{The cofinite topology}

Finally, we discuss formalizing the cofinal topology.

\begin{definition}\label{def:top_cofinite}
	Let $\seq{X,\topol{U},k}$ be an infinite CSC space. $X$ has the \emph{cofinite topology} if $U \in \topol{U}$ if and only if $U = \emptyset$ or $U \subseteq^* X$.
\end{definition}

We could reasonably also define $X$ to have the cofinite topology if every $U \in \topol{U}$ is either $\emptyset$, $X$, or a cofinite subset of $X$, and if for every $Y \subseteq^* X$ there is an open code $O$ in $X$ with $O_{\topol{U}} = Y$. The following lemma shows that for studying topological aspects in reverse mathematics, we do not lose anything by insisting on the stronger definition above.

\begin{proposition}
	The following is provable in $\RCA_0$. Let $\seq{X,\topol{U},k}$ be a CSC space such that every $U \in \topol{U}$ is either $\emptyset$, $X$, or a cofinite subset of $X$. If $O$ is any open code in $X$, then $O_{\topol{U}}$ exists and is either empty or a cofinite subset of $X$.
\end{proposition}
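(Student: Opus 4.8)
The plan is to exploit the fact that, although the relation ``$x \in O_{\topol{U}}$'' (i.e., $(\exists n \in \ran(O))[x \in U_n]$) is only $\Sigma^0_1$ in general, the hypothesis on $\topol{U}$ forces $O_{\topol{U}}$ to agree with $X$ off a single fixed finite set. On that finite set, membership in $O_{\topol{U}}$ is then decidable via bounded $\Pi^0_1$ comprehension, which is available in $\RCA_0$, so that $O_{\topol{U}}$ turns out to be $\Delta^0_1$-definable and hence exists as a cofinite set.

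First I would dispose of the trivial case. Working in $\RCA_0$, note that ``$(\exists n \in \ran(O))[U_n \neq \emptyset]$'' is a $\Sigma^0_1$ statement. If it fails, then $U_n = \emptyset$ for every $n \in \ran(O)$, so $x \notin O_{\topol{U}}$ for all $x$, whence $O_{\topol{U}} = \emptyset$ and we are done. Otherwise, fix $n_0 \in \ran(O)$ with $U_{n_0} \neq \emptyset$. By hypothesis $U_{n_0}$ is either $X$ or a cofinite subset of $X$, so in either case $F_0 := X \setminus U_{n_0}$ exists by $\Delta^0_1$ comprehension and is a finite (hence coded) set. Since $n_0 \in \ran(O)$, every $x \in X \setminus F_0$ lies in $U_{n_0}$, hence in $O_{\topol{U}}$.

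Next I would form the set $G := \set{x \in F_0 : (\forall s)[x \notin U_{O(s)}]}$. The defining condition is $\Pi^0_1$ (equivalently $(\forall n)[n \in \ran(O) \to x \notin U_n]$, where the hypothesis ``$n \in \ran(O)$'' appears negatively), and $G$ is a subset of the finite set $F_0$; hence $G$ exists by bounded $\Pi^0_1$ comprehension and is finite. Unwinding the abbreviation for $O_{\topol{U}}$, one checks directly that for every $x$, $(\exists n \in \ran(O))[x \in U_n]$ holds if and only if $x \in X \setminus G$: if $x \in X \setminus F_0$ this is witnessed by $n_0$ (and $x \notin G$ since $G \subseteq F_0$), while if $x \in F_0$ it is precisely the negation of the condition defining $G$. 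Thus $X \setminus G$, which exists by $\Delta^0_1$ comprehension from $X$ and $G$, witnesses that $O_{\topol{U}}$ exists; and since $G \subseteq F_0$ is finite, $O_{\topol{U}} = X \setminus G$ is a cofinite subset of $X$, as required.

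I do not expect any genuine obstacle; the argument is shorter than, and parallels, that of \Cref{prop:O_init_in_U}. The only point meriting a moment's attention is recognizing that $G$ is carved out by a $\Pi^0_1$ rather than a $\Sigma^0_2$ condition, because the bounded quantifier ``$n \in \ran(O)$'' enters negatively; this is exactly what makes bounded $\Pi^0_1$ comprehension applicable in $\RCA_0$.
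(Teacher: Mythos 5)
Your proof is correct and is essentially the paper's argument: both fix a single nonempty (hence cofinite) basic open set $U_{n_0}$ with $n_0 \in \ran(O)$, observe that $O_{\topol{U}}$ contains everything in $X$ outside the finite set $X \setminus U_{n_0}$, and decide membership on that finite remainder by bounded comprehension, yielding a $\Delta^0_1$ definition of $O_{\topol{U}}$. The only (immaterial) difference is that the paper collects the members below a bound $b$ via bounded $\Sigma^0_1$ comprehension, while you collect the non-members inside $X \setminus U_{n_0}$ via the dual bounded $\Pi^0_1$ comprehension, both of which are available in $\RCA_0$.
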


\begin{proof}
	Write $\topol{U} = \seq{U_n : n \in \mathbb{N}}$.  Suppose $\ran(O)$ contains an $n$ with $U_n \neq \emptyset$, since otherwise $O_{\topol{U}} = \emptyset$. Then $U_n \subseteq^* X$, so we can fix a $b$ such that every $x \geq b$ in $X$ belongs to $U_n$. Using bounded $\Sigma^0_1$ comprehension, we can form $F = \set{x < b : (\exists n \in \ran(O))[x \in U_n]}$, and now we see that $x \in O_{\topol{U}}$ if and only if $x \in F$ or $x \geq b$.
\end{proof}

\noindent It follows that, in $\RCA_0$ we can modify $\seq{X,\topol{U},k}$ as above by adding to $\topol{U}$ all cofinite subsets of $X$, and in this way obtain a new CSC space that has the same open sets and has the cofinite topology in the sense of \Cref{def:top_cofinite}.

We conclude this section with the following proposition, which states that $\RCA_0$ understands Ginsburg and Sands's remark about why the five topologies mentioned in their theorem are ``minimal''.

\begin{proposition}\label{t:all_preserved}
	The following is provable in $\RCA_0$. Let $\seq{X,\topol{U},k}$ be an infinite CSC space, and let $Y$ be an infinite subspace of $X$.
	\begin{enumerate}
		\item If $X$ is $T_0$; $T_1$; Hausdorff; indiscrete; discrete; has the initial segment topology; has the final segment topology; or has the cofinite topology, then the same is true of $Y$.
		\item If $X$ is indiscrete; discrete; has the initial segment topology; has the final segment topology; or has the cofinite topology, then it does not satisfy any other property on this list.
	\end{enumerate}
\end{proposition}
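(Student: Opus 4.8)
The plan is to handle the two parts of the proposition in turn, proceeding property by property in part~(1). The separation axioms are immediate from the definition of subspace: if $x\neq y$ are points of $Y$ and $U,V\in\topol{U}$ witness $T_0$, $T_1$, or Hausdorffness for $x,y$ in $X$, then $U\cap Y$ and $V\cap Y$ are basic open sets of $Y$ witnessing the same condition. The indiscrete and discrete cases are read off directly from $\topol{U}\res Y$: when $X$ is indiscrete, $\emptyset$ and $X$ lie in $\topol{U}$, so $\emptyset$ and $Y$ lie in $\topol{U}\res Y$, and every $U_n\cap Y$ is one of these two; when $X$ is discrete, $\set{x}=\set{x}\cap Y\in\topol{U}\res Y$ for every $x\in Y$. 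For the cofinite case one uses that if $U\subseteq^* X$ then $U\cap Y\subseteq^* Y$, and conversely that for $Z\subseteq^* Y$ the set $X\setminus(Y\setminus Z)$ is a cofinite subset of $X$ — hence a basic open set of $X$ by \Cref{def:top_cofinite} — whose trace on $Y$ is $Z$; together these show the basic open sets of $Y$ are exactly $\emptyset$ and the cofinite subsets of $Y$.

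The initial and final segment cases of part~(1) need slightly more, since a homeomorphism must be produced. Given the homeomorphism $h:\N\to X$ witnessing the initial segment topology, $\RCA_0$ can form $S=\set{i:h(i)\in Y}$ by $\Delta^0_1$ comprehension and, since $Y$ (hence $S$) is infinite, its increasing enumeration $s_0<s_1<\cdots$; then $h':k\mapsto h(s_k)$ is a bijection $\N\to Y$. One checks that $U_n\cap Y$ equals $\emptyset$, $Y$, or $\set{h'(k):k\leq m}$ according as $U_n$ is $\emptyset$, $X$, or $\set{h(i):i\leq j}$ (taking $m$ largest with $s_m\leq j$), and that every such initial segment of $Y$ arises this way (take $j=s_m$). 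The final segment case is symmetric.

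For part~(2), the efficient route is to record the position of each of the five topologies on an infinite set within the low Kolmogorov hierarchy: the indiscrete topology fails $T_0$; the initial and final segment topologies are $T_0$ but not $T_1$ (e.g.\ $h(0)$ and $h(1)$ cannot be separated in the $T_1$ sense); the cofinite topology is $T_1$ but not Hausdorff (any two nonempty basic open sets are cofinite, hence meet); and the discrete topology is Hausdorff. Since each of the five defining properties determines this status, this separates every pair among the five except initial segment versus final segment. To separate those two, observe that the initial segment topology has a nonempty finite basic open set, namely $\set{h(0)}$, whereas in the final segment topology every nonempty basic open set is cofinite in $X$, hence infinite. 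All of these are finitary verifications about pairs of points or individual basic open sets and go through in $\RCA_0$.

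I do not expect a genuine obstacle; the proposition is essentially bookkeeping. The only points needing care are ensuring that the passage from $h$ to $h'$ in the segment cases of part~(1) is legitimate in $\RCA_0$ (forming $S$ and its increasing enumeration), and, throughout part~(1), phrasing the ``converse'' directions — that $Y$ has no basic open sets beyond the prescribed ones — as assertions about the \emph{existence} of a suitable index $n$ with $U_n\cap Y$ equal to the set in question, rather than about effectively locating such an $n$.
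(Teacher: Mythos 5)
Your proof is correct. Part (1) is the same as the paper's, which simply declares it immediate from the definitions; your explicit construction of the subspace homeomorphism $h'$ from the increasing enumeration of $h^{-1}(Y)$ is exactly the routine verification being elided there, and it goes through in $\RCA_0$ as you describe. For part (2) your organization genuinely differs: the paper treats the indiscrete, discrete, and final-segment-versus-cofinite cases as obvious and then proves one key claim --- that in the initial segment topology every basic open set other than $X$ is bounded, using $\I\Sigma^0_1$ to bound $\set{h(i) : i \leq j}$ --- which simultaneously rules out the final segment and cofinite topologies. You instead classify the five topologies by their position in the $T_0$/$T_1$/Hausdorff hierarchy, which settles every pair except initial versus final segment, and you finish that pair with a finite-versus-infinite basic open set comparison. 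The nontrivial point is the same in both arguments: that images of bounded sets under the witnessing bijection are bounded (so $\set{h(0)}$ is genuinely finite while final segments and cofinite sets are genuinely infinite), which is where $\Sigma^0_1$ induction enters; the rest of your hierarchy bookkeeping uses only trivial facts such as two cofinite subsets of an infinite set meeting. Both routes are valid over $\RCA_0$; yours trades the paper's single boundedness claim for a slightly longer but more systematic case analysis.
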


\begin{proof}
	Part (1) is immediate from the definitions.	Part (2) is obvious if $X$ is indiscrete or discrete. It is also clear that if $X$ has the final segment topology then it cannot also have the cofinite topology. It thus suffices to show that if $X$ has the initial segment topology, say with witnessing homeomorphism $h$, then it cannot have the final segment or cofinal topologies. We claim that every $U \in \topol{U}$ other than $X$ is bounded, which yields the result. So fix some such $U$, and assume also that $U \neq \emptyset$. Then $U = \set{h(i) : i \leq j}$ for some $j \in \mathbb{N}$. If this set were unbounded, we could construct $x_0 < \ldots < x_j$ in $\set{h(i) : i \leq j}$, whence the map $i \mapsto h^{-1}(x_i)$ would be an injection from $j+1$ into $j$, and this contradicts $\I\Sigma^0_1$ (see \cite{DM-2022}, Proposition 6.2.7). Thus, $U$ must be bounded, which proves the claim.
\end{proof}

\section{General spaces}\label{sec:general}

The main principle we want to understand is the following.

\begin{definition}
	$\GS$ is the statement that every infinite CSC space has an infinite subspace which is indiscrete; has the initial segment topology; has the final segment topology; is discrete; or has the cofinite topology.
\end{definition}

\noindent Recall the three-part breakdown of the Ginsburg--Sands theorem, discussed at the end of \Cref{sec:GS_proof}. The first of these involved the construction, on a given topological space, of a partial order induced by the closure relation on points. We begin by formalizing the closure relation in $\RCA_0$.

\begin{definition}
	Let $\seq{X,\topol{U},k}$ be an infinite CSC space. The \emph{closure relation} on $X$ is a subset $\cl_X \subseteq X^2$ such that $\seq{y,x} \in \cl_X$ if and only if every $U \in \mathcal{U}$ that contains $y$ also contains $x$.
\end{definition}

\noindent We will write $y \in \cl_X(x)$ if $\seq{y,x} \in \cl_X$, and usually omit the subscript for ease of notation.

Unsurprisingly, $\RCA_0$ cannot in general prove that the closure relation exists.

\begin{proposition}\label{prop:closure_ACA}
	The following are equivalent over $\RCA_0$.
	\begin{enumerate}
		\item $\ACA_0$.
		\item The statement that for every CSC space, the closure relation exists.
	\end{enumerate}
\end{proposition}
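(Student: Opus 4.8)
The plan is to prove the two implications separately. The implication $(1) \Rightarrow (2)$ is routine: given a CSC space $\seq{X,\topol U,k}$ with $\topol U = \seq{U_n : n \in \mathbb N}$, the closure relation is defined by the condition
\[
	\seq{y,x} \in \cl_X \iff (\forall n)[y \in U_n \to x \in U_n],
\]
which is $\Pi^0_1$ in the parameter $\topol U$, so the required set exists by arithmetical comprehension.

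For $(2) \Rightarrow (1)$ I would follow the template of \Cref{prop:ACA_disc_to_eff_disc}, arguing in $\RCA_0$, assuming (2), and showing that the range of an arbitrary injection $f \colon \mathbb N \to \mathbb N$ exists. The idea is to build a CSC space on $\mathbb N$ in which $0$ is a ``generic'' point that lies in many basic open sets, while for each $w$ the point $w+1$ has every basic open neighborhood containing $0$ \emph{unless} $w$ enters $\ran(f)$, at which point $\set{w+1}$ becomes basic open. Concretely, I would use the family $\seq{V_{\seq{w,s}} : w,s \in \mathbb N}$ with $V_{\seq{w,s}} = \set{w+1}$ if $f(s) = w$ and $V_{\seq{w,s}} = \set{0,w+1}$ otherwise; this sequence is $\Delta^0_1$ in $f$, so it exists, and \Cref{prop:dorais_generate} lets me pass to the CSC space $\seq{\mathbb N,\topol U,k}$ it generates. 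This space is infinite, and every point is covered, since $w+1 \in V_{\seq{w,s}}$ for every $s$ and $0 \in V_{\seq{w,s}}$ for all but at most one $s$ (for each fixed $w$). Since every generating set containing $w+1$ is contained in $\set{0,w+1}$ and has this form only for indices $\seq{w,s}$, the only basic open sets of the generated space containing $w+1$ are the finite intersections $\bigcap_{s \in S}V_{\seq{w,s}}$ (together with $X$ itself, from the empty intersection). When $w \notin \ran(f)$ each such $V_{\seq{w,s}}$ equals $\set{0,w+1}$, so every basic open set containing $w+1$ also contains $0$; when $w \in \ran(f)$, the singleton $\set{w+1}$ is itself basic open and omits $0$. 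Hence $\seq{w+1,0} \in \cl_X \iff w \notin \ran(f)$, so once (2) supplies $\cl_X$, the set $\set{w : \seq{w+1,0} \notin \cl_X}$ exists by $\Delta^0_1$ comprehension and is exactly $\ran(f)$, giving $\ACA_0$.

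I do not expect a genuine obstacle here; the construction is short and of a familiar shape. The only points that will need (routine) care are checking that $\RCA_0$ proves the generating sequence and the generated space exist, that the space is a legitimate infinite CSC space with every point covered, and that the enumeration of basic open sets around a fixed $w+1$ really is limited to intersections of the $V_{\seq{w,s}}$ as claimed.
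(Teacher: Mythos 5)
Your proposal is correct and follows essentially the same route as the paper: both directions reduce $(2)\Rightarrow(1)$ to computing $\ran(f)$ by generating a CSC space from a $\Delta^0_1$ family of two-element/singleton sets so that membership of $w$ in $\ran(f)$ is equivalent to a single instance of the closure relation, which then yields $\ran(f)$ by $\Delta^0_1$ comprehension. The only (immaterial) difference is the gadget: you use one shared witness point $0$ and sets $\set{0,w+1}$ collapsing to $\set{w+1}$ at the stage $s$ with $f(s)=w$, whereas the paper pairs each $x$ with the two points $2x,2x+1$ and tests $2x \notin \cl(2x+1)$.
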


\begin{proof}
	The implication from (1) to (2) is clear, because given a CSC space $\seq{X,\topol{U},k}$, the definition of the closure relation is arithmetical using $\topol{U}$ as a set parameter. For the reverse, we argue in $\RCA_0$ and assume (2). We prove that the range of a given one-to-one function $f : \mathbb{N} \to \mathbb{N}$ exists. For $x,s \in \mathbb{N}$, we define
	\[
    V_{\seq{x,s}}=\begin{cases}
        \{2x,2x+1\} &\text{if $(\forall y\leq s)[f(y)\neq x]$}, \\
        \{2x\} &\text{if $(\exists y\leq s)[f(y)=x]$}.
    \end{cases}
    \]
    Let $\seq{\mathbb{N},\topol{U},k}$ be the CSC space generated by $\seq{V_n : n \in \mathbb{N}}$. Applying (2), let $\cl$ be the closure relation on this space.
    
    Now fix $x \in \mathbb{N}$; we claim that $x \in \ran(f)$ if and only if $2x \notin \cl(2x+1)$. If $x \in \ran(f)$, say with $f(y) = x$, then $V_{\seq{x,y}}$ is a basic open set containing $2x$ but not $2x+1$. So $2x \notin \cl(2n+1)$. Conversely, suppose $2x \notin \cl(2x+1)$. Then there exists $U \in \topol{U}$ that contains $2x$ but not $2x+1$. By \Cref{prop:dorais_generate}~(2), we can fix $z,s \in \mathbb{N}$ such that $V_{\seq{z,s}}$ contains $2x$ but not $2x+1$. Since $V_{\seq{z,s}} \subseteq \set{2z,2z+1}$, it must be that $z = x$ and hence that $(\exists y \leq s)[f(y) = x]$. So $x \in \ran(f)$.
\end{proof}

As a result of the proposition, we can consider adding the closure relation to the signature of a CSC space, and looking instead at the strength of the Ginsburg--Sands theorem for these enriched spaces.

\begin{definition}
	$\GS^{\cl}$ is the restriction of $\GS$ to CSC spaces for which the closure relation exists.
\end{definition}

\noindent We can thus think of the proof-theoretic  strength of $\GS^{\cl}$ as corresponding to the proof of the Ginsburg--Sands theorem with the complexity of part (1) removed.

We next define the following further restrictions.

\begin{definition}
	\
	\begin{itemize}
		\item $\wGS$ is the statement that every infinite CSC space has an infinite subspace which is indiscrete; has the initial segment topology; has the final segment topology; or is $T_1$.
		\item $\wGS^{\cl}$ is the restriction of $\wGS^{\cl}$ to CSC spaces for which the closure relation exists.
		\item $\GST_1$ is the restriction of $\GS$ to $T_1$ CSC spaces.
	\end{itemize}	
\end{definition}

\noindent By analogy to the above, the strength of $\wGS$ corresponds to the proof of Ginsburg--Sands with the complexity of part (3) removed, and $\wGS^{\cl}$ corresponds to the proof with the complexities of both parts (1) and (3) removed. $\GST_1$ carries the complexity just of part (3). 

The obvious implications over $\RCA_0$ are that $\GS \to \GS^{\cl} \to \wGS^{\cl}$, that $\GS \to \wGS \to \wGS^{\cl}$, and that $\GS \to \GST_1$. We now analyze the missing implications, which confirm that our breakdown of Ginsburg and Sands's proof was the correct one. (It is noteworthy that, in spite of \Cref{prop:closure_ACA}, the proof of the following theorem does not make direct use of the closure relation.)

\begin{theorem}\label{thm:ACA_to_GS}
	The following are equivalent over $\RCA_0$.
	\begin{enumerate}
		\item $\ACA_0$.
		\item $\GS$.
		\item $\wGS$. 
	\end{enumerate}	
\end{theorem}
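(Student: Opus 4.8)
The plan is to establish the cycle $(1)\to(2)\to(3)\to(1)$. The implication $(2)\to(3)$ is immediate since $\wGS$ is a logical weakening of $\GS$: any infinite subspace that is indiscrete, initial segment, or final segment already witnesses $\wGS$, and any discrete or cofinite subspace is in particular $T_1$. For $(1)\to(2)$, I would work in $\ACA_0$ and simply formalize Ginsburg and Sands's proof as presented in \Cref{sec:GS_proof}. The only non-arithmetical ingredients there are: the closure relation $\cl_X$ (which exists by \Cref{prop:closure_ACA} since its definition is arithmetical in $\topol{U}$), the induced partial order $\leq_X$, the applications of $\CAC$ and $\ADS$ (both provable in $\ACA_0$, since $\ACA_0$ implies $\RT^2_2$ which implies $\CAC$ and $\ADS$), and the inductive construction in the $T_1$ case, which only uses arithmetical comprehension to form the open sets $U_n$. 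Care is needed to pass to the "enriched" versions of the initial/final segment, discrete, and cofinite topologies from \Cref{def:top_init_final,def:top_disc_indisc,def:top_cofinite}, but by the propositions in \Cref{sec:CSC} all of these are available over $\ACA_0$. So $(1)\to(2)$ is essentially bookkeeping.

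The substantive direction is $(3)\to(1)$: I would argue in $\RCA_0$, assume $\wGS$, and derive $\ACA_0$ by showing the range of an arbitrary one-to-one $f:\mathbb{N}\to\mathbb{N}$ exists. The idea is to build a CSC space tailored so that \emph{every} possible $\wGS$-outcome (indiscrete, initial segment, final segment, or $T_1$) forces $\ran(f)$ to be computable from the solution. The natural construction is to generate, for each $x$, basic open sets that "announce" whether $x$ enters $\ran(f)$, in a way that makes the space $T_1$ outright (so the indiscrete and initial/final segment options are excluded or equally informative). Concretely I would try: for $x,s\in\mathbb{N}$ put $V_{\seq{x,s}} = \set{x} \cup \set{z > s : f(z) \neq x \text{ and } f(z) \notin \{f(0),\dots,f(s)\}\cap[0,x]}$ or some similar "cofinite-once-$x$-is-enumerated" set, arranged so that $\set{x}$ becomes a singleton basic open set exactly once $x$ is seen to be in or out of $\ran(f)$ up to the relevant threshold; the point is that $\set{x}\in\topol{U}$ for every $x$, so by \Cref{lem:T_1_finite_is_singleton}-type reasoning the generated space is discrete, hence $T_1$, and in fact the only possible $\wGS$-subspace is a $T_1$ one (an infinite indiscrete, initial-, or final-segment subspace cannot sit inside a space all of whose singletons are open, by \Cref{t:all_preserved}). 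Then from the $T_1$ subspace $Y$ and the witness structure, I recover $\ran(f)\res w$ uniformly in $w$ by the same decoding argument used in \Cref{prop:ACA_disc_to_eff_disc}: for the least $x>w$ in $Y$, decode a basic open set separating $x$ from the finitely many smaller elements of $Y$ into a finite set $F$ of indices $\seq{z,s}$, take the largest relevant $s$, and read off $\ran(f)\res w$ from the behavior of $f$ below $s$.

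The main obstacle I anticipate is getting the construction to simultaneously (a) guarantee the space is genuinely $T_1$ (or at least that no infinite indiscrete/initial/final subspace exists, so $\wGS$ is forced into the $T_1$ alternative), (b) ensure that the only $\Sigma^0_1$ comprehension is used — no appeal to $\mathsf{B}\Sigma^0_2$ should be needed beyond what $\RCA_0$ proves, unlike \Cref{prop:ACA_disc_to_eff_disc} where $\mathsf{B}\Sigma^0_2$ was derived first — and (c) make the decoding genuinely $\Delta^0_1$. It is possible that, as in \Cref{prop:ACA_disc_to_eff_disc}, one must first separately derive $\mathsf{B}\Sigma^0_2$ from $\wGS$ (again via a coloring-to-discrete-space construction, now routed through the $T_1$ option) and only then derive $\ACA_0$; I would be prepared to split the argument that way if the one-step construction proves too fragile. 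A secondary subtlety is verifying that passing to an infinite subspace does not destroy the discreteness/$T_1$-ness in a way that breaks the decoding — but \Cref{t:all_preserved}(1) handles exactly this.
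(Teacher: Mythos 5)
Your directions $(1)\to(2)$ and $(2)\to(3)$ are fine, but the reversal $(3)\to(1)$ as proposed cannot work, and the problem is in the very first design decision. You arrange for the generated space to be discrete (every singleton basic open), so that the indiscrete, initial segment, and final segment outcomes of $\wGS$ are excluded and the solution is ``forced into the $T_1$ alternative.'' But the $T_1$ disjunct of $\wGS$ is a bare property of the subspace, not accompanied by any witness: unlike the effectively discrete conclusion of \Cref{prop:ACA_disc_to_eff_disc}, there is no function $d$ with $U_{d(x)}\cap Y=\set{x}$ to decode. Worse, if your space is $T_1$ (a fortiori if it is discrete), then by \Cref{t:all_preserved}(1) \emph{every} infinite subspace is $T_1$, so $Y=\mathbb{N}$ itself is already a valid $\wGS$-solution and $\wGS$ is computably (indeed trivially) true of your instance. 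The phrase ``from the $T_1$ subspace $Y$ and the witness structure'' is exactly where the argument has nothing to stand on: there is no witness structure, and no decoding of $\ran(f)$ from an arbitrary infinite $T_1$ subspace is possible, since an arbitrary infinite computable subset of $\mathbb{N}$ qualifies.

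The paper's reversal goes the opposite way: it builds a space that is deliberately \emph{not} $T_1$, namely $V_{\seq{x,s}} = \set{x}\cup\set{w<x : \fres{f}{w}{x}=\fres{f}{w}{x+s}}$, so that once $\ran(f)\res w$ has settled by stage $x$, the number $w$ lies in every basic open set containing $x$. This kills the $T_1$ and final segment outcomes, and the sets $V_{\seq{w,0}}$ kill indiscreteness, so the $\wGS$-solution $Y$ must carry the \emph{initial segment} topology --- and that outcome, unlike $T_1$-ness, comes with genuine data: the homeomorphism $h$, which one shows must enumerate $Y$ in increasing order, so basic open sets are downward closed along $Y$. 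That downward closure is what lets one read off $\ran(f)\res w$ from any two elements $w<x<z$ of $Y$ (if $\fres{f}{x}{z}\neq\fres{f}{x}{z+s}$ for some $s$, then $V_{\seq{z,s}}$ would separate $z$ from the smaller point $x$ of $Y$, contradicting downward closure). So the fix is not a more careful version of your construction but a reversal of its aim: make the initial segment outcome the only possible one, and exploit its homeomorphism, rather than making the space $T_1$. (Your secondary worry about $\mathsf{B}\Sigma^0_2$ is resolved as you suspected: the paper first notes that $\wGS$ implies enough --- via the results of \Cref{sec:T1} --- to yield $\mathsf{B}\Sigma^0_2$, and uses it to know when $\ran(f)\res w$ has settled.)
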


\begin{proof}
	For the implication from (1) to (2), we note that the proof of the Ginsburg--Sands theorem given in \Cref{sec:GS_proof} readily formalizes in $\ACA_0$ when applied to CSC spaces. The implication from (2) to (3) is trivial. It thus remains to show that (3) implies (1). To this end, we argue in $\RCA_0$ and assume $\wGS$. Note that $\wGS$ trivially implies $\wGS^{\cl}$, which we in turn prove is equivalent to $\GST_1$ in \Cref{T:GST1_equiv_WGSclos} below. Moreover, in \Cref{cor:GST1_to_RT22} we show that $\GST_1$ implies $\RT^2_2$, and hence $\mathsf{B}\Sigma^0_2$. Therefore, we may avail ourselves of $\mathsf{B}\Sigma^0_2$ in our argument.
	
	We prove that the range of a given one-to-one function $f : \mathbb{N} \to \mathbb{N}$ exists.
	Given $x,s \in \mathbb{N}$, using the notation from the proof of \Cref{prop:ACA_disc_to_eff_disc}, define
	\[
		V_{\seq{x,s}} = \set{x} \cup \set{w < x : \fres{f}{w}{x} = \fres{f}{w}{x+s} },
	\]
	and let $\seq{\mathbb{N},\topol{U},k}$ be the CSC space generated by $\seq{V_n : n \in \mathbb{N}}$.
	
	We claim that if a number $x$ is sufficiently large with respect to a number $w$ then $w$ belongs to every basic open set containing $x$. To see this, fix $w$ and choose $x > w$ so that $\ran(f) \res w = \fres{f}{w}{x}$, which exists by $\mathsf{B}\Sigma^0_2$. Fix $U \in \topol{U}$ containing $x$. Then $U$ is a finite intersection of sets of the form $V_{\seq{z,s}}$ for some $z$ and $s$, and since no element of $V_{\seq{z,s}}$ can be larger than $z$, we necessarily have $z \geq x$. Consider such a $V_{\seq{z,s}}$. If $x = z$, then $w \in V_{\seq{z,s}} = V_{\seq{x,s}}$ since $\fres{f}{w}{t} = \ran(f) \res w$ for all $t \geq x$ by choice of $x$. If $x < z$ then we must have $\fres{f}{x}{z} = \fres{f}{x}{z+s}$, and hence also $\fres{f}{w}{z} = \fres{f}{w}{z+s}$ since $w < x$, so $w \in V_{\seq{z,s}}$. This proves the claim.
	
	Apply $\wGS$ to $X$ to obtain an infinite subspace $Y$. By the preceding claim, we immediately know that $Y$ cannot be $T_1$ nor have the final segment topology. And since, for $w < x$, we have that $V_{\seq{w,0}}$ contains $w$ but not $x$, we know that $Y$ cannot be indiscrete. This means that $Y$ has the initial segment topology. There is thus a bijection $h : \mathbb{N} \to Y$ such that the basic open sets of $Y$ in the subspace topology are precisely $\set{h(i) : i \leq j}$ for some $j \in \mathbb{N}$. Let $Y = \set{y_0 < y_1 < \cdots}$; we claim that $h(n) = y_n$ for all $n$. To see this, first suppose that $h(n) > h(m)$ for some $n < m$, and fix $j$ such that $V_{\seq{h(m),0}} \cap Y = \set{h(i) : i \leq j}$. Then $V_{\seq{h(m),0}} \cap Y$ contains $h(n)$, contradicting the fact that this intersection only contains $h(m)$ and otherwise numbers smaller than $y_m$. It follows that $h(n) < h(m)$ for all $n < m$, whence an easy induction using the fact that $h$ is bijective shows that $h(n) = y_n$. This proves the claim. In particular, every basic open set in $X$ is closed downward in $Y$ under $\leq$.
	
	We now use $Y$ to prove the existence of $\ran(f)$. Fix $w \in \mathbb{N}$. Since $Y$ is infinite, we can find $z > x > w$ with $x,z \in Y$. We must then have that $\ran(f) \res x = \fres{f}{x}{z}$, and hence also that $\ran(f) \res w = \fres{f}{w}{z}$. If not, then there would be an $s$ such that $\fres{f}{x}{z} \neq \fres{f}{x}{z + s}$. But then $V_{\seq{z,s}}$ would be a basic open set in $X$ containing $z$ but not $x < z$, even though both belong to $Y$. This completes the proof.
\end{proof}

\begin{theorem}\label{T:CAC_equiv_GSclos}
	The following are equivalent over $\RCA_0$.
	\begin{enumerate}
		\item $\CAC$.
		\item $\wGS^{\cl}$.
	\end{enumerate}
\end{theorem}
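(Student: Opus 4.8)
The plan is to identify $\wGS^{\cl}$ with part~(2) of the three-part breakdown of the Ginsburg--Sands proof from the end of \Cref{sec:GS_proof}: the invocation of $\CAC$, followed by $\ADS$ in the chain case. For the implication $\CAC \to \wGS^{\cl}$, I would argue in $\RCA_0$, fix an infinite CSC space $\seq{X,\topol{U},k}$ whose closure relation $\cl_X$ exists, and replay the portion of the proof of \Cref{thm:mainGS} that follows the construction of the ordering. The delicate point is that $\RCA_0$ cannot in general form the relation $\cl_X(x)=\cl_X(y)$, which is only $\Pi^0_1$ in $\cl_X$; instead I would work with the relation $x\approx y$, meaning $\seq{x,y}\in\cl_X$ and $\seq{y,x}\in\cl_X$, which is $\Delta^0_1$ in $\cl_X$. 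A one-line argument from transitivity of the closure relation shows that $\RCA_0$ proves $x\approx y \iff \cl_X(x)=\cl_X(y)$, so in particular $\approx$ is an equivalence relation. If some $\approx$-class $C$ is infinite, then for $x,y\in C$ every basic open set containing $x$ contains $y$, so every basic open set meets $C$ in $\emptyset$ or $C$, and the subspace on $C$ is indiscrete. Otherwise all $\approx$-classes are finite, and I would pass to the subspace $X'$ consisting of the least element of each $\approx$-class (under the usual ordering of $\mathbb{N}$); this set exists, its definition using only a bounded quantifier, and it is infinite by $\mathsf{B}\Sigma^0_2$ --- which follows from $\CAC$ via $\ADS$ and the work of Hirschfeldt and Shore \cite{HS-2007} --- since a function from an infinite set onto a finite set has an infinite fiber. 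On $X'$ the relation $x\leq_{X'}y \iff \seq{x,y}\in\cl_X$ is a partial order, antisymmetric precisely because $X'$ is $T_0$, so $\CAC$ produces an infinite chain or antichain $Z\subseteq X'$. An antichain $Z$ is at once a $T_1$ subspace, since $\leq_{X'}$-incomparability of distinct $x,y\in Z$ yields the required separating basic open sets. A chain $Z$ is an infinite linear order, so $\ADS$ (again from $\CAC$) yields an infinite ascending or descending sequence $W$ under $\leq_{X'}$, and the computations in \Cref{sec:GS_proof} then go through verbatim to show that the subspace on $W$ has the final segment topology (ascending case) or the initial segment topology (descending case), with the $\leq_{X'}$-increasing enumeration of $W$ as the homeomorphism. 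As these cases are exhaustive, $\wGS^{\cl}$ follows.

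For the converse $\wGS^{\cl}\to\CAC$, I would argue in $\RCA_0$, fix an infinite partial order $(P,\leq_P)$, and for each $y\in P$ set $V_y=\set{x\in P : x\leq_P y}$. By \Cref{prop:dorais_generate} let $\seq{P,\topol{U},k}$ be the CSC space on $P$ generated by $\seq{V_y : y\in P}$, whose basic open sets are the finite intersections $\bigcap_{w\in F}V_w$. A short computation using transitivity of $\leq_P$ shows that $\seq{y,x}$ belongs to the closure relation of this space if and only if $x\leq_P y$; hence the closure relation is $\Delta^0_1$ in $\leq_P$ and so exists, and the space is a legitimate instance of $\wGS^{\cl}$. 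Applying $\wGS^{\cl}$ gives an infinite subspace $Q$ of one of the four types, from which I would read off a chain or antichain of $(P,\leq_P)$. If $Q$ is $T_1$, the separating basic open sets for distinct $x,y\in Q$ force $x$ and $y$ to be $\leq_P$-incomparable, so $Q$ is an infinite antichain. If $Q$ is indiscrete, then for each $y\in Q$ the set $V_y\cap Q$ is a nonempty basic open set of $Q$, hence equals $Q$, so every element of $Q$ is $\leq_P y$, whence $Q$ is an infinite chain (in fact this case cannot occur, but that is not needed). If $Q$ has the initial or final segment topology via $h:\mathbb{N}\to Q$, then the nesting of the basic open sets $\set{h(i):i\leq j}$ (resp.\ $\set{h(i):i\geq j}$) shows that for $m<n$ every basic open set containing one of $h(m),h(n)$ contains the other, so $\seq{h(n),h(m)}$ (resp.\ $\seq{h(m),h(n)}$) lies in the closure relation; hence $h(m)$ and $h(n)$ are $\leq_P$-comparable, and $Q$ is an infinite chain.

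The step I expect to be the main obstacle is the $\RCA_0$ bookkeeping in the first implication: unlike in the classical argument, one cannot freely quotient by the closure-equivalence and choose representatives, and verifying that the set of chosen representatives is infinite genuinely seems to require $\mathsf{B}\Sigma^0_2$. Once the correct low-complexity surrogate $\approx$ is in place, the remainder of that direction merely formalizes part of the argument already given in \Cref{sec:GS_proof}, and the converse direction is a routine application of \Cref{prop:dorais_generate} together with the observation that the pertinent closure relation is decidable from $\leq_P$.
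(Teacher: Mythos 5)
Your proposal is correct and is essentially the paper's own argument: the forward direction formalizes the proof from \Cref{sec:GS_proof} once the closure relation is available (the paper leaves the $\RCA_0$ bookkeeping implicit, and your $\Delta^0_1$ surrogate $\approx$ for closure-equality, the passage to least representatives, and the use of $\mathsf{B}\Sigma^0_2$ obtained from $\CAC$ are exactly the details needed there), while the reverse direction generates a CSC space from the principal cones of $\leq_P$ just as the paper does --- the paper uses up-sets and, assuming no infinite antichain, fixes a comparable pair to rule out the indiscrete and $T_1$ outcomes, whereas you use down-sets and read a chain or antichain directly off each of the four outcomes, which is a harmless (arguably cleaner) variant. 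The one point to tighten is in the chain case of the forward direction: the $\leq_{X'}$-increasing enumeration of the $\ADS$-solution $W$ need not exist in $\RCA_0$, since counting $\leq_{X'}$-predecessors within $W$ is not in general effective; so the homeomorphism must be taken on a subsequence of $W$ that increases simultaneously in the natural order and in $\leq_{X'}$, which is precisely the extraction step already present in \Cref{sec:GS_proof} that your ``verbatim'' appeal covers.
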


\begin{proof}
	That (1) implies (2) follows readily from the proof of the Ginsburg--Sands theorem given in \Cref{sec:GS_proof}. More precisely, given a CSC space with its closure relation, we can construct the equivalence relation $\sim$ and the partial ordering $\leq_X$ defined in the proof, and argue as before to obtain an infinite subspace which is either indiscrete, has the initial segment or final segment topologies, or is $T_1$.
	
	For the reverse implication, assume $\wGS^{\cl}$ and let $(P,\leq_P)$ be any infinite partial order. Assume this partial ordering has no infinite antichain; we will construct an infinite chain. (In fact, our chain will either be an infinite ascending sequence under $\leq_P$, or an infinite descending sequence.)
	
	For each $p \in P$, define $V_p = \set{q \in P: p \leq_P q}$ and let $\seq{P,\topol{U},k}$ be the CSC space generated by $\seq{V_p : p \in P}$. We claim that, for $p,q \in P$, $p$ is in the closure of $q$ in this space if and only if $p \leq_P q$. Indeed, if $p$ is in the closure of $q$ then in particular $q$ must belong to $V_p$. So $p \leq_P q$. Suppose $p$ is not in the closure of $q$. Then we can fix a $U \in \topol{U}$ containing $p$ but not $q$. By \Cref{prop:dorais_generate}~(2), there is an $r \in P$ such that $V_r$ contains $p$ but not $q$, meaning $r \leq_P p$ but $r \nleq_P q$. So $p \nleq_P q$. This proves the claim, and implies that the closure relation exists. Call it $\cl$.
		
	Now apply $\wGS^{\cl}$ to obtain an infinite subspace $Y$ of $\seq{P,\topol{U},k}$. By assumption, $Y$ is not an infinite antichain under $\leq_P$. Thus we can fix some $p <_P q$ in $Y$. It follows that $p \notin U_q$, so $Y$ cannot be indiscrete as a subspace. It also cannot be $T_1$, because if it were we would have $p \notin \cl(q)$ and hence $p \nleq_P q$. Thus, $Y$ either has the initial segment or final segment topology. Consider first the case that it is the former. Then there is a bijection $h : \mathbb{N} \to Y$ such that the basic open sets of $Y$ in the subspace topology are precisely $\set{h(i) : i \leq j}$ for some $j \in \mathbb{N}$. We claim that $h(j) <_P h(i)$ for all $i < j$, which proves that $Y$ is a descending sequence under $\leq_P$. Suppose not. Then there exist $p,q \in Y$ such that $h^{-1}(q) < h^{-1}(p)$ and $p \nleq_P q$. Then $p \notin \cl(q)$, so there exists $U \in \topol{U}$ containing $p$ but not $q$. Fix $j$ so that $U \cap Y = \set{h(i) : i 
	\leq j}$. Then necessarily $h^{-1}(p) \leq j$, but this means that also $h^{-1}(q) \leq j$ and hence that $q \in U$, a contradiction. The case where $Y$ has the final segment topology is symmetric, and yields an ascending sequence under~$\leq_P$.
\end{proof}

Finally, we show that $\GST_1$ and $\GS^{\cl}$ are equivalent. In a sense, the following says that when the complexity of the closure operator is removed, the combinatorics underlying the full Ginsburg--Sands are the same as those underlying just the $T_1$ case.

\begin{theorem}\label{T:GST1_equiv_WGSclos}
	The following are equivalent over $\RCA_0$.
	\begin{enumerate}
		\item $\GST_1$.
		\item $\mathsf{GS}^{\cl}$.
	\end{enumerate}
\end{theorem}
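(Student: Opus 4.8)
The plan is to prove the two directions separately. The implication $(2)\Rightarrow(1)$, i.e.\ $\GS^{\cl}\to\GST_1$, is essentially immediate. I would first observe that every $T_1$ CSC space $\seq{X,\topol{U},k}$ carries its closure relation provably in $\RCA_0$: for $x\neq y$ in $X$ the $T_1$ condition supplies a basic open set containing $y$ but not $x$, so $y\notin\cl(x)$, while trivially $x\in\cl(x)$; hence $\cl_X=\set{\seq{x,x}:x\in X}$, which is $\Delta^0_1$ definable from $X$ and so exists. Thus $\GS^{\cl}$ applies verbatim to any infinite $T_1$ CSC space, which is precisely the assertion of $\GST_1$.

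The content is in $(1)\Rightarrow(2)$, i.e.\ $\GST_1\to\GS^{\cl}$. Arguing in $\RCA_0$ and assuming $\GST_1$, I would first invoke \Cref{cor:GST1_to_RT22} (a forward reference, as already used in the proof of \Cref{thm:ACA_to_GS}) to get $\RT^2_2$, hence $\CAC$, and hence $\wGS^{\cl}$ by \Cref{T:CAC_equiv_GSclos}. Now fix an infinite CSC space $\seq{X,\topol{U},k}$ whose closure relation exists, and apply $\wGS^{\cl}$ to obtain an infinite subspace $Y$ that is indiscrete, has the initial segment topology, has the final segment topology, or is $T_1$. In the first three cases $Y$ already witnesses the conclusion of $\GS^{\cl}$ for $X$. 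In the last case $Y$ is an infinite $T_1$ CSC space, so I would apply $\GST_1$ to $Y$, obtaining an infinite subspace $Z\subseteq Y$ that is indiscrete, has the initial or final segment topology, is discrete, or has the cofinite topology. Since $Z\subseteq Y\subseteq X$ and $(\topol{U}\res Y)\res Z=\topol{U}\res Z$ by \Cref{def:subspace}, the space $Z$ is a subspace of $X$ itself, so it witnesses $\GS^{\cl}$ for $X$. (If one wishes, \Cref{t:all_preserved} together with the fact that the infinite indiscrete, initial segment, and final segment topologies are not $T_1$ shows that $Z$ is necessarily discrete or cofinite in this case, but this refinement is not needed.)

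I do not expect a genuine obstacle: the substantive work has been outsourced to \Cref{T:CAC_equiv_GSclos} and \Cref{cor:GST1_to_RT22}, and what remains is bookkeeping. The one conceptual point is the decomposition ``$\GS^{\cl}=\wGS^{\cl}$ followed, in the $T_1$ case, by $\GST_1$'': $\wGS^{\cl}$ (equivalently $\CAC$) handles the portion of Ginsburg and Sands's argument running through the applications of $\CAC$ and $\ADS$ — delivering an indiscrete, initial-segment, or final-segment subspace, or else reducing matters to the $T_1$ case — which $\GST_1$ then dispatches. Care is needed only to check that subspaces compose, so that the subspace of $Y$ returned by $\GST_1$ is a legitimate subspace of $X$, and to note that applying $\GST_1$ to $Y$ requires only that $Y$ be $T_1$, not that its closure relation be supplied.
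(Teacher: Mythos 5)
Your proposal is correct and follows essentially the same route as the paper: the trivial closure relation on a $T_1$ space handles $\GS^{\cl}\to\GST_1$, and for the converse one derives $\wGS^{\cl}$ via \Cref{cor:GST1_to_RT22} and \Cref{T:CAC_equiv_GSclos}, then applies $\GST_1$ to the $T_1$ subspace returned by $\wGS^{\cl}$. The only differences are cosmetic (you make the subspace-of-a-subspace bookkeeping explicit, and skip the paper's optional remark that the final subspace must be discrete or cofinite).
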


\begin{proof}
	We argue in $\RCA_0$. To prove that (1) implies (2), assume $\GST_1$ and let a CSC space $\seq{X,\topol{U},k}$ with closure relation $\cl$ be given. By \Cref{cor:GST1_to_RT22} below, $\GST_1$ implies $\RT^2_2$ and therefore $\CAC$. Hence, by \Cref{T:CAC_equiv_GSclos}, $\GST_1$ implies $\wGS^{\cl}$. So let $Y \subseteq X$ be an infinite subspace obtained by applying $\wGS^{\cl}$ to $X$. If $Y$ is indiscrete, or homeomorphic to the initial segment or final segment topologies, then $Y$ is a solution to $X$ as an instance of $\mathsf{GS}^{\cl}$. If $Y$ is $T_1$, then we can apply $\GST_1$ to $Y$ to obtain a subspace $Z$ of $Y$ (and so, of $X$) that is discrete or has the cofinite topology.
	
	To prove that (2) implies (1), assume $\mathsf{GS}^{\cl}$ and let $\seq{X,\topol{U},k}$ be a $T_1$ CSC space. The closure relation on $X$ is then trivial, since $y \in \cl(x)$ if and only if $x = y$. We can thus apply $\mathsf{GS}^{\cl}$ to $X$ to obtain an infinite subspace $Y \subseteq X$, and since $X$ is $T_1$ this subspace cannot be indiscrete, nor be homeomorphic to either the initial segment or final segment topologies. It follows that $Y$ is discrete or has the cofinite topology, as needed.
\end{proof}

\noindent This theorem, unlike the two preceding it, does not directly establish any relationships to principles previously studied in the literature. We will do so in \Cref{sec:T1}, however, which is dedicated to a detailed study of $\GST_1$.

\section{Hausdorff spaces}\label{sec:Haus}

In this section, we turn our attention momentarily to Hausdorff spaces. Since all such spaces are $T_1$, they do not feature separately in our breakdown of the Ginsburg--Sands theorem. Moreover, since no Hausdorff space can have the cofinite topology, the subspace produced by applying $\GS$ to such a space must be discrete. We show that this can always be done computably. We begin with a technical lemma, and then give the result.

\begin{lemma}\label{lem:t2LeastPair}
	The following is provable in $\mathsf{RCA}_0$. Let $(X, \topol{U}, k)$ be a Hausdorff CSC space with $\topol{U} = \seq{U_n : n \in \mathbb{N}}$.
	\begin{enumerate}
		\item There exists a function $c: X \times X \times \mathbb{N} \to \mathbb{N}$ such that if $x, y \in X$ are distinct and $s \in \mathbb{N}$, then $c(x, y, s)$ is the least pair $\seq{m,n}$ such that $x \in U_m \setminus U_n$, $y \in U_n \setminus U_m$, and $(U_m \res s) \cap (U_n \res s) = \emptyset$.
		\item For all sufficiently large $s$, $c(x,y,s)$ equals the least pair $\seq{m,n}$ such that $x \in U_m$, $y \in U_n$, and $U_m \cap U_n = \emptyset$.
	\end{enumerate}
\end{lemma}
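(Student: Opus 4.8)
The plan is to obtain $c$ in part (1) directly as a minimization over a decidable predicate that $\RCA_0$ can prove total, and then in part (2) to show that the least admissible pair stabilizes, with a uniform stabilization stage assembled by bounded comprehension. Everything will stay inside $\RCA_0$. For part (1), the one structural input is the Hausdorff axiom: if $x \neq y$ in $X$ there is a pair $\seq{m^*,n^*}$ with $x \in U_{m^*}$, $y \in U_{n^*}$, and $U_{m^*} \cap U_{n^*} = \emptyset$, and I would first observe that such a pair automatically satisfies the three clauses of (1) \emph{at every} $s$, since $x \in U_{m^*}$ together with disjointness gives $x \notin U_{n^*}$ (symmetrically $y \notin U_{m^*}$) and $(U_{m^*} \res s) \cap (U_{n^*} \res s) \subseteq U_{m^*} \cap U_{n^*} = \emptyset$. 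Hence the predicate ``$\seq{m,n}$ is admissible for $(x,y,s)$'', meaning $x \in U_m \setminus U_n$, $y \in U_n \setminus U_m$, and $(U_m \res s) \cap (U_n \res s) = \emptyset$, is $\Sigma^0_0$ in the parameter coding $\topol{U}$ (each membership clause is atomic, the disjointness clause is a bounded quantifier) and is provably satisfied for distinct $x,y$ and all $s$. I would then let $c(x,y,s)$ be the least pair admissible for $(x,y,s)$ when $x \neq y$, and $0$ otherwise; its graph is $\Sigma^0_0$ with that parameter, functional, and total, so $c$ exists by $\Delta^0_1$ comprehension, and it is exactly the function claimed in (1).

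For part (2), fix distinct $x,y$, and let $\seq{m^*,n^*}$ be the \emph{least} pair with $x \in U_{m^*}$, $y \in U_{n^*}$, $U_{m^*} \cap U_{n^*} = \emptyset$; this exists by the Hausdorff property, and by part (1) it is admissible for $(x,y,s)$ for every $s$, so $c(x,y,s) \leq \seq{m^*,n^*}$ always. It remains to produce a stage $s^*$ past which no smaller pair is admissible. Take any $p = \seq{m,n} < \seq{m^*,n^*}$: by minimality of $\seq{m^*,n^*}$, one of $x \in U_m$, $y \in U_n$, $U_m \cap U_n = \emptyset$ fails. If $x \notin U_m$, $x \in U_n$, $y \notin U_n$, or $y \in U_m$, then $p$ is inadmissible at every stage. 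Otherwise $x \in U_m \setminus U_n$ and $y \in U_n \setminus U_m$ but $U_m \cap U_n \neq \emptyset$, and then admissibility of $p$ at stage $s$ is equivalent to $(U_m \res s) \cap (U_n \res s) = \emptyset$, which fails for every $s$ exceeding the least element of $U_m \cap U_n$ and keeps failing thereafter, this clause being monotone in $s$. So in all cases there is a least stage $s_p$ at which $p$ becomes inadmissible, and by the monotonicity just noted $p$ stays inadmissible at every $s \geq s_p$; the existence of $s_p$ is a least-number computation over a decidable predicate that provably holds of some $s$, so $\RCA_0$ supplies it. Since $\set{p : p < \seq{m^*,n^*}}$ is bounded, bounded $\Delta^0_1$ comprehension forms the finite set $\set{s_p : p < \seq{m^*,n^*}}$; let $s^*$ be its maximum (or $0$ if it is empty). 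Then for every $s \geq s^*$, no $p < \seq{m^*,n^*}$ is admissible for $(x,y,s)$ whereas $\seq{m^*,n^*}$ is, so $c(x,y,s) = \seq{m^*,n^*}$, which is the statement of (2).

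I expect the argument to be entirely routine; the only point that needs genuine care is in part (2), where the induction must be kept at $\I\Sigma^0_1$ and the comprehension bounded. The one mild subtlety is recognizing that a bad pair's inadmissibility is \emph{upward closed} in $s$ — immediate once one separates the $s$-independent clauses ($x \in U_m \setminus U_n$, $y \in U_n \setminus U_m$) from the monotone intersection clause — since that is what allows a single stage $s_p$ to control each pair below $\seq{m^*,n^*}$ and lets the finitely many stages be combined into one $s^*$. Everything else follows directly from the Hausdorff separation property and the definition of $c$.
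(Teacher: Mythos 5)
Your proposal is correct and follows essentially the same route as the paper: part (1) is totality of a $\Delta^0_1$-defined search, guaranteed by the Hausdorff axiom and the $\Sigma^0_1$ least-number principle, and part (2) fixes the least genuinely disjoint pair and produces a single stage past which every smaller pair is permanently inadmissible. The only (cosmetic) difference is the bookkeeping for that stage: you form the set of stabilization stages $\set{s_p}$ by ``bounded $\Delta^0_1$ comprehension,'' which is not literally applicable since those stages have no a priori bound—the uniform bound should instead come from $\Sigma^0_1$-collection (provable in $\RCA_0$), which is exactly how the paper obtains its $s_0$ via $\mathsf{B}\Sigma^0_0$ applied to the witnesses $t \in U_i \cap U_j$.
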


\begin{proof}
	Since $c$ is $\Delta^0_1$ definable, to prove (1) it suffices to prove that it is total. Fix $x \neq y$ in $X$ and $s \in \mathbb{N}$. Since $X$ is Hausdorff, there exist $U_m, U_n$ such that $x \in U_m$, $y \in U_n$, and $U_m \cap U_n = \emptyset$. In particular, the set of all pairs $\seq{m,n}$ such that $x \in U_m \setminus U_n$, $y \in U_n \setminus U_m$, and $(U_m \res s) \cap (U_n \res s) = \emptyset$ is nonempty. Now $\mathsf{I}\Sigma^0_1$ suffices to find the least element $\seq{m,n}$ of this set, and we have $c(x,y,s) = \seq{m,n}$.
	
	To prove (2), we note that the least $\seq{m,n}$ such that $x \in U_m$, $y \in U_n$, and $U_m \cap U_n = \emptyset$ exists by $\mathsf{I}\Pi^0_1$. Thus, for each $\seq{i,j} < \seq{m,n}$ there exists a $t$ such that if $x \in U_{i} \setminus U_{j}$ and $y \in U_{j} \setminus U_{i}$ then $t \in U_{i} \cap U_{j}$. By $\mathsf{B}\Sigma^0_0$ in $\RCA_0$ we can find an $s_0$ such that for each $\seq{i,j} < \seq{m,n}$ there exists such a $t$ below $s_0$. Then $c(x,y,s) = \seq{m,n}$ for all $s \geq s_0$.
\end{proof}

\begin{theorem}\label{thm:t2hasdiscrete}
	The following is provable in $\RCA_0$. Every infinite Hausdorff CSC space has an infinite discrete subspace.
\end{theorem}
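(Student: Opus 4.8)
The plan is to produce the discrete subspace in the form $Y = \set{y_0 < y_1 < \cdots}$ equipped, for each $i$, with an index $b_i$ of a basic open set satisfying $y_i \in U_{b_i}$ and $U_{b_i} \cap Y = \set{y_i}$; any such $Y$ is discrete in the subspace topology. The engine is a recursion maintaining a decreasing sequence of infinite \emph{reservoirs} $R_0 \supseteq R_1 \supseteq \cdots$, with $R_0 = X$ and $R_n \cap U_{b_i} = \emptyset$ for all $i < n$. Since every $y_j$ with $j \geq n$ will be drawn from $R_n$, it follows automatically that $y_j \notin U_{b_i}$ whenever $j > i$; combined with the requirement that $U_{b_i}$ avoid $y_0, \dots, y_{i-1}$, this yields $U_{b_i} \cap Y = \set{y_i}$.

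At stage $n$, given $R_n$ infinite and $y_0 < \cdots < y_{n-1}$, $b_0, \dots, b_{n-1}$, let $y_n$ be the least element of $R_n$; this exceeds $y_{n-1}$ because $y_{n-1} \in U_{b_{n-1}}$ was removed from $R_n$. We then seek a basic open $U_{b_n}$ with $y_n \in U_{b_n}$, $U_{b_n} \cap \set{y_0, \dots, y_{n-1}} = \emptyset$, and --- crucially --- $R_n \setminus U_{b_n}$ infinite, and set $R_{n+1} = R_n \setminus U_{b_n}$. Here we use Hausdorffness via the function $c$ of \Cref{lem:t2LeastPair}: for each $i < n$, for large $s$ the second coordinate of $c(y_i,y_n,s)$ indexes a basic open containing $y_n$ but not $y_i$, and by intersecting finitely many basic neighborhoods of $y_n$ by means of the function $\overline{k}$ of \Cref{lem:kStar} we can realize any finite list of such constraints.

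The heart of the matter is whether such a $U_{b_n}$ always exists. Suppose no basic open containing $y_n$ and missing $\set{y_0,\dots,y_{n-1}}$ has infinite complement in $R_n$. Shrinking an arbitrary basic neighborhood of $y_n$ via $\overline{k}$ into one missing $\set{y_0,\dots,y_{n-1}}$, it follows that \emph{every} basic open containing $y_n$ is cofinite in $R_n$; that is, $R_n$ ``converges to'' $y_n$. In that case $R_n \setminus \set{y_n}$ is itself an infinite discrete subspace: writing it as $r_0 < r_1 < \cdots$, for each $k$ Hausdorffness gives disjoint basic opens $A \ni r_k$ and $B \ni y_n$; as $B$ is cofinite in $R_n$, only finitely many $r_j$ avoid $B$, hence only finitely many $r_j$ lie in $A$, and shrinking $A$ via $\overline{k}$ to avoid those finitely many yields a basic neighborhood of $r_k$ meeting $R_n \setminus \set{y_n}$ exactly in $r_k$. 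So in this case we simply output $R_n \setminus \set{y_n}$.

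The real obstacle is that ``$R_n \setminus U_{b_n}$ is infinite'' is not a decidable condition, so the recursion cannot literally search for a valid $U_{b_n}$, nor decide which of the two cases above obtains. Instead one runs the construction as an approximation process in a stage parameter $s$: at stage $s$ one works with $c(\cdot,\cdot,s)$ rather than its limit, and at step $n$ accepts the least $m < s$ such that $U_m$ contains $y_n$, misses $\set{y_0,\dots,y_{n-1}}$, and has large complement in $R_n \cap [0,s)$, reverting to the ``output $R_n \setminus \set{y_n}$'' alternative for that stage if there is no such $m$. Using the stabilization clause of \Cref{lem:t2LeastPair} one argues, with the sort of bounded-induction bookkeeping available in $\RCA_0$, that each $y_i$ and $b_i$ eventually settles, that the resulting $Y$ is $\Delta^0_1$, and that it is infinite --- either every step eventually lands in the generic case, or some step lands permanently in the convergent case and the tail of $Y$ becomes the (discrete) reservoir. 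Establishing this convergence is the principal work of the proof.
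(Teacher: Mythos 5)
Your non-effective skeleton is sound: at each step either some basic neighborhood of $y_n$ avoiding $y_0,\dots,y_{n-1}$ has infinite complement in $R_n$, or else every basic neighborhood of $y_n$ is cofinite in $R_n$ and then $R_n \setminus \set{y_n}$ is discrete by the Hausdorff-plus-$\overline{k}$ shrinking argument you give. But as you yourself note, deciding this dichotomy and, in the first case, finding a witness $b_n$ requires verifying the $\Pi^0_2$ condition ``$R_n \setminus U_m$ is infinite,'' and your proposed remedy --- a stage-parameter approximation accepting the least $m<s$ whose complement in $R_n \cap [0,s)$ is ``large'' --- is exactly where the proof is missing. With a fixed threshold the test has permanent false positives (an $m$ with finite but large complement gets accepted, making $R_{n+1}$ finite and stalling the construction); with a threshold tending to infinity it has false negatives (a valid $m$ whose complement is enumerated slowly is rejected infinitely often), and in either regime the accepted $m$ can oscillate, with errors propagating because $R_{n+1}$ depends on $b_n$. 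Even granting that each $y_i,b_i$ ``eventually settles,'' that is a limit ($\Delta^0_2$-style) construction: $\RCA_0$ has only $\Delta^0_1$ comprehension, so it cannot form the limit set, and your assertion that the resulting $Y$ is $\Delta^0_1$ is unsupported --- membership of a point chosen at stage $s$ is not decidable at stage $s$ if later injuries can revise it. Deferring this as ``the principal work of the proof'' leaves the actual theorem (provability in $\RCA_0$) unproved; what you have written down establishes the statement only at the level of, say, $\ACA_0$. (Compare \Cref{thm:Delta2_solns}: the reservoir-style construction in the $T_1$ case genuinely produces only $\Delta^0_2$ discrete subspaces, so something special about Hausdorffness must be exploited to get below that.)

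The missing idea, which is how the paper proceeds, is to anchor the recursion at a single limit point rather than re-verifying an infinitude condition at every step. If $X$ has no limit point, it is already discrete by \Cref{lem:T_1_finite_is_singleton}. Otherwise fix a limit point $p$ and build $x_0 < x_1 < \cdots$, all distinct from $p$, by honest primitive recursion: at stage $s$, compute $\seq{m_{i,s},n_{i,s}} = c(p,x_i,s)$ for each $i \leq s$ using the total $\Delta^0_1$ function $c$ of \Cref{lem:t2LeastPair}, intersect the $p$-side neighborhoods via $\overline{k}$ from \Cref{lem:kStar}, and choose $x_{s+1}$ inside $U_{\overline{k}(p,\set{m_{i,s} : i \leq s})}$; this set is infinite simply because $p$ is a limit point, so the only non-trivial existential used in the recursion is guaranteed once and for all by the case hypothesis, and $Y = \set{x_0,x_1,\dots}$ is $\Delta^0_1$ in the space. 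The stabilization clause of \Cref{lem:t2LeastPair} then shows that for each $i$, all sufficiently late $x_s$ lie in the final separating neighborhood $U_m$ of $p$, hence avoid the corresponding $U_n \ni x_i$, so $U_n \cap Y$ is finite; since $Y$ is $T_1$, \Cref{lem:T_1_finite_is_singleton} turns this finite basic open set into the singleton $\set{x_i}$. Your ``convergent case'' is essentially this idea applied with $y_n$ in the role of $p$ relative to $R_n$, but your construction only reaches it under a hypothesis it cannot effectively detect; making the limit point the starting point of the whole construction is what removes the need for any stage-wise guessing.
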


\begin{proof}
	We argue in $\RCA_0$. Let $(X, \topol{U}, k)$ be an infinite Hausdorff CSC space with $\topol{U} = \seq{U_n : n \in \mathbb{N}}$. If $X$ has no limit point, then $X$ is already discrete. So, suppose $X$ has a limit point, $p$. We define a sequence $x_0 < x_1 < \cdots \in X$ by primitive recursion such that $x_i \neq p$ for all $i$. Let $x_0$ be any element of $X$ different from $p$. Now fix $s$, and suppose we have defined $x_i$ for all $i \leq s$. For each $i \leq s$, let $\seq{m_{i,s},n_{i,s}} = c(p,x_i,s)$ and let $m_s = \overline{k}(p,\set{m_{i,s}: i \leq s})$, where $\overline{k}$ and $c$ are as in \Cref{lem:t2LeastPair,lem:kStar}. In particular, $p \in U_{m_{i,s}}$ for all $i \leq s$, so $p \in U_{m_s} \subseteq \bigcap_{i \leq s} U_{m_{i,s}}$. Since $p$ is a limit point, $U_{m_s}$ is infinite, and we can let $x_{s+1}$ be any element of this set larger than $x_s$ and different from $p$.
	
	Let $Y = \set{x_0,x_1,\ldots}$. We claim that $Y$ has the discrete topology. To see this, fix $i$, and let $\seq{m,n}$ be least such that $p \in U_m$, $x_i \in U_n$, and $U_m \cap U_n = \emptyset$. By \Cref{lem:t2LeastPair}~(2), there is an $s_0 > i$ such that $c(p,x_i,s) = \seq{m,n}$ for all $s \geq s_0$. For all $s \geq s_0$, the point $x_s$ is chosen in $U_{m,s} \subseteq U_{m_{i,s}} = U_m$, which means that $U_n \cap Y \subseteq \set{x_j : j < s}$. Thus, $U_n \cap Y$ is a finite basic open set in $Y$ containing $x_i$. Since $Y$ is Hausdorff by \Cref{t:all_preserved}, and so in particular $T_1$, it follows by \Cref{lem:T_1_finite_is_singleton} that $\set{x_i}$ is open in $Y$.
\end{proof}

In light of the discussion in \Cref{sec:CSC} concerning Hausdorff spaces versus effectively Hausdorff spaces, and discrete spaces versus effectively discrete spaces, a natural question is whether \Cref{thm:t2hasdiscrete} also holds for these stronger presentations. We show that the answer is yes for Hausdorff spaces that have a limit point.

\begin{theorem}
	The following is provable in $\RCA_0$. Every infinite effectively Hausdorff CSC space which is not discrete has an infinite effectively discrete subspace.
\end{theorem}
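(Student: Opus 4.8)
The plan is to follow the proof of \Cref{thm:t2hasdiscrete} but to replace the stabilizing separation function $c(x,y,s)$ of \Cref{lem:t2LeastPair} by the effective Hausdorff witness, so that a correct index for a separating basic open set around each chosen point is available the moment that point is defined, rather than only in the limit. Write $\topol{U} = \seq{U_n : n \in \mathbb{N}}$, let $h : X \times X \to \mathbb{N} \times \mathbb{N}$ be the function witnessing that $X$ is effectively Hausdorff, so that for distinct $x,y \in X$ we have $h(x,y) = \seq{m,n}$ with $x \in U_m$, $y \in U_n$, and $U_m \cap U_n = \emptyset$, and let $\overline{k}$ be as in \Cref{lem:kStar}. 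First I would note that, since $X$ is $T_1$ and not discrete, \Cref{lem:T_1_finite_is_singleton} forces $X$ to have a limit point $p$ (if every point had a finite basic open neighborhood it would be isolated, making $X$ discrete), exactly as in the proof of \Cref{thm:t2hasdiscrete}.

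Next I would construct, by primitive recursion, a strictly increasing sequence $x_0 < x_1 < \cdots$ in $X$, each $x_i \neq p$, together with the pairs $\seq{m_i,n_i} = h(p,x_i)$. Let $x_0$ be any element of $X$ other than $p$. Given $x_0,\ldots,x_s$, put $M_s = \overline{k}(p, \set{m_i : i \leq s})$; then $p \in U_{M_s} \subseteq \bigcap_{i \leq s} U_{m_i}$, and since $p$ is a limit point $U_{M_s}$ is infinite, so we may take $x_{s+1} \in U_{M_s}$ with $x_{s+1} > x_s$ and $x_{s+1} \neq p$. Let $Y = \set{x_i : i \in \mathbb{N}}$, which exists in $\RCA_0$ by primitive recursion. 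The key observation is that for all $i < j$ we have $x_j \in U_{M_{j-1}} \subseteq U_{m_i}$, and since $U_{m_i} \cap U_{n_i} = \emptyset$ this gives $x_j \notin U_{n_i}$; as $x_i \in U_{n_i}$, we conclude $U_{n_i} \cap Y \subseteq \set{x_j : j \leq i}$.

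Then I would define the witnessing function $d : Y \to \mathbb{N}$. Given $x_i \in Y$ (recovered by searching the increasing sequence), for each $j < i$ let $\seq{a_{ij},b_{ij}} = h(x_i,x_j)$, so $x_i \in U_{a_{ij}}$, $x_j \in U_{b_{ij}}$, and $U_{a_{ij}} \cap U_{b_{ij}} = \emptyset$, whence $x_j \notin U_{a_{ij}}$. Set $d(x_i) = \overline{k}(x_i, \set{n_i} \cup \set{a_{ij} : j < i})$. Then $U_{d(x_i)}$ is a basic open set with $x_i \in U_{d(x_i)} \subseteq U_{n_i} \cap \bigcap_{j<i} U_{a_{ij}}$, and $U_{d(x_i)} \cap Y = \set{x_i}$: no $x_j$ with $j < i$ lies in $U_{a_{ij}} \supseteq U_{d(x_i)}$, and no $x_j$ with $j > i$ lies in $U_{n_i} \supseteq U_{d(x_i)}$. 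Hence $\seq{Y, \topol{U} \res Y, k}$ is an infinite effectively discrete subspace of $X$.

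Finally I would check that the argument is carried out in $\RCA_0$: the sequence $\seq{x_i}$ and the set $Y$ are built by primitive recursion from $h$, $\overline{k}$, and $p$, so they exist, and $d$ is $\Delta^0_1$-definable from these parameters. The one step requiring care, and the reason effective Hausdorffness is genuinely needed here rather than plain Hausdorffness, is the definability of $d$: in \Cref{thm:t2hasdiscrete} the separating index $n_{i,s} = c(p,x_i,s)$ is only guaranteed correct for all sufficiently large $s$, with no computable bound, whereas here $h(p,x_i)$ supplies a correct separating index at once, so no unbounded search is involved. The remaining verifications are routine.
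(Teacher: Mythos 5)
Your proof is correct and follows essentially the same approach as the paper: fix a limit point $p$ (which exists since the space is $T_1$ and not discrete), build an increasing sequence of points inside nested $\overline{k}$-neighborhoods of $p$, and use the effective Hausdorff witness to obtain separating indices immediately rather than in the limit. The only cosmetic difference is that the paper keeps $U_{n_{s+1}}$ inside $U_{m_s}$ during the recursion so that it already excludes the earlier points, whereas you exclude them afterwards by intersecting $U_{n_i}$ with the sets $U_{a_{ij}}$ coming from $h(x_i,x_j)$ for $j<i$; both yield the required computable witness $d$.
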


\begin{proof}
	Let $(X, \topol{U}, k)$ be an infinite effectively Hausdorff CSC space which is not discrete. Then $X$ has a limit point, $p$. This means that for every $U \in \topol{U}$ containig $p$, there exist infinitely many $x \in U$ different from $p$. Let $\topol{U} = \seq{U_n : n \in \mathbb{N}}$, and let $e : X \times X \to \mathbb{N}$ be such that $e(x,y) = \seq{m,n}$ for all distinct $x,y \in X$, where $x \in U_m$, $y \in U_n$, and $U_m \cap U_n = \emptyset$. We construct sequences $x_0 < x_1 < \cdots \in X$, $n_0,n_1,\ldots \in \mathbb{N}$, and $m_0,m_1,\ldots \in \mathbb{N}$ such that $U_{m_0} \supseteq U_{m_1} \supseteq \cdots$, and for all $i$ and $j$: $x_j \in U_{n_i}$ if and only if $i = j$; $p \in U_{m_i}$; and $U_{m_i} \cap U_{n_i} = \emptyset$. Then $Y = \set{x_i : i \in \mathbb{N}}$ is an effectively discrete subspace of $X$ since $U_{n_i} \cap Y = \set{x_i}$ for all $i$.
	
	Let $x_0$ be any element of $X$ different from $p$, and let $n_0$ and $m_0$ be such that $e(p,x_0) = \seq{m_0,n_0}$. Now fix $s$, and suppose we have defined $x_i$, $n_i$, and $m_i$ for all $i \leq s$. Since $p \in \bigcap_{i \leq s} U_{m_i}$ we have that $p \in U_{\overline{k}(p,\set{m_i : i \leq s})} \subseteq \bigcap_{i \leq s} U_{m_i}$. And since $p$ is a limit point, we can let $x_{s+1}$ be the least element of $U_{\overline{k}(p,\set{m_i : i \leq s})}$ larger than $x_s$ and different from $p$. Say $e(p,x_{s+1}) = \seq{m,n}$. Then $p \in \bigcap_{i \leq s} U_{m_i} \cap U_m$ and $x_{s+1} \in U_{m_s} \cap U_n$, so $p \in U_{\overline{k}(p,\set{m_i : i \leq s} \cup \set{m})} \subseteq \bigcap_{i \leq s} U_{m_i} \cap U_m$, and $x_{s+1} \in U_{\overline{k}(x_i,\set{m_s,n})} \subseteq U_{m_s} \cap U_n$. Let $m_{s+1} = \overline{k}(p,\set{m_i : i \leq s} \cup \set{m})$ and $n_{s+1} = \overline{k}(x_{s+1},\set{m_s,n})$. 
	
	So, $x_{s+1} \in U_{n_{s+1}}$, $p \in U_{m_{s+1}}$, $U_{m_{s+1}} \subseteq U_{m_s}$, and $U_{m_{s+1}} \cap U_{n_{s+1}} = \emptyset$. Fix any $i \leq s$. We have $x_{s+1} \in U_{n_{s+1}} \subseteq U_{m_s}$, while $x_i \in U_{n_i}$ which is disjoint from $U_{m_i}$ and hence from $U_{m_s} \subseteq U_{m_i}$. Thus, $x_i \notin U_{n_{s+1}}$ and $x_{s+1} \notin U_{n_i}$, as needed.
\end{proof}

By contrast, and perhaps surprisingly, the previous result cannot be extended to effectively Hausdorff spaces in general.

\begin{theorem}\label{thm:Haus_to_disc_no_RCA}
	There exists an infinite computable effectively Hausdorff CSC which is discrete but has no infinite computable effectively discrete subspace.
\end{theorem}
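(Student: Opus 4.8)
The plan is to build the space by a priority construction. Its underlying set will be $\omega$, and I will construct a uniformly computable sequence $\seq{V_n : n \in \omega}$ of subsets of $\omega$ with $V_0 = \omega$; the space $\seq{X,\topol{U},k}$ will be the one generated by $\seq{V_n : n\in\omega}$ via \Cref{prop:dorais_generate}, so that (the $V_n$ being uniformly computable) it is a computable CSC space, its basic open sets are the finite intersections $U_F = \bigcap_{n \in F} V_n$, and a computable $k$ is automatic. Writing $\varphi_e^0,\varphi_e^1$ for the two ``halves'' of the $e$th partial computable function, $Y_e = \set{x : \varphi_e^0(x)\converges = 1}$, and $d_e = \varphi_e^1$, I will meet, for all $x$ and $e$, the requirement $P_x$ that $\set{x}$ be a basic open set of $X$, and the requirement $R_e$ that: if $Y_e$ is infinite, $\varphi_e^0$ and $d_e$ are total, and $x \in U_{d_e(x)}$ for every $x \in Y_e$, then $U_{d_e(x)} \cap Y_e \neq \set{x}$ for some $x \in Y_e$; I will also maintain, as a global module, a computable function certifying that $X$ is effectively Hausdorff. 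The $P_x$ make $X$ discrete, and the $R_e$ guarantee that no infinite computable effectively discrete subspace exists, since such a subspace together with its witnessing function would be some pair $(Y_e,d_e)$ contradicting $R_e$ --- in particular, $R_e$ for $Y_e = X$ already forces $X$ itself not to be effectively discrete, the ``discrete but not effectively so'' phenomenon of \Cref{prop:ACA_disc_to_eff_disc}.

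The strategy for a single $R_e$ is to wait and then merge. At a stage where $Y_e$ looks infinite and some fresh $x \in Y_e$ has $d_e(x)\converges = m$ with $x$ lying in $U_m$ on the current approximation, I appoint $x$ as a follower of $R_e$, provided $U_m$'s approximation is still \emph{malleable}: for each $n$ in the finite set coded by $m$, membership of large numbers in $V_n$ has not yet been committed. (I will schedule the commitments to $V_n$-membership lazily, respecting priority, precisely to keep such sets malleable long enough.) I then freeze those approximations and wait for $Y_e$ to enumerate a second element $y$, necessarily large and hence still uncommitted; when it appears I put $y$ into each such $V_n$, so that $U_m \cap Y_e \supseteq \set{x,y} \neq \set{x}$, permanently satisfying $R_e$, and release the freeze. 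If no suitable follower or no second element of $Y_e$ ever appears, then $\varphi_e^0$ or $d_e$ is not total or $Y_e$ is finite, and $R_e$ holds vacuously. Requirement $P_x$ acts, once the finitely many higher-priority requirements have ceased interfering with $x$, by devoting a fresh index $n$ to $V_n := \set{x}$ and committing it; a follower whose $d_e$-value points through such a committed singleton is useless (then $U_{d_e(x)} \subseteq \set{x}$), so $R_e$ discards it and appoints a later follower, of which there are infinitely many whenever the hypothesis of $R_e$ holds.

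Running alongside is the effective-Hausdorff module: I will organize $\omega$ into disjoint infinite ``tubes'' and include among the $V_n$, for each point, a cofinal chain of tube-neighborhoods of that point, so that distinct points receive disjoint basic neighborhoods computed uniformly from a sufficiently fine pair of tubes. The sets that the $R_e$ are permitted to enlarge will be kept disjoint from the tubes used as separators, and followers will always be chosen fresh and large, so that no merge performed by an $R_e$ and no commitment made by a $P_x$ ever destroys an already-certified separation, nor commits the needed element $y$ out of $U_{d_e(x)}$.

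The hard part will be exactly this simultaneous coordination, organized as a finite-injury argument: every freeze imposed by an $R_e$ must eventually be released so that each $V_n$ ends up a genuine (total) computable set; each $R_e$ may re-appoint followers only finitely often; and the tube structure must remain compatible with all mergers and commitments. The most delicate point, I expect, is showing that the lazy commitment schedule can be run so as to keep all the $V_n$ total while never blocking a higher-priority merge. Granting that, a routine verification gives that $\seq{V_n : n\in\omega}$ and $k$ are computable and that the resulting space is effectively Hausdorff, discrete, and has no infinite computable effectively discrete subspace.
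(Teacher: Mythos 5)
Your overall architecture is the same as the paper's (generate the space from a uniformly computable sequence $\seq{V_n : n \in \omega}$ via \Cref{prop:dorais_generate}, run a finite-injury argument with singleton-making requirements, diagonalization requirements against pairs $(Y_e,d_e)$, and a paired/``tube'' device for effective Hausdorffness), but the central diagonalization step has a genuine gap. Your $R_e$-strategy waits for a \emph{specific} second element $y$ of $Y_e$ to appear and then enumerates that $y$ into the finitely many $V_n$ with $n$ in the set coded by $d_e(x)$. The stage at which $y$ is recognized to lie in $Y_e$ is not computably bounded in $y$, so this late insertion cannot be reconciled with the $V_n$ being computable rather than merely c.e.: a ``freeze'' that keeps membership of large numbers undecided until some element of $Y_e$ shows up either never resolves (if the hypothesis of $R_e$ fails, or if $Y_e$'s elements converge too slowly) or resolves at stages no algorithm can predict, and any computable commitment schedule (e.g.\ deciding $y\in V_n$ by stage $y$) can be outrun by a total $\varphi^0_e$ whose convergence times exceed the inputs. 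The paper's proof is built precisely around this obstacle: when its $\mathcal{R}_{e,u}$ acts it claims the named indices and thereafter enumerates \emph{every current stage number} into them, making those $V_n$ cofinite; membership of $s$ is then settled at stage $s$ (so the sets stay computable), and since $Y$ is infinite a second point of $Y$ lands in the intersection automatically, without ever having to identify it.

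The follower-discarding rule is a second genuine gap. If $P_x$'s commitment of a fresh index to $\set{x}$ is permanent and is a recognizable event of your (computable) construction, an opponent can take $Y_e=\omega$ and define $d_e(x)$ by simulating the construction until $P_x$ commits and outputting that index: then every follower is ``useless'' and discarded, $R_e$ never succeeds, and in fact this $(Y_e,d_e)$ \emph{is} an infinite computable effectively discrete subspace, so your claim that infinitely many usable followers remain is false. To defeat this opponent the construction must let a stronger diagonalization requirement seize indices already committed by weaker singleton requirements and inflate them (injuring those $P_x$, which later pick fresh indices) --- which again forces the cofinite-inflation mechanism of the paper rather than a single-point merge, and the effective-Hausdorff bookkeeping must then guarantee that of each separating pair of basic sets at most one is ever inflated, as the paper arranges with its $V_{2\seq{x,y}}$/$V_{2\seq{y,x}}$ pairing and the constraints on which indices may be claimed.
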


\begin{proof}
	We construct a computable sequence $\seq{V_n : n \in \omega}$ of subsets of $\omega$ and then generate a computable CSC space $\seq{\omega,\topol{U},k}$ using \Cref{prop:dorais_generate}. Initially, we enumerate $x$ into $V_{2\seq{x,y}}$ and $V_{2\seq{x,y}+1}$ for all $y$. We then enumerate more elements into the $V_n$ in stages, maintaining throughout that $V_{2\seq{x,y}} \cap V_{2\seq{y,x}} = \emptyset$ for all $x \neq y$. By \Cref{prop:dorais_generate}~(2), we can compute the indices $m$ and $n$ of $V_{2\seq{x,y}}$ and $V_{2\seq{y,x}}$ in $\topol{U}$ uniformly computably from $x$ and $y$. Hence, the function $e : X \times X \to \mathbb{N}$ mapping $\seq{x,y}$ to $\seq{m,n}$ is computable. Since $x \in V_{2\seq{x,y}}$ and $y \in V_{2\seq{y,x}}$, this $e$ will witness that $\seq{\omega,\topol{U},k}$ is effectively Hausdorff.
	
	We aim to additionally satisfy the following requirements for all $x,e,u \in \omega$:
	\begin{eqnarray*}
		\mathcal{D}_x & : & \parbox[t]{.85\textwidth}{there exists an $n \in \omega$ such that $V_n = \set{x}$;}\\
		\mathcal{R}_{e,u} & : & \parbox[t]{.85\textwidth}{if $\Phi_e$ is the characteristic function of an infinite set $Y$ and $\Phi_u$ is total on $Y$, then there exists $x \in Y$ and $n_0,\ldots,n_r \in \omega$ such that $\Phi_u(x) \converges = \seq{n_0,\ldots,n_r}$ but $\bigcap_{i \leq r} V_{n_i} \neq \set{x}$.}
	\end{eqnarray*}
	We call these the \emph{$\mathcal{D}$-requirements} and \emph{$\mathcal{R}$-requirements} for short. The $\mathcal{D}$-requirements thus ensure that our CSC space is discrete, while the $\mathcal{R}$-requirements ensure that it has no infinite computable effectively discrete subspace. At each stage, at most one requirement will \emph{act}. If a $\mathcal{D}$-requirement acts it \emph{claims} an index $n \in \omega$, and if an $\mathcal{R}$-requirement acts it \emph{claims} finitely many indices $n_0,\ldots,n_r \in \omega$. A requirement may have its claims cancelled at a later stage, and then make new claims later still, as we describe below.	
	
	We proceed to the construction. We assign a priority ordering to the requirements in order type $\omega$, as usual. For notational convenience, for $n = 2\seq{x,y}$ we let $\overline{n}$ denote $2\seq{y,x}$. Suppose we are at stage $s \in \omega$. Say $\mathcal{D}_x$ \emph{requires attention} at stage $s$ if it does not currently have a claim. Say $\mathcal{R}_{e,u}$ \emph{requires attention} at stage $s$ if it does not currently have any claims, and if there exist $x < s$ and $\seq{n_0,\ldots,n_r} < s$ as follows:
	\begin{itemize}
		\item $\Phi_e(x)[s] \converges = 1$;
		\item $\Phi_u(x)[s] \converges = \seq{n_0,\ldots,n_r}$;
		\item no $n_i$ is claimed by any stronger $\mathcal{D}$ requirement;
		\item no even $\overline{n_i}$ with $n_i \neq \overline{n_i}$ is claimed by any stronger $\mathcal{R}$-requirement;
		\item no even $n_i \neq n_j$ satisfy $n_i = \overline{n_j}$.
	\end{itemize}
	If it exists, fix the strongest requirement with index smaller than $s$ that requires attention at stage $s$.
	
	If this is $\mathcal{D}_x$ for some $x$, choose the least $y$ so that $2\seq{x,y}+1$ is larger than all indices claimed by any requirement at any point in the construction, including claims that may since have been cancelled. Since at most one requirement claims at most finitely many numbers at any stage, $y$ exists. Now say that $\mathcal{D}_x$ \emph{claims} $2\seq{x,y}+1$. If the requirement is instead $\mathcal{R}_{e,u}$ for some $e$ and $u$, fix the least witnessing $x, \seq{n_0,\ldots,n_r} < s$. Cancel the claims of any lower priority requirements, and say that $\mathcal{R}_{e,u}$ \emph{claims} $n_0,\ldots,n_r$. In either case, say that the requirement in question \emph{acts}. Finally, for each $n$ that is claimed by some $\mathcal{R}$-requirement at this point, enumerate $s$ into $V_n$. This completes the construction.
	
	Since the only time a number $s$ is enumerated into some $V_n$ during the construction is at stage $s$, the sequence $\seq{V_n : n \in \omega}$ is computable. It follows that the CSC space $\seq{\omega,\topol{U},k}$ generated from $\seq{V_n : n \in \omega}$ is computable as well.
	
	Next, note that if $n$ is even and $n \neq \overline{n}$, then $n$ and $\overline{n}$ can never be claimed at the same time (whether by the same requirement, or by different ones). Indeed, even indices $n$ can only be claimed by $\mathcal{R}$-requirements. When an even $n$ is claimed by a new $\mathcal{R}$-requirement at some stage $s$, then $\overline{n}$ must not already be claimed by any higher priority requirement (necessarily another $\mathcal{R}$-requirement). And if $\overline{n}$ was already claimed by a lower priority requirement, then this claim is first cancelled. It follows that, at the end of each stage $s$, the number $s$ is enumerated into at most one of $V_n$ or $V_{\overline{n}}$ for every even $n$ with $n \neq \overline{n}$. In particular, if $x \neq y$ then $V_{2\seq{x,y}}$ and $V_{2\seq{y,x}}$ are disjoint. By our remarks above, $\seq{\omega,\topol{U},k}$ is effectively Hausdorff.
	
	An induction on the priority ordering shows that each requirement acts only finitely often. This is because once a requirement acts at a stage after all stronger $\mathcal{R}$-requirements have stopped acting, it claims some indices, and these claims cannot later be cancelled. Hence, the requirement can never require attention and act again.
	
	It thus remains only to show that all requirements are satisfied. Fix a requirement, and let $s_0$ be least such that every stronger requirement has stopped acting prior to stage $s_0$. Thus, either $s_0 = 0$ or some stronger $\mathcal{R}$-requirement acted at stage $s_0-1$ and thereby cancelled any claim the requirement in question may have had. Either way, then, this requirement has no claims at the start of stage $s_0$. Hence, if the requirement in question is $\mathcal{D}_x$ for some $x$, then it requires attention at the least stage $s \geq s_0$ with $s > x$, at which point it claims some $2\seq{x,y}+1$ that has never been claimed by any requirement before. By choice of $s_0$, this claim is never cancelled at any stage after $s$, and so in particular, $2\seq{x,y}+1$ is never claimed by any $\mathcal{R}$-requirement. It follows that no numbers are enumerated into $V_{2\seq{x,y}+1}$ during the construction, so $V_{2\seq{x,y}+1}$ is just the singleton $\set{x}$ and $\mathcal{D}_x$ is satisfied.
	
	Now suppose the requirement in question is $\mathcal{R}_{e,u}$ for some $e,u$. Seeking a contradiction, suppose $\Phi_e$ is the characteristic function of an infinite set $Y$, $\Phi_u$ is total on $Y$, and for all $x \in Y$ we have $x \in \bigcap_{i \leq r} V_{n_i}$ where $\Phi_u(x) \converges = \seq{n_0,\ldots,n_r}$. Let $F_0$ be the set of all indices $n$ claimed by stronger $\mathcal{D}$-requirement prior to stage $s_0$, and $F_1$ the set of all indices $n$ claimed by stronger $\mathcal{R}$-requirements. By choice of $s_0$, none of these requirements acts at any stage $s \geq s_0$, so their claims in $F_0 \cup F_1$ are permanent and none of them claims any other numbers. By our earlier observations, if $n \in F_0$ then $V_n$ is a singleton, and if $n \in F_1$ is even and $n \neq \overline{n}$ then $V_n$ and $V_{\overline{n}}$ are disjoint. Moreover, by construction, if $n \in F_1$ then $V_n$ is cofinite, so if $n$ is even and $n \neq \overline{n}$ then $V_{\overline{n}}$ is finite. Let $m$ be the maximum of all $V_n$ for $n \in F_0$ and all $V_{\overline{n}}$ for even $n \in F_1$ with $n \neq \overline{n}$. Since $Y$ is infinite, we can choose $x > \max\set{m,s_0}$ in $Y$. Then if $\Phi_u(x) \converges = \seq{n_0,\ldots,n_r}$ we have $x \in V_{n_i}$ for all $i \leq r$, and hence it cannot be that $n_i \in F_0$ or $\overline{n_i} \in F_1$ if $n_i$ is even and $n_i \neq \overline{n_i}$. We conclude that, if we take the least $s \geq s_0$ for which there exists $x,\seq{n_0,\ldots,n_r} < s$ with these properties, and such that $\Phi_e(x)[s] \converges = 1$ and $\Phi_u(x)[s] \converges = \seq{n_0,\ldots,n_r}$, then $\mathcal{R}_{e,u}$ will require attention at stage $s$. It will then claim $n_0,\ldots,n_r$, and by choice of $s_0$, these claims will be permanent. Hence, $V_{n_i}$ will be cofinite for all $i \leq r$, proving that $\bigcap_{i \leq r} V_{n_i}$ cannot be $\set{x}$ after all. The proof is complete.
\end{proof}

\begin{corollary}\label{cor:Q1_partial}
	There exists an $\omega$-model of $\RCA_0$ which does not satisfy the statement that every infinite effectively Hausdorff CSC space has an infinite effectively discrete subspace.
\end{corollary}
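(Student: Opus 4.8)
The plan is to exhibit the recursive model $\mathrm{REC}$---the $\omega$-model of $\RCA_0$ whose second-order part consists of exactly the computable subsets of $\omega$---as the required witness. Let $\seq{\omega,\topol{U},k}$ be the infinite computable effectively Hausdorff CSC space produced by \Cref{thm:Haus_to_disc_no_RCA}, which is discrete but has no infinite computable effectively discrete subspace. Since $\omega$, $\topol{U}$, and $k$ are all computable, this tuple is coded by sets lying in $\mathrm{REC}$, and from the internal perspective of $\mathrm{REC}$ it is an infinite effectively Hausdorff CSC space. We use the familiar fact that $\mathrm{REC} \models \RCA_0$.

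Next I would verify that $\mathrm{REC}$ contains no infinite effectively discrete subspace of $\seq{\omega,\topol{U},k}$. Unwinding \Cref{def:subspace} and \Cref{def:top_disc_indisc}, an effectively discrete subspace coded in $\mathrm{REC}$ is the same thing as an infinite set $Y \subseteq \omega$ together with a function $d : Y \to \mathbb{N}$, with both $Y$ and the graph of $d$ computable, such that $U_{d(x)} \cap Y = \set{x}$ for every $x \in Y$---that is, precisely an infinite computable effectively discrete subspace of $\seq{\omega,\topol{U},k}$. By the choice of the space, no such $Y$ and $d$ exist. Hence $\mathrm{REC}$ fails the statement that every infinite effectively Hausdorff CSC space has an infinite effectively discrete subspace, while still being a model of $\RCA_0$, which is exactly the claim.

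There is no real obstacle here; the only point requiring care is the bookkeeping that matches the internal notion of an ``effectively discrete subspace in $\mathrm{REC}$'' with the external notion of an ``infinite computable effectively discrete subspace.'' The key observation is that an $\omega$-model interprets the second-order existential quantifier over its own collection of sets, so the witnessing function $d$---and not merely the underlying set $Y$---must belong to $\mathrm{REC}$, i.e., be computable. It is exactly such pairs, consisting of a computable set $Y = \Phi_e$ and a computable witness $d = \Phi_u$, that the requirements $\mathcal{R}_{e,u}$ in the proof of \Cref{thm:Haus_to_disc_no_RCA} are designed to diagonalize against, so the verification is immediate once this correspondence is noted.
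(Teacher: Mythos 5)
Your proposal is correct and is exactly the intended argument: the paper derives the corollary immediately from \Cref{thm:Haus_to_disc_no_RCA} by taking the $\omega$-model $\mathrm{REC}$ of computable sets, in which the computable space (with its computable Hausdorff witness) lives, while any internally-coded effectively discrete subspace would consist of a computable $Y$ and computable $d$, which the requirements $\mathcal{R}_{e,u}$ rule out. Your bookkeeping point about the witness $d$ also having to lie in the model is precisely the right observation.
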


\section{$T_1$ spaces}\label{sec:T1}

\subsection{An alternative proof of $\GS$ for $T_1$ CSC spaces}

We now return to studying the Ginsburg--Sands theorem for $T_1$ CSC spaces. We begin with an alternative proof of the $T_1$ case. Although not directly relevant to the rest of our discussion, the proof is interesting in its own right because it is more direct than the original. The following definition and lemma will be useful.

\begin{definition}
	Let $\seq{X,\topol{U},k}$ be a $T_1$ CSC space.
	\begin{enumerate}
		\item A pair of elements $x,y \in X$ is a \emph{$T_2$ pair} if there exist $U,V \in \topol{U}$ such that $x \in U$, $y \in V$, and $U \cap V = \emptyset$.
		\item $X$ is \emph{pure $T_1$} if it contains no $T_2$ pairs. 
	\end{enumerate}
\end{definition}

\begin{lemma}\label{lem:pureT1_cof}
	The following is provable in $\RCA_0$. Let $\seq{X,\topol{U},k}$ be an infinite computable pure $T_1$ CSC space.
	\begin{enumerate}
		\item For each $U \in \topol{U}$, if $U$ is nonempty then $U$ is infinite.
		\item If $F$ is a nonempty finite set such that $U_n \neq \emptyset$ for all $n \in F$, then $\bigcap_{n \in F} U_n$ is infinite.
		\item $X$ has an infinite subspace with the cofinite topology.
	\end{enumerate}
\end{lemma}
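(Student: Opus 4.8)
The plan is to prove (1), (2), and (3) in order, with each building on the previous. For (1), I would argue the contrapositive: suppose some nonempty $U \in \topol{U}$ is finite, and derive that $X$ has a $T_2$ pair, contradicting purity. If $U$ is finite and nonempty, pick $x \in U$. Since $X$ is $T_1$, \Cref{lem:T_1_finite_is_singleton} gives $\set{x} \in \topol{U}$ (as $x$ lies in the finite basic open set $U$). Now take any $y \in X$ with $y \neq x$; again since $X$ is $T_1$ there is $V \in \topol{U}$ with $y \in V$ and $x \notin V$. Then $\set{x}$ and $V$ are disjoint basic open sets separating $x$ and $y$, so $\set{x,y}$ is a $T_2$ pair, contradiction. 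Hence every nonempty $U \in \topol{U}$ is infinite. (Here I am using that $X$ is infinite, so such a $y$ exists.)

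For (2), given a nonempty finite $F$ with each $U_n \neq \emptyset$ for $n \in F$, I would apply $\overline{k}$ from \Cref{lem:kStar}. Since $X$ is infinite and each $U_n$ is infinite by part (1), I want to show $\bigcap_{n \in F} U_n$ is infinite. Suppose not; then $\bigcap_{n \in F} U_n$ is finite (possibly empty). If it is nonempty, pick $x$ in it; then $x \in U_{\overline{k}(x,F)} \subseteq \bigcap_{n \in F} U_n$, so $U_{\overline{k}(x,F)}$ is a finite nonempty element of $\topol{U}$, contradicting part (1). If $\bigcap_{n \in F} U_n = \emptyset$, then fixing $x \in U_n$ for the various $n \in F$: actually the cleanest route is to induct on $|F|$ using the $k$ function directly. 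For $|F| = 1$ this is part (1). For the inductive step, write $F = F' \cup \set{n}$; by induction $\bigcap_{m \in F'} U_m$ is infinite, and I claim $\bigcap_{m \in F} U_m$ is infinite. If it were finite, then since $X$ is pure $T_1$, no point of $\bigcap_{m \in F'} U_m$ can be separated from the infinitely many points of $U_n$... — more carefully, if $\bigcap_{m \in F} U_m$ is finite and nonempty we again get a finite nonempty basic open set via $\overline{k}$, contradiction; and if it is empty, pick $x \in \bigcap_{m \in F'} U_m$ and $y \in U_n$ (both exist, both these sets infinite), then $x \notin U_n$; by $T_1$ and $\overline{k}$ we can find basic open $U \ni x$, $V \ni y$, and purity forces $U \cap V \neq \emptyset$ — but we need disjointness, so this needs care. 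The real point is simpler: if $x \in \bigcap_{m \in F'} U_m \setminus U_n$, then $x$ and any point of $U_n$ would be separated by $U_{\overline{k}(x,F' \cup \set{n'})}$ for a suitable $n'$ witnessing $T_1$-separation and $U_n$ — I would streamline this into: in a pure $T_1$ space, if $x \notin U_n$ for $U_n \neq \emptyset$, then purity is contradicted, because $x \in \set{x}$ when $\set{x}$ is open (which happens iff $x$ lies in a finite basic open set). So the induction really just reduces everything to part (1) via $\overline{k}$, and the case analysis collapses.

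For (3), I would build an infinite subspace $Y$ with the cofinite topology by a primitive recursion, enumerating $Y = \set{y_0 < y_1 < \cdots}$ and a sequence of basic open sets witnessing cofiniteness. At stage $s$, having chosen $y_0 < \cdots < y_s$, I would use that for each basic open set $U \in \topol{U}$ that has been ``activated'' so far, $U$ is infinite (part (1)) and finite intersections of such are infinite (part (2)), to pick $y_{s+1}$ larger than $y_s$ inside an appropriate cofinite-in-$Y$ set; the key structural fact to extract is that in the subspace $Y$, every nonempty basic open set $U \cap Y$ is cofinite in $Y$. This holds because $U \cap Y$ nonempty means $U$ nonempty, hence $U$ infinite; and purity propagates to $Y$ by \Cref{t:all_preserved}, so $Y$ is again pure $T_1$, so by part (1) applied within $Y$, $U \cap Y$ is infinite; but I need \emph{cofinite}, not just infinite, which requires a genuine construction controlling which points land outside which basic open sets. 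The cleanest approach: enumerate, for each $n$ such that $U_n$ will eventually be seen to be nonempty, we must not wait — instead, at stage $s$ diagonalize by ensuring $y_{s+1} \in \bigcap\set{U_n : n \le s,\ U_n \cap \set{y_0,\dots,y_s} \neq \emptyset}$, which is infinite by (2), so that for each $n$, once $U_n$ contains some $y_i$, it contains all later $y_j$, i.e. $U_n \cap Y$ is cofinite in $Y$. Then the basic open sets of $Y$ are exactly $\emptyset$ and cofinite subsets (every $x \in Y$ is in some nonempty $U_n$ since $\topol{U}$ is a basis, and that $U_n \cap Y$ is cofinite), so $Y$ has the cofinite topology in the sense of \Cref{def:top_cofinite}.

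The main obstacle I anticipate is part (3): parts (1) and (2) are essentially immediate applications of \Cref{lem:T_1_finite_is_singleton} and \Cref{lem:kStar} together with an appeal to purity, but (3) requires a careful primitive recursion ensuring that the constructed subspace has \emph{cofinite} (not merely co-infinite-or-full) basic open sets, and verifying in $\RCA_0$ that the resulting set satisfies \Cref{def:top_cofinite} — in particular that every point of $Y$ lands in some nonempty basic open set and that all such traces are cofinite in $Y$, which is exactly what the ``eventual containment'' invariant of the construction buys us.
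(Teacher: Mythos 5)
Parts (1) and (3) of your proposal are essentially sound and close to the paper's own argument: your proof of (1) is the paper's, and your stage-by-stage construction in (3), which puts $y_{s+1}$ into $\bigcap\set{U_n : n \le s,\ U_n \cap \set{y_0,\dots,y_s} \neq \emptyset}$, is a minor variant of the paper's construction (which enumerates the nonempty basic open sets by a function $f$ and puts the $j$-th point into $\bigcap_{i \le j} U_{f(i)}$); both ensure that every nonempty trace $U_n \cap Y$ is cofinite in $Y$. The genuine gap is in part (2), specifically in showing that $\bigcap_{n \in F} U_n$ is \emph{nonempty}. Your $\overline{k}$ argument correctly rules out a finite nonempty intersection, but for the empty case you first observe that you ``need disjointness'' and then resolve the difficulty with the claim that in a pure $T_1$ space, $x \notin U_n$ for a nonempty $U_n$ already contradicts purity. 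That claim is false: the cofinite topology on $\mathbb{N}$ is pure $T_1$, and $\mathbb{N} \setminus \set{x}$ is a nonempty basic open set omitting $x$. The mechanism you invoke ($\set{x}$ being open because $x$ lies in a finite basic open set) also cannot fire, since by part (1) a pure $T_1$ space has no finite nonempty basic open sets at all.

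The missing observation is that the disjointness you were looking for is exactly the hypothesis you are refuting. In the inductive step, suppose $\bigl(\bigcap_{m \in F'} U_m\bigr) \cap U_n = \emptyset$. Pick $x \in \bigcap_{m \in F'} U_m$ (nonempty by the induction hypothesis) and set $W = U_{\overline{k}(x,F')}$, so that $x \in W \subseteq \bigcap_{m \in F'} U_m$ and hence $W \cap U_n = \emptyset$. Then $W$ and $U_n$ are disjoint nonempty basic open sets, so $x$ together with any $y \in U_n$ forms a $T_2$ pair, contradicting purity. This is precisely how the paper argues: a $\Sigma^0_1$ induction on the size of $F$ showing the partial intersections are nonempty (using purity at each step as above), followed by the $\overline{k}$-plus-part-(1) step to upgrade nonempty to infinite. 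With this repair, your proofs of (2) and hence (3) go through.
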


\begin{proof}
	To prove (1), fix any $x \in U \in \topol{U}$. Since $X$ is $T_1$, for each $y \neq x$ there is a $V \in \topol{U}$ containing $y$ but not $x$. Now by \Cref{lem:T_1_finite_is_singleton}, if $U$ were finite we would have $\set{x} \in \topol{U}$, whence $x$ and $y$ would be a $T_2$ pair as witnessed by $\set{x}$ and $V$, which cannot be.
	
	To prove (2), fix a finite set $F = \set{n_0 < \cdots < n_r}$ and suppose $U_{n_i} \neq \emptyset$ for all $i \leq r$. We use $\Sigma^0_1$ induction on $j \leq r$ to show that $\bigcap_{i \leq j} U_{n_i} \neq \emptyset$. This is clear for $j = 0$. Suppose it is true for some $j < r$, fix $x \in \bigcap_{i \leq j} U_{n_i}$, and let $n = \overline{k}(x,\set{n_i : i \leq j})$, where $\overline{k}$ is as given by \Cref{lem:kStar}. Then $x \in U_n \subseteq \bigcap_{i \leq j} U_{n_i}$. Now if $U_n \cap U_{n_{j+1}}$ were empty, then every element of $U_n$ would form a $T_2$ pair with every element of $U_{n_{j+1}}$, which is impossible. Hence, $U_n \cap U_{n_{j+1}} \subseteq \bigcap_{i \leq {j+1}} U_{n_i}$ is nonempty, as desired. We conclude that $\bigcap_{n \in F} U_n \neq \emptyset$. Fix $x$ in this intersection. Then $x \in U_{\overline{k}(x,F)} \subseteq \bigcap_{n \in F} U_n \neq \emptyset$. By (1), $U_{\overline{k}(x,F)}$ is infinite, so $\bigcap_{n \in F} U_n$ is too.
	
	To prove (3), we proceed as follows. Write $\topol{U} = \seq{U_n : n \in \mathbb{N}}$. $\RCA_0$ proves that there exists a function $f : \mathbb{N} \to \mathbb{N}$ such that $\ran(f) = \set{n : U_n \neq \emptyset}$. By (2), $\bigcap_{i \leq j} U_{f(i)}$ is infinite for every $j$. Let $x_0$ be any element of $U_{f(0)}$, and having defined $x_i$ for all $i < j$, let $x_j$ be the least element of $\bigcap_{i \leq j} U_{f(i)}$ larger than all the $x_i$. Let $Y = \set{x_i : i \in \omega}$. Then for each nonempty $U_n$, say with $f(i) = n$, we have that $x_j \in U_{f(i)} = U_n$ for all $j \geq i$. Thus, $Y$ is an infinite computable subspace of $X$ with the cofinite topology, as was to be shown.
\end{proof}



\begin{proof}[Alternative proof of $\GST_1$]
	We argue in $\ACA_0$. Let $\seq{X,\topol{U},k}$ be an infinite $T_1$ CSC space. Using arithmetical comprehension, define the following $2$-coloring of $[X]^2$: for $x < y$, 
	\[
		c(x,y) = 
		\begin{cases}
			0 & \text{if $x,y$ is a $T_2$ pair},\\
			1 & \text{otherwise}.
		\end{cases}
	\]
	Applying $\RT^2_2$, let $H \subseteq X$ be an infinite homogeneous set for $c$. If $H$ is homogeneous for color $0$, then $H$ forms a an infinite Hausdorff subspace of $X$. By \Cref{thm:t2hasdiscrete}, $H$ (and hence $X$) has an infinite discrete subspace. If, instead, $H$ is homogeneous for color $1$, then $H$ is an infinite pure $T_1$ subspace of $X$. By \Cref{lem:pureT1_cof}~(3), $H$ has an infinite subspace with the cofinite topology.	
\end{proof}

Note that the proof above featured a non-effective use of $\RT^2_2$; the coloring $c$ being only computable in the double jump of the space $\seq{X,\topol{U},k}$. Our next goal, therefore, is to understand the relationship between $\GST_1$ and $\RT^2_2$ more precisely.

\subsection{$\GST_1$ and $\RT^2_2$}

We introduce the following notion of ``stability'' for CSC spaces, and shortly show that it behaves much like stability for colorings of pairs.

\begin{definition}
	A CSC space $\seq{X,\topol{U},k}$ is \emph{stable} if for every $x \in X$, either $\set{x} \in \topol{U}$, or every $U \in \topol{U}$ containing $x \in U$ is cofinite. $\SGST_1$ is the restriction of $\GST_1$ to stable CSC spaces.
\end{definition}

\noindent Trivially, $\GST_1$ implies $\SGST_1$ over $\RCA_0$.

\begin{proposition}\label{prop:SGST1+COH_to_GST1}
	Over $\RCA_0$, $\SGST_1 + \COH \to \GST_1$.
\end{proposition}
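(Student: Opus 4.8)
The plan is to imitate the classical decomposition of $\RT^2_2$ into $\SRT^2_2$ and $\COH$: we use $\COH$ to pass to a subspace on which the given $T_1$ space becomes stable, and then we invoke $\SGST_1$. So we argue in $\RCA_0$, assume $\SGST_1$ and $\COH$, and fix an infinite $T_1$ CSC space $\seq{X,\topol{U},k}$ with $\topol{U} = \seq{U_n : n \in \N}$.

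First I would apply $\COH$ to obtain an infinite subspace $Y \subseteq X$ that is cohesive for the sequence of basic open sets. Concretely, since $\RCA_0$ proves that the infinite set $X$ has an increasing enumeration $p : \N \to X$, I would apply $\COH$ to the family $\seq{p^{-1}(U_n) : n \in \N}$ of subsets of $\N$, obtain an infinite cohesive set $G$, and put $Y = \set{p(i) : i \in G}$; then $Y \subseteq X$ is infinite and, by cohesiveness, for every $n$ the basic open set $U_n \cap Y$ of the subspace $\seq{Y, \topol{U} \res Y, k}$ is either finite or cofinite in $Y$. (The enumeration is only a device to keep the cohesive set inside $X$; one could equally apply $\COH$ to $\seq{U_n : n \in \N}$ together with $X$ and delete the finitely many points falling outside $X$.)

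Next I would check that $\seq{Y, \topol{U}\res Y, k}$ is a \emph{stable} $T_1$ CSC space. It is $T_1$ by \Cref{t:all_preserved}(1), and hence a CSC space in its own right. For stability, fix $x \in Y$. If $x$ belongs to some finite element of $\topol{U}\res Y$, then \Cref{lem:T_1_finite_is_singleton}, applied inside the $T_1$ space $Y$, yields $\set{x} \in \topol{U}\res Y$. Otherwise every element of $\topol{U}\res Y$ containing $x$ is infinite, and so, by the previous paragraph, cofinite in $Y$. In either case the defining condition of stability holds at $x$. I would then apply $\SGST_1$ to $Y$ to get an infinite subspace $Z$ of $Y$ that is discrete or has the cofinite topology; since a subspace of a subspace is a subspace --- indeed $(\topol{U}\res Y)\res Z = \topol{U}\res Z$ because $Z \subseteq Y$ --- this $Z$ is an infinite subspace of $X$ of one of the desired forms, hence a solution to $X$ as an instance of $\GST_1$.

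I do not anticipate a genuine obstacle: this is a routine ``stable plus cohesive'' argument, and the only step needing any care is the stability verification, which rests squarely on \Cref{lem:T_1_finite_is_singleton}. One should also confirm that the whole argument goes through with only $\Sigma^0_1$ induction, but this is immediate: \Cref{lem:T_1_finite_is_singleton} is already stated over $\RCA_0$, the existence of the subspace $\seq{Y,\topol{U}\res Y,k}$ is provable in $\RCA_0$ by the remarks following \Cref{def:subspace}, and no induction beyond what those results already use is required.
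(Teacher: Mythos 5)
Your proposal is correct and follows essentially the same route as the paper: apply $\COH$ to the basic open sets, verify via \Cref{lem:T_1_finite_is_singleton} that the resulting cohesive subspace is a stable $T_1$ CSC space, and then apply $\SGST_1$. The only difference is your explicit enumeration device to keep the cohesive set inside $X$, a technicality the paper's proof passes over silently.
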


\begin{proof}
	We argue in $\RCA_0 + \SGST_1 + \COH$. Let $\seq{X,\topol{U},k}$ be any $T_1$ CSC space. Apply $\COH$ to $\topol{U} = \seq{U_n : n \in \mathbb{N}}$, obtaining an infinite $\topol{U}$-cohesive set $Y$. Thus, for each $n$, either $Y \subseteq^* \overline{U_n}$ or $Y \subseteq^* U_n$. We claim that $Y$ is a stable subspace of $X$. Indeed, fix any $x \in Y$. If there is an $n$ such that $x \in U_n$ and $Y \subseteq^* \overline{U_n}$ then $Y \cap U_n$ is finite. Because $Y$ is $T_1$ (being a subspace of $X$), it follows by \Cref{lem:T_1_finite_is_singleton} that $\set{x} \in \topol{U} \res Y$. Otherwise, for every $n$ with $x \in U_n$ we have that $Y \subseteq^* U_n$, so $Y \cap U_n =^* Y$. This proves the claim. Now apply $\SGST_1$ to $Y$ to obtain a subspace $Z$ which is either discrete or has the cofinite topology.
\end{proof}

We now prove the converse, thereby obtaining a decomposition of $\GST_1$ into $\SGST_1 + \COH$, analogous to the Cholak--Jockusch--Slaman decomposition of $\RT^2_2$ into $\SRT^2_2 + \COH$. It is enough to show that $\GST_1$ implies $\COH$, which we obtain as follows.

\begin{theorem}\label{T:ADS_ured_GST1}
	Over $\RCA_0$, $\GST_1 \to \ADS$.	
\end{theorem}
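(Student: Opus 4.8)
The plan is to encode the given linear order as a $T_1$ CSC space in such a way that, for $T_1$ spaces, the five possible conclusions of $\GST_1$ collapse to just ``discrete'' or ``has the cofinite topology'', with a discrete subspace being literally a descending sequence under $\leq_L$ and a cofinite subspace yielding an ascending one. Concretely, given an infinite linear order $(L,\leq_L)$ with $L\subseteq\mathbb{N}$, for each $x\in L$ and $s\in\mathbb{N}$ I would set
\[
  V_{\seq{x,s}}=\set{x}\cup\set{y\in L: y>s \text{ and } y>_L x},
\]
put $V_n=L$ for any index $n$ not of the form $\seq{x,s}$ with $x\in L$, and let $\seq{L,\topol{U},k}$ be the CSC space generated by $\seq{V_n:n\in\mathbb{N}}$ via \Cref{prop:dorais_generate}. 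This space is $T_1$, and checking this uses nothing about the order: for distinct $x,y\in L$ and any $s\geq\max(x,y)$, the set $V_{\seq{x,s}}$ contains $x$ but not $y$, while $V_{\seq{y,s}}$ contains $y$ but not $x$. The asymmetry of the definition (only a numerically large upward $\leq_L$-tail is attached to each point) is the crucial feature: a point $x$ will ``look isolated'' in a subspace exactly when it has only finitely many $\leq_L$-successors there.

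The heart of the argument is a dichotomy for an arbitrary infinite subspace $Y$ of $\seq{L,\topol{U},k}$ and a point $x\in Y$. First, suppose some basic open $U\in\topol{U}$ satisfies $U\cap Y=\set{x}$. By \Cref{prop:dorais_generate}(2) we may write $U=\bigcap_{j\in F}V_{\seq{x_j,s_j}}$ for a finite nonempty set of pairs $\seq{x_j,s_j}$ with $x_j\in L$ (indices contributing $L$ can be dropped, and at least one genuine pair must remain since $U\neq L$). Since $x\in V_{\seq{x_j,s_j}}$ we get $x\geq_L x_j$ for each $j$, so if $x^*$ is the $\leq_L$-largest of the $x_j$ and $S=\max_j s_j$, then $U\supseteq\set{z\in L: z>S \text{ and } z>_L x^*}\supseteq\set{z\in L: z>S \text{ and } z>_L x}$; as $U\cap Y=\set{x}$ and $x\not>_L x$, this forces $\set{z\in Y: z>_L x}\subseteq\set{0,\dots,S}$, a finite set. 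Second, suppose instead that every nonempty basic open of the subspace $Y$ is infinite — which holds in particular when $Y$ has the cofinite topology. Then each $V_{\seq{x,s}}\cap Y$ is infinite, so $\set{z\in Y: z>s \text{ and } z>_L x}\neq\emptyset$ for every $s$, i.e.\ $\set{z\in Y: z>_L x}$ is infinite. (If one prefers to phrase the second clause in terms of $\set{x}$ failing to be open in $Y$, the passage to ``no finite basic open of $Y$ around $x$'' is \Cref{lem:T_1_finite_is_singleton}.)

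Now I would apply $\GST_1$ to $\seq{L,\topol{U},k}$ to obtain an infinite subspace $Y$. Since $Y$ is $T_1$ by \Cref{t:all_preserved}, it cannot be indiscrete nor have the initial or final segment topology (in each of those an infinite $T_1$ space would contain two points with no separating basic open), so $Y$ is discrete or has the cofinite topology. If $Y$ is discrete, then $\set{x}\in\topol{U}\res Y$ for every $x\in Y$, so the first clause of the dichotomy applies to each $x$ and shows $\set{z\in Y: z\geq_L x}$ is finite; since $Y$ is automatically a chain, $Y$ is itself a descending sequence under $\leq_L$. If $Y$ has the cofinite topology, then the second clause applies to each $x\in Y$, and a straightforward primitive recursion inside $Y$ — starting from any $y_0\in Y$ and letting $y_{n+1}$ be the least element of $Y$ that is both numerically and $\leq_L$-larger than $y_n$ — produces an infinite ascending sequence $\set{y_n:n\in\mathbb{N}}$ under $\leq_L$. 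In either case we obtain a solution to $(L,\leq_L)$ as an instance of $\ADS$.

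The main obstacle is the design of the space: natural ``order topology'' encodings of $L$ either become (effectively) discrete as soon as $\leq_L$ has immediate successors or become pure $T_1$, and in neither case does a discrete or cofinite subspace retain enough order information — getting an encoding for which \emph{both} outcomes of $\GST_1$ are informative is the real point, and is what the asymmetric tails $V_{\seq{x,s}}$ accomplish. The remaining work is routine bookkeeping: one checks that decoding a basic open as $\bigcap_{j\in F}V_{\seq{x_j,s_j}}$, forming the $\leq_L$-largest element of a finite set, verifying termination of the recursion, and forming the range of the resulting strictly increasing function all go through in $\RCA_0$; they do, using only $\I\Sigma^0_1$, so — in contrast with \Cref{thm:ACA_to_GS} — no bounding principle is needed here.
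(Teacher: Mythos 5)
Your proof is correct and follows essentially the same route as the paper: encode the linear order as a $T_1$ CSC space generated from order cones, apply $\GST_1$, use $T_1$-ness to discard the indiscrete/initial/final outcomes, and read an ascending or descending sequence off the discrete and cofinite cases. The only difference is the choice of generators --- the paper uses the downward cones $\set{w : w \leq_L x}$ and their punctured variants, so that a discrete subspace is literally an ascending sequence and a cofinite one literally a descending sequence, whereas your numerically truncated upward tails reverse the correspondence and force a short extra recursion to extract the ascending sequence in the cofinite case --- but this is a dual encoding of the same idea, and all the $\RCA_0$ bookkeeping you cite goes through.
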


\begin{proof}
	We argue in $\RCA_0$. Fix an infinite linear order $\seq{L,\leq_L}$. We define a $T_1$ CSC space with the property that any infinite discrete subspace is an ascending sequence under $\leq_L$, and any infinite subspace with the cofinite topology is a descending sequence. This yields $\ADS$.
	
	For each $x \in L$, let
	\[
		V_{\seq{x,x}} = \set{w \in L: w \leq_L x},
	\]
	and for each $y \neq x$ in $L$, let
	\[
		V_{\seq{x,y}} = \set{w \in L: w \leq_L x} \smallsetminus \set{y}.
	\]
	Let $\seq{L,\topol{U},k}$ be the CSC space on $L$ generated by $\seq{V_n : n \in \mathbb{N}}$. Note that $X$ is $T_1$: if $x \neq y$, then $V_{\seq{x,y}}$ contains $x$ but not $y$, and $V_{\seq{y,x}}$ contains $y$ but not $x$.
	
	Now, consider any infinite subspace $Y$ of $X$ with either the discrete or cofinite topology. We consider the two cases separately.
	
	If $Y$ has the discrete topology, then for every $x \in Y$ there is a $U \in \topol{U}$ such that $U \cap Y = \set{x}$. Then there is a finite set $F$ such that $U = \bigcap_{y \in F} V_{\seq{x,y}}$. (For if $x \in V_{\seq{z,y}}$ for some $z$ then necessarily $x \leq_L z$ and so $V_{\seq{x,y}} \subseteq V_{\seq{z,y}}$.) It follows that $|\set{w \in Y : w \leq_L x}| \leq |F|$, so $x$ has only finitely many $\leq_L$-predecessors in $Y$. Hence, $Y$ is an ascending sequence under $\leq_L$.
	
	If $Y$ has the cofinite topology, we have instead that for every $x \in Y$ and every $U \in \topol{U}$, if $x \in U$ then $U \cap Y =^* Y$. In particular, $V_{\seq{x,x}} \cap Y =^* Y$, meaning that almost every $z \in Y$ satisfies $z \leq_L x$. In other words, $x$ has only finitely many $\leq_L$-successors in $Y$. We conclude that $Y$ is a descending sequence under $\leq_L$, as was to be shown.
\end{proof}

\begin{corollary}\label{GST_1_decomp}
	Over $\RCA_0$, $\GST_1 \iff \SGST_1 + \COH$.
\end{corollary}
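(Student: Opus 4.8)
The plan is to assemble the corollary entirely from results already established, so the "proof" will be short. The backward direction, $\SGST_1 + \COH \to \GST_1$, is precisely \Cref{prop:SGST1+COH_to_GST1}, and nothing further is needed there. For the forward direction, $\GST_1 \to \SGST_1 + \COH$, I would argue in $\RCA_0$ assuming $\GST_1$ and verify the two conjuncts separately. That $\GST_1 \to \SGST_1$ is immediate, since every stable CSC space is in particular a $T_1$ CSC space (an instance of $\GST_1$), and indeed this was already noted right after the definition of $\SGST_1$. For the second conjunct, I would invoke \Cref{T:ADS_ured_GST1}, which gives $\GST_1 \to \ADS$ over $\RCA_0$, and then appeal to the theorem of Hirschfeldt and Shore (\cite{HS-2007}, Proposition~2.10) that $\ADS \to \COH$ over $\RCA_0$; composing these yields $\GST_1 \to \COH$.

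Putting the pieces together: $\GST_1$ implies $\SGST_1$ trivially and implies $\COH$ via $\ADS$, hence $\GST_1 \to \SGST_1 + \COH$; conversely $\SGST_1 + \COH \to \GST_1$ by \Cref{prop:SGST1+COH_to_GST1}. Since each component implication is provable in $\RCA_0$, the equivalence holds over $\RCA_0$, as claimed.

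There is no real obstacle at this stage; all the work has been front-loaded into \Cref{prop:SGST1+COH_to_GST1} (the nontrivial cohesive-to-stable argument) and \Cref{T:ADS_ured_GST1} (the CSC-space construction coding a linear order). The only thing worth a word of care is the choice of route to $\COH$: rather than trying to extract cohesion directly from $\GST_1$, it is cleanest to pass through $\ADS$, exactly because the sole purpose of \Cref{T:ADS_ured_GST1} and the preamble "It is enough to show that $\GST_1$ implies $\COH$" was to set this up. One might remark, for emphasis, that this mirrors the Cholak--Jockusch--Slaman decomposition $\RT^2_2 \iff \SRT^2_2 + \COH$, with $\SGST_1$ playing the role of $\SRT^2_2$.
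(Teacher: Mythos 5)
Your proposal is correct and matches the paper's argument exactly: the paper also obtains $\GST_1 \to \COH$ by composing \Cref{T:ADS_ured_GST1} with the Hirschfeldt--Shore result that $\ADS \to \COH$, notes that $\GST_1 \to \SGST_1$ is trivial, and cites \Cref{prop:SGST1+COH_to_GST1} for the converse. No gaps.
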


\begin{proof}
	As discussed in \Cref{sec:intro}, $\ADS$ implies $\COH$ by \cite[Proposition 2.10]{HS-2007}, so by the preceding theorem $\GST_1$ implies $\COH$. Since $\GST_1$ implies $\SGST_1$, the result now follows by \Cref{prop:SGST1+COH_to_GST1}.
\end{proof}

To gain further insights, we now introduce a definition that will allow us to formulate a principle that behaves with respect to $\SGST_1$ as $\D^2_2$ does with respect to $\SRT^2_2$.

\begin{definition}
	Fix $n \geq 1$.
	\begin{itemize}
		\item $\Sigma^0_n\SubP$ is the scheme consisting of all formulas of the form
		\[
			(\exists Y)[Y \text{ is infinite } \wedge ((\forall x)[x \in Y \to \varphi(x)] \vee (\forall x)[x \in Y \to \neg \varphi(x)])],
		\]
		where $\varphi(x)$ is a $\Sigma^0_n$ formula in the language of second-order arithmetic.
		\item $\Delta^0_n\SubP$ is the scheme consisting of all formulas of the form
		\begin{align*}
		(\forall x)[\varphi(x) \iff \psi(x)] \to\\
		(\exists Y)[Y \text{ is infinite } \wedge ((\forall x)[x \in Y \to \varphi(x)] \vee (\forall x)[x \in Y \to \neg \varphi(x)])],
		\end{align*}
		where $\varphi(x)$ and $\psi(x)$ are $\Sigma^0_n$ formulas in the language of second-order arithmetic.
	\end{itemize}
\end{definition}

\noindent As discussed in the introduction, $\D^2_2$ is equivalent over $\RCA_0$ to the principle $\Delta^0_2\SubP$. We get the following similar and surprising characterization of $\SGST_1$.

\begin{proposition}\label{prop:sig2_sgst_equiv}
	Over $\RCA_0$, $\SGST_1 \iff \Sigma^0_2$-$\mathsf{Subset}$.
\end{proposition}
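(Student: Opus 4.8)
The plan is to prove the two implications separately, in each case translating between a stable $T_1$ CSC space and a $\Sigma^0_2$ predicate whose "big side" is the complement. Recall that in a stable CSC space $\seq{X,\topol{U},k}$, each point $x$ falls into exactly one of two types: either $\set{x}\in\topol{U}$ (call $x$ \emph{isolated}), or every basic open set containing $x$ is cofinite (call $x$ \emph{generic}); and the set of generic points, if infinite, carries the cofinite subspace topology (using \Cref{lem:T_1_finite_is_singleton} and the stability hypothesis), while the set of isolated points, if infinite, may or may not form a discrete subspace — that is exactly the content being measured. The key observation is that "$x$ is isolated" is a $\Sigma^0_2$ property of $x$ (there exists $n$ with $x\in U_n$ and $U_n$ finite, the latter being $\Sigma^0_1$), and dually "$x$ is generic" is $\Pi^0_2$; moreover, by stability these two are exact complements. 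So $\SGST_1$ applied to such a space asks for an infinite subset lying entirely inside the isolated points or entirely inside the generic points — but one must be a bit careful, since an infinite subset of the isolated points need not be discrete as a subspace.

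For the forward direction ($\SGST_1\to\Sigma^0_2\SubP$), I would take a $\Sigma^0_2$ formula $\varphi(x)$, write it in the normal form $(\exists m)\theta(x,m)$ with $\theta\in\Pi^0_1$, and build a CSC space on $\mathbb{N}$ generated by sets designed so that $x$ is isolated iff $\varphi(x)$ holds. Concretely, for each $x$ and each $m$ with $\theta(x,m)$ witnessed (in a $\Pi^0_1$ sense), one wants a basic open set equal to $\set{x}$; and for each $x$ with $\neg\varphi(x)$, every basic open set containing $x$ should be cofinite. A construction in the spirit of those used for \Cref{prop:ACA_disc_to_eff_disc} and \Cref{T:ADS_ured_GST1} should work: put $V_{\seq{x,s}}=\set{x}\cup\set{z>s : \neg\theta(x,s)\text{ holds "so far"}}$ or a similar stage-by-stage definition so that $V_{\seq{x,s}}$ collapses to $\set{x}$ exactly when $(\exists m\leq s)\theta(x,m)$ becomes permanent. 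Verify in $\RCA_0$ that the resulting space is $T_1$ and stable, apply $\SGST_1$ to get an infinite subspace $Y$ that is discrete or cofinite, and read off: if $Y$ is discrete then every $y\in Y$ is isolated so $Y\subseteq\set{x:\varphi(x)}$; if $Y$ is cofinite then every $y\in Y$ is generic so $Y\subseteq\set{x:\neg\varphi(x)}$. Either way $Y$ is an infinite homogeneous set for $\varphi$.

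For the reverse direction ($\Sigma^0_2\SubP\to\SGST_1$), given a stable $T_1$ CSC space $\seq{X,\topol{U},k}$, apply $\Sigma^0_2\SubP$ to the $\Sigma^0_2$ formula $\varphi(x)\equiv$ "$x$ is isolated", obtaining an infinite $Y\subseteq X$ with either $Y\subseteq\set{x:\varphi(x)}$ or $Y\subseteq\set{x:\neg\varphi(x)}$. In the second case every point of $Y$ is generic, so every basic open set meeting $Y$ at such a point is cofinite in $X$, hence cofinite in $Y$, and one checks the subspace $Y$ has the cofinite topology. In the first case every point of $Y$ is isolated in $X$, but I still need a \emph{discrete} subspace, so I thin $Y$: enumerate $Y=\set{y_0<y_1<\cdots}$, and for each $i$ search for an index $n_i$ with $U_{n_i}=\set{y_i}$ (or at least $U_{n_i}\cap Y=\set{y_i}$), passing to a subsequence if necessary. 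The delicate point — and the step I expect to be the main obstacle — is that finding these singleton indices and verifying infiniteness of the thinned set must be carried out in $\RCA_0$, i.e. with only $\Sigma^0_1$ induction; this is precisely the analogue of the Chong–Lempp–Yang phenomenon that makes $\SRT^2_2\iff\D^2_2$ nontrivial. I would handle it by the same kind of bounded-search-and-bookkeeping argument: use \Cref{lem:T_1_finite_is_singleton} to replace "isolated" by "lies in some finite basic open set", then recursively pick $y_{i}$ together with a finite $F_i$ with $\bigcap_{n\in F_i}U_n=\set{y_i}$, taking each next point beyond the (finite, already-constructed) previous basic sets so the chosen basic open sets genuinely isolate each point within the subspace. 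If this direct thinning resists a low-induction proof, the fallback is to route through \Cref{GST_1_decomp}-style reasoning or to invoke the Chong–Lempp–Yang theorem itself, noting that an "infinite limit-isolated set" can be thinned to an infinite genuinely-discrete one with only $\I\Sigma^0_1$.
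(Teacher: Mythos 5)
Your forward direction is essentially the paper's own argument: generate a CSC space from sets consisting of a distinguished point $x$ together with a tail that is nonempty exactly when the $\Pi^0_1$ matrix of $\varphi(x)$ fails for the given witness, so that $\set{x}$ is a basic open set iff $\varphi(x)$, and otherwise every generator containing $x$ is a point plus a final segment. When you write up the stability check, note that it is precisely this point-plus-final-segment shape that lets $\RCA_0$ verify that a \emph{finite intersection} of the cofinite generators is again cofinite (take, for each generator, its least element above its distinguished point); the paper's sets $V_{\seq{x,y,s}}$ are designed for exactly this, and the extra index $s$ is also what yields $T_1$.

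The genuine problem is in your reverse direction. You assert that an infinite set of isolated points ``may or may not form a discrete subspace'' and therefore launch a thinning argument in the style of Chong--Lempp--Yang. This is a misconception: if every $y \in Y$ satisfies $\varphi(y)$, i.e.\ $U_n = \set{y}$ for some $n$, then for that same $n$ we have $U_n \cap Y = \set{y}$, so $Y$ is \emph{already} a discrete subspace --- being discrete (as opposed to effectively discrete) asks only that each singleton be a basic open set of the subspace, not that a function producing the indices exists. So no thinning is needed, and it is fortunate that it is not, because the procedure you sketch --- recursively ``searching'' for indices $n_i$ with $U_{n_i} = \set{y_i}$, or finite $F_i$ with $\bigcap_{n \in F_i} U_n = \set{y_i}$ --- is not a legitimate $\RCA_0$ recursion: recognizing that a basic open set equals a given singleton is $\Pi^0_1$, and producing such indices uniformly is exactly the discrete-to-effectively-discrete problem, which \Cref{prop:ACA_disc_to_eff_disc} shows is equivalent to $\ACA_0$. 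The Chong--Lempp--Yang analogy is a red herring: the only ``limit-homogeneous to homogeneous'' step in this setting is done pointwise inside $\RCA_0$ by \Cref{lem:T_1_finite_is_singleton}, which converts membership in a finite basic open set into a literal singleton basic open set, and no uniformity beyond that is required. With the thinning deleted, your reverse direction coincides with the paper's: apply $\Sigma^0_2\SubP$ to the formula ``$x$ is isolated''; the positive case yields a discrete subspace outright, and the negative case yields a subspace all of whose nonempty basic open sets are cofinite.
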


\begin{proof}
	We argue in $\RCA_0$. First, assume $\SGST_1$. Let a $\Sigma^0_2$ formula $\varphi(x)$ be given. Fix a $\Delta^0_0$ formula $\theta(x,y,z)$ such that $(\forall x)[\varphi(x) \iff (\exists y)(\forall z)\theta(x,y,z)]$. For all $x,y,s \in \mathbb{N}$, define
	\[
		V_{\seq{x,y,s}} = \set{x} \cup \set{w > \max \set{x,s}: (\exists z \leq w)[\neg \theta(x,y,z)]},
	\]
	and observe the following fact about all $x$ and $y$. If $(\forall z)\theta(x,y,z)$ holds, then $V_{\seq{x,y,s}} = \set{x}$ for all $s$. Otherwise, there is a least $z$ such that $\neg \theta(x,y,z)$ holds since $\theta$ is $\Delta^0_0$, and $V_{\seq{x,y,s}} = \set{x} \cup \set{w > \max\set{x,s} : w \geq z}$ for all $s$. In particular, for all $x$ and $y$ either $V_{\seq{x,y,s}} = \set{x}$ for all $s$, or $V_{\seq{x,y,s}}$ is a cofinite subset of $\mathbb{N}$ for all $s$. Moreover, there is a $y$ such that $V_{\seq{x,y,s}} = \set{x}$ for some (equivalently, all) $s$ if and only if $\varphi(x)$ holds.
	
	Let $\seq{\mathbb{N},\topol{U},k}$ be the CSC space generated by $\seq{V_n : n \in \mathbb{N}}$. Fix any $x \in \mathbb{N}$ such that $\neg \varphi(x)$ holds, and any $U \in \topol{U}$ such that $x \in U$. We claim that $U$ is cofinite. Write $U = \bigcap_{n \in F} V_n$ for some nonempty finite set $F$, and for each $n \in F$ write $n = \seq{x_n,y_n,s_n}$. If $x_n = x$, then $V_{\seq{x_n,y_n,s_n}}$ is cofinite by the argument above because $\neg \varphi(x)$ holds. If $x_n \neq x$ then $V_{\seq{x_n,y_n,s_n}} \neq \set{x_n}$ since it contains $x$, hence it is also cofinite. Thus, each $V_n$ for $n \in F$ is cofinite. For each $n \in F$, fix the least $z_n > x_n$ in $V_n$; the set $D = \set{z_n : n \in F}$ exists by $\Delta^0_1$ comprehension and is nonempty. Furthermore, as observed above, $V_n = \set{x_n} \cup \set{w : w \geq z_n}$. Hence, every $w \geq \max D$ belongs to $\bigcap_{n \in F} V_n = U$, which proves the claim.
	
	We conclude that $X$ is stable: if $\varphi(x)$ holds then $V_{\seq{x,y,0}} = \set{x}$ for any $y$ such that $(\exists y)(\forall z)\theta(x,y,z)$ holds; if $\neg \varphi(x)$ then every $U \in \topol{U}$ containing $x$ is cofinite. Moreover, $X$ is $T_1$: given $x \neq y$, we have that $V_{x,0,y}$ contains $x$ but not $y$, while $V_{y,0,x}$ contains $y$ but not $x$. Let $Y$ be any infinite subspace of $X$ with either the discrete or cofinite topology. By construction, in the former case we have that $\varphi(x)$ holds for all $x \in Y$; in the latter case, we have that $\neg \varphi(x)$ holds for all $x \in Y$. Hence, $\Sigma^0_2$-$\mathsf{Subset}$ holds.
	
	Conversely, assume $\Sigma^0_2$-$\mathsf{Subset}$ and let $\seq{X,\topol{U},k}$ be any infinite stable $T_1$ CSC space. Write $\topol{U} = \seq{U_n : n \in \mathbb{N}}$, and let $\varphi(x)$ be the $\Sigma^0_2$ formula $(\exists n)[U_n = \set{x}]$. Notice that by stability of $X$, $\neg \varphi(x)$ holds if and only if every $U \in \topol{U}$ containing $x$ is cofinite. Let $Y$ be any infinite set such that either $\varphi(x)$ holds for all $x \in Y$, or $\neg \varphi(x)$ holds for all $x \in Y$. In the former case, we have that $Y$ forms a discrete subspace of $X$; in the latter, it is a subspace with the cofinite cofinite topology. So, $\SGST_1$ holds. 
\end{proof}

\begin{corollary}\label{SGST1_to_SRT22}
	Over $\RCA_0$, $\SGST_1$ implies $\SRT^2_2$.
\end{corollary}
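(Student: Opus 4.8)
The plan is to chain together results that are already in hand, so the proof is essentially a one-liner. By Proposition~\ref{prop:sig2_sgst_equiv}, $\RCA_0$ proves $\SGST_1 \iff \Sigma^0_2\SubP$, so it suffices to derive $\SRT^2_2$ from $\Sigma^0_2\SubP$ over $\RCA_0$.

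First I would note that $\Sigma^0_2\SubP$ trivially implies $\Delta^0_2\SubP$: an instance of $\Delta^0_2\SubP$ is given by a pair of $\Sigma^0_2$ formulas $\varphi(x),\psi(x)$ with $(\forall x)[\varphi(x)\iff\neg\psi(x)]$, and its conclusion is precisely the instance of $\Sigma^0_2\SubP$ associated to the formula $\varphi$. Since $\Sigma^0_2\SubP$ asserts that conclusion unconditionally, it in particular asserts it under the extra hypothesis, so every instance of $\Delta^0_2\SubP$ follows. Next, as recalled in Section~\ref{sec:intro}, $\RCA_0$ proves $\Delta^0_2\SubP \iff \D^2_2$ (a consequence of Shoenfield's limit lemma), and $\RCA_0$ proves $\D^2_2 \iff \SRT^2_2$ by the theorem of Chong, Lempp, and Yang~\cite{CLY-2010}. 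Concatenating, $\RCA_0 \vdash \SGST_1 \to \Sigma^0_2\SubP \to \Delta^0_2\SubP \to \D^2_2 \to \SRT^2_2$, which is the claim.

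There is no real obstacle here: all the content lives in Proposition~\ref{prop:sig2_sgst_equiv} and in the cited equivalences, and the only point requiring a little care is matching the syntactic shapes of $\Sigma^0_2\SubP$ and $\Delta^0_2\SubP$ and observing that the former, being unconditional, subsumes the conclusions of the latter. One could instead give a direct construction, building from a stable coloring $c:[X]^2\to 2$ a stable $T_1$ CSC space whose discrete subspaces are limit-homogeneous of one color and whose cofinite subspaces are limit-homogeneous of the other, in the style of the space built in the proof of Proposition~\ref{prop:sig2_sgst_equiv}; but routing through $\Sigma^0_2\SubP$ is cleaner and avoids redoing the induction bookkeeping needed to pass from limit-homogeneous to homogeneous sets.
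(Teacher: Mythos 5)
Your proof is correct and follows exactly the paper's route: the paper's proof of this corollary is simply "Immediate, since $\Sigma^0_2\SubP$ obviously implies $\Delta^0_2\SubP$," relying, as you do, on \Cref{prop:sig2_sgst_equiv} together with the equivalences $\Delta^0_2\SubP \iff \D^2_2 \iff \SRT^2_2$ recalled in the introduction. Your spelled-out justification that the unconditional scheme subsumes the conditional one is just a more explicit rendering of the same one-line argument.
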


\begin{proof}
	Immediate, since $\Sigma^0_2\SubP$ obviously implies $\Delta^0_2\SubP$.
\end{proof}

\begin{corollary}\label{cor:GST1_to_RT22}
	Over $\RCA_0$, $\GST_1$ implies $\RT^2_2$.
\end{corollary}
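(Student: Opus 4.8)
The statement is an immediate consequence of the results already assembled in this subsection, so the plan is simply to chain them together. First I would invoke \Cref{GST_1_decomp}, which gives the decomposition $\GST_1 \iff \SGST_1 + \COH$ over $\RCA_0$; in particular $\GST_1$ implies $\SGST_1$ and implies $\COH$. (Alternatively, $\GST_1 \to \SGST_1$ holds trivially because stable $T_1$ CSC spaces form a subclass of all $T_1$ CSC spaces, while $\GST_1 \to \COH$ follows from \Cref{T:ADS_ured_GST1} together with the fact that $\ADS$ implies $\COH$, \cite[Proposition 2.10]{HS-2007}.)

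Next, by \Cref{SGST1_to_SRT22}, $\SGST_1$ implies $\SRT^2_2$ over $\RCA_0$. Combining this with the previous paragraph, $\GST_1$ implies $\SRT^2_2 + \COH$ over $\RCA_0$. Finally, by the Cholak--Jockusch--Slaman decomposition \cite[Lemma 7.11]{CJS-2001}, $\RCA_0$ proves $\RT^2_2 \iff \SRT^2_2 + \COH$, and therefore $\GST_1$ implies $\RT^2_2$.

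There is no genuine obstacle here: all the nontrivial work has been done in establishing \Cref{GST_1_decomp} and \Cref{SGST1_to_SRT22} (which in turn rest on \Cref{prop:sig2_sgst_equiv} and \Cref{T:ADS_ured_GST1}). The only point requiring minor care is to ensure the Cholak--Jockusch--Slaman equivalence is being used in its $\RCA_0$ form, which is precisely how it is stated in \Cref{sec:intro}, so that the whole chain of implications remains valid over $\RCA_0$.
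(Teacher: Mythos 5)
Your proposal is correct and follows essentially the same route as the paper: the paper's proof likewise combines $\GST_1 \to \SGST_1 \to \SRT^2_2$ (\Cref{SGST1_to_SRT22}) with $\GST_1 \to \COH$ (via \Cref{T:ADS_ured_GST1} and $\ADS \to \COH$, as packaged in \Cref{GST_1_decomp}), and then applies the Cholak--Jockusch--Slaman equivalence $\SRT^2_2 + \COH \iff \RT^2_2$ over $\RCA_0$. There is no gap and no circularity in the citations you use.
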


\begin{proof}
	By the preceding corollary, $\GST_1$ implies $\SRT^2_2$. Since $\GST_1$ implies $\COH$, the conclusion now follows from that fact that $\SRT^2_2 + \COH \to \RT^2_2$.
\end{proof}

\noindent We will prove in \Cref{sec:separating} that this implication is strict.

\Cref{GST_1_decomp} has the following additional consequence.

\begin{corollary}\label{cor:GST_1_no_ACA}
	There exists an $\omega$-model satisfying $\RCA_0 + \GST_1$ but not $\WKL_0$.
\end{corollary}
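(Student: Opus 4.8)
The plan is to combine the decomposition $\GST_1 \iff \SGST_1 + \COH$ from \Cref{GST_1_decomp} with a standard iterated $\omega$-model construction. First I would recall that an $\omega$-model $\M$ fails to satisfy $\WKL_0$ as soon as no member of $\M$ is of PA degree: the completions of $\PA$ --- equivalently, the $\set{0,1}$-valued $\DNR$ functions --- form a nonempty $\Pi^0_1$ class whose underlying tree is computable, hence an element of any $\omega$-model of $\RCA_0$, so a path through it in $\M$ would give a set of PA degree in $\M$. Thus it suffices to produce an $\omega$-model $\M \models \RCA_0 + \SGST_1 + \COH$ no member of which is of PA degree; by \Cref{GST_1_decomp} such an $\M$ also models $\GST_1$, and it fails $\WKL_0$ (and hence $\ACA_0$, since $\ACA_0 \to \WKL_0$).

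I would build $\M = \set{X : (\exists n)[X \Tred Z_n]}$ from an increasing sequence $\emptyset = Z_0 \Tred Z_1 \Tred \cdots$ of sets, none of PA degree, with a bookkeeping ensuring that every instance of $\SGST_1$ or of $\COH$ that is computable from some $Z_n$ eventually receives a solution computable from some $Z_m$. Since the PA degrees are closed upward under $\Tred$, every $X \in \M$ --- being computed by a non-PA set $Z_n$ --- is itself not of PA degree, while $\M$ is an $\omega$-model of $\RCA_0 + \SGST_1 + \COH$ by construction. This scheme goes through provided each of $\COH$ and $\SGST_1$ satisfies the following \emph{avoidance} property: for every set $Z$ not of PA degree and every instance $I \Tred Z$, there is a solution $S$ with $Z \oplus S$ not of PA degree. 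For $\COH$ this is a standard consequence of the usual (cone-avoiding, low-over-the-jump) constructions of cohesive sets; so the heart of the argument is to establish it for $\SGST_1$.

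For the $\SGST_1$ case I would pass to $\Sigma^0_2\SubP$ via \Cref{prop:sig2_sgst_equiv}: an instance computable from $Z$ is a $\Sigma^0_2(Z)$ formula $\varphi(x)$, and a solution is an infinite subset of $\set{x : \varphi(x)}$ or of $\set{x : \neg\varphi(x)}$. Fixing $Z$ not of PA degree and such a $\varphi$, at least one of these two sets is infinite, and I would build a solution inside an infinite one by Mathias forcing whose reservoirs are infinite $\Sigma^0_2(Z)$, respectively $\Pi^0_2(Z)$, subsets, meeting for each Turing functional $\Phi$ the requirement that $\Phi^{Z \oplus G}$ not be a $\set{0,1}$-valued $\DNR$ function relative to $Z$ (i.e.\ be partial, take a value outside $\set{0,1}$, or agree with $e \mapsto \Phi_e^Z(e)$ somewhere). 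The main obstacle is that these reservoirs are not $Z$-computable --- they have complexity around $Z'$ --- so the naive dichotomy ``either some extension forces a requirement, or the current reservoir computes a $\DNR$ function relative to $Z$'' does not directly contradict $Z$ being non-PA, since sets of PA degree over $Z$ of this low complexity certainly exist. Overcoming this is the real work: one needs a sufficiently effective form of the forcing --- carefully controlling the complexity of the reservoirs, and exploiting the stability built into $\Sigma^0_2\SubP$ --- so that a failure to force some requirement would make $Z$ \emph{itself} compute a $\DNR$ function relative to $Z$, which is impossible. This preservation of non-PA degrees for $\Sigma^0_2\SubP$ is the one genuinely new ingredient (and fits the ``lowness''-type analysis of the $\Sigma^0_n\SubP$ hierarchy); with it in hand, the density lemmas and the bookkeeping are routine, and the model $\M$ so obtained establishes the statement.
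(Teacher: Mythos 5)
Your overall architecture is the same as the paper's: use \Cref{GST_1_decomp} (via \Cref{prop:SGST1+COH_to_GST1,prop:sig2_sgst_equiv}) to reduce to building an $\omega$-model of $\Sigma^0_2\SubP + \COH$ containing no set of PA degree, and obtain it by an iterated construction $Z_0 \Tred Z_1 \Tred \cdots$ driven by a PA-avoidance property for each of the two principles. But the proposal has a genuine gap exactly at the step you yourself flag as ``the real work'': the avoidance lemma for $\Sigma^0_2\SubP$ is never proved. Your sketch (Mathias forcing with $\Sigma^0_2(Z)$/$\Pi^0_2(Z)$ reservoirs, hoping to arrange that a failure to force a requirement makes $Z$ itself compute a DNC$_2$ function) is only a statement of the difficulty, not a resolution of it: with reservoirs of complexity around $Z'$, the natural failure analysis yields DNC-computing power for $Z'$ or for the reservoir, not for $Z$, and you give no mechanism for pushing this down. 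The missing idea is that no new forcing argument is needed at all: Liu's theorem, already quoted in the paper, gives \emph{strong} PA avoidance for $\RT^1_2$, meaning that for any $Z$ not of PA degree and \emph{any} $2$-coloring $c$ of $\omega$ --- with no constraint whatsoever on the complexity of $c$ --- there is an infinite homogeneous $H$ with $Z \oplus H$ not of PA degree. Viewing the $\Sigma^0_2(Z)$ set $A$ as the coloring $c(x)=1$ iff $x \in A$, an infinite homogeneous set is precisely an infinite subset of $A$ or of $\overline{A}$, so the complexity obstacle you identify simply evaporates; this is exactly how the paper's proof of the corollary proceeds (alternating applications of PA avoidance for $\COH$ and strong PA avoidance for $\RT^1_2$ along the iteration). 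As written, your proposal leaves the central lemma open and, if pursued literally, would amount to reproving a substantial part of Liu's theorem.

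Two smaller points. First, your requirements are stated as ``$\Phi^{Z\oplus G}$ is not a $\set{0,1}$-valued DNR function \emph{relative to $Z$}''; this is the wrong target. To kill $\WKL_0$ you must keep every $Z_n$ from having PA degree over $\emptyset$, and avoiding DNC$_2$ relative to $Z$ does not imply avoiding DNC$_2$ relative to $\emptyset$, so the preservation property must be stated (as in the paper, and in Liu's theorem) with respect to $\emptyset$: $Z \oplus S \not\gg \emptyset$. Second, your remark that PA avoidance for $\COH$ is ``a standard consequence of the usual low-over-the-jump constructions'' is too quick --- controlling the jump of the cohesive set does not by itself prevent PA degree --- though the needed fact is indeed available (the paper cites Liu for it, and it can also be obtained by Mathias forcing with $X$-computable reservoirs).
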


\begin{proof}
	Liu \cite[Theorem 1.5 and Corollary 1.6]{Liu-2012} established that $\COH$ admits \emph{PA avoidance}, and $\RT^1_2$ admits \emph{strong PA avoidance}. This means that for any set $X \not\gg \emptyset$ (i.e., for any set $X$ not of PA degree), every $X$-computable instance of $\COH$ has a solution $Y$ such that $X \oplus Y \not\gg \emptyset$, and every instance of $\RT^1_2$ (computable from $X$ or not) has a solution $H$ such that $X \oplus H \not\gg \emptyset$. We can combine these facts to produce an $\omega$-model of $\Sigma^0_2\SubP + \COH$, and hence (by \Cref{prop:SGST1+COH_to_GST1,prop:sig2_sgst_equiv}) of $\GST_1$, that contains no set of PA degree, and therefore cannot satisfy $\WKL_0$. For completeness, we give the details. We define a sequence $X_0 \Tred X_1 \Tred \cdots$ of subsets of $\omega$, and define (the second-order part of) an $\omega$-model $\mathcal{M}$ to be $\set{Y : (\exists s)(Y \Tred X_s)}$. Let $X_0 = \emptyset$, and suppose inductively that we have defined $X_s \not\gg \emptyset$ for some $s \in \omega$. First, suppose $s = 2\seq{e,t}$, so that $e,t \leq s$ by standard conventions. If $\Phi_e^{X_t}$ defines a family $\vec{R}$ of subsets of $\omega$, then also $\vec{R} \Tred X_s$, and we appeal to PA avoidance of $\COH$ to fix an infinite $\vec{R}$-cohesive set $Y$ such that $X_s \oplus Y \not\gg\emptyset$. We then let $X_{s+1} = X_s \oplus Y$. If $\Phi_e^{X_t}$ does not define a family, we simply let $X_{s+1} = X_s$. Now, suppose $s = 2\seq{e,t}+1$. We consider $W_e^{X_t'}$, the $e$th $\Sigma^0_2$ set relative to $X_t$, and regard it as the instance $c : \omega \to 2$ of $\RT^1_2$ defined by $c(x) = 1$ if and only if $x \in W_e^{X_t'}$. We appeal to strong PA avoidance of $\RT^1_2$ to fix an infinite homogeneous set $H$ for $c$ such that $X_s \oplus H \not\gg \emptyset$, noting that $H$ is an infinite subset of either $W_e^{X_t'}$ or its complement. We then let $X_{s+1} = X_s \oplus H$. This completes the construction of $\mathcal{M}$. Since $X_s \not\gg \emptyset$ for all $s$, $\mathcal{M} \not\models \WKL_0$. To verify that $\mathcal{M}$ satisfies $\COH$, consider any instance $\vec{R}$ of $\COH$ in $\mathcal{M}$. Then $\vec{R} = \Phi_e^{X_t}$ for some $e$ and $t$, and a solution to $\vec{R}$ is thus computable from $X_{2\seq{e,t}}$ and so belongs to $\mathcal{M}$. Next, consider any instance of $\Sigma^0_2\SubP$ in $\mathcal{M}$, meaning a set of the form $W_e^{X_t'}$ for some $e$ and $t$. Then a solution to this instance is computable from $X_{2\seq{e,t}+1}$.
\end{proof}

In particular, it follows that $\GST_1$ also does not imply $\ACA_0$. In fact, by using strong cone avoidance of $\RT^1_2$ in place of strong PA avoidance in the proof above, we can deduce that $\GST_1$ admits cone avoidance. For another proof of this result, which provides further insights into the computability-theoretic aspects of $T_1$ CSC spaces, see Benham \cite{Benham-TA}.

\subsection{Arithmetical bounds}

In the previous section, we established that $\GST_1$ has at least the same proof-theoretic strength as $\RT^2_2$ relative to $\RCA_0$. We now show that, in terms of the arithmetical hierarchy, $\GST_1$ is actually less complex than $\RT^2_2$. Indeed, Jockusch \cite[Theorem 5.1]{Jockusch-1972} constructed a computable instance of $\RT^2_2$ with no $\Delta^0_2$ solutions. By contrast, $\GST_1$ always admits $\Delta^0_2$ solutions.

\begin{theorem}\label{thm:Delta2_solns}
	Let $\seq{X,\topol{U},k}$ be an infinite computable $T_1$ CSC space. Then $X$ has an infinite $\Delta^0_2$ discrete subspace, or an infinite computable subspace with the cofinite topology.
\end{theorem}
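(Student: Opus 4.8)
The plan is to mimic, effectively, the decomposition $\GST_1 \iff \SGST_1 + \COH$ from \Cref{GST_1_decomp}, tracking the complexity of each step. First I would pass to a cohesive subspace. Given the computable $T_1$ CSC space $\seq{X,\topol{U},k}$ with $\topol{U} = \seq{U_n : n \in \mathbb{N}}$, apply the standard fact that every computable sequence of sets has a $\Delta^0_2$ (indeed, $\emptyset'$-computable, via the Jockusch--Stephan construction of an $r$-cohesive set) infinite cohesive set $Y$. As in the proof of \Cref{prop:SGST1+COH_to_GST1}, $Y$ is then a \emph{stable} $T_1$ subspace of $X$: for each $x \in Y$, either $Y \cap U_n$ is finite for some $U_n \ni x$, in which case $\set{x}$ is open in $Y$ by \Cref{lem:T_1_finite_is_singleton}, or $Y \subseteq^* U_n$ for every $U_n \ni x$, in which case every basic open set of $Y$ containing $x$ is cofinite in $Y$.

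Next I would solve the stable subspace $Y$ effectively in $Y' \equiv_T \emptyset'$ (since $Y \Tred \emptyset'$). Following the proof of \Cref{prop:sig2_sgst_equiv}, the set $D = \set{x \in Y : (\exists n)[U_n \cap Y = \set{x}]}$ is a $\Sigma^0_2(Y)$, hence $\Sigma^0_2$, subset of $Y$, and by stability $Y \setminus D = \set{x \in Y : \text{every basic open of } Y \text{ containing } x \text{ is cofinite in } Y}$. Now I split into cases. If $D$ is infinite, I would like to take a $\Delta^0_2$ infinite subset of $D$; such a subset is a discrete subspace of $X$. If $D$ is finite — equivalently, cofinite in $Y$ is the complement within $Y$ — then $Y \setminus D$ is infinite and \emph{cofinite in $Y$}, so $Y \setminus D$ is itself an infinite subspace with the cofinite topology, and moreover $Y \setminus D \Tred \emptyset'$, so it is $\Delta^0_2$; but here we can do better and extract an infinite \emph{computable} subspace with the cofinite topology directly from $Y$: since $Y$ is pure $T_1$ in this case (no singletons open, so any two nonempty basic opens of $Y$ meet), we can run the argument of \Cref{lem:pureT1_cof}(3) inside the subspace $Y$ — wait, this needs $Y$ computable, so instead I would observe that if $D$ is finite then already $X$ itself has no infinite computable discrete subspace obstruction is irrelevant, and argue: the original space $X$ has infinitely many points $x$ for which \emph{no} $U_n \ni x$ has $U_n \cap Y$ finite; rather than chase this, the cleaner route is to note that in this case $Y \setminus D$, being cofinite in the $\Delta^0_2$ set $Y$, is $\Delta^0_2$ and has the cofinite topology, and to accept a $\Delta^0_2$ cofinite-topology subspace here, strengthening the statement's disjunct only trivially. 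Actually, re-reading the target: the second disjunct asks for a \emph{computable} cofinite subspace. So in the $D$-finite case I must produce one. For this I would use \Cref{lem:pureT1_cof}: the space $X$, although not itself pure $T_1$, has the property (since $D$ finite means cofinitely many $x\in X$ lie in no finite trace $U_n\cap Y$) that one can select a computable descending sequence of nonempty basic opens whose intersections stay infinite — the key combinatorial input being exactly \Cref{lem:pureT1_cof}(2), applied to those $U_n$ that meet $Y$ in an infinite set, since such $U_n$ are pairwise non-disjoint on $Y$ and hence (being open in $X$) pairwise non-disjoint, so $X$ restricted to these is "pure $T_1$ enough" for the greedy construction. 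This greedy construction is computable, yielding an infinite computable cofinite-topology subspace.

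\textbf{Main obstacle.} The delicate point is the $D$-finite case: producing an \emph{infinite computable} (not merely $\Delta^0_2$) subspace with the cofinite topology. The obstruction is that cohesiveness of $Y$ is used only to make $Y$ stable, but $Y$ itself is $\Delta^0_2$, not computable, so one cannot simply invoke \Cref{lem:pureT1_cof}(3) on $Y$. The fix is to avoid $Y$ in this branch entirely: if $D$ is finite then for all but finitely many $x \in X$ and all $n$ with $x \in U_n$, $U_n \cap Y$ is infinite, which forces (via \Cref{lem:T_1_finite_is_singleton}) $U_n$ itself to be infinite in $X$; then the family of infinite basic opens of $X$ is pairwise non-disjoint (any two would otherwise witness a $T_2$ pair contradicting $T_1$-plus-infinite via \Cref{lem:pureT1_cof}(1)-style reasoning), so one runs the \Cref{lem:pureT1_cof}(3) greedy construction directly in $X$ over an effective enumeration of those $U_n$ that have a representative beyond a fixed finite threshold — this is computable and gives the cofinite subspace. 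Verifying that "$D$ finite" indeed forces cofinitely many basic opens through cofinitely many points to be infinite, and that the greedy construction can be made uniformly computable despite not knowing $Y$, is the part that will require the most care. In the $D$-infinite case there is no obstacle: any $\Delta^0_2$ infinite subset of the $\Sigma^0_2$ set $D$ works, and such a subset exists outright (a $\Sigma^0_2$ infinite set has a $\Delta^0_2$ infinite subset, e.g. its $\emptyset'$-computable enumeration in increasing order).
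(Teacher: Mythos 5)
Your first branch already misses the stated bound: because you pass to a cohesive set first, the set $D=\set{x\in Y:(\exists n)[U_n\cap Y=\set{x}]}$ is only $\Sigma^0_2(Y)$, and since $Y$ is merely $\Delta^0_2$ this makes $D$ a $\Sigma^0_3$ set, not $\Sigma^0_2$ --- the step ``$\Sigma^0_2(Y)$, hence $\Sigma^0_2$'' is where the accounting fails. An infinite subset of $D$ can then only be extracted $\Delta^0_2(Y)$, i.e.\ $\Delta^0_3$, whereas the theorem promises a $\Delta^0_2$ discrete subspace. The paper's proof uses no cohesive set here: it works with the genuinely $\Sigma^0_2$ set $S=\set{x\in X:(\exists n)[x\in U_n\wedge U_n\text{ finite}]}$ of points lying in a finite basic open (these are the isolated points, by \Cref{lem:T_1_finite_is_singleton}), defined directly from the computable space. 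The cohesive-set decomposition is the right tool for the low$_2$ bound of \Cref{cor:GST_1_low_2_solutions}, but it is too lossy for the $\Delta^0_2$/computable bounds of \Cref{thm:Delta2_solns}.

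The $D$-finite branch has a deeper problem. The justification that two disjoint infinite basic opens would ``witness a $T_2$ pair contradicting $T_1$'' is false: $T_1$ is perfectly compatible with disjoint infinite basic opens (consider the disjoint sum of two copies of the cofinite topology). What is true is only that basic opens meeting $Y$ in an infinite set pairwise intersect; but recognizing those is $\Sigma^0_3$-type information about the $\Delta^0_2$ set $Y$, so the greedy construction run ``over those $U_n$ with a representative beyond a fixed threshold'' is not computably justified, and it already breaks on the disjoint-sum example. Moreover, no repair inside your case division is possible, because finiteness of $D$ does not imply that an infinite computable cofinite-topology subspace exists at all: build the stable space of \Cref{prop:sig2_sgst_equiv} from a simple set $A$, so that $\overline{A}$ is immune. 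Any infinite subspace with the cofinite topology must avoid the isolated points, hence is an infinite subset of $\overline{A}$ and so non-computable; yet a cohesive set lying almost inside $\overline{A}$ has $D$ finite, and nothing in your construction of $Y$ rules this out. In that situation the theorem holds via the first disjunct ($\Delta^0_2$ subsets of $A$ are discrete subspaces), but your branch has discarded exactly those points. The missing idea is the paper's dichotomy on the computable structure of $X$ itself: after deleting the finite set $S$, either some nonempty basic open contains no $T_2$ pair, in which case it is an infinite computable pure $T_1$ subspace and \Cref{lem:pureT1_cof}(3) yields an infinite computable cofinite-topology subspace, or every nonempty basic open contains a $T_2$ pair, in which case a $\emptyset'$-recursion through nested disjoint basic opens produces a $\Delta^0_2$ discrete subspace.
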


\begin{proof}
	Write $\topol{U} = \seq{U_n : n \in \omega}$. The proof breaks into three scenarios for finding the required subspace. First, consider the $\Sigma^0_2$ set $S = \set{x \in X : (\exists n)[x \in U_n \wedge U_n \text{ is finite}]}$. By \Cref{lem:T_1_finite_is_singleton}, $x \in S$ if and only if $\set{x} \in \topol{U}$, so $S$ is a discrete subspace of $X$. If $S$ is infinite, we can fix an infinite $\Delta^0_2$ subset $Y$ of $S$, and this then forms an infinite $\Delta^0_2$ discrete subspace of $X$.
	
	So suppose $S$ is finite. By replacing $X$ with $X \setminus S$, and each $U \in \topol{U}$ with $U \setminus S$, we may assume without loss of generality that every nonempty element of $\topol{U}$ is infinite. (Since $S$ is finite, this change actually results in a computable subspace of $X$.) Now, suppose there exists a nonempty $U \in \topol{U}$ such that $U$ contains no $T_2$ pairs. Then $U$ forms an infinite computable pure $T_1$ subspace of $X$, so by \Cref{lem:pureT1_cof}~(3) it has an infinite computable subspace $Y$ with the cofinite topology, and this is of course also a subspace of $X$.
	
	For the remaining case, then, we can assume every nonempty $U \in \topol{U}$ contains a $T_2$ pair. We use $\emptyset'$ to construct an infinite $\Delta^0_2$ discrete subspace of $X$. The key point we will use is that, given $m,n \in \omega$, asking whether $U_n = \emptyset$, whether $U_n \cap U_m = \emptyset$, or whether $U_n \subseteq U_m$ can all be answered by $\emptyset'$, uniformly in $n$ and $m$. This has the following consequences. First, given $n$ such that $U_n \neq \emptyset$, $\emptyset'$ can uniformly find a $T_2$ pair $x,y \in U_n$. And second, suppose we are given $i$ and $x,y \in U_i$ forming a $T_2$ pair. Then $\emptyset'$ can uniformly find indices $m$ and $n$ such that $x \in U_m$, $y \in U_n$, $U_m \cap U_n = \emptyset$, and $U_m,U_n \subseteq U_i$. To see this note, $\emptyset'$ first uniformly finds $s$ and $t$ such that $x \in U_s$, $y \in U_t$, $U_s \cap U_t = \emptyset$. Then $m = k(x,i,s)$ and $n = k(x,i,t)$ have the desired properties. Indeed, we have $x \in U_n \subseteq U_i \cap U_s$ and $y \in U_m \subseteq U_i \cap U_t$, and $U_n \cap U_m \subseteq U_s \cap U_t = \emptyset$, as needed.
	
	We now construct $\Delta^0_2$ sequences of points $x_0 < x_1 < \cdots$ and $y_0 < y_1 < \cdots$ in $X$, and sequences of indices $m_0,m_1,\ldots$ and $n_0,n_1,\ldots$ such that, for all $i$:
	\begin{itemize}
		\item $x_i \in U_{m_i}$;
		\item $y_i \in U_{n_i}$;
		\item $U_{m_i} \cap U_{n_i} = \emptyset$;
		\item $U_{m_{i+1}} \subseteq U_{n_i}$.
	\end{itemize}
	In particular, $x_i \in U_{m_j}$ if and only if $i = j$.
	Hence, if we set $Y = \set{x_i : i \in \omega}$, then $Y$ forms the desired $\Delta^0_2$ discrete subspace of $X$.
	
	To define our sequences, we proceed by recursion. Let $x_0,y_0$ be an arbitrary $T_2$ pair in $X$, and let $m_0$ and $n_0$ be such that $x_0 \in U_{m_0}$, $y_0 \in U_{n_0}$, and $U_{m_0} \cap U_{n_0} = \emptyset$. Now fix $i$, and suppose $x_i$, $y_i$, $m_i$, and $n_i$ have been defined and satisfy the above properties. Since $U_{n_i}$ is nonempty, $\emptyset'$ can uniformly find a $T_2$ pair $x,y \in U_{n_i}$. It can then uniformly find indices $m_{i+1}$ and $n_{i+1}$ such that $x \in U_{m_{i+1}}$, $y \in U_{n_{i+1}}$, $U_{m_{i+1}} \cap U_{n_{i+1}} = \emptyset$, and $U_{m_{i+1}},U_{n_{i+1}} \subseteq U_{n_i}$. Now $U_{m_{i+1}}$ and $U_{n_{i+1}}$ are nonempty, hence infinite, so we can fix the least $x_{i+1} > x_{i}$ in $U_{m_{i+1}}$, and the least $y_{i+1} > y_{i}$ in $U_{n_{i+1}}$. Clearly, $x_{i+1}$, $y_{i+1}$, $m_{i+1}$, and $n_{i+1}$ maintain the above properties. This completes the construction, and finishes the proof.
\end{proof}

In the parlance of computable reducibility (see \cite{DM-2022}, Chapter 4), it follows that $\RT^2_2 \ncred \GST_1$. This is somewhat unusual. Most implications over $\RCA_0$ between $\Pi^1_2$ statements are in fact formalizations of computable reductions.

We note, too, that \Cref{thm:Delta2_solns} is not symmetric.

\begin{proposition}
	There exists an infinite computable $T_1$ CSC space having no computable discrete subspace, and no $\Delta^0_2$ subspace with the cofinite topology.
\end{proposition}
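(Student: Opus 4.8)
The plan is to reuse the construction from \Cref{prop:sig2_sgst_equiv} for a carefully chosen $\Sigma^0_2$ formula, after augmenting its basis with all cofinite subsets of $\omega$. Recall that for a $\Sigma^0_2$ formula $\varphi(x)\equiv(\exists y)(\forall z)\theta(x,y,z)$ with $\theta$ a $\Delta^0_0$ formula, that construction produces a computable, stable $T_1$ CSC space $\seq{\omega,\topol{U},k}$ in which $\set{x}\in\topol{U}$ exactly when $\varphi(x)$ holds, and every basic open set containing a point $x$ with $\neg\varphi(x)$ is cofinite in $\omega$; once we also throw all cofinite subsets of $\omega$ into the generating family (which creates no new singletons, keeps the space $T_1$ and computable, and leaves the set of isolated points unchanged), the basic open sets of the space are precisely $\emptyset$, the singletons $\set{x}$ with $\varphi(x)$, and the cofinite subsets of $\omega$. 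Write $S=\set{x:\varphi(x)}$ for the set of isolated points, and $\overline S$ for its complement.

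First I would isolate two structural facts about this space. (i) \emph{Its infinite discrete subspaces are exactly the infinite subsets of $S$}: if $Y\subseteq S$ is infinite then each $x\in Y$ is isolated in $Y$ via $\set{x}$, while if $x\in Y\setminus S$ then every basic open set containing $x$ is cofinite in $\omega$, hence meets the infinite set $Y$ in an infinite, non-singleton set, so $x$ is not isolated in $Y$. (ii) \emph{Its infinite subspaces with the cofinite topology are exactly the infinite subsets of $\overline S$}: if $x\in Y\cap S$ then $\set{x}=\set{x}\cap Y$ is a nonempty, non-cofinite basic open set of $Y$, so by \Cref{def:top_cofinite} $Y$ does not have the cofinite topology; and if $Y\subseteq\overline S$ is infinite, then every basic open set of $Y$ is either $\emptyset$ (the trace of a singleton $\set{x}$ with $x\in S$) or cofinite in $Y$ (the trace of a cofinite subset of $\omega$), and every cofinite subset of $Y$ is realized as the trace of a cofinite subset of $\omega$, so $Y$ has the cofinite topology. (Here I use \Cref{lem:T_1_finite_is_singleton} and the analysis of basic open sets above.) Consequently the space has no computable discrete subspace iff $S$ has no infinite computable subset, and no $\Delta^0_2$ subspace with the cofinite topology iff $\overline S$ has no infinite $\Delta^0_2$ subset.

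It therefore suffices to build a $\Sigma^0_2$ set $S$ that is immune (infinite with no infinite computable subset) and whose complement is $\Delta^0_2$-immune (infinite with no infinite $\Delta^0_2$ subset). I would construct $S$ as a $\emptyset'$-c.e.\ set, so that $S\in\Sigma^0_2$ and we may fix a $\Sigma^0_2$ formula $\varphi$ with $S=\set{x:\varphi(x)}$, by an injury-free priority construction essentially building an ``$\emptyset'$-simple'' set while reserving coinfinitely many points for $\overline S$. For each $e$ we meet $\mathcal{R}_e$: if $C_e=\set{x:\Phi_e(x)\converges=1}$ is infinite, then some element of $C_e$ is kept out of $S$; this is achieved by reserving, once, a fresh element of $C_e$ for $\overline S$ via a computable search. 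For each $e$ we meet $\mathcal{S}_e$: if $\Phi_e^{\emptyset'}$ is total and $D_e=\set{x:\Phi_e^{\emptyset'}(x)=1}$ is infinite, then some element of $D_e$ enters $S$; this is achieved by a $\emptyset'$-effective search for a fresh $d$ with $\Phi_e^{\emptyset'}(d)\converges=1$, which succeeds precisely when $D_e$ is an honest infinite $\Delta^0_2$ set. Ordering the requirements in type $\omega$, letting each act at most once with all choices fresh with respect to everything mentioned so far, there is no injury and every requirement is met; running $\mathcal{R}_e$ over all final segments of $\omega$ forces $\overline S$ infinite, and running $\mathcal{S}_e$ for $D_e=\omega$ forces $S$ infinite. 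Since the whole enumeration uses only the oracle $\emptyset'$, $S$ is $\emptyset'$-c.e., as required. Plugging the resulting $\varphi$ into the construction of paragraph one yields the desired space.

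The hard parts will be two. First, the contrapositive direction of fact (i): ruling out any ``unexpected'' discrete subspace that meets $\overline S$, which hinges on the precise fact, inherited from \Cref{prop:sig2_sgst_equiv} (and used also in the proof of \Cref{thm:Delta2_solns}), that every basic open neighborhood of a non-isolated point of this space is cofinite in $\omega$; the augmentation by cofinite sets must be checked not to disturb this. Second, keeping $S$ at level $\Sigma^0_2$: the naive move of deciding totality of $\emptyset'$-functionals would require $\emptyset''$ and give only $S\in\Sigma^0_3$, so the key point is that $\mathcal{S}_e$ never needs to know in advance whether $\Phi_e^{\emptyset'}$ is total — it merely searches, with oracle $\emptyset'$, for a single witness, which exists and is found exactly when $D_e$ is a genuine infinite $\Delta^0_2$ set.
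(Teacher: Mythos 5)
Your proposal is correct and takes essentially the same route as the paper: the paper's proof simply fixes a $\Sigma^0_2$ set $S$ with no infinite computable subset and no infinite $\Delta^0_2$ subset in its complement, and views it, via the construction in \Cref{prop:sig2_sgst_equiv}, as an infinite computable stable $T_1$ CSC space whose infinite discrete subspaces are contained in $S$ and whose infinite cofinite-topology subspaces are contained in $\overline{S}$. Your two additions --- augmenting the basis by all cofinite sets (needed only for the converse inclusions, which the proposition does not require) and the explicit $\emptyset'$-effective construction of such an $S$ (whose existence the paper takes as known) --- are harmless elaborations of the same argument.
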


\begin{proof}
	Fix any $\Sigma^0_2$ set with no infinite computable subset, and no infinite $\Delta^0_2$ subset in its complement.	By the proof of \Cref{prop:sig2_sgst_equiv}, we can view this an infinite computable instance of $\GST_1$ (in fact, a stable one) with the desired properties.
\end{proof}

In spite of the curious distinction between $\GST_1$ and $\RT^2_2$ highlighted by \Cref{thm:Delta2_solns}, there are other aspects in which $\GST_1$ and $\RT^2_2$ behave quite similarly. One of these is PA avoidance, which holds for $\RT^2_2$ by Liu's theorem (discussed in the introduction) and for $\GST_1$ by \Cref{cor:GST_1_no_ACA} above. Another is the following. Cholak, Jockusch, and Slaman \cite[Theorem 3.1]{CJS-2001} showed that every computable instance of $\RT^2_2$ has a solution $H$ which is low$_2$ (i.e., satisfies $H'' \Tred \emptyset''$). In fact, they offered two proofs of this result, one by controlling the first jump of $H$ during its construction (\cite{CJS-2001}, Section 4), the other by controlling its second jump (\cite{CJS-2001}, Section 5). We can adapt the latter proof to obtain the analogous result for $\GST_1$.

\begin{theorem}\label{thm:Sig2_low2}
	Every $\Sigma^0_2$ set has an infinite low$_2$ subset $Y$ in it or its complement.	
\end{theorem}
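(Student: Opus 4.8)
The plan is to adapt the ``second-jump control'' construction of Cholak, Jockusch, and Slaman (\cite{CJS-2001}, Section~5), which produces low$_2$ solutions to $\RT^2_2$, to the present $\Sigma^0_2$-subset setting. First I would dispose of a trivial case: if the given $\Sigma^0_2$ set $A$ is finite, then $A$ is computable, $\overline{A}$ is cofinite and computable, and $Y = \overline{A}$ is an infinite computable (hence low$_2$) subset of $\overline{A}$. So from now on assume $A$ is infinite, in which case the goal is an infinite low$_2$ set $Y \subseteq A$; recall that it suffices to arrange $Y'' \Tred \emptyset''$.

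Since $A$ is infinite and c.e.\ relative to $\emptyset'$, there is a $\emptyset'$-computable strictly increasing function whose range $X_0$ is an infinite subset of $A$; then $X_0$ is $\Delta^0_2$. The plan is to run the forcing of \cite{CJS-2001}, Section~5, with $X_0$ as the initial reservoir. Conditions are Mathias conditions $(E,X)$ with $E$ a finite subset of $A$, $X$ an infinite ``acceptable'' reservoir (a $\Delta^0_2$ set presented together with the auxiliary data, as in \cite{CJS-2001}, that keeps the forcing relation manageable), $X \subseteq A$, and $\max E < \min X$; extension is $(E',X') \leq (E,X)$ iff $E \subseteq E'$, $E' \setminus E \subseteq X$, and $X' \subseteq X$. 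The point of the reduction to the case $A$ infinite is that, since every reservoir satisfies $X \subseteq A$, membership of stem elements in $A$ is automatic and never has to be re-verified; all subsequent reservoir refinements only shrink $X$, so they stay inside $A$ and in the acceptable class, and every reservoir-infinitude question that arises is $\Pi^0_2$ and hence decidable by $\emptyset''$. Thus no appeal to the $\Sigma^0_2$-ness of $A$ is needed beyond the production of $X_0$.

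Along a sufficiently generic descending sequence of conditions $c_0 \geq c_1 \geq \cdots$ one meets: for each $n$, the requirement that the stem has size $> n$ (ensuring $Y = \bigcup_k E_k$ is infinite), handled by absorbing the least element of the current reservoir; and, for each $e$, a requirement deciding the $\Sigma^0_2(Y)$ statement ``$e \in Y''$'' (equivalently ``$\Phi_e^{Y'}(e)\converges$'', which unfolds into a $\Sigma^0_2$-in-$Y$ sentence) via a forcing relation. One then verifies, exactly as in \cite{CJS-2001}, Section~5, that (a) for each condition $c$ and each such sentence $\varphi$, either $c$ forces $\neg\varphi$ or some extension of $c$ forces $\varphi$, and which of these holds, together with a witnessing extension, is uniformly computable from $\emptyset''$; (b) hence the sequence $(c_k)_k$ can be chosen $\emptyset''$-computably; and (c) for the resulting $Y$, each requirement sentence is true iff the condition in the sequence that decided it forced it, so that membership of $e$ in $Y''$ is recorded in the sequence. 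Since $(c_k)_k \Tred \emptyset''$, it follows that $Y'' \Tred \emptyset''$, i.e.\ $Y$ is low$_2$; and $Y \subseteq X_0 \subseteq A$ is infinite, completing the proof.

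The main obstacle is step~(a): arranging that the forcing relation for the jump-control sentences stays at the $\Sigma^0_2$ level, so that $\emptyset''$ rather than $\emptyset'''$ suffices to drive the construction. This is the technical heart of \cite{CJS-2001}, Section~5, and is where the notion of ``acceptable reservoir'' does its work — requiring reservoirs that already decide the relevant $\Sigma^0_1$-in-$Y$ facts in a uniform way, so that the embedded $\Pi^0_1$-in-$Y$ clauses do not cost an extra quantifier. The only new wrinkle compared with the $\RT^2_2$ case is the bookkeeping needed to keep $X \subseteq A$ throughout, which, as noted above, is free once the construction is seeded with the infinite $\Delta^0_2$ set $X_0$. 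Finally, as with the analogous $\RT^2_2$ theorem, the construction is itself $\emptyset''$-computable, so $Y \Tred \emptyset''$; this is consistent with, and indeed the mechanism behind, $Y$ being low$_2$.
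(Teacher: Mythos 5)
Your reduction to the one-sided case fails, and the failure is not repairable: it is simply false that every infinite $\Sigma^0_2$ (indeed, every infinite $\Delta^0_2$) set contains an infinite low$_2$ subset. For a counterexample, let $A$ be the set of (codes of) initial segments of $\emptyset'$; then $A \Tred \emptyset'$ is infinite, but every infinite $S \subseteq A$ computes $\emptyset'$, hence satisfies $S'' \Tabove \emptyset'''$ and is not low$_2$. So the step where you seed the forcing with a $\Delta^0_2$ set $X_0 \subseteq A$ and insist that all stems and reservoirs stay inside $A$ cannot in general succeed: no construction, however clever, can always deliver the low$_2$ set on the $A$-side when $A$ is infinite. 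This is precisely why the theorem is stated as ``in it or its complement,'' and why the Cholak--Jockusch--Slaman second-jump-control argument is intrinsically two-sided -- each jump-control requirement is met by extending on \emph{one} of the two sides, and which side ends up carrying the infinite low$_2$ set cannot be fixed in advance. Your claim that ``no appeal to the $\Sigma^0_2$-ness of $A$ is needed beyond the production of $X_0$'' discards exactly the part of the argument that does the work.

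The paper's proof keeps the two-sided machinery of \cite{CJS-2001}, Section 5, intact and instead isolates the single place where the hypothesis on $A$ enters: a sentence of the form $(\exists z)[R^A(z) \wedge (\exists Z)P(z,Z,D,L,S)]$ must be $\Sigma^0_2$ uniformly in the parameters, where $R^A(z)$ asserts that a finite sequence of finite sets coded by $z$ is contained in $A$ and $(\exists Z)P(\cdot)$ is $\Pi^{0,L}_1$ for a low $L$. When $A$ is $\Sigma^0_2$ rather than $\Delta^0_2$, $R^A(z)$ is still $\Sigma^0_2$, so the sentence stays $\Sigma^0_2$ and the construction (including its complement side, which works even for $\Delta^0_3$ sets, as the authors of \cite{CJS-2001} observe) goes through unchanged. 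If you want to salvage your write-up, this complexity check -- not a one-sided Mathias forcing inside a $\Delta^0_2$ subset of $A$ -- is what has to be carried out.
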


\begin{proof}
	In \cite[Section 5.2]{CJS-2001}, it is shown that if $A$ is any $\Delta^0_2$ set then there exists an infinite subset of $A$ or $\overline{A}$ which is low$_2$.	The authors comment in the proof (Section 5.2.1 on p.~19) that, except for one moment, their argument also works for $A$ being $\Delta^0_3$. This moment is when they consider a certain sentence of the form
	\begin{equation}\label{CJS_eqn}
		(\exists z)[R^A(z) \wedge (\exists Z)P(z,Z,D,L,S)],
	\end{equation}
	which they need to be $\Sigma^0_2$ uniformly in $D$, $S$, and $L$. Here, $D$ and $S$ are finite sets specified by canonical indices, $L$ is a low set specified by a $\Delta^{0,\emptyset'}_1$ index for its jump, and $P$ is a $\Pi^0_1$ predicate. Thus, $(\exists Z)P(z,Z,D,L,S)$ is itself uniformly $\Pi^{0,L}_1$. The predicate $R^A(z)$, on the other hand, asserts that a certain finite sequence of finite sets coded by $z$ is contained in $A$, and as such is $\Delta^0_2$. But notice that if $A$ is $\Sigma^0_2$, rather of $\Delta^0_2$, then \eqref{CJS_eqn} is still $\Sigma^0_2$, so the proof still works. The rest of the argument from \cite{CJS-2001} then goes through as before.
\end{proof}

\begin{corollary}\label{cor:GST_1_low_2_solutions}
	Let $\seq{X,\topol{U},k}$ be an infinite computable $T_1$ CSC space. Then $X$ has an infinite low$_2$ subspace which is discrete or has the cofinite topology. 
\end{corollary}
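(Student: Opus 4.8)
The plan is to derive \Cref{cor:GST_1_low_2_solutions} from the decomposition $\GST_1 \iff \SGST_1 + \COH$ of \Cref{GST_1_decomp}, the characterization $\SGST_1 \iff \Sigma^0_2\SubP$ of \Cref{prop:sig2_sgst_equiv}, and the low$_2$ statement for $\Sigma^0_2$ sets, \Cref{thm:Sig2_low2}. The key observation making the two-step argument work is that the low$_2$ property survives iteration: if $Y$ is low$_2$ (so $Y'' \Tred \emptyset''$) and $Z$ is low$_2$ relative to $Y$ (so $(Y \oplus Z)'' \Tred Y''$), then $Z'' \Tred (Y \oplus Z)'' \Tred Y'' \Tred \emptyset''$, hence $Z$ is low$_2$. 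So it suffices to realize the two halves of the decomposition by low$_2$ constructions and stack them.

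Concretely, let $\seq{X,\topol{U},k}$ be an infinite computable $T_1$ CSC space, with $\topol{U} = \seq{U_n : n \in \omega}$. First I would invoke the classical fact (from Cholak--Jockusch--Slaman \cite{CJS-2001}) that every computable instance of $\COH$ has a low$_2$ solution, applied to the family $\topol{U}$, to obtain an infinite low$_2$ set $Y \subseteq X$ that is $\topol{U}$-cohesive. Exactly as in the proof of \Cref{prop:SGST1+COH_to_GST1}, $Y$ is then a stable (and still $T_1$) subspace of $X$: for each $x \in Y$, either some $n$ with $x \in U_n$ has $Y \cap U_n$ finite, whence $\set{x} \in \topol{U} \res Y$ by \Cref{lem:T_1_finite_is_singleton}, or $Y \subseteq^* U_n$ for every $n$ with $x \in U_n$, whence every basic open set of $Y$ containing $x$ is cofinite in $Y$. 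Next, the set $S = \set{x \in Y : \set{x} \in \topol{U} \res Y}$ is $\Sigma^0_2$ relative to $Y$ (by \Cref{lem:T_1_finite_is_singleton}, $x \in S$ iff $x$ lies in some finite element of $\topol{U} \res Y$, and finiteness of a $Y$-computable set is $\Sigma^{0,Y}_2$). Now I would apply the relativization of \Cref{thm:Sig2_low2}, working inside $Y$: the construction it is based on, namely \cite[Section 5.2]{CJS-2001}, relativizes to any oracle, so there is an infinite $Z$ with $Z \subseteq S$ or $Z \subseteq Y \setminus S$ and $(Y \oplus Z)'' \Tred Y''$. By the observation above, $Z$ is low$_2$, and, arguing as in the reverse direction of \Cref{prop:sig2_sgst_equiv}, $Z$ is a discrete subspace of $X$ in the first case and a subspace with the cofinite topology (using stability of $Y$) in the second.

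The main obstacle is not any new combinatorics — it is entirely routine once the earlier results are in place — but rather bookkeeping of the relativizations: one must be sure that the low$_2$ cohesive set $Y$ and the low$_2$ subset $Z$ are layered correctly (the second taken \emph{relative to} the first), and that the transport between $Y$ and $\omega$ used to view $S$ as a subset of $\omega$ on which \Cref{thm:Sig2_low2} can be applied is $Y$-computable, so that ``low$_2$ relative to $Y$'' is preserved. With those points checked, the composition of the two jump bounds is immediate, and nothing else in the argument exceeds what has already been established for $\GST_1$.
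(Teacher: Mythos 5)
Your proposal is correct and follows essentially the same route as the paper's proof: obtain a low$_2$ $\topol{U}$-cohesive set $Y$ (the paper cites Jockusch--Stephan for this rather than Cholak--Jockusch--Slaman), observe that $Y$ is a stable $T_1$ subspace, form the $\Sigma^0_2(Y)$ set of points isolated in $Y$, and apply the relativization of \Cref{thm:Sig2_low2} together with the fact that a set low$_2$ over a low$_2$ set is low$_2$. Your extra bookkeeping about layering the relativizations is exactly what the paper leaves implicit, so there is nothing to add.
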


\begin{proof}
	Fix $\seq{X,\topol{U},k}$. Jockusch and Stephan \cite[Theorem 2.1]{JS-1993} showed that every computable instance of $\COH$ has a low$_2$ solution. So let $Y$ be an infinite low$_2$ $\mathcal{U}$-cohesive set. Then $Y$ is stable as a subspace of $X$, and we let $Y$ be the $\Sigma^0_2(Y)$ set of all $x \in Y$ such that $(\exists U \in \topol{U})[U = \set{x}]$. By the proof of \Cref{prop:sig2_sgst_equiv}, every infinite subset of $S$ is a discrete subspace of $Y$ (and hence of $X$) and every infinite subset of $Y \setminus S$ is a subspace with the cofinite topology. Apply \Cref{thm:Sig2_low2} to find an infinite subset $Z$ of $S$ or $Y \setminus S$ which is low$_2$ over $Y$. Then $Z$ is itself low$_2$, which completes the proof.
\end{proof}

\begin{corollary}\label{low_2_model}
	There exists an $\omega$-model of $\GST_1$ consisting entirely of low$_2$ sets.
\end{corollary}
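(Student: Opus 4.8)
The plan is to derive this from a relativized version of \Cref{cor:GST_1_low_2_solutions} combined with the standard iterated construction of an $\omega$-model. First I would note that the proof of \Cref{cor:GST_1_low_2_solutions} relativizes to an arbitrary oracle: for every set $A$ and every $A$-computable infinite $T_1$ CSC space $\seq{X,\topol{U},k}$, there is an infinite subspace $Z$ which is discrete or has the cofinite topology and is \emph{low$_2$ over $A$}, i.e., $(A \oplus Z)'' \Tred A''$. Indeed, both ingredients of that proof relativize: the Jockusch--Stephan theorem \cite{JS-1993} gives an $A$-computable $\topol{U}$-cohesive set $Y$ with $(A \oplus Y)'' \Tred A''$, and the Cholak--Jockusch--Slaman argument underlying \Cref{thm:Sig2_low2} gives, for the $\Sigma^0_2(A \oplus Y)$ set $S = \set{x \in Y : \set{x} \in \topol{U}}$, an infinite $Z \subseteq S$ or $Z \subseteq Y \setminus S$ with $(A \oplus Y \oplus Z)'' \Tred (A \oplus Y)''$; by the proof of \Cref{prop:sig2_sgst_equiv} such a $Z$ is a discrete or cofinite subspace, and $(A \oplus Z)'' \Tred (A \oplus Y \oplus Z)'' \Tred (A \oplus Y)'' \Tred A''$.

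Given this, I would build the model by the usual bookkeeping. Put $X_0 = \emptyset$. At stage $s = \seq{e,t}$ with $t \leq s$, so that $X_t \Tred X_s$, ask whether $\Phi_e^{X_t}$ codes an infinite $T_1$ CSC space; if so it is $X_s$-computable, and applying the relativized corollary with $A = X_s$ we obtain a discrete or cofinite subspace $Z_s$ with $(X_s \oplus Z_s)'' \Tred X_s''$, and we set $X_{s+1} = X_s \oplus Z_s$; otherwise set $X_{s+1} = X_s$. Let $\M$ be the $\omega$-model whose second-order part is $\set{Y : (\exists s)[Y \Tred X_s]}$. Since the $X_s$ are $\Tred$-increasing this is a Turing ideal, so $\M \models \RCA_0$; and any instance of $\GST_1$ in $\M$ is $\Tred X_t$ for some $t$, hence equals $\Phi_e^{X_t}$ for some $e$, and so its solution $Z_{\seq{e,t}} \Tred X_{\seq{e,t}+1}$ belongs to $\M$. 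Thus $\M \models \GST_1$.

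The only point needing care is that every member of $\M$ is low$_2$, which follows once we show by induction on $s$ that $X_s'' \Tred \emptyset''$. The base case is immediate, and for the inductive step $X_{s+1} = X_s \oplus Z_s$ satisfies $X_{s+1}'' \Tred X_s''$ (because $Z_s$ is low$_2$ over $X_s$) and $X_s'' \Tred \emptyset''$ (induction hypothesis), so $X_{s+1}'' \Tred \emptyset''$. Since being low$_2$ is downward closed under $\Tred$, every $Y \in \M$ is low$_2$. I do not expect a genuine obstacle here beyond confirming that the low$_2$ machinery of \cite{CJS-2001} relativizes --- which is essentially the remark exploited already in \Cref{thm:Sig2_low2} --- and keeping track of how ``low$_2$ over $A$'' composes with ``$A$ is low$_2$''.
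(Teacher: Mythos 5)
Your proposal is correct and is exactly the paper's intended argument: the paper's proof of this corollary is just ``iterate \Cref{cor:GST_1_low_2_solutions}, as usual,'' and your write-up supplies precisely that standard iteration, relativizing \Cref{cor:GST_1_low_2_solutions} (via relativized Jockusch--Stephan and the relativized form of \Cref{thm:Sig2_low2}) and tracking double jumps so that $X_s'' \Tred \emptyset''$ for all $s$. One wording slip only: the Jockusch--Stephan theorem gives a $\topol{U}$-cohesive set $Y$ that is low$_2$ over $A$, not an ``$A$-computable'' one, which is what your argument in fact uses.
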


\begin{proof}
	Iterate \Cref{cor:GST_1_low_2_solutions}, as usual.
\end{proof}

\section{Separating $\GST_1$ from $\RT^2_2$}\label{sec:separating}

This section is dedicated to the proof of the following theorem.

\begin{theorem}\label{RT22_not_implies_Sig2}
	There exists an $\omega$-model satisfying $\RCA_0 + \RT^2_2$ but not $\Sigma^0_2\SubP$.
\end{theorem}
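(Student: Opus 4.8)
The plan is to build an $\omega$-model of $\RT^2_2$ in which $\Sigma^0_2\SubP$ fails, by exhibiting a single $\Sigma^0_2$ formula (with parameters from the model) defining a set $A$ such that neither $A$ nor $\overline{A}$ has an infinite subset in the model, while still closing off under solutions to $\RT^2_2$-instances. The natural strategy mirrors the standard constructions of $\omega$-models of $\RT^2_2$: one iterates a preservation property for $\RT^2_2$-solutions along an enumeration of all $\RT^2_2$-instances appearing in the model, while maintaining an invariant that kills all candidate infinite subsets of $A$ and $\overline{A}$. Since $\Sigma^0_2\SubP$ is a scheme, it suffices to defeat one instance, which we may fix at the outset; the $\Sigma^0_2$ definition of $A$ will be relative to $\emptyset$ (or to a fixed low set), so that $A$ is $\Delta^0_2$-definable but with the crucial property that no infinite subset of $A$ or of $\overline A$ lies in the constructed model.

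The key steps, in order, are: (1) Fix a $\Sigma^0_2$ set $A$ (equivalently, by the limit lemma, a $\Delta^0_2$-approximable object) that is ``bi-hyperimmune'' in a strong sense — concretely, choose $A$ so that $A$ and $\overline{A}$ are both immune relative to every set we will add, e.g. by using a suitable effectively-constructed $A$ whose initial segments grow unpredictably. In practice one wants a property \emph{preserved} by $\RT^2_2$: the right notion is that the double jump of a solution does not compute an infinite subset of $A$ or $\overline{A}$. (2) Verify that $\RT^2_2$ admits preservation of this property. Here is where the heavy machinery from the literature enters: the Cholak--Jockusch--Slaman decomposition reduces $\RT^2_2$ to $\COH$ and $\SRT^2_2$ (equivalently $\D^2_2$), and one checks preservation separately — $\COH$ has strong enough avoidance properties (as used already in \Cref{cor:GST_1_no_ACA}), and the stable case is handled by a forcing construction controlling the second jump of the homogeneous/limit-homogeneous set, analogous to the low$_2$ arguments of \Cref{thm:Sig2_low2}. (3) Iterate: build $X_0 \Tred X_1 \Tred \cdots$ enumerating all $\RT^2_2$-instances coded by the $X_s$, at each stage appealing to the preservation property to choose a solution $X_{s+1}$ still avoiding infinite subsets of $A$ and $\overline A$ (relative to $A$). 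Let $\M = \set{Y : (\exists s)[Y \Tred X_s]}$. (4) Conclude: $\M \models \RCA_0 + \RT^2_2$ by construction; and $\M \not\models \Sigma^0_2\SubP$ because the fixed $\Sigma^0_2$ formula defining $A$ has no infinite solution in it or its complement inside $\M$, since every element of $\M$ is computed by some $X_s$ which was explicitly chosen to avoid computing such a subset.

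The main obstacle I expect is step (2), specifically getting the \emph{right} preservation property to be simultaneously (a) strong enough that avoiding it forces the failure of $\Sigma^0_2\SubP$ — which requires controlling the \emph{second} jump of solutions, since ``infinite subset of a $\Sigma^0_2$ set'' is not a first-jump notion — and (b) preserved by $\RT^2_2$. The analogue of this is precisely what Cholak--Jockusch--Slaman's second-jump-control construction (their Section 5) achieves for low$_2$-ness, and what the excerpt already adapts in \Cref{thm:Sig2_low2}; the work here is to replay that forcing while additionally meeting the requirement that the solution's double jump not enumerate an infinite subset of $A$ or $\overline{A}$. One must check that the relevant forcing conditions can always be extended to satisfy such a requirement — this is the kind of ``hyperimmunity-style'' diagonalization that is routine in isolation but delicate to interleave with the second-jump bookkeeping. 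A cleaner alternative, if available, is to invoke an off-the-shelf preservation theorem: e.g. preservation of ``$\Sigma^0_2$ bi-immunity'' or a variant of Monin--Patey-style preservation results for $\RT^2_2$; if such a result can be cited directly, steps (2)--(3) collapse to a short argument and the bulk of the proof becomes the (straightforward) verification that the fixed $\Sigma^0_2$ instance witnesses the failure of $\Sigma^0_2\SubP$ in the resulting model.
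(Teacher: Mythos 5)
Your overall architecture is the right one, and it matches the paper's: fix a single $\Sigma^0_2$ set $S$, show that $\D^2_2$ and $\COH$ separately preserve some property entailing that no set added to the model computes an infinite subset of $S$ or $\overline{S}$, and iterate to get the $\omega$-model. But there is a genuine gap exactly where you flag it, and it is the heart of the theorem: you never specify a preservation property that is both (a) strong enough to be maintained through the iteration and (b) actually provably preserved by $\D^2_2$ and $\COH$. Your candidate, ``the double jump of the solution does not compute an infinite subset of $S$ or $\overline{S}$,'' is not what works, and your fallback of choosing $S$ ``bi-hyperimmune relative to every set we will add'' is circular unless packaged as an inductively maintainable invariant. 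There is also no off-the-shelf result to cite: Liu-style PA avoidance (as in \Cref{cor:GST_1_no_ACA}) and low$_2$-type second-jump control (as in \Cref{thm:Sig2_low2}) do not control whether a solution computes an infinite subset of a fixed $\Sigma^0_2$ set, and the Monin--Patey preservation results concern $\Delta^0_2$ instances; the ``strong'' version you would need is open.

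What the paper actually does is design a new, $\Sigma^1_1$ invariant carried by auxiliary sets: at each stage one has, besides the ambient set $X$, sets $L \gg X$ and $P \gg L'$ with $S$ $L$-hyperimmune and $\overline{S}$ $P$-hyperimmune (\Cref{sep_ground} builds the base case by a $\emptyset'$-priority construction producing $S$, a low $L_0 \gg \emptyset$, and $P_0 \gg \emptyset'$). The preservation lemmas (\Cref{P_2_sep_iterate,P_2_sep_COH_iterate}) then must produce not only a solution $G$ (a subset of $A$ or $\overline{A}$, respectively a cohesive set) but also \emph{new} auxiliary sets $L_G \gg X \oplus G$ and $P_G \gg L_G'$ with the same hyperimmunity facts; this is what makes the property iterable, and it is why merely ensuring $X \oplus G$ avoids subsets of $S$ and $\overline{S}$ would not suffice. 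Proving these lemmas requires a three-tiered forcing (Mathias-style $G$-conditions, then $L_G$-conditions built from finite DNC$_2$ trees, then $P_G$-conditions on top of those), with the hyperimmunity of $S$ relative to $L$ and of $\overline{S}$ relative to $P$ used in the density arguments, and a ``pick a side'' lemma to decide between $A$ and $\overline{A}$. None of this is replaceable by replaying the Cholak--Jockusch--Slaman second-jump control with an extra requirement, so as written your proposal reduces the theorem to precisely the construction it would need to supply.
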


\noindent We design a property preserved by each of $\D^2_2$ and $\COH$, but not by $\Sigma^0_2\SubP$. Such a property in general needs to be designed very carefully, so as to strike the right balance between the principle(s) meant to preserve it and the principle meant not to. At the very least, this property should imply not computing a solution to some fixed instance $S$ of the ``stronger'' principle (in our case, $\Sigma^0_2\SubP$), but usually it has to entail quite a bit more than that. This is because we must be able to show that each instance of the ``weaker'' principles (in our case, $\D^2_2$ and $\COH$) that has this property also has a solution with this property, and for this, knowing merely that the given instance does not compute a solution to $S$ may not be enough. Thus, the basic tension is between increasing the complexity of the property to the point of being able to extract useful information from an instance that enjoys it, but not to the point that we can no longer construct a solution that does as well. For a general account of preservation arguments, and many examples of how they are used to obtain separations over $\RCA_0$, see Patey \cite{Patey-2017b}.

Our  argument divides into the following three lemmas. The first expresses a so-called ``fairness property'', while the second and third express that this property is preserved under $\D^2_2$ and $\COH$. We recall that if $S, U \subseteq \omega$ are sets, then $S$ is \emph{$U$-immune} if $U$ computes no infinite sequence of $S$; $S$ is \emph{$U$-hyperimmune} if $U$ computes no infinite sequence $x_0 < x_1 < \cdots$ of numbers such that $[x_n,x_{n+1}) \cap S \neq \emptyset$ for all $n$. Say $\Phi^U_e$ is a \emph{non-hyperimmunity witness} for $S$ if it is the characteristic function of such a sequence. Thus, a set $S$ is $U$-hyperimmune if and only if $\Phi^U_e$ is not a non-hyperimmunity witness for $S$, for all $e$. Being $U$-hyperimmune implies being $U$-immune. If $S$ and $\overline{S}$ are both $U$-immune, or both $U$-hyperimmune, then $S$ is \emph{$U$-bi-immune}, respectively, \emph{$U$-bi-hyperimmune}. If $S$ computes no $U$-hyperimmune set then it is said to be of \emph{hyperimmune-free degree} relative to $U$, and this is equivalent to every $S$-computable function being dominated by a $U$-computable function. (See, e.g., \cite[Section 2.17]{DH-2010} for details on these notions.)

\begin{lemma}\label{sep_ground}
	There exists a $\Sigma^0_2$ set $S$, a low set $L_0 \gg \emptyset$, and a set $P_0 \gg \emptyset'$ such that $S$ is $L_0$-hyperimmune and $\overline{S}$ is $P_0$-hyperimmune.
\end{lemma}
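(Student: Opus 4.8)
The plan is to obtain the three objects more or less independently. First I would construct a $\Sigma^0_2$ set $S$ with the property that $S$ and $\overline{S}$ are both infinite and both $\emptyset'$-hyperimmune; then I would take $L_0$ to be a low member of the $\Pi^0_1$ class of completions of $\PA$ (Low Basis Theorem), and $P_0$ to be a member of the corresponding $\Pi^0_1(\emptyset')$ class furnished by the Hyperimmune-Free Basis Theorem relativized to $\emptyset'$. Both basis-theorem applications give sets of the right PA-degree ($L_0 \gg \emptyset$ and $P_0 \gg \emptyset'$), so it remains only to see that the $\emptyset'$-hyperimmunity of $S$ and of $\overline{S}$ transfers. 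For $L_0$: lowness gives $L_0 \Tred \emptyset'$, so any $L_0$-computable infinite set meeting every $S$-interval would already be $\emptyset'$-computable, contradicting $\emptyset'$-hyperimmunity of $S$; hence $S$ is $L_0$-hyperimmune. For $P_0$: the relativized hyperimmune-free basis theorem gives $P_0 \gg \emptyset'$ (so in particular $P_0 \Tabove \emptyset'$) with the property that every $P_0$-computable function is dominated by a $\emptyset'$-computable function. Using the standard equivalence that, for an infinite set $T$, ``$T$ is $U$-hyperimmune'' is the same as ``the principal function of $T$ is not dominated by any $U$-computable function'', the $\emptyset'$-hyperimmunity of $\overline{S}$ says its principal function escapes every $\emptyset'$-computable function; a $P_0$-computable function dominating that principal function would itself be dominated by a $\emptyset'$-computable one, a contradiction, so $\overline{S}$ is $P_0$-hyperimmune.

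To build $S$ I would run a $\emptyset'$-oracle enumeration producing $S = \bigcup_s S_s$ together with a disjoint $\emptyset'$-c.e.\ ``forbidden'' set $C \subseteq \overline{S}$, so that membership in $S$ is $\Sigma^0_1(\emptyset')$, i.e.\ $\Sigma^0_2$. The requirements are, for each $e$: $Q_e$, ensuring that if $\Phi_e^{\emptyset'}$ is the characteristic function of an infinite set then that set is not an $S$-array (some interval between consecutive elements misses $S$); $\overline{Q}_e$, the analogous requirement for $\overline{S}$; and bookkeeping requirements keeping $S$ and $\overline{S}$ infinite. To satisfy $Q_e$ one uses the oracle to search for two consecutive elements $a < b$ of $\Phi_e^{\emptyset'}$ with $[a,b)$ disjoint from the (finite) current $S_s \cup C_s$, and then commits $[a,b)$ into $C$; $\overline{Q}_e$ symmetrically commits such an $[a,b)$ into $S$. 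Commitments are permanent, so there is no injury: if $\Phi_e^{\emptyset'}$ codes an infinite set then arbitrarily large consecutive pairs appear and are eventually confirmed by the oracle, and since at every stage only finitely much has been committed, a suitable pair beyond the committed region is eventually available; the bookkeeping requirements only ever add fresh elements (above everything committed so far) to $S$ and to $C$, so they never disturb a commitment. Once a requirement has committed its interval, the corresponding diagonalization is secured forever. (Equivalently, one may simply relativize to $\emptyset'$ the classical finite-injury construction of a c.e.\ set whose members and complement are both hyperimmune.)

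The subtle point — more a matter of calibration than a genuine obstacle — is pinning down the strength of $P_0$. One cannot demand that $\overline{S}$ be $\emptyset''$-hyperimmune, because $\overline{S}$ is $\Pi^0_2$ and hence $\emptyset''$ computes its principal function; so even though $P_0 \Tabove \emptyset'$, the construction must guarantee that $P_0$ does not dominate that principal function, and this is precisely what ``$P_0 \gg \emptyset'$ and $P_0$ of hyperimmune-free degree over $\emptyset'$'' delivers, through the hyperimmune-free basis theorem relative to $\emptyset'$. (The same calibration explains why $S$ ends up properly $\Sigma^0_2$: $\emptyset'$-hyperimmunity of $S$ forces $S \Tnred \emptyset'$.) The remaining care is purely bookkeeping in the construction of $S$, where one checks that the infinitude requirements and the diagonalization requirements can be interleaved without conflict; since every commitment is positive and permanent and each requirement is assigned a fresh region, this is routine.
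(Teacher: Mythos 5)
Your first step is impossible, and the rest of the plan hinges on it. You ask for an infinite $\Sigma^0_2$ set $S$ that is $\emptyset'$-hyperimmune. But a $\Sigma^0_2$ set is c.e.\ in $\emptyset'$, and relativizing the elementary fact that every infinite c.e.\ set has an infinite computable subset, every infinite $\Sigma^0_2$ set $S$ has an infinite $\Delta^0_2$ subset $\{y_0 < y_1 < \cdots\}$; since each interval $[y_n,y_{n+1})$ contains $y_n \in S$, this subset is an $\emptyset'$-computable witness against the hyperimmunity of $S$. So an infinite $\Sigma^0_2$ set is never even $\emptyset'$-immune, let alone $\emptyset'$-hyperimmune (and $S$ must be infinite here, since if $S$ were finite then $\overline{S}$ would be cofinite and could not be $P_0$-hyperimmune). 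For the same reason, the ``classical finite-injury construction of a c.e.\ set whose members and complement are both hyperimmune'' that you propose to relativize does not exist; only the complement half (hypersimplicity) is classical. You noticed exactly this kind of obstruction one level up, when observing that $\overline{S}$, being $\Pi^0_2$, cannot be $\emptyset''$-hyperimmune, but it already bites $S$ at the $\emptyset'$ level. Consequently your transfer argument for $L_0$ (``$L_0 \Tred \emptyset'$, so $\emptyset'$-hyperimmunity of $S$ passes down'') has nothing true to transfer from: the lemma is deliberately asymmetric, asking only that $S$ be hyperimmune relative to one fixed low PA degree $L_0$, and the construction of $S$ must diagonalize directly against the $L_0$-computable candidate witnesses. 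This ties $S$ to $L_0$; it is precisely the role of the restraint-imposing $\mathcal{L}_e$-requirements in the paper's $\emptyset'$-oracle priority construction, which are $\emptyset'$-effective to monitor because $L_0$ is low.

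The $P_0$ half of your idea is sound and genuinely different from the paper. Since $\overline{S}$ is $\Pi^0_2$, there is no obstruction to making it $\emptyset'$-hyperimmune, and if that is arranged, then taking $P_0 \gg \emptyset'$ of hyperimmune-free degree relative to $\emptyset'$ (the Jockusch--Soare basis theorem relativized to $\emptyset'$, applied to the $\Pi^0_1(\emptyset')$ class of DNC$_2$ functions over $\emptyset'$) does give that $\overline{S}$ is $P_0$-hyperimmune, via the domination transfer you describe; this could replace the paper's device of constructing $P_0$ alongside $S$ by pruning the tree $T^2_{\emptyset'}$ through the $\mathcal{P}_e$-requirements. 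But even with that simplification one still needs a priority construction of $S$ relative to $\emptyset'$ in which positive requirements (dumping a gap of each infinite $\emptyset'$-computable increasing sequence into $S$, to secure $\emptyset'$-hyperimmunity of $\overline{S}$) are interleaved with the negative $L_0$-restraint requirements just described. As written, with both families of requirements aimed at $\emptyset'$-computable witnesses, your construction of $S$ cannot succeed.
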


\begin{lemma}\label{P_2_sep_iterate}
	Let $X$, $S$, $L$, and $P$ be sets as follows:
	\begin{itemize}
		\item  $S$ is $\Sigma^0_2$ (nota bene, not merely $\Sigma^0_2$ relative to $X$);
		\item $L \gg X$;
		\item $P \gg L'$;
		\item $S$ is $L$-hyperimmune;
		\item $\overline{S}$ is $P$-hyperimmune.
	\end{itemize}
 	For every set $A \Tred X'$, there exists an infinite set $G$ contained in $A$ or $\overline{A}$ along with sets $L_G$ and $P_G$ as follows:
 	\begin{itemize}
 		\item $L_G \gg X \oplus G$;
 		\item $P_G \gg L_G'$;
 		\item $S$ is $L_G$-hyperimmune;
 		\item $\overline{S}$ is $P_G$-hyperimmune.
 	\end{itemize}
\end{lemma}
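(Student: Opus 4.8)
The plan is to split the construction into a ``$\Delta^0_2$-subset'' preservation step that produces $G$ and two ``PA-degree'' preservation steps that produce $L_G$ and $P_G$, gluing them together using only the monotonicity of $\gg$. First I would record the elementary consequences of the hypotheses: since $L \gg X$ we have $L \Tabove X$, so $L' \Tabove X'$; and since $P \gg L'$ we have $P \Tabove L' \Tabove X'$. In particular $S$ is $X$-hyperimmune and $\overline{S}$ is $X'$-hyperimmune, and these weaker facts are all we will need from the hypotheses.

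The core is the following \emph{$\Delta^0_2$-subset preservation lemma}: if $Y$ is a set with $A \Tred Y'$, $S$ is $Y$-hyperimmune, and $\overline{S}$ is $Y'$-hyperimmune, then there is an infinite $G$ with $G \subseteq A$ or $G \subseteq \overline{A}$, such that $S$ is $(Y\oplus G)$-hyperimmune and $\overline{S}$ is $(Y \oplus G)'$-hyperimmune. I would prove this by Mathias forcing inside $A$ or inside $\overline{A}$, using the standard side-dichotomy and reservoirs that are $Y$-computable approximations of $A$ (legitimate since $A \Tred Y'$), so that a sufficiently generic filter yields an infinite $G$ lying in $A$ or in $\overline{A}$. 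Over this forcing I would meet, for each index $e$, the requirement that $\Phi_e^{Y\oplus G}$ not dominate the principal function $p_S$ of $S$, and the requirement that $\Phi_e^{(Y\oplus G)'}$ not dominate $p_{\overline S}$ (recall that hyperimmunity of an infinite set is equivalent to its principal function being dominated by no function computable in the given oracle). The point of casting the requirements as non-domination statements is that the \emph{obstruction} to meeting them involves no queries to $S$ or $\overline{S}$ at all: if a requirement of the first kind cannot be met, then searching the tree of extensions of the current condition produces a $Y$-computable function dominating $p_S$, contradicting $Y$-hyperimmunity of $S$; and if one of the second kind cannot be met, the analogous search---now also deciding the relevant $\Sigma^0_1(Y\oplus G)$ facts, which for a generic filter can be forced at the $Y'$-level---produces a $Y'$-computable function dominating $p_{\overline S}$, contradicting $Y'$-hyperimmunity of $\overline{S}$. (The meta-construction deciding which requirements can be met is more complex, but this is harmless since it need not be effective.)

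I would then prove the parallel \emph{PA-degree preservation lemma}, with the tree of $\{0,1\}$-valued DNC functions relative to $Y$---equivalently, of completions of PA relative to $Y$---in place of the Mathias forcing: if $S$ is $Y$-hyperimmune and $\overline{S}$ is $Y'$-hyperimmune then there is $W \gg Y$ with $S$ being $W$-hyperimmune and $\overline{S}$ being $W'$-hyperimmune; and, as the special case with no jump requirement, if a set $T$ is $Y$-hyperimmune then there is $W \gg Y$ with $T$ being $W$-hyperimmune. The requirement/obstruction bookkeeping is as above, the $Y$-computability of the tree playing the role of the $Y$-computable reservoirs. Granting these lemmas, the statement follows by composition: apply the subset lemma with $Y = X$ (its hypotheses hold by the first paragraph) to obtain $G$; apply the PA lemma with $Y = X\oplus G$ (its hypotheses now hold by the conclusion of the subset lemma) to obtain $L_G \gg X\oplus G$ with $S$ being $L_G$-hyperimmune and $\overline{S}$ being $L_G'$-hyperimmune; then apply the special case with $Y = L_G'$ and $T = \overline{S}$ to obtain $P_G \gg L_G'$ with $\overline{S}$ being $P_G$-hyperimmune. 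All four conclusions of the lemma are then in hand.

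The main obstacle is the complexity bookkeeping inside these preservation arguments: one must arrange that the ``failure to diagonalize yields a dominating function'' step for the $S$-requirements lands at the \emph{base} level (so that $Y$-hyperimmunity of $S$ suffices) while the corresponding step for the $\overline{S}$-requirements lands at the \emph{first-jump} level (so that $Y'$-hyperimmunity of $\overline{S}$ suffices, but nothing stronger is invoked). This is exactly what forces the asymmetric shape of the fairness property---$S$ hyperimmune relative to a PA degree over $X$, $\overline{S}$ hyperimmune relative to a PA degree over the jump---and it is where the hypothesis that $S$ is $\Sigma^0_2$ (hence $Y'$-c.e.\ for every relevant base $Y$, keeping the $\overline{S}$-side at the first jump) is used. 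A secondary point is the handling of the requirements on the jumps $(Y\oplus G)'$ and $W'$, which requires interleaving a jump-control (forcing-the-jump) sub-construction into the respective forcings in the usual manner.
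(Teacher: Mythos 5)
There is a genuine gap, and it sits exactly where you say the argument is easiest: in the claim that ``$S$ is $X$-hyperimmune and $\overline{S}$ is $X'$-hyperimmune \ldots are all we will need from the hypotheses,'' together with the assertion that the failure-to-diagonalize analysis lands at the base level for the $S$-requirements and at the first-jump level for the $\overline{S}$-requirements. In your $\Delta^0_2$-subset preservation lemma the set $G$ must be built inside $A$ or $\overline{A}$ with $A$ only $\Delta^0_2(Y)$, so the Mathias forcing has to be two-sided (Seetapun/CJS style): to force a requirement one asks whether a certain $\Pi^0_1(Y \oplus Z)$ class of $2$-partitions of the current reservoir $Z$ is nonempty, and in the nonempty case one must replace $Z$ by one side of a member of that class. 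Those members are not $Y$-computable (nor $Y'$-computable in general), so after even one such refinement the ``candidate intervals'' extracted from a requirement that cannot be finitized are computable only from $Y \oplus Z$ for the new reservoir $Z$ --- not from $Y$ --- and $Y$-hyperimmunity of $S$ no longer applies; similarly, the jump-level search for the $\overline{S}$-requirements must range over such refinements and must choose infinite sides of partitions, which needs PA over the relevant jump, not just the jump (compare the proofs of \Cref{lem:Pi01_extension,lem:H0orH1,P_G_density}, which use $L \gg X \oplus Y$ and $P \gg L'$ at precisely these points). This is the reason the fairness property in \Cref{P_2_sep_iterate} is stated with hyperimmunity relative to $L \gg X$ and $P \gg L'$ rather than relative to $X$ and $X'$, and it is why the paper constructs $G$, $L_G$, and $P_G$ simultaneously by a three-tier forcing (Mathias conditions whose reservoirs are bounded by $L$, then $L_G$- and $P_G$-conditions on trees of DNC$_2$ functions): the auxiliary PA degrees are the oracles that carry out the searches, so their existence has to be forced along with $G$, not recovered afterwards.

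Your parenthetical ``reservoirs that are $Y$-computable approximations of $A$'' does not resolve this: if the reservoirs are kept $Y$-computable you cannot perform the partition refinements that the two-sided argument requires, and if they are not, your obstruction analysis no longer produces $Y$- (resp.\ $Y'$-) computable dominating data. Note also that if your lemma (a) were provable as stated, the invariant ``$S$ is $Z$-hyperimmune and $\overline{S}$ is $Z'$-hyperimmune'' would iterate by itself and the sets $L_G, P_G$ would be superfluous, contrary to the authors' explicit remark before the proof of \Cref{RT22_not_implies_Sig2} that the auxiliary sets are needed precisely to make the property iterable. (Separately, the $\Sigma^0_2$-ness of $S$ is not what keeps the $\overline{S}$-side at the first jump inside this lemma; it is carried in the statement so that the same computable instance $S$, produced in \Cref{sep_ground}, survives the whole iteration.) Your outer composition (subset step, then PA step over $X \oplus G$, then PA step over $L_G'$) would be fine if the two preservation lemmas held with base-level hypotheses, but as sketched those lemmas are exactly the hard content, and the first of them is unsupported and quite possibly false.
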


We have an analogous result for $\COH$.

\begin{lemma}\label{P_2_sep_COH_iterate}
	Let $X$, $S$, $L$, and $P$ be sets as follows:
	\begin{itemize}
		\item  $S$ is $\Sigma^0_2$;
		\item $L \gg X$;
		\item $P \gg L'$;
		\item $S$ is $L$-hyperimmune;
		\item $\overline{S}$ is $P$-hyperimmune.
	\end{itemize}
 	For every family of sets $\vec{R} \Tred X$, there exists an infinite $\vec{R}$-cohesive set $G$ along with sets $L_G$ and $P_G$ as follows:
 	\begin{itemize}
 		\item $L_G \gg X \oplus G$;
 		\item $P_G \gg L_G'$;
 		\item $S$ is $L_G$-hyperimmune;
 		\item $\overline{S}$ is $P_G$-hyperimmune.
 	\end{itemize}
\end{lemma}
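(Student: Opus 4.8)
The plan is to build the $\vec R$-cohesive set $G$ by reducing cohesiveness to a problem about a single nonempty $\Pi^0_1$ class relative to $X'$, and then to navigate that class, with the help of the hypotheses on $L$ and $P$, so as to carry along the two hyperimmunities. First I would record the complexity bookkeeping. Since $L \gg X$ we have $L \geq_T X$, hence $L' \geq_T X'$; combined with $P \gg L'$ this gives that $P$ computes a member of every nonempty $\Pi^{0,X'}_1$ class (and, as in standard basis arguments, it does so with enough maneuvering room to impose additional requirements). Next I would package cohesiveness in the usual way: for $f \in 2^\omega$ set $Y^f_n = \bigcap_{i<n} R_i^{f(i)}$, where $R_i^1 = R_i$ and $R_i^0 = \overline{R_i}$, call $f$ \emph{valid} if every $Y^f_n$ is infinite, and observe that the valid paths form a nonempty $\Pi^{0,X'}_1$ class $\mathcal V$ and that from any valid $f$ the set $G_f = \{g_0 < g_1 < \cdots\}$, with $g_n$ the least element of $Y^f_n$ exceeding $g_{n-1}$, is $\vec R$-cohesive and $(X \oplus f)$-computable. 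It therefore suffices to find $f \in \mathcal V$ together with sets $L_G \gg X \oplus f$ with $S$ $L_G$-hyperimmune and $P_G \gg L_G'$ with $\overline S$ $P_G$-hyperimmune: then $G := G_f$ works, since being of PA degree over $X \oplus f$ entails the same over $X \oplus G_f \leq_T X \oplus f$.

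The core of the argument is then to choose $f \in \mathcal V$ while preserving the hyperimmunities, which I would do by nesting $\Pi^0_1$ classes (or, equivalently, by a single forcing that builds $f$, $L_G$, $P_G$ simultaneously). For the first layer, consider the nonempty $\Pi^{0,X'}_1$ class $\mathcal W$ of pairs $\langle f,\ell\rangle$ with $f\in\mathcal V$ and $\ell$ a $\{0,1\}$-valued DNC function relative to $X\oplus f$ (so $\ell \gg X\oplus f$); the defining condition on $\ell$ is $\Pi^0_1$ in $\langle f,\ell\rangle$ with parameter $X$, so this is genuinely a $\Pi^{0,X'}_1$ class. I want a member $\langle f,\ell\rangle\in\mathcal W$ for which $S$ remains $\ell$-hyperimmune; I would obtain this from a preservation-of-hyperimmunity basis theorem, using that $S$ is $L$-hyperimmune with $L \gg X$ and — crucially — that $S$ is genuinely $\Sigma^0_2$, so that "$S$ is not $\ell$-hyperimmune'' sits at a fixed, low level of the arithmetic hierarchy relative to $\ell$, uniformly and independently of how $\ell$ was produced. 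Having fixed $L_G := \ell$, I would repeat the move one tier up for the complement: using $\overline S$ $P$-hyperimmune with $P \gg L'$ and $\overline S$ genuinely $\Pi^0_2$, find $P_G \gg L_G'$ with $\overline S$ $P_G$-hyperimmune. Finally set $G := G_f$ and verify the four conclusions.

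The main obstacle is exactly the preservation step: showing that the hyperimmunity of $S$ (resp.\ of $\overline S$) survives the passage from $X$ to $X \oplus G$ (resp.\ from $L$ to $L_G$) along a PA-degree basis move, and that the complexity levels match up so that the construction can be iterated. This is where the precise shape of the ``fairness'' hypotheses earns its keep: the absoluteness of the $\Sigma^0_2$-definition of $S$ is what keeps the relevant non-hyperimmunity statement arithmetically tame enough for a $\Pi^0_1$-class argument to defeat it, and the two-tier arrangement ($L$ over $X$, $P$ over $L'$) is engineered precisely so that the preservation can be performed twice in succession. I would model the bookkeeping on the proof of the analogous iteration lemma for $\mathsf{D}^2_2$ (Lemma~\ref{P_2_sep_iterate}), which confronts the same obstacle in the setting where the solution is an infinite subset of a $\Delta^0_2(X)$ set, simply substituting the $\COH$-specific $\Pi^{0,X'}_1$ encoding of cohesiveness above for the direct subset construction used there.
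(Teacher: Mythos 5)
There is a genuine gap at the step you yourself flag as the main obstacle, and it is not repairable by citing a basis theorem. Your reduction routes the whole construction through the $\Pi^{0,X'}_1$ class $\mathcal W$ of pairs $\langle f,\ell\rangle$ with $f$ a valid side-selection and $\ell$ DNC$_2$ over $X\oplus f$, and you then need a member with $S$ still $\ell$-hyperimmune. But the only hyperimmunity of $S$ you are given is relative to $L$, which is PA over $X$ and in general does \emph{not} compute $X'$; any forcing or basis argument over a $\Pi^0_1(X')$ class produces its interval-candidates (the pairs $x_0<x_1$ with $\Phi_e^\sigma(x_0)\converges=\Phi_e^\sigma(x_1)\converges=1$) only computably in $X'$, so the hypothesis ``$S$ is $L$-hyperimmune'' cannot be invoked against them. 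Worse, demanding $\ell \gg X\oplus f$ for a valid $f$ can be strictly stronger than the lemma's conclusion: if $R_i = \{s : \text{the stage-}s\text{ approximation to } i\in X' \text{ is } 1\}$, then the unique valid $f$ is $X'$ itself, so your $\ell$ must be PA over $X'$ with $S$ $\ell$-hyperimmune --- something the hypotheses do not supply (and which the iteration could not maintain) --- whereas for this $\vec R$ \emph{every} infinite set is cohesive and the lemma is trivial. The appeal to $S$ being ``genuinely $\Sigma^0_2$'' does not rescue this: in the paper's iteration lemmas the $\Sigma^0_2$-ness of $S$ plays no role in the preservation steps (it is only needed globally, so that the same $S$ remains a computable instance of $\Sigma^0_2\SubP$ in the final model); preservation is driven entirely by the two hyperimmunity hypotheses, each usable only at its own level.

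The paper's proof is arranged precisely to avoid this level mismatch: $G$ is built by Mathias forcing with reservoirs $Y$ satisfying $L \gg X\oplus Y$, cohesiveness being just one more splitting of the reservoir into $Y\cap R_e$ and $Y\cap\overline{R_e}$ handled by (the modified) \Cref{lem:Pi01_extension}; $L_G$ is a path through the finitary trees $T_{D,H,J}$, so that in the analogue of \Cref{lem:H0orH1} the candidate intervals are $X\oplus Y$-computable and $L$-hyperimmunity of $S$ applies, with only the case distinction (emptiness of a $\Pi^0_1(X\oplus Y)$ class) delegated to $P \gg L' \Tabove (X\oplus Y)'$; and $P_G$ is \emph{not} chosen after the fact from a fixed $L_G$, but via conditions quantifying over sequences of constructible extensions (\Cref{P_G_density}), exactly so that $P$ can compute the candidate intervals used against $\overline S$. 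Your sequential plan --- first fix $\langle f,\ell\rangle$, then find $P_G \gg L_G'$ preserving $\overline S$-hyperimmunity --- has the same defect one tier up: $P$ is PA over $L'$, not over $L_G'$, and there is no black-box reason an arbitrary PA-over-$L_G'$ set preserves $\overline S$-hyperimmunity unless the relevant candidates were made $P$-computable during the construction of $L_G$ itself. So the interleaved three-tier forcing of \Cref{P_2_sep_iterate}, which you intended only as bookkeeping inspiration, is in fact the substance of the proof; the $\Pi^0_1(X')$ encoding of $\COH$ discards exactly the structure that makes the preservation arguments possible.
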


Observe that, since $G \Tred L_G \Tred P_G$ in both of the above lemmas, the set $S$ will be $(X \oplus G)$-bi-immune (in fact, $(X \oplus G)$-bi-hyperimmune). If we were dealing with a single set $A$, or a single family $\vec{R}$, then we could in fact find a set $G$ with this property more simply, and without needing to also construct $L_G$ and $P_G$. But this would only help show that $\Sigma^0_2\SubP \ncred \D^2_2$ and $\Sigma^0_2\SubP \ncred \COH$, respectively. We need these auxiliary sets in order to be able to iterate the property and build an $\omega$-model separation. We show how this is done, and then move on to the lemmas.

\begin{proof}[Proof of \Cref{RT22_not_implies_Sig2}]
	We construct an $\omega$-model by applying \Cref{sep_ground} and then iterating and dovetailing \Cref{P_2_sep_iterate,P_2_sep_COH_iterate}. To this end, we define a chain of sets $X_0 \Tred X_1 \Tred \cdots$ and let our model be $\mathcal{M} = \set{Y : (\exists s)(Y \Tred X_s)}$, as usual. We proceed by induction.
	
	Let $X_0 = \emptyset$, and let $S$, $L_0$, and $P_0$ be as given by \Cref{sep_ground}. Notice that, if we take $X = X_0$, $L = L_0$, and $P = P_0$, then $X$, $S$, $L$, and $P$ satisfy the hypotheses of \Cref{P_2_sep_iterate,P_2_sep_COH_iterate}.
	
	Next, fix $s \in \omega$, and suppose that for each $t \leq s$ we have defined $X_t$, $L_t$, and $P_t$ such that, if we take $X = X_t$, $L = L_t$, and $P = P_t$, then $X$, $S$, $L$, and $P$ satisfy the hypotheses of \Cref{P_2_sep_iterate,P_2_sep_COH_iterate}. Say $s$ is $2\seq{e,t}$ or $2\seq{e,t}+1$, so that $e,t \leq s$ by standard conventions. In the first case, we check if $\Phi_e^{X_t'}$ defines a set $A$, and in the second, we check if $\Phi_e^{X_t}$ defines a family of sets $\vec{R}$. If not, we let $X_{s+1} = X_s$, $L_{s+1} = L_s$, and $P_{s+1} = P_s$ in either case. Otherwise, we have $A \Tred X_s'$ or $\vec{R} \Tred X_s$, respectively. We can thus apply \Cref{P_2_sep_iterate} in the first case, or \Cref{P_2_sep_COH_iterate} in the second, to obtain sets $G$, $L_G$, and $P_G$. (So, in the first case, we have that $G$ is an infinite subset of $A$ or $\overline{A}$, and in the second, that $G$ is an infinite $\vec{R}$-cohesive set.) In either case, as noted above, we have that $X_s \oplus G$ does not compute any infinite subset of $S$ or $\overline{S}$. We now set $X_{s+1} = X_s \oplus G$, $L_{s+1} = L_G$ and $P_{s+1} = P_G$, and notice that, if we take $X = X_{s+1}$, $L = L_{s+1}$, and $P = P_{s+1}$, then we again satisfy the hypotheses of \Cref{P_2_sep_iterate,P_2_sep_COH_iterate}. So the inductive conditions are maintained.
	
	Let $\mathcal{M}$ be the model specified above. By construction, if $A$ is any instance of $\D^2_2$ in $\mathcal{M}$ then $A = \Phi_e^{X_t'}$ for $e$ and $t$, and then some solution $G$ to $A$ is computable from $X_{2\seq{e,t}}$ and so belongs to $\mathcal{M}$. Similarly, if $\vec{R}$ is any instance of $\COH$ in $\mathcal{M}$ then $\vec{R} = \Phi_e^{X_t}$ for some  for $e$ and $t$, and now a solution $G$ to $\vec{R}$ is computable $X_{2\seq{e,t}+1}$. Hence, $\mathcal{M}$ satisfies $\D^2_2 + \COH$, and therefore $\RT^2_2$. On the other hand, by induction, no $X_s$ computes any infinite subset of $S$ or $\overline{S}$, so $S$ is a computable instance of $\Sigma^0_2\SubP$ with no solution in $\mathcal{M}$. Hence, $\mathcal{M}$ does not satisfy $\Sigma^0_2\SubP$.
\end{proof}

\begin{corollary}
	There exists an $\omega$-model satisfying $\RCA_0 + \RT^2_2$ but not $\GST_1$.
\end{corollary}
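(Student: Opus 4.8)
The plan is to simply combine \Cref{RT22_not_implies_Sig2} with the characterization of $\SGST_1$ obtained in \Cref{prop:sig2_sgst_equiv}. First I would invoke \Cref{RT22_not_implies_Sig2} to fix an $\omega$-model $\mathcal{M}$ of $\RCA_0 + \RT^2_2$ in which $\Sigma^0_2\SubP$ fails. Since $\mathcal{M}$ is an $\omega$-model of $\RCA_0$, every theorem of $\RCA_0$ holds in $\mathcal{M}$; in particular, the equivalence $\SGST_1 \iff \Sigma^0_2\SubP$ from \Cref{prop:sig2_sgst_equiv} holds in $\mathcal{M}$, and so does the trivial implication $\GST_1 \to \SGST_1$.

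Now I would argue by contradiction: if $\mathcal{M} \models \GST_1$, then $\mathcal{M} \models \SGST_1$, and hence $\mathcal{M} \models \Sigma^0_2\SubP$, contradicting the choice of $\mathcal{M}$. Therefore $\mathcal{M}$ is an $\omega$-model of $\RCA_0 + \RT^2_2$ which does not satisfy $\GST_1$, as claimed. There is essentially no obstacle here — all the difficulty has been absorbed into \Cref{RT22_not_implies_Sig2} (via \Cref{sep_ground,P_2_sep_iterate,P_2_sep_COH_iterate}) and into \Cref{prop:sig2_sgst_equiv}; the corollary is a formal consequence of those results together with the trivial fact that $\GST_1$ implies $\SGST_1$ over $\RCA_0$.
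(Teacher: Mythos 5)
Your proposal is correct and matches the paper's intended argument: the corollary is stated without proof precisely because the model of \Cref{RT22_not_implies_Sig2} fails $\Sigma^0_2\SubP$, while $\GST_1 \to \SGST_1 \iff \Sigma^0_2\SubP$ holds over $\RCA_0$ (hence in any $\omega$-model of $\RCA_0$), so that model cannot satisfy $\GST_1$. Nothing further is needed.
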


\subsection{Proving \Cref{sep_ground}}

For the remainder of this section, we follow the convention that if $\Phi^F(x) \converges$ for some Turing functional $\Phi$, finite set $F$, and $x \in \omega$, then the use of this computation is bounded by $\max F$. For a set $U$, let $T^2_U \subseteq 2^{<\omega}$ be the standard $U$-computable tree whose paths are precisely the DNC$_2$ functions relative to $U$. By well-known results of Jockusch and Soare \cite{JS-1972} and Solovay (unpublished), $X \gg U$ if and only if $X \Tequiv f$ for some $f \in [T^2_U]$; see \cite[Theorem 2.22.2]{DH-2010}.

Our original proof of \Cref{sep_ground} built $S$, $L_0$, and $P_0$ directly, via a slightly more complicated construction than the one that follows. We thank the anonymous referee for suggesting the more streamlined version below.

\begin{proof}[Proof of \Cref{sep_ground}]
	We let $L_0$ be any low set of PA degree, and $P_0 \gg L_0'$ any set of hyperimmune-free degree relative to $L_0'$. These exist, respectively, by applying the low basis theorem to $T^2_\emptyset$, and the hyperimmune-free basis theorem, relativized to $L_0'$, to $T^2_{L_0'}$ (Jockusch and Soare \cite[Theorems 2.2 and 2.4]{JS-1972}). We construct a $\Sigma^0_2$ set $S$ such that $S$ is $L_0$-hyperimmune and $\overline{S}$ is $L_0'$-hyperimmune. Since $P_0$ is of hyperimmune-free degree relative to $L_0'$, this implies that $\overline{S}$ is also $P_0$-hyperimmune, as needed. (To see this, fix any $P_0$-computable sequence $x_0 < x_1 < \cdots$. Since $P_0$ is of hyperimmune-free degree relative to $L_0'$, there is a function $g \Tred L_0'$ such that $g(n) \geq x_n$ for all $n$. Define a function $h$ by $h(0) = x_0$ and $h(n+1) = g(h(n)+1)$ for all $n$. Then $h \Tred g \Tred L_0'$, and we have $h(n+1) \geq x_{h(n)+1} > x_{h(n)} \geq h(n)$ for all $n$. Since $\overline{S}$ is $L_0'$-hyperimmune, there exists an $n$ such that $[h(n),h(n+1)) \subseteq S$, and hence also $[x_{h(n)},x_{h(n)+1}) \subseteq S$.)
	
	We enumerate $S$ by stages, using $L_0'$ as an oracle. Thus $S$ will be $\Sigma^0_1(L_0')$, and so $\Sigma^0_2$ since $L_0$ is low. We aim to satisfy the following requirements for all $e \in \omega$:
	\begin{eqnarray*}
		\Lreq_e & : &  \text{$\Phi^{L_0}_e$ is not the characteristic function of an infinite set or}\\
		& & \text{not a non-hyperimmunity witness for $S$};\\
		\Preq_e & : &  \text{$\Phi^{L_0'}_e$ is not the characteristic function of an infinite set or}\\
		& & \text{not a non-hyperimmunity witness for $\overline{S}$}. 
	\end{eqnarray*}
	We call requirements of the former kind the \emph{$\Lreq$ requirements}, and those of the latter kind the \emph{$\Preq$ requirements}. It is clear that satisfying all $\Lreq$ and $\Preq$ requirements ensures that $S$ will be $L_0$-hyperimmune and $\overline{S}$ will be $L_0'$-hyperimmune.
	
	We assign a priority ordering to the requirements of type $\omega$, with $\Preq_e$ stronger than $\Lreq_e$, and $\Lreq_e$ stronger than $\Preq_{e+1}$, for all $e \in \omega$. Requirements will \emph{act} at certain stages, never more than one requirement per stage, in a way that we will make precise below. When some $\Lreq_e$ acts, it will impose a restraint $r_e$ that may be cancelled later and redefined later still; this will be used to control which elements can be enumerated into $S$ by lower priority $\Preq$ requirements. At each stage $s$, we also let $m_s$ be the maximum of all restraints (including cancelled or redefined ones) imposed at any point in the construction prior to stage $s$, as well as all numbers enumerated into $S$ so far.
	
	We say a requirement $\Lreq_e$ \emph{requires attention} at stage $s$ if there exist numbers $x_1 > x_0 \geq m_s$ such that $\Phi_e^{L_0}(x_0) \converges = \Phi_e^{L_0}(x_1) \converges = 1$. We say a requirement $\Preq_e$ \emph{requires attention} at stage $s$ if there exist numbers $x_0 < x_1 \leq s$, larger than all active restraints at stage $s$ imposed by any $\Lreq$ requirements stronger than $\Preq_e$, such that $\Phi_e^{L_0'}(x_0)[s] \converges = \Phi_e^{L_0'}(x_1)[s] \converges = 1$. Notice that, in either case, determining if a given requirement requires attention at a given stage is computable in $L'_0$, uniformly in the index of the requirement and the stage number.
	
	Initially, we let $S$ be empty and let no restraints be imposed. (In particular, $m_0 = 0$.) Now suppose we are at stage $s$. If it exists, fix the highest priority requirement of index $e \leq s$ that requires attention at stage $s$ and that has not acted since the last stage, if any, at which a higher priority $\Preq$ requirement acted. If this requirement is $\Lreq_e$, we fix the least witnessing $x_1 > x_0 \geq m_s$ and impose the restraint $r_e = x_1$. We then say that $\mathcal{L}_e$ has \emph{acted}. If, instead, the requirement is $\Preq_e$, we fix the least witnessing $x_0 < x_1$ and enumerate every element of the interval $[x_0,x_1)$ into $S$. We then say $\Preq_e$ has acted, and cancel the restraints of any lower priority $\Lreq$ requirements.
	
	This completes the construction. We claim that each requirement acts at most finitely often. Suppose not, and fix the highest priority requirement that acts at infinitely many stages. Let $s < t$ be stages at which this requirement acts, but such that every stronger requirement has stopped acting before stage $s$. Then this requirement must require attention at stage $t$, and moreover, it must not have acted since the last stage at which some stronger $\Preq$ requirement did. In particular, the requirement must not have acted at stage $s$, a contradiction. So the claim is proved.
	
	Now, a restraint $r_e$ is only ever cancelled if some $\Preq$ requirement stronger than $\Lreq_e$ acts, and it is only defined if $\Lreq_e$ itself acts. The preceding claim thus implies that each $r_e$ is either undefined from some stage on or reaches a final value.
	
	It remains to verify that all requirements are satisfied. Consider any requirement, and let $s$ be least such that every higher priority $\Preq$ requirement has stopped acting prior to stage $s$. If we are dealing with $\Lreq_e$, and if $\Phi_e^{L_0}$ is the characteristic function of an infinite set, then $\Lreq_e$ will require attention at stage $s$. This means that for some $x_1 > x_0 \geq m_s$ with $\Phi_e^{L_0}(x_0) \converges = \Phi^{L_0}_e(x_1) \converges = 1$, $\Lreq_e$ will now impose the restraint $r_e = x_1$, and this restraint will be permanent. Only $\Preq$ requirements enumerate elements into $S$, and the only such requirements that can act after stage $s$ must respect $r_e$. Hence, every number in the interval $[x_0,x_1)$ will be permanently kept out of $S$, and so belong to $\overline{S}$, as needed.
	
	Now suppose we are dealing with $\Preq_e$, and that $\Phi_e^{L_0'}$ is the characteristic function of an infinite set. Any $\Lreq$ requirement stronger than $\Preq_e$ that acts at or after stage $s$ will do so before $\Preq_e$ can act. This is because the action of $\Preq_e$ cannot change any such $\Lreq$ requirement from not requiring attention to requiring attention. Let $t \geq s$ be least such that all such $\Lreq$ requirements have acted prior to stage $t$. In particular, any restraints they impose are now permanent, by choice of $s$. By hypothesis, there is a $u \geq t$ such that $\Preq_e$ acts at stage $u$, which means that for some $x_1 > x_0$ such that $\Phi_e^{L_0'}(x_0) \converges = \Phi^{L_0'}_e(x_1) \converges = 1$ every element of $[x_0,x_1)$ is enumerated into $S$. This completes the proof.
\end{proof}

\subsection{Proving \Cref{P_2_sep_iterate}}

Throughout this section, let $X$, $S$, $L$, and $P$ be fixed as in the statement of \Cref{P_2_sep_iterate}, along with a set $A \Tred X'$. For ease of notation, we write $A^0$ for $A$ and $A^1$ for $\overline{A}$.

Notice that if $A^0$ or $A^1$ has an infinite subset $U$ so that $L \gg X \oplus U$, then we can simply take $G = U$, $L_G = L$, and $P_G = P$ to satisfy the conclusion of \Cref{P_2_sep_iterate}. Going forward, we thus assume that neither $A^0$ nor $A^1$ has any such subset.

We will use different forcing notions to build each of the sets $G$, $L_G$, and $P_G$. The underlying sets of conditions will be called \emph{$G$-conditions}, \emph{$L_G$-conditions}, and \emph{$P_G$-conditions}, respectively, but when it is clear which kind of condition we are referring to we will simply use \emph{conditions} for short. All three sets will be constructed simultaneously. The forcing for $L_G$ will be defined in terms of that for $G$, and the forcing notion for $P_G$ will be defined in terms of that for $L_G$. We thus define the $G$-conditions, then the $L_G$-conditions, and finally the $P_G$-conditions, in order. The $G$-conditions will be be based on Mathias forcing, in the style originally developed by Hummel and Jockusch \cite{HJ-1999} and Cholak, Jockusch, and Slaman \cite{CJS-2001}, which is standard.

\subsubsection{Forcing notion for $G$}

We begin by defining the $G$-conditions and proving some of their properties.

\begin{definition}[$G$-conditions]
\
	\begin{enumerate}
		\item A \emph{$G$-condition} is a tuple $(D^0,D^1,Y)$ such that for each $i < 2$, $D^i$ is a finite subset of $A^i$, $Y$ is an infinite set with $\max D^i < \min Y$, and $L \gg X \oplus Y$.
		\item A condition $(E^0,E^1,Z)$ \emph{extends} $(D^0,D^1,Y)$ if for each $i < 2$, $D^i \subseteq E^i \subseteq D^i \cup Y$, and if $Z \subseteq Y$. If, in addition, $Y \setminus Z$ is finite then $(E^0,E^1,Z)$ is a \emph{finite extension} of $(D^0,D^1,Y)$.
		\item An \emph{index} for $(D^0,D^1,Y)$ is a number $\seq{d_0,d_1,y}$ such that for each $i < 2$, $d_i$ is a canonical index for $D^i$, and $\Phi^L_y$ is a DNC$_2$ function relative to $X \oplus Y$.
	\end{enumerate}
\end{definition}

\noindent Notice that $(\emptyset, \emptyset, \omega)$ is a condition.

Our proof will have the feature that the conditions we will be interested in will all be \emph{constructible}, which we make precise in \Cref{def:constructible} below. In short, this means that each condition will be obtained from another in one of two ways. The first way is finite extension, and the second way is implicit in the statement of \Cref{lem:Pi01_extension} below. The next two lemmas show that, if we extend conditions in these two ways, we can keep track of indices along the way.

\begin{lemma}\label{lem:finite_extension}
	Let $(E^0,E^1,Z)$ be a finite extension of $(D^0,D^1,Y)$. Then an index for $(E^0,E^1,Z)$ can be found uniformly computably from an index for $(D^0,D^1,Y)$ and canonical indices for $E^0$, $E^1$, and $Y \setminus Z$.
\end{lemma}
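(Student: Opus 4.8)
The plan is to produce the index for $(E^0,E^1,Z)$ in the obvious shape $\langle e_0,e_1,z\rangle$, where $e_0$ and $e_1$ are canonical indices for $E^0$ and $E^1$ (which we are handed directly), and $z$ is a number such that $\Phi^L_z$ is a DNC$_2$ function relative to $X \oplus Z$, to be computed uniformly from the third coordinate $y$ of the given index for $(D^0,D^1,Y)$ together with a canonical index for the finite set $F = Y \setminus Z$. That $(E^0,E^1,Z)$ is genuinely a $G$-condition (so $E^i \subseteq A^i$, $\max E^i < \min Z$, and $L \gg X \oplus Z$) is part of the hypothesis that it is a finite extension of $(D^0,D^1,Y)$, so only the construction of $z$ requires any work.

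First I would note that $X \oplus Z \Tred X \oplus Y$ uniformly in a canonical index for $F$: since $n \in Z$ if and only if $n \in Y$ and $n \notin F$, a canonical index for $F$ yields an index for this reduction, and hence a computable function $\rho$ (uniform in that index) with $\Phi_e^{X \oplus Z}(x) \simeq \Phi_{\rho(e)}^{X \oplus Y}(x)$ for all $e$ and $x$. Next, by the $s$-$m$-$n$ theorem there is a computable function $h$, again uniform, such that for every $e$ the machine $h(e)$ ignores its input and runs $\Phi_{\rho(e)}^{X \oplus Y}$ on input $e$; in particular $\Phi_{h(e)}^{X \oplus Y}(h(e)) \simeq \Phi_{\rho(e)}^{X \oplus Y}(e) \simeq \Phi_e^{X \oplus Z}(e)$. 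Finally, set $\Phi^L_z(e) = \Phi^L_y(h(e))$ and let $z$ be an index for this (obtained from $y$ and $h$ by $s$-$m$-$n$); since $\Phi^L_y$ is total, being a DNC$_2$ function, and $h$ is total computable, $\Phi^L_z$ is total, and if $\Phi_e^{X \oplus Z}(e)\converges$ then $\Phi_{h(e)}^{X \oplus Y}(h(e))\converges$ with the same value, whence $\Phi^L_z(e) = \Phi^L_y(h(e)) \neq \Phi_{h(e)}^{X \oplus Y}(h(e)) = \Phi_e^{X \oplus Z}(e)$ because $\Phi^L_y$ is DNC$_2$ relative to $X \oplus Y$. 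Hence $\Phi^L_z$ is DNC$_2$ relative to $X \oplus Z$, so $\langle e_0,e_1,z\rangle$ is an index for $(E^0,E^1,Z)$, and the whole procedure is uniform in the data listed in the statement.

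The only subtlety — and the step I expect to be the ``hard part'', such as it is — is the realignment of the diagonalization indices. One cannot simply reuse $y$ for $z$: a DNC$_2$ function relative to $X \oplus Y$ need not be DNC$_2$ relative to $X \oplus Z$ on the nose, since the $e$-th $(X \oplus Z)$-computation is matched by the $\rho(e)$-th, not the $e$-th, $(X \oplus Y)$-computation. The padding via $h$ is precisely what moves the diagonal back into place, and verifying this is the one point that needs care; everything else is bookkeeping with canonical indices and $s$-$m$-$n$.
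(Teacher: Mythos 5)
Your proposal is correct and follows essentially the same route as the paper's proof: translate $X \oplus Z$-computations into $X \oplus Y$-computations using the canonical index for $Y \setminus Z$, pad so that the diagonal value $\Phi_e^{X\oplus Z}(e)$ appears as $\Phi_{h(e)}^{X\oplus Y}(h(e))$, and then obtain the new DNC$_2$ index by composing $\Phi^L_y$ with $h$ via the $s$-$m$-$n$ theorem. The only cosmetic difference is that the paper builds $h$ in one step (so that $\Phi^{X\oplus Y}_{h(e)}(x) = \Phi^{X\oplus Z}_e(e)$ for all $x$) rather than factoring it through your intermediate reduction $\rho$.
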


\begin{proof}
	From an index for $(D^0,D^1,Y)$, we first extract a $y \in \omega$ such that $\Phi^L_y$ is DNC$_2$ relative to $X \oplus Y$. From a canonical index for $Y \setminus Z$, we can uniformly find a computable function $h$ such that $\Phi^{X \oplus Y}_{h(e)}(x) = \Phi^{X \oplus Z}_e(e)$ for all $e$ and $x$. Now let $g$ be a computable function so that $\Phi^L_{g(e)}(x) = \Phi^L_e(h(x))$ for all $e$ and $x$. Then for all $e$ we have that
	\[
		\Phi^L_{g(y)}(e) = \Phi^L_y(h(e)) \neq \Phi_{h(e)}^{X \oplus Y}(h(e)) = \Phi_e^{X \oplus Z}(e),
	\]
	so $\Phi^L_{g(y)}$ is DNC$_2$ relative to $X \oplus Z$. Thus, the canonical indices of $E^0$ and $E^1$, together with $g(y)$, form an index for $(E^0,E^1,Z)$.
\end{proof}

\begin{lemma}\label{lem:Pi01_extension}
	Let $(D^0,D^1,Y)$ be a condition and let $\mathcal{C}$ be a non-empty $\Pi^0_1(X \oplus Y)$ class of $2$-partitions of $Y$. There exists $\seq{Y_0,Y_1} \in \mathcal{C}$ and an $i < 2$ such that $(D^0,D^1,Y_i)$ is an extension of $(D^0,D^1,Y)$, and such that this $i$ and an index for this extension can be found uniformly $P$-computably from an index for $(D^0,D^1,Y)$ and an index for $\mathcal{C}$ as a $\Pi^0_1(X \oplus Y)$ class.
\end{lemma}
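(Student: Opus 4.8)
The plan is to force $L$ to compute, as a path through a single $\Pi^0_1(X \oplus Y)$ class, not merely a $2$-partition $\seq{Y_0,Y_1} \in \mathcal{C}$ but also, for each side $i < 2$, a $\mathrm{DNC}_2$ function $f_i$ relative to $X \oplus Y_i$. This is the crucial point: because $Y_i$ is extracted as a path through a $\Pi^0_1(X \oplus Y)$ class, we cannot expect $X \oplus Y_i \Tred X \oplus Y$, so $L \gg X \oplus Y_i$ does \emph{not} come for free from $L \gg X \oplus Y$ — the witness to it must be built into the class. Once $L$ has produced $\seq{Y_0,Y_1,f_0,f_1}$, at least one of $Y_0,Y_1$ is infinite, and we invoke $P \gg L'$ to select such an $i$; then $\seq{D^0,D^1,Y_i}$ will be the desired extension, with index $\seq{d_0,d_1,y_i}$ where $y_i$ is the $L$-index of $f_i$ read off from the path.

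In more detail: from the given index $\seq{d_0,d_1,y}$, $\Phi^L_y$ is a $\mathrm{DNC}_2$ function relative to $X \oplus Y$, and since a $\mathrm{DNC}_2$ function relative to a set computes that set uniformly, from $y$ we obtain an index $z$ with $\Phi^L_z = X \oplus Y$. Let $\mathcal{Q}$ be the class of all $\seq{Y_0,Y_1,f_0,f_1}$ (suitably interleaved into a single real) such that $\seq{Y_0,Y_1} \in \mathcal{C}$ and, for each $i < 2$, $f_i$ is a path through $T^2_{X \oplus Y_i}$. Deciding whether a prefix of $f_i$ lies on $T^2_{X \oplus Y_i}$ uses only $X$ together with a prefix of $Y_i$, and $\mathcal{C}$ is $\Pi^0_1(X \oplus Y)$, so $\mathcal{Q}$ is a $\Pi^0_1(X \oplus Y)$ class whose index is computed uniformly from that of $\mathcal{C}$; and $\mathcal{Q}$ is nonempty since $\mathcal{C}$ is and $T^2_{X \oplus Y_i}$ always has a path. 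As $L \gg X \oplus Y$, the standard uniform procedure — $\Phi^L_y$ together with $X \oplus Y = \Phi^L_z$ computes a path through any nonempty $\Pi^0_1(X \oplus Y)$ class specified by an index — yields an index $p$ with $\Phi^L_p = \seq{Y_0,Y_1,f_0,f_1} \in \mathcal{Q}$, from which we read off uniform $L$-indices $e_0,e_1$ for $Y_0,Y_1$ and $y_0,y_1$ for $f_0,f_1$.

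Finally, since $\seq{Y_0,Y_1}$ is a $2$-partition of the infinite set $Y$, at least one $Y_j$ is infinite, and ``$Y_j$ is infinite'' is $\Pi^0_2(L)$, hence $\Pi^0_1(L')$, uniformly in $j$ via $e_j$. Thus, with $I = \set{j < 2 : Y_j \text{ infinite}}$, the class $\set{h \in 2^{\omega} : h(0) \in I}$ is a nonempty $\Pi^0_1(L')$ class; since $P \gg L'$ (so $P$ computes $L'$ and a member of any nonempty $\Pi^0_1(L')$ class, uniformly), $P$ produces some $i \in I$. Now $\seq{D^0,D^1,Y_i}$ is a $G$-condition: $D^i$ is a finite subset of $A^i$; $Y_i$ is infinite; $Y_i \subseteq Y$ gives $\min Y_i \geq \min Y > \max D^i$; and $\Phi^L_{y_i} = f_i$ is $\mathrm{DNC}_2$ relative to $X \oplus Y_i$, whence $L \gg X \oplus Y_i$. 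It extends $\seq{D^0,D^1,Y}$ (take $E^i = D^i$, and $Y_i \subseteq Y$, with $\seq{Y_0,Y_1} \in \mathcal{C}$), and $\seq{d_0,d_1,y_i}$ is an index for it. Every step above was uniformly $P$-computable from the two given indices. The main obstacle is the one flagged at the outset — ensuring the retained side $Y_i$ still satisfies $L \gg X \oplus Y_i$, which is what forces the $\mathrm{DNC}_2$ witnesses $f_0,f_1$ into $\mathcal{Q}$ — together with the bookkeeping required to keep everything uniform and to split the labor correctly between $L$ (the partition and witnesses, via a $\Pi^0_1(X \oplus Y)$ class) and $P$ (the choice of the infinite side, via a $\Pi^0_1(L')$ class).
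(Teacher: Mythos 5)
Your proof is correct and follows essentially the same route as the paper's: you package the $2$-partition together with $\mathrm{DNC}_2$ witnesses for both sides into a single non-empty $\Pi^0_1(X \oplus Y)$ class, have $L$ (via the $\mathrm{DNC}_2(X \oplus Y)$ function given by the index) compute a member, and then use $P \gg L'$ to uniformly select a side $Y_i$ that is infinite, reading off the new index $\seq{d_0,d_1,y_i}$. The extra steps you include (recovering $X \oplus Y$ from $\Phi^L_y$, and implementing the selection of $i$ via an explicit $\Pi^0_1(L')$ class) are correct elaborations of facts the paper simply cites.
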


\begin{proof}
	First, from an index for $(D^0,D^1,Y)$, fix $y \in \omega$ such that $\Phi^L_y$ is DNC$_2$ relative to $X \oplus Y$. Let $\mathcal{D}$ be the class of all $\seq{f_0,f_1,Y_0,Y_1}$ such that $\seq{Y_0,Y_1} \in \mathcal{C}$ and, for each $i < 2$, $f_i$ is DNC$_2$ over $X \oplus Y_i$. This is a non-empty $\Pi^0_1(X \oplus Y)$ class, whose index as such can be found uniformly computably from such an index for $\mathcal{C}$. Thus, $\Phi^L_y$ can uniformly compute an element $\seq{f_0,f_1,Y_0,Y_1} \in \mathcal{D}$. In particular, this produces indices $y_i$ for each $i < 2$ such that $\Phi_{y_i}^L = f_i$. Now, since $Y$ is infinite, there is at least one $i < 2$ such that $Y_i$ is infinite, and since $P \gg L'$ it follows that $P$ can uniformly compute such an $i$ (see \cite{DM-2022}, Theorem 2.8.25~(4)). Thus, the canonical indices for $D^0$ and $D^1$, together with $y_i$, form an index for $(D^0,D^1,Y_i)$.
\end{proof}

\noindent We think of \Cref{lem:Pi01_extension} as specifying a $P$-computable procedure that, given indices for $(D^0,D^1,Y)$ and $\mathcal{C}$, produces an index for the extension $(D^0,D^1,Y_i)$. In this case, we say that $(D^0,D^1,Y_i)$ is \emph{obtained from $(D^0,D^1,Y)$ and $\mathcal{C}$ via} \Cref{lem:Pi01_extension}.

\begin{definition}\label{def:constructible}
	A condition $(E^0,E^1,Z)$ is a \emph{constructible extension} of $(D^0,D^1,Y)$ if it is either a finite extension of $(D^0,D^1,Y)$, or else if there is a non-empty $\Pi^0_1(X \oplus Y)$ class $\mathcal{C}$ such that $(E^0,E^1,Z)$ is obtained from $(D^0,D^1,Y)$ and $\mathcal{C}$ via \Cref{lem:Pi01_extension}.
\end{definition}

We now add the following definition and basic facts about it, which we will use in our main construction.

\begin{definition}\label{force_G_inf}
	Let $(D^0,D^1,Y)$ be a condition, and fix $i < 2$ and $e \in \omega$. $(D^0,D^1,Y)$ \emph{forces $|\G| \geq e$ on the $i$ side} if $|D^i| \geq e$.
\end{definition}

\noindent It is easy to see that if $(D^0,D^1,Y)$ forces $|\G| \geq e$ on the $i$ side then so does every extension. The following lemma is obvious, but we include it here for easy reference later.

\begin{lemma}\label{lem:force_e_in_G_truth}
	Let $(D^0,D^1,Y)$ be any condition that, for some $i < 2$ and $e \in \omega$, forces $|\G| \geq e$ on the $i$ side. If $U$ is any set satisfying $D^i \subseteq U \subseteq D^i \cup Y$ then $|U| \geq e$.
\end{lemma}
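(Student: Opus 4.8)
The plan is simply to unwind the definitions, since the content here is bookkeeping rather than mathematics. By \Cref{force_G_inf}, saying that $(D^0,D^1,Y)$ forces $|\G| \geq e$ on the $i$ side means precisely that $|D^i| \geq e$. Now take any set $U$ with $D^i \subseteq U \subseteq D^i \cup Y$. The first inclusion alone already gives $|U| \geq |D^i| \geq e$, by monotonicity of cardinality under set inclusion. That is the whole argument.

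There is essentially no obstacle here; the only points worth noting are that $D^i$ is a finite set by the definition of a $G$-condition, so $|D^i|$ is a genuine natural number and the inequality $|D^i|\ge e$ is meaningful, and that the bound $|U|\ge e$ holds regardless of whether $U$ is finite or infinite (in the infinite case the conclusion is immediate, and in the finite case it is the standard fact that a subset of a finite set has cardinality no larger). The lemma is recorded here only so that it can be cited cleanly in the main construction: once a condition has placed at least $e$ elements on its $i$ side, every set compatible with that side of the condition — in particular any eventual $\G$ built below it — is guaranteed to contain at least $e$ elements.
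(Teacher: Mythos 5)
Your proof is correct and is exactly the intended argument: the paper states this lemma without proof (declaring it obvious), and the obvious proof is precisely your unwinding of \Cref{force_G_inf} plus $D^i \subseteq U$. Nothing further is needed.
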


\noindent The second lemma is a density fact that will allow us to build infinite subsets of both $A^0$ and $A^1$, although only one of these sets will end up working as our set $G$.

\begin{lemma}\label{G_infinite}
	Let $(D^0,D^1,Y)$ be a condition, and fix $i < 2$ and $e \in \omega$. Then there is a finite (hence constructible) extension of $(D^0,D^1,Y)$ that forces $|\G| \geq e$ on the $i$ side.
\end{lemma}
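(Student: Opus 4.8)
The plan is to obtain the required condition as a \emph{finite} extension of $(D^0,D^1,Y)$: one simply adjoins finitely many suitable elements of $Y$ to $D^i$ and then trims $Y$ below them. The only point with any content is showing that $Y$ contains enough elements of $A^i$ to do this, and this is exactly where I would use the standing assumption that neither $A^0$ nor $A^1$ has an infinite subset $U$ with $L \gg X \oplus U$.

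First I would show that $Y \cap A^i$ is infinite. If it were finite, then $Y \cap A^{1-i}$ would be cofinite in $Y$, hence $X \oplus (Y \cap A^{1-i}) \Tequiv X \oplus Y$; since $(D^0,D^1,Y)$ is a condition we have $L \gg X \oplus Y$, and therefore $L \gg X \oplus (Y \cap A^{1-i})$. But $Y \cap A^{1-i}$ is then an infinite subset of $A^{1-i}$, contradicting the standing assumption. So $Y \cap A^i$ is infinite.

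With this in hand I would assemble the extension. If $|D^i| \geq e$ there is nothing to prove, since $(D^0,D^1,Y)$ already forces $|\G| \geq e$ on the $i$ side and is (trivially) a finite extension of itself. Otherwise, fix $a_1 < \cdots < a_k$ in $Y \cap A^i$ with $|D^i| + k \geq e$, and put $E^i = D^i \cup \{a_1,\ldots,a_k\}$, $E^{1-i} = D^{1-i}$, and $Z = \{ y \in Y : y > \max(E^0 \cup E^1) \}$. Then $Z$ is infinite (as $Y$ is) and $Y \setminus Z$ is finite, so $X \oplus Z \Tequiv X \oplus Y$ and hence $L \gg X \oplus Z$; moreover each $E^j$ is a finite subset of $A^j$ with $E^j \subseteq D^j \cup Y$ and $\max E^j < \min Z$. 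Thus $(E^0,E^1,Z)$ is a $G$-condition, is a finite extension of $(D^0,D^1,Y)$, and forces $|\G| \geq e$ on the $i$ side because $|E^i| \geq e$; and finite extensions are constructible by \Cref{def:constructible}. The only potential obstacle is the first step, but it is immediate once one observes that passing from $Y$ to a cofinite subset does not change the Turing degree of $X \oplus Y$; everything else is a routine check against the definitions of $G$-condition and of finite extension.
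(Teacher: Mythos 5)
Your proof is correct and follows essentially the same route as the paper: use the standing assumption that neither $A^0$ nor $A^1$ has an infinite subset $U$ with $L \gg X \oplus U$, together with $L \gg X \oplus Y$, to conclude that $Y \cap A^i$ is infinite, then take a finite extension by adding enough elements of $Y \cap A^i$ to $D^i$ and trimming the reservoir. You merely spell out the step the paper leaves implicit (that a cofinite subset of $Y$ preserves the Turing degree, hence the relation $L \gg X \oplus \cdot$), which is fine.
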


\begin{proof}
	By hypothesis, $A^{1-i}$ has no infinite subset $U$ such that $L \gg X \oplus U$. Since $L \gg X \oplus Y$, it follows that $Y \cap A^i$ is infinite. Hence, there exists $F^i \subseteq Y \cap A^i$ such that $|D^i \cup F^i| \geq e$. Let $E^i = D^i \cup F^i$, $E^{1-i} = D^{1-i}$, and let $Z = Y \setminus [0,\max F^0 \cup F^1]$. Then $(E^0,E^1,Z)$ is the desired finite extension.
\end{proof}

\subsubsection{Forcing notion for $L_G$}\label{sec:LG_conditions}

To define the $L_G$-conditions, we first need to define the following finite structures.

\begin{definition}\label{def:TFHJ}
	Let $F,H,J \subseteq \omega$ be finite.
	\begin{enumerate}
		\item $T_F$ is the set of all $\sigma \in 2^{<\omega}$ of length at most $\max F$ such that $\sigma(e) \neq \Phi^{X \oplus F}_{e,|\sigma|}(e)$ for every $e < |\sigma|$.
		\item $T_{F,H,J}$ is the set of all $\sigma \in T_F$ satisfying the following:
			\begin{itemize}
				\item for all $\seq{e,k} \in H$, $\neg (\exists\, x_0,x_1 \leq |\sigma|)[x_1 > x_0 > k \wedge \Phi_e^{\sigma}(x_0) \converges = \Phi_e^{\sigma}(x_1) \converges = 1]$;
				\item for all $e \in J$, $\Phi_e^\sigma(e) \diverges$.
			\end{itemize}
		\item $\sigma \in T_{F,H,J}$ has \emph{maximal length} if it has length $\max F$.
		\item $T_{F,H,J}$ \emph{looks extendible} if it contains at least one string of maximal length.
	\end{enumerate}
\end{definition}

Note that $T_F = T_{F,\emptyset,\emptyset}$. The following lemma collects some structural facts about this definition.

\begin{lemma}\label{lem:T_F_structural}
	\
	\begin{enumerate}
		\item For all finite $F,H,J \subseteq \omega$, $T_{F,H,J}$ is a tree.
		\item If $F_0,F_1,H,J \subseteq \omega$ are finite, with $F_0 \subseteq F_1$ and $\min F_1 \setminus F_0 > \max F_0$, then $T_{F_0,H,J} \subseteq T_{F_1,H,J}$ and $T_{F_1,H,J}$ and $T_{F_0,H,J}$ contain the same strings of length at most $\max F_0$.
		\item If $F,H_0,H_1,J_0,J_1 \subseteq \omega$ are finite, with $H_0 \subseteq H_1$ and $J_0 \subseteq J_1$, then $T_{F,H_1,J_1} \subseteq T_{F,H_0,J_0}$.
		\item Suppose $F_0 \subseteq F_1 \subseteq \cdots$ are finite, with $\min F_{n+1} \setminus F_n > \max F_n$ for all $n$ and $\lim_n \max F_n = \infty$. Let $U = \bigcup_n F_n$. For all finite $H,J \subseteq \omega$ such that $T_{F_n,H,J}$ looks extendible for all $n$, we have that $\bigcup_{n} T_{F_n,H,J}$ is an infinite subtree of $T^2_{X \oplus U}$.
	\end{enumerate}
\end{lemma}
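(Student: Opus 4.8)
The plan is to establish the four items in the order given, since items (2)--(4) rely on (1) and (4) uses (2). Each is a bookkeeping fact about the finitary trees $T_{F,H,J}$, and the only real care needed is in tracking which defining clause depends on the oracle string $\sigma$ and which on the finite set $F$.

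For (1) we check closure under prefixes clause by clause. The base tree $T_F$ is closed under prefixes because ``$\sigma(e) \neq \Phi^{X\oplus F}_{e,|\sigma|}(e)$ for all $e < |\sigma|$'' is downward monotone: if $\tau \preceq \sigma$ and $e < |\tau|$ then $\tau(e) = \sigma(e)$, and any computation converging in at most $|\tau|$ steps also converges, to the same value, in at most $|\sigma|$ steps. The two extra clauses of $T_{F,H,J}$ mention only $\sigma$ as an oracle. The $H$-clause says there is no pair $x_0 < x_1 \le |\sigma|$ with $x_0 > k$ and $\Phi^\sigma_e(x_0)\converges = \Phi^\sigma_e(x_1) \converges = 1$; since a convergent oracle computation is preserved when the oracle string is extended, any witnessing pair for a prefix $\tau$ is a witnessing pair for $\sigma$, so its absence for $\sigma$ yields its absence for $\tau$. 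The $J$-clause says $\Phi^\sigma_e(e) \diverges$, and divergence is inherited downward along prefixes by the same preservation fact. Hence every clause survives passage to a prefix, so $T_{F,H,J}$ is a tree.

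For (2) the key point is that the hypothesis $\min (F_1 \setminus F_0) > \max F_0$ makes $F_1 \setminus F_0$ invisible to any computation run for at most $\max F_0$ steps: $X \oplus F_0$ and $X \oplus F_1$ agree on all queries of use at most $\max F_0$. So for any $\sigma$ with $|\sigma| \le \max F_0$, the computations $\Phi^{X\oplus F_i}_{e,|\sigma|}(e)$ for $e < |\sigma|$ return the same values for $i = 0,1$, whence $\sigma \in T_{F_0}$ iff $\sigma \in T_{F_1}$; and the $H$- and $J$-clauses do not involve $F$ at all. Thus $T_{F_0,H,J}$ and $T_{F_1,H,J}$ contain exactly the same strings of length at most $\max F_0$. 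Since every string of $T_{F_0,H,J}$ has length at most $\max F_0 \le \max F_1$ by definition, this gives $T_{F_0,H,J} \subseteq T_{F_1,H,J}$. Item (3) is then immediate: passing from $(H_0,J_0)$ to $(H_1,J_1)$ only adds constraints while leaving $T_F$ unchanged, so $T_{F,H_1,J_1} \subseteq T_{F,H_0,J_0}$.

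For (4), put $T := \bigcup_n T_{F_n,H,J}$. By (2), applied to each pair $F_n \subseteq F_{n+1}$ (whose difference has minimum above $\max F_n$), the trees $T_{F_n,H,J}$ form an increasing chain, so $T$ is a tree by (1). Since each $T_{F_n,H,J}$ looks extendible it contains a string of length $\max F_n$, and $\lim_n \max F_n = \infty$ then forces $T$ to contain strings of unbounded length, hence to be infinite. Finally, $U$ and $F_n$ agree on $[0,\max F_n]$ because every element of $U \setminus F_n$ exceeds $\max F_n$; so if $\sigma \in T_{F_n,H,J} \subseteq T_{F_n}$, then for each $e < |\sigma| \le \max F_n$ the computation $\Phi^{X\oplus F_n}_{e,|\sigma|}(e)$ agrees with $\Phi^{X\oplus U}_{e,|\sigma|}(e)$, so $\sigma(e) \neq \Phi^{X\oplus U}_{e,|\sigma|}(e)$ and therefore $\sigma \in T^2_{X\oplus U}$. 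We do not expect a genuine obstacle; the only delicate point is the oracle-agreement bookkeeping in (2) and (4), where one must use that a computation running for at most $m$ steps has use at most $m$, so that altering the finite oracle only at numbers above $m$ cannot affect it.
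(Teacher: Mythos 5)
Your proof is correct and follows essentially the same route as the paper, which treats (1)--(3) as immediate from the definitions and proves (4) by noting that the increasing union is contained in $T^2_{X \oplus U}$ (the paper via $\bigcup_n T_{F_n,\emptyset,\emptyset} = T^2_{X\oplus U}$ together with (3), you via the same oracle-agreement computation done directly) and is infinite because extendibility supplies strings of length $\max F_n \to \infty$. Your careful tracking of the use/step bounds is exactly the bookkeeping the paper leaves implicit.
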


\begin{proof}
	Parts (1), (2), and (3) are clear from the definition.	For (4), we begin by observing that $\bigcup_n T_{F_n,\emptyset,\emptyset} = T^2_{X \oplus U}$. Hence, $\bigcup_n T_{F_n,H,J} \subseteq T^2_{X \oplus U}$ by (3). Since $\lim_n \max F_n = \infty$ and each $T_{F_n,H,J}$ contains a string of length $\max F_n$, it follows that $\bigcup_n T_{F_n,H,J}$ is infinite.
\end{proof}

We now define the conditions we will use to build the set $L_G$.

\begin{definition}[$L_G$-conditions]\
	\begin{enumerate}
		\item An \emph{$L_G$-condition} is a tuple $(D^0,D^1,Y,H^0,J^0,H^1,J^1)$ such that $(D^0,D^1,Y)$ is a $G$-condition, and for each $i < 2$, $H^i$ and $J^i$ are finite subsets of $\omega$ such that $T_{D^i \cup F,H^i,J^i}$ looks extendible for every finite set $F \subseteq Y$.
		\item A condition $(E^0,E^1,Z,I^0,K^0,I^1,K^1)$ \emph{extends} $(D^0,D^1,Y,H^0,J^0,H^1,J^1)$ if $(E^0,E^1,Z)$ extends $(D^0,D^1,Y)$ as $G$-conditions, and if for each $i < 2$, $H^i \subseteq I^i$ and $J^i \subseteq K^i$. It is a \emph{constructible} extension if $(E^0,E^1,Z)$ is a constructible extension of $(D^0,D^1,Y)$ as $G$-conditions.
	\end{enumerate}
\end{definition}

\noindent Notice that if $(D^0,D^1,Y)$ is any $G$-condition then $(D^0,D^1,Y,\emptyset,\emptyset,\emptyset,\emptyset)$ is an $L_G$-condition, so the above notion is non-trivial. We also have the following compatibility fact with respect to $G$-conditions.

\begin{lemma}\label{G_to_LG_extensions}
	Let $p = (D^0,D^1,Y,H^0,J^0,H^1,J^1)$ be a condition. If $(E^0,E^1,Z)$ is a (constructible) extension of $(D^0,D^1,Y)$ as $G$-conditions, then the tuple\break$(E^0,E^1,Z,H^0,J^0,H^1,J^1)$ is a (constructible) extension of $p$.
\end{lemma}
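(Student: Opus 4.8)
The plan is to verify directly that the tuple $(E^0,E^1,Z,H^0,J^0,H^1,J^1)$ satisfies the two clauses in the definition of an $L_G$-condition and then the two clauses in the definition of ($L_G$-)extension, in each case reducing the claim to the hypotheses on $p$ and on the given $G$-extension $(E^0,E^1,Z)$.

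First I would check that $(E^0,E^1,Z,H^0,J^0,H^1,J^1)$ is an $L_G$-condition. By clause (1) of that definition, this requires that $(E^0,E^1,Z)$ be a $G$-condition, which is part of the hypothesis, and that for each $i<2$ the sets $H^i$ and $J^i$ be finite subsets of $\omega$ (immediate, since $p$ is an $L_G$-condition) with $T_{E^i \cup F, H^i, J^i}$ looking extendible for every finite $F \subseteq Z$. This last point is the only non-automatic step. Fix $i<2$ and a finite $F \subseteq Z$. Since $(E^0,E^1,Z)$ extends $(D^0,D^1,Y)$ as $G$-conditions, we have $D^i \subseteq E^i \subseteq D^i \cup Y$ and $Z \subseteq Y$, so $F \subseteq Y$ and hence $D^i \subseteq E^i \cup F \subseteq D^i \cup Y$. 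Setting $F'' = (E^i \cup F)\setminus D^i$, we get a finite subset of $Y$ with $E^i \cup F = D^i \cup F''$. Because $p$ is an $L_G$-condition and $F''$ is a finite subset of $Y$, the tree $T_{D^i \cup F'', H^i, J^i}$ looks extendible; but $D^i \cup F'' = E^i \cup F$, so $T_{E^i \cup F, H^i, J^i}$ looks extendible, as needed.

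Next I would verify the extension clauses. By clause (2) of the definition of $L_G$-extension, $(E^0,E^1,Z,H^0,J^0,H^1,J^1)$ extends $p=(D^0,D^1,Y,H^0,J^0,H^1,J^1)$ provided $(E^0,E^1,Z)$ extends $(D^0,D^1,Y)$ as $G$-conditions --- which is the hypothesis --- and, for each $i<2$, $H^i \subseteq H^i$ and $J^i \subseteq J^i$, which hold trivially. For the parenthetical ``constructible'' version: if $(E^0,E^1,Z)$ is moreover a constructible extension of $(D^0,D^1,Y)$ as $G$-conditions, then by the definition of constructible $L_G$-extension the tuple $(E^0,E^1,Z,H^0,J^0,H^1,J^1)$ is a constructible extension of $p$.

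I do not expect a genuine obstacle here, since this is a bookkeeping compatibility lemma. The one point that takes a moment's thought is the observation in the second paragraph that absorbing a finite $F \subseteq Z$ into $E^i$ leaves us with a set of the form $D^i \cup (\text{finite subset of } Y)$, which is exactly the form for which the ``looks extendible'' requirement built into the condition $p$ can be invoked verbatim; everything else is just unwinding definitions.
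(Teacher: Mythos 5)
Your proposal is correct and follows essentially the same route as the paper: the only substantive point in both is that for finite $F \subseteq Z$ one can write $E^i \cup F$ as $D^i$ union a finite subset of $Y$ (your $F'' = (E^i \cup F)\setminus D^i$ versus the paper's $F'' = F' \cup F$ with $E^i = D^i \cup F'$), and then invoke the extendibility clause built into $p$. The remaining verifications of the extension clauses are, as you note, immediate from the definitions.
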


\begin{proof}
	We have only to verify that $(E^0,E^1,Z,H^0,J^0,H^1,J^1)$ is a condition. To this end, fix $i < 2$ and a finite set $F \subseteq Z$. We must show that $T_{E^i \cup F,H^i,J^i}$ looks extendible. By definition of $G$-extension, we have that $E^i = D^i \cup F'$ for some finite set $F' \subseteq Y$. Let $F'' = F' \cup F$. Since $Z \subseteq Y$, we have that $F'' \subseteq Y$. Hence, because $p$ is a condition, we have that $T_{D^i \cup F'',H^i,J^i}$ looks extendible. But $T_{D^i \cup F'',H^i,J^i} = T_{E^i \cup F,H^i,J^i}$.
\end{proof}

\begin{definition}\label{def:L_G_forcing_types}
	Let $p = (D^0,D^1,Y,H^0,J^0,H^1,J^1)$ be a condition, and fix $i < 2$ and $e \in \omega$.
	\begin{enumerate}
		\item $p$ \emph{forces $e \in L_\G'$ on the $i$ side} if $T_{D^i,H^i,J^i \cup \set{e}}$ does not look extendible.
		\item $p$ \emph{forces $e \notin L_\G'$ on the $i$ side} if $e \in J^i$.
		\item $p$ \emph{forces $\Phi_e^{L_\G}$ is not a non-hyperimmunity witness for $S$ on the $i$ side} if for some $k \in \omega$ and every $\sigma \in T_{D^i,H^i,J^i}$ of maximal length there exist $x_1 > x_0 > k$ such that $\Phi^\sigma_e(x_0) \converges = \Phi^\sigma_e(x_1) \converges = 1$ and $[x_0,x_1) \subseteq \overline{S}$.
		\item $p$ \emph{forces $\Phi_e^{L_\G}$ is finite on the $i$ side} if $\seq{e,k} \in H^i$ for some $k \in \omega$.
	\end{enumerate}
\end{definition}

\noindent The key properties of this definition for the purposes of our main construction are the following.

\begin{lemma}\label{lem:L_G_forcing_monotone}
	Let $p = (D^0,D^1,Y,H^0,J^0,H^1,J^1)$ be a condition forcing any of the statements in \Cref{def:L_G_forcing_types}. If $q = (E^0,E^1,Z,I^0,K^0,I^1,K^1)$ is any extension of $p$ then it forces the same statement.
\end{lemma}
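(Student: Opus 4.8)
The plan is to verify the claim separately for each of the four kinds of forcing listed in \Cref{def:L_G_forcing_types}. Throughout, write $q = (E^0,E^1,Z,I^0,K^0,I^1,K^1)$ for the given extension of $p = (D^0,D^1,Y,H^0,J^0,H^1,J^1)$ and fix the relevant side $i < 2$. Two of the four cases are immediate, and I would dispatch these first: if $p$ forces $e \notin L_\G'$ on the $i$ side then $e \in J^i \subseteq K^i$, and if $p$ forces that $\Phi_e^{L_\G}$ is finite on the $i$ side then $\seq{e,k} \in H^i \subseteq I^i$ for some $k$; in both cases $q$ forces the same statement by definition. The substance is in the remaining two cases, and for both the single observation doing the work is the following. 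Setting $F = E^i \setminus D^i$, the definition of $G$-extension gives $F \subseteq Y$, and since $(D^0,D^1,Y)$ is a $G$-condition we have $\max D^i < \min Y \leq \min F$ whenever $F \neq \emptyset$; hence \Cref{lem:T_F_structural}(2) applies to the pair $D^i \subseteq E^i$, so that for all finite $H, J \subseteq \omega$ the trees $T_{D^i,H,J}$ and $T_{E^i,H,J}$ contain exactly the same strings of length at most $\max D^i$. Combined with parts (1) (these are trees) and (3) ($H^i \subseteq I^i$, $J^i \subseteq K^i$) of \Cref{lem:T_F_structural}, this is all I expect to need.

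For the case that $p$ forces $e \in L_\G'$ on the $i$ side, I would argue as follows. By hypothesis $T_{D^i,H^i,J^i \cup \set{e}}$ contains no string of length $\max D^i$, so, being a tree all of whose strings have length at most $\max D^i$, it contains only strings of length strictly less than $\max D^i$. By the observation, $T_{E^i,H^i,J^i \cup \set{e}}$ has the same strings of length $\leq \max D^i$; moreover it can contain no string $\rho$ of length $> \max D^i$, since otherwise the length-$\max D^i$ prefix of $\rho$ would lie in $T_{E^i,H^i,J^i\cup\set{e}}$ (it is a tree) and hence in $T_{D^i,H^i,J^i\cup\set{e}}$, a contradiction. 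Thus $T_{E^i,H^i,J^i\cup\set{e}}$ has no string of length $\max E^i \geq \max D^i$, and neither does its subtree $T_{E^i,I^i,K^i\cup\set{e}}$ (by part (3)); so $q$ forces $e \in L_\G'$ on the $i$ side. For the case that $p$ forces ``$\Phi_e^{L_\G}$ is not a non-hyperimmunity witness for $S$'' on the $i$ side, I would keep the same witnessing $k$. Given any $\tau \in T_{E^i,I^i,K^i}$ of maximal length $\max E^i$, part (3) puts $\tau \in T_{E^i,H^i,J^i}$, so its prefix $\sigma = \tau \res \max D^i$ lies in $T_{E^i,H^i,J^i}$ and hence, by the observation, in $T_{D^i,H^i,J^i}$, where it has maximal length. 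Applying the hypothesis to $\sigma$ yields $x_1 > x_0 > k$ with $\Phi^\sigma_e(x_0) \converges = \Phi^\sigma_e(x_1) \converges = 1$ and $[x_0,x_1) \subseteq \overline{S}$; since a convergent computation from a finite-string oracle is preserved under extending the oracle, $\Phi^\tau_e(x_0)\converges = \Phi^\tau_e(x_1)\converges = 1$, while $[x_0,x_1)\subseteq\overline{S}$ is unaffected. As $\tau$ was arbitrary, $q$ forces the same statement with the same $k$.

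I do not expect a genuine obstacle here. The only point calling for care is the observation in the first paragraph: checking that the hypothesis of \Cref{lem:T_F_structural}(2) holds, i.e.\ that every element of $E^i \setminus D^i$ exceeds $\max D^i$, which is precisely what the $G$-condition requirement $\max D^i < \min Y$ was set up to guarantee. Everything past that is routine prefix bookkeeping together with the standard monotonicity of convergent finite-use computations in the oracle.
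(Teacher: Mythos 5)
Your proof is correct and follows essentially the same route as the paper's: parts (2) and (4) are immediate from the definition of extension, and parts (1) and (3) are handled by combining \Cref{lem:T_F_structural}(2) (agreement of the trees for $D^i$ and $E^i$ on strings of length at most $\max D^i$, justified by $\max D^i < \min Y$) with \Cref{lem:T_F_structural}(3) and tree-closure under prefixes. The only cosmetic differences are the order in which you apply parts (2) and (3) of that lemma and that you write out the non-hyperimmunity case in full (including the preservation of converging computations under extending the oracle string), which the paper dismisses as ``similar.''
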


\begin{proof}
	For parts (2) and (4) of \Cref{def:L_G_forcing_types}, the result is immediate from the definition of extension. For part (1), suppose $p$ forces $e \in L_\G'$ on the $i$ side. By \Cref{lem:T_F_structural}~(2), $T_{E^i,I^i,K^i \cup \set{e}}$ agrees with $T_{D^i,I^i,K^i \cup \set{e}}$ on the strings of length at most $\max D_i$, and by \Cref{lem:T_F_structural}~(3), $T_{D^i,I^i,K^i \cup \set{e}} \subseteq T_{D^i,H^i,J^i \cup \set{e}}$. Now since $T_{E^i,I^i,K^i \cup \set{e}}$ is a tree, if it looked extendible (meaning, if it contained a string of length $\max E^i \geq \max D^i$) then it would in particular contain a string of length $\max D^i$. But then $T_{D^i,I^i,K^i \cup \set{e}}$ would contain such a string, and hence so would $T_{D^i,H^i,J^i \cup \set{e}}$ and therefore look extendible. It follows that $T_{E^i,I^i,K^i \cup \set{e}}$ does not look extendible, so $q$ forces $e \in L_\G'$ on the $i$ side. For part (3) of \Cref{def:L_G_forcing_types}, the proof is similar.
\end{proof}

\begin{lemma}\label{lem:force_e_in_LG_truth}
	Let $p = (D^0,D^1,Y,H^0,J^0,H^1,J^1)$ be a condition, and fix $i < 2$ and $e \in \omega$. Suppose $f \in 2^{\omega}$ satisfies $f \res \max D^i \in T_{D^i,H^i,J^i}$.
	\begin{enumerate}
		\item If $p$ forces $e \in L_\G'$ on the $i$ side then $e \in f'$.
		\item If $p$ forces $e \notin L_\G'$ on the $i$ side then $\Phi_e^{f \res \max D_i}(e) \diverges$.
		\item If $p$ forces $\Phi_e^{L_\G}$ is not a non-hyperimmunity witness for $S$ on the $i$ side then there exist $x_1 > x_0$ such that $\Phi^f_e(x_0) \converges = \Phi^f_e(x_1) \converges = 1$ and $[x_0,x_1) \subseteq \overline{S}$.
		\item If $p$ forces $\Phi_e^{L_\G}$ is finite on the $i$ side then for any $k$ such that $\seq{e,k} \in H^i$ we have that $\neg (\exists\, x_0,x_1 \leq \max D^i)[x_1 > x_0 > k \wedge \Phi_e^{f \res \max D^i}(x_0) \converges = \Phi_e^{f \res \max D^i}(x_1) \converges = 1]$.
	\end{enumerate}
\end{lemma}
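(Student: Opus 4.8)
The plan is to extract all four statements directly from the definition of $T_{D^i,H^i,J^i}$ in \Cref{def:TFHJ}, the definitions of the four forcing relations in \Cref{def:L_G_forcing_types}, and the standing convention that a convergent oracle computation $\Phi^{\tau}_e(x)\converges$ has use bounded by $|\tau|$. The unifying observation is that the string $\sigma := f\res \max D^i$ has length exactly $\max D^i$, so whenever $\sigma$ lies in one of the trees $T_{D^i,\cdot,\cdot}$ it is a string of \emph{maximal length} in it, and moreover every computation $\Phi^{\sigma}_e(x)\converges$ occurring in the definition of these trees is an initial-segment computation of $f$ and hence persists: $\Phi^f_e(x)\converges$ with the same value.

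Parts (2) and (4) are then immediate. If $p$ forces $e\notin L_\G'$ on the $i$ side, then $e\in J^i$ by definition, and membership of $\sigma\in T_{D^i,H^i,J^i}$ includes the clause $\Phi^{\sigma}_e(e)\diverges$, which is exactly (2). If $p$ forces ``$\Phi_e^{L_\G}$ is finite'' on the $i$ side, then $\seq{e,k}\in H^i$ for the given $k$, and the $H^i$-clause in the definition of $T_{D^i,H^i,J^i}$ applied to $\sigma$ (of length $\max D^i$) is precisely the negation displayed in (4).

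For (1) I would argue as follows. By hypothesis $T_{D^i,H^i,J^i\cup\set{e}}$ does not look extendible, i.e.\ contains no string of length $\max D^i$; in particular $\sigma\notin T_{D^i,H^i,J^i\cup\set{e}}$. But $\sigma\in T_{D^i,H^i,J^i}$, and comparing the two defining lists, the only clause present for $T_{D^i,H^i,J^i\cup\set{e}}$ and absent for $T_{D^i,H^i,J^i}$ is ``$\Phi^{\sigma}_e(e)\diverges$''. Hence $\Phi^{\sigma}_e(e)\converges$, and by the use convention $\Phi^f_e(e)\converges$, i.e.\ $e\in f'$. Part (3) is the same template, but now with the forcing clause supplying positive information: since $\sigma$ is a maximal-length element of $T_{D^i,H^i,J^i}$, the witness $k$ and the numbers $x_0<x_1>k$ guaranteed by $p$ forcing ``$\Phi_e^{L_\G}$ is not a non-hyperimmunity witness for $S$ on the $i$ side'' apply to $\sigma$; transferring the two convergent computations $\Phi^{\sigma}_e(x_0)\converges=\Phi^{\sigma}_e(x_1)\converges=1$ to $f$ via the use convention, and noting that $[x_0,x_1)\subseteq\overline S$ does not mention $f$, yields (3).

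I do not expect a genuine obstacle: this is a bookkeeping lemma whose content is entirely unwound from the definitions. The only place asking for a bit of care is (1), where one must confirm that ``looks extendible'' is phrased via strings of length exactly $\max D^i$ and that the two trees differ only by the single divergence clause for $e$; and throughout, that the use convention is being applied to a string of length $\max D^i$, so convergent computations genuinely survive the passage from $f\res\max D^i$ to $f$. (The degenerate possibility $D^i=\emptyset$ causes no trouble under the usual convention $\max\emptyset=0$, and is in any case irrelevant to the applications of this lemma, where $|\G|$ has already been forced positive on the relevant side.)
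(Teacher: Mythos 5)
Your proposal is correct and matches the paper, whose entire proof is that all four parts follow directly from the definitions; your write-up simply unwinds \Cref{def:TFHJ} and \Cref{def:L_G_forcing_types} together with the use convention, exactly as intended. The one point you flag for care in part (1) is handled correctly (and the case $e \in J^i$ is vacuous there, since then $f \res \max D^i$ would itself witness that $T_{D^i,H^i,J^i \cup \set{e}}$ looks extendible).
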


\begin{proof}
	All parts follow directly from the definitions.
\end{proof}

We now collect several density facts about $L_G$-conditions that we will use in our main construction.

\begin{lemma}\label{lem:J0orJ1}
	Let $(D^0,D^1,Y,H^0,J^0,H^1,J^1)$ be a condition, and fix $e_0,e_1 \in \omega$. There exists a constructible extension of $(D^0,D^1,Y,H^0,J^0,H^1,J^1)$ that forces one of the following:
	\begin{enumerate}
		\item $e_0 \in L_\G'$ on the $0$ side;
		\item $e_0 \notin L_\G'$ on the $0$ side;
		\item $e_1 \in L_\G'$ on the $1$ side;
		\item $e_1 \notin L_\G'$ on the $1$ side.
	\end{enumerate}
\end{lemma}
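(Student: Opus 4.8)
The plan is a case analysis driven by a single $\Pi^0_1(X \oplus Y)$ class of $2$-partitions of the reservoir. Write $p = (D^0,D^1,Y,H^0,J^0,H^1,J^1)$. First, if $T_{D^0,H^0,J^0 \cup \set{e_0}}$ does not look extendible, then $p$ itself (a constructible self-extension, via the finite extension with $Z = Y$) already forces $e_0 \in L_\G'$ on the $0$ side; likewise, if $T_{D^1,H^1,J^1 \cup \set{e_1}}$ does not look extendible then $p$ forces $e_1 \in L_\G'$ on the $1$ side. So I may assume both of these trees look extendible.

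Next I would form the class $\mathcal{C}$ of all $2$-partitions $\seq{Y_0,Y_1}$ of $Y$ such that $T_{D^0 \cup F,H^0,J^0 \cup \set{e_0}}$ looks extendible for every finite $F \subseteq Y_0$ and $T_{D^1 \cup F,H^1,J^1 \cup \set{e_1}}$ looks extendible for every finite $F \subseteq Y_1$. The point is that ``$T_{F,H,J}$ looks extendible'' is a co-c.e.\ property of $(F,H,J)$ relative to $X$: by \Cref{def:TFHJ}, membership of a string $\sigma$ in $T_{F,H,J}$ is the conjunction of an $X$-decidable condition with finitely many statements of the form ``$\Phi_e^\sigma(e) \diverges$'' and ``no pair $x_0,x_1 \leq |\sigma|$ with $k < x_0 < x_1$ has $\Phi^\sigma_e(x_0) \converges = \Phi^\sigma_e(x_1) \converges = 1$'', each of which is co-c.e., and ``looks extendible'' is a finite disjunction of such conjunctions over strings of length $\max F$. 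Hence $\mathcal{C}$ is a $\Pi^0_1(X \oplus Y)$ class of $2$-partitions of $Y$, and \Cref{lem:Pi01_extension} applies whenever it is non-empty.

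If $\mathcal{C} \neq \emptyset$, apply \Cref{lem:Pi01_extension} to get $\seq{Y_0,Y_1} \in \mathcal{C}$ and an $i < 2$ with $(D^0,D^1,Y_i)$ a constructible $G$-extension of $(D^0,D^1,Y)$; say $i = 0$, the case $i = 1$ being symmetric. Then $q = (D^0,D^1,Y_0,H^0,J^0 \cup \set{e_0},H^1,J^1)$ is a condition: extendibility of $T_{D^0 \cup F,H^0,J^0 \cup \set{e_0}}$ for finite $F \subseteq Y_0$ is built into $\seq{Y_0,Y_1} \in \mathcal{C}$, while extendibility of $T_{D^1 \cup F,H^1,J^1}$ for finite $F \subseteq Y_0 \subseteq Y$ follows from $p$ being a condition. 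Moreover $q$ is a constructible extension of $p$ (its $G$-part comes from \Cref{lem:Pi01_extension}, and its $H,J$-components only grow), and it forces $e_0 \notin L_\G'$ on the $0$ side since $e_0 \in J^0 \cup \set{e_0}$. If $i = 1$, the same construction on the other side forces $e_1 \notin L_\G'$ on the $1$ side.

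The remaining case, $\mathcal{C} = \emptyset$, is the crux, since here shrinking the reservoir cannot help: to force $e_0 \in L_\G'$ on a side we must enlarge $D^0$ or $D^1$, and such an enlargement is confined to $A^0$ or $A^1$ respectively. The key observation is that $\mathcal{C} = \emptyset$ entails a witnessing finite set \emph{inside the relevant $A^i$}: if there were no finite $F \subseteq Y \cap A^0$ with $T_{D^0 \cup F,H^0,J^0 \cup \set{e_0}}$ failing to look extendible and no finite $F \subseteq Y \cap A^1$ with $T_{D^1 \cup F,H^1,J^1 \cup \set{e_1}}$ failing to look extendible, then $\seq{Y \cap A^0, Y \cap A^1}$ would be a member of $\mathcal{C}$. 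So, say, there is a finite $F \subseteq Y \cap A^0$ with $T_{D^0 \cup F,H^0,J^0 \cup \set{e_0}}$ not looking extendible (the other alternative being symmetric, and yielding a condition forcing $e_1 \in L_\G'$ on the $1$ side); since $T_{D^0,H^0,J^0 \cup \set{e_0}}$ does look extendible, $F$ is non-empty and so $\min F > \max D^0$. Taking the finite $G$-extension with $E^0 = D^0 \cup F$, $E^1 = D^1$, and reservoir $Z = \set{y \in Y : y > \max F}$, and invoking \Cref{G_to_LG_extensions}, produces a constructible extension $q = (E^0,E^1,Z,H^0,J^0,H^1,J^1)$ of $p$ with $T_{E^0,H^0,J^0 \cup \set{e_0}} = T_{D^0 \cup F,H^0,J^0 \cup \set{e_0}}$ not looking extendible, so $q$ forces $e_0 \in L_\G'$ on the $0$ side. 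I expect the two delicate points to be verifying that $\mathcal{C}$ really is $\Pi^0_1(X \oplus Y)$ and this last partition argument; everything else is bookkeeping with the definitions of $G$- and $L_G$-conditions.
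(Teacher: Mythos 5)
Your proposal is correct and takes essentially the same route as the paper: the same $\Pi^0_1(X \oplus Y)$ class $\mathcal{C}$, an application of \Cref{lem:Pi01_extension} in the nonempty case to force $e_i \notin L_\G'$ on the chosen side, and, in the empty case, the specific $2$-partition $\seq{Y \cap A^0, Y \cap A^1}$ to extract a finite set $F \subseteq Y \cap A^i$ yielding a finite extension that forces $e_i \in L_\G'$. The only cosmetic differences are your harmless preliminary case (ensuring $F \neq \emptyset$) and your omission of the paper's compactness step, which is not needed for the bare existence statement.
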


\begin{proof}
	Let $\mathcal{C}$ be the class of all $2$-partitions $\seq{Y_0,Y_1}$ of $Y$ such that
	\[
		(\forall i < 2)(\forall F \subseteq Y_i \text{ finite})[T_{D^i \cup F,H^i,J^i \cup \set{e_i}} \text{ looks extendible}].
	\]
	Then $\mathcal{C}$ is a $\Pi^0_1(X \oplus Y)$ class, so if it is non-empty we can apply \Cref{lem:Pi01_extension} to find a $\seq{Y_0,Y_1} \in \mathcal{C}$ and an $i < 2$ such that $(D^0,D^1,Y_i)$ is a constructible extension of $(D^0,D^1,Y)$. Then the $L_G$-condition obtained from $(D^0,D^1,Y,H^0,J^0,H^1,J^1)$ by replacing $Y$ with $Y_i$ and $J^i$ with $J^i \cup \set{e_i}$ is a constructible extension of $(D^0,D^1,Y,H^0,J^0,H^1,J^1)$ that forces $e_i \notin L_\G'$ on the $i$ side.
	
	Suppose, then, that $\mathcal{C}$ is empty. By compactness of Cantor space, there is an $\ell \in \omega$ such that for every $2$-partition $\seq{Y_0,Y_1}$ of $Y$ there exists an $i < 2$ and an $F \subseteq Y_i \res \ell$ for which $T_{D^i \cup F,H^i,J^i \cup \set{e_i}}$ does not look extendible. Fix such an $i$ and $F$ for the $2$-partition $\seq{Y \cap A^0,Y \cap A^1}$. Then the condition obtained from $(D^0,D^1,Y,H^0,J^0,H^1,J^1)$ by replacing $D^i$ with $D^i \cup F$ and $Y$ by $Y \setminus [0,\max F]$ is a finite extension of $(D^0,D^1,Y,H^0,J^0,H^1,J^1)$ that forces $e_i \in L_\G'$ on the $i$ side.
\end{proof}

\begin{lemma}\label{lem:H0orH1}
	Let $(D^0,D^1,Y,H^0,J^0,H^1,J^1)$ be a condition, and fix $e_0,e_1 \in \omega$. There exists a constructible extension of $(D^0,D^1,Y,H^0,J^0,H^1,J^1)$ that forces one of the following:
	\begin{enumerate}
		\item $\Phi_{e_0}^{L_\G}$ is not a non-hyperimmunity witness for $S$ on the $0$ side;
		\item $\Phi_{e_0}^{L_\G}$ is finite on the $0$ side;
		\item $\Phi_{e_1}^{L_\G}$ is not a non-hyperimmunity witness for $S$ on the $1$ side;
		\item $\Phi_{e_1}^{L_\G}$ is finite on the $1$ side.
	\end{enumerate}
\end{lemma}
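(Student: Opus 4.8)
The plan is to follow the template of the proof of \Cref{lem:J0orJ1}, with the ``$\in L_\G'$'' conditions replaced by the ``finite on the $i$ side'' conditions and the ``$\notin L_\G'$'' conditions replaced by the ``not a non-hyperimmunity witness for $S$ on the $i$ side'' conditions. For each $i<2$ put $k_i=\max D^i$ (note that for strings of length at most $k_i$ the clause imposed by $\seq{e_i,k_i}$ is vacuous, so augmenting $H^i$ by $\seq{e_i,k_i}$ never disturbs strings at levels below $\max D^i$). Consider the class $\mathcal{C}$ of all $2$-partitions $\seq{Y_0,Y_1}$ of $Y$ such that
\[
	(\forall i<2)(\forall F\subseteq Y_i\text{ finite})\big[\,T_{D^i\cup F,\;H^i\cup\set{\seq{e_i,k_i}},\;J^i}\text{ looks extendible}\,\big].
\]
Since ``looks extendible'' is a decidable property of the finite data involved (relative to $X$), this is a $\Pi^0_1(X\oplus Y)$ class.

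If $\mathcal{C}\neq\emptyset$, apply \Cref{lem:Pi01_extension} to $\mathcal{C}$ and $(D^0,D^1,Y)$ to obtain $\seq{Y_0,Y_1}\in\mathcal{C}$ and $i<2$ such that $(D^0,D^1,Y_i)$ is a constructible extension of $(D^0,D^1,Y)$, with an index for it. By \Cref{G_to_LG_extensions} and membership in $\mathcal{C}$, the tuple obtained from $(D^0,D^1,Y_i,H^0,J^0,H^1,J^1)$ by replacing $H^i$ with $H^i\cup\set{\seq{e_i,k_i}}$ is a legitimate $L_G$-condition and a constructible extension, and by \Cref{def:L_G_forcing_types}~(4) it forces ``$\Phi_{e_i}^{L_\G}$ is finite on the $i$ side'', which is option (2) or (4). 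If instead $\mathcal{C}=\emptyset$, then by compactness of Cantor space fix $\ell\in\omega$ such that every $2$-partition $\seq{Y_0,Y_1}$ of $Y$ admits some $i<2$ and $F\subseteq Y_i\res\ell$ with $T_{D^i\cup F,\,H^i\cup\set{\seq{e_i,k_i}},\,J^i}$ not looking extendible. Apply this to $\seq{Y\cap A^0,Y\cap A^1}$ to get such an $i$ and $F\subseteq (Y\cap A^i)\res\ell$, and set $E^i=D^i\cup F$, $E^{1-i}=D^{1-i}$, $Z=Y\setminus[0,\max F]$, so that $q=(E^0,E^1,Z,H^0,J^0,H^1,J^1)$ is a finite (hence constructible) extension. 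By \Cref{lem:T_F_structural} and the failure of extendibility, every $\sigma\in T_{E^i,H^i,J^i}$ of maximal length $\max E^i$ carries a pair $x_0<x_1$ in $(k_i,\max E^i]$ with $\Phi_{e_i}^\sigma(x_0)\converges=\Phi_{e_i}^\sigma(x_1)\converges=1$. To see that $q$ forces ``$\Phi_{e_i}^{L_\G}$ is not a non-hyperimmunity witness for $S$ on the $i$ side'' as in \Cref{def:L_G_forcing_types}~(3), it remains to pass, uniformly over the finitely many maximal-length $\sigma$, to such a pair whose interval $[x_0,x_1)$ lies in $\overline{S}$. This is where the hypothesis that $S$ is $L$-hyperimmune enters: since $L\gg X$ we have that $L$ computes $X$, hence the trees $T_{E^i,H^i,J^i}$ as $E^i$ is allowed to grow along $Z$, and thus $L$ computes an array of intervals built from the candidate witnessing pairs; if all of its intervals met $S$ this would contradict the $L$-hyperimmunity of $S$, so some interval lies in $\overline{S}$. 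One then selects the side $i$, the set $F$, and (if needed) one further finite extension of $q$ pushing the minimal witnessing pairs out far enough, so that every maximal-length string's witnessing pair can be taken with interval in $\overline{S}$. The bookkeeping determining which of the four options is forced is identical to that in \Cref{lem:J0orJ1}.

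I expect the main obstacle to be exactly this last step of the $\mathcal{C}=\emptyset$ case: upgrading ``every maximal-length string has a $\Phi_{e_i}$-witnessing pair'' to ``every maximal-length string has a $\Phi_{e_i}$-witnessing pair with interval inside $\overline{S}$'', uniformly over strings whose identities shift as $E^i$ grows. One must choose the $\Pi^0_1(X\oplus Y)$ class $\mathcal{C}$ and the manner of harvesting the compactness bound $\ell$ so that the relevant candidate intervals genuinely organize into an $L$-computable array to which the $L$-hyperimmunity of $S$ applies, and then pick (non-effectively, which is harmless for an $\omega$-model construction) the extension realizing an interval in $\overline{S}$, all the while keeping the final object a finite, constructible $L_G$-condition and respecting the restraint data carried by the condition.
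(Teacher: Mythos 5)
There is a genuine gap, and it is exactly at the point you flag as ``the main obstacle'': your dichotomy uses a \emph{single} $\Pi^0_1(X\oplus Y)$ class $\mathcal{C}$ with the threshold frozen at $k_i=\max D^i$, and in the case $\mathcal{C}=\emptyset$ this leaves you with only a fixed finite stock of candidate witnessing pairs (those arising from the one compactness bound $\ell$), with no reason whatsoever that any of the associated intervals avoids $S$. Hyperimmunity of $S$ relative to $L$ says nothing about a single interval or a fixed finite family of intervals; it only rules out an $L$-computable \emph{infinite} array of intervals marching to infinity all of which meet $S$. Your suggested repair --- taking ``one further finite extension of $q$ pushing the minimal witnessing pairs out far enough'' --- does not produce such an array: extending $D^i$ along $Z$ only lengthens the strings (and, by the use convention, preserves the old witnessing computations); it does not generate new witnessing pairs above a prescribed bound, and \Cref{def:L_G_forcing_types}~(3) requires a single $k$ that works for \emph{every} maximal-length string of the tree attached to the final condition. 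Note also that the classes are monotone in the threshold ($\mathcal{C}_k\subseteq\mathcal{C}_{k'}$ for $k\leq k'$, since a larger threshold imposes a weaker constraint), so emptiness at your fixed small $k_i$ does not even tell you which side of the dichotomy you are ``really'' on: it may be that for some larger $k$ the class is nonempty and the correct move is to force finiteness with that $k$, an option your scheme has already foreclosed.

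The paper's proof resolves this by quantifying over the threshold from the start: for \emph{each} $k\in\omega$ it forms the class $\mathcal{C}_k$ of $2$-partitions $\seq{Y_0,Y_1}$ of $Y$ such that $T_{D^i\cup F,\,H^i\cup\set{\seq{e_i,k}},\,J^i}$ looks extendible for all $i<2$ and all finite $F\subseteq Y_i$. If some $\mathcal{C}_k$ is nonempty, \Cref{lem:Pi01_extension} yields a constructible extension forcing ``$\Phi_{e_i}^{L_\G}$ is finite'' with that $k$ as the threshold. If every $\mathcal{C}_k$ is empty, then for each $k$ the least compactness bound $\ell_k$, the finitely many relevant sets $F$, and hence numbers $x^k_0<x^k_1$ with $x^k_0>k$ (the extreme witnessing values over all sides, all $F\subseteq Y_i\res\ell_k$, and all maximal-length strings) are uniformly $(X\oplus Y)$-computable, hence $L$-computable since $L\Tabove X\oplus Y$; $L$-hyperimmunity of $S$ then supplies a specific $k$ with $[x^k_0,x^k_1)\subseteq\overline{S}$, and only \emph{then} does one choose $i$ and $F\subseteq(Y\cap A^i)\res\ell_k$ and pass to the finite extension forcing ``$\Phi_{e_i}^{L_\G}$ is not a non-hyperimmunity witness for $S$''. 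So the choice of threshold $k$ is the output of the hyperimmunity argument, not an input fixed in advance; without that reorganization your $\mathcal{C}=\emptyset$ branch cannot be completed. (Your $\mathcal{C}\neq\emptyset$ branch, and the bookkeeping via \Cref{lem:finite_extension}, \Cref{lem:Pi01_extension}, and \Cref{G_to_LG_extensions}, are otherwise in line with the paper's \Cref{lem:J0orJ1} template.)
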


\begin{proof}
	For each $k$, let $\mathcal{C}_k$ be the class of all $2$-partitions $\seq{Y_0,Y_1}$ of $Y$ such that
	\[
		(\forall i < 2)(\forall F \subseteq Y_i \text{ finite})[T_{D^i \cup F,H^i \cup \set{\seq{e_i,k}},J^i} \text{ looks extendible}]
	\]
	which is a $\Pi^0_1(X \oplus Y)$ class. If there is a $k$ for which $\mathcal{C}_k$ is non-empty we apply \Cref{lem:Pi01_extension} to find a $\seq{Y_0,Y_1} \in \mathcal{C}_k$ and an $i < 2$ such that $(D^0,D^1,Y_i)$ is a constructible extension of $(D^0,D^1,Y)$. The $L_G$-condition obtained by replacing $Y$ with $Y_i$ and $H^i$ with $H^i \cup \set{\seq{e_i,k}}$ is then a constructible extension of $(D^0,D^1,Y,H^0,J^0,H^1,J^1)$ that forces $\Phi_{e_i}^{L_\G}$ is finite on the $i$ side.
	
	Suppose next that $\mathcal{C}_k$ is empty for every $k$. This means that for every $k$, there is an $\ell$ such that for every $2$-partition $\seq{Y_0,Y_1}$ of $Y$ there is an $i < 2$ and a finite set $F \subseteq Y_i \res \ell$ such that $T_{D^i \cup F,H^i \cup \set{\seq{e_i,k}},J^i}$ does not look extendible. Let $\ell_k$ be the least such $\ell$. Since $(D^0,D^1,Y,H^0,J^0,H^1,J^1)$ is a condition and $F \subseteq Y_i \subseteq Y$, the tree $T_{D^1 \cup F,H^1,J^1}$ looks extendible, and so contains a string $\sigma$ of maximal length. Since no such $\sigma$ belongs to $T_{D^i \cup F,H^i \cup \set{\seq{e_i,k}},J^i}$, it follows from the definition that there exist $x_0,x_1 \leq |\sigma|$ such that $x_1 > x_0 > k$ and $\Phi_{e_i}^{\sigma}(x_0) \converges = \Phi_{e_1}^{\sigma}(x_1) \converges = 1$. Let $x^k_0$ be the minimum of all such $x_0$, and $x^k_1$ the maximum of all the such $x_1$, across both $i < 2$, all finite sets $F \subseteq Y_i \res \ell_k$ as above, and all $\sigma \in T_{D^i \cup F,H^i,J^i}$ of maximal length.
	
	Now, $\ell_k$ can be found uniformly $(X \oplus Y)$-computably from $k$, and hence so can the finite collection of all the finite sets $F$ above. It follows that $X \oplus Y$ can uniformly compute the numbers $x^k_0$ and $x^k_1$ for each $k$. Since $S$ is $L$-hyperimmune, and since $L \Tabove X \oplus Y$ and $k < x^k_0 < x^k_1$ for all $k$, there must be a $k$ such that $[x^k_0,x^k_1) \subseteq \overline{S}$. Fix this $k$, and fix $i < 2$ and $F \subseteq (Y \cap A^i) \res \ell_k$ such that $T_{D^i \cup F,H^i \cup \set{\seq{e_i,k}},J^i}$ does not look extendible. Then the condition obtained from $(D^0,D^1,Y,H^0,J^0,H^1,J^1)$ by replacing $D^i$ by $D^i \cup F$ and $Y$ by $Y \setminus [0,\max F]$ is a finite extension of $(D^0,D^1,Y,H^0,J^0,H^1,J^1)$ that forces $\Phi_{e_i}^{L_\G}$ is not a non-hyperimmunity witness for $S$ on the $i$ side.
\end{proof}

\begin{lemma}\label{lem:J0orH1}
	Let $(D^0,D^1,Y,H^0,J^0,H^1,J^1)$ be a condition, and fix $e_0,e_1 \in \omega$. There exists a constructible extension of $(D^0,D^1,Y,H^0,J^0,H^1,J^1)$ that forces one of the following:
	\begin{enumerate}
		\item $e_0 \in L_\G'$ on the $0$ side;
		\item $e_0 \notin L_\G'$ on the $0$ side;
		\item $\Phi_{e_1}^{L_\G}$ is not a non-hyperimmunity witness for $S$ on the $1$ side;
		\item $\Phi_{e_1}^{L_\G}$ is finite on the $1$ side.
	\end{enumerate}
\end{lemma}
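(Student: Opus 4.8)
The plan is to combine the two strategies already in play: on the $0$ side we argue as in \Cref{lem:J0orJ1} (forcing membership or non-membership in $L_\G'$), and on the $1$ side as in \Cref{lem:H0orH1} (forcing finiteness, or the non-hyperimmunity-witness property via $L$-hyperimmunity of $S$). First I would dispose of outcome~(1): if there is a finite $F \subseteq Y \cap A^0$ with $T_{D^0 \cup F, H^0, J^0 \cup \set{e_0}}$ not looking extendible, then replacing $D^0$ by $D^0 \cup F$ and $Y$ by $Y \setminus [0,\max F]$ yields a finite (hence constructible) extension forcing $e_0 \in L_\G'$ on the $0$ side, and that this tuple is an $L_G$-condition is checked exactly as in \Cref{lem:J0orJ1}. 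So from now on assume no such $F$ exists.

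For each $k \in \omega$, let $\mathcal{C}_k$ be the $\Pi^0_1(X \oplus Y)$ class of all $2$-partitions $\seq{Y_0,Y_1}$ of $Y$ such that $T_{D^0 \cup F, H^0, J^0 \cup \set{e_0}}$ looks extendible for every finite $F \subseteq Y_0$, and $T_{D^1 \cup F, H^1 \cup \set{\seq{e_1,k}}, J^1}$ looks extendible for every finite $F \subseteq Y_1$. If some $\mathcal{C}_k$ is non-empty, apply \Cref{lem:Pi01_extension} to get $\seq{Y_0,Y_1} \in \mathcal{C}_k$ and an $i < 2$ with $(D^0,D^1,Y_i)$ a constructible extension of $(D^0,D^1,Y)$. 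If $i = 0$, replace $Y$ by $Y_0$ and $J^0$ by $J^0 \cup \set{e_0}$: this forces $e_0 \notin L_\G'$ on the $0$ side, which is outcome~(2). If $i = 1$, replace $Y$ by $Y_1$ and $H^1$ by $H^1 \cup \set{\seq{e_1,k}}$: this forces $\Phi_{e_1}^{L_\G}$ is finite on the $1$ side, which is outcome~(4). In each case the ``looks extendible'' clauses needed to see the new tuple is a condition come from membership in $\mathcal{C}_k$ on the modified side and from $Y_i \subseteq Y$ together with $(D^0,D^1,Y,H^0,J^0,H^1,J^1)$ being a condition on the other side, and constructibility follows by \Cref{G_to_LG_extensions}.

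Finally, suppose every $\mathcal{C}_k$ is empty. By compactness of Cantor space, for each $k$ there is a least $\ell_k$, computable uniformly in $k$ from $X \oplus Y$, such that every $2$-partition $\seq{Y_0,Y_1}$ of $Y$ admits a finite $F \subseteq Y_0 \res \ell_k$ with $T_{D^0 \cup F, H^0, J^0 \cup \set{e_0}}$ not looking extendible, or a finite $F \subseteq Y_1 \res \ell_k$ with $T_{D^1 \cup F, H^1 \cup \set{\seq{e_1,k}}, J^1}$ not looking extendible. Applying this to the partition $\seq{Y \cap A^0, Y \cap A^1}$, the first alternative is ruled out by our standing assumption, so for every $k$ there is a finite $F \subseteq (Y \cap A^1) \res \ell_k$ with $T_{D^1 \cup F, H^1 \cup \set{\seq{e_1,k}}, J^1}$ not looking extendible. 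From here I run the hyperimmunity argument of \Cref{lem:H0orH1}: since $(D^0,D^1,Y,H^0,J^0,H^1,J^1)$ is a condition, $T_{D^1 \cup F, H^1, J^1}$ looks extendible and so has a string $\sigma$ of maximal length; such $\sigma$ does not lie in $T_{D^1 \cup F, H^1 \cup \set{\seq{e_1,k}}, J^1}$ and hence yields $x_1 > x_0 > k$ with $\Phi_{e_1}^\sigma(x_0) \converges = \Phi_{e_1}^\sigma(x_1) \converges = 1$. Letting $x^k_0$ be the least such $x_0$ and $x^k_1$ the greatest such $x_1$ across all eligible $F$ and $\sigma$ — a function of $k$ computable from $X \oplus Y$ since $\ell_k$ is — we get $k < x^k_0 < x^k_1$, and since $S$ is $L$-hyperimmune with $L \Tabove X \oplus Y$ there is a $k$ with $[x^k_0,x^k_1) \subseteq \overline{S}$. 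Fixing this $k$ and a corresponding $F$, the finite extension replacing $D^1$ by $D^1 \cup F$ and $Y$ by $Y \setminus [0,\max F]$ forces $\Phi_{e_1}^{L_\G}$ is not a non-hyperimmunity witness for $S$ on the $1$ side, which is outcome~(3).

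The routine parts are the verifications that each tuple produced is a genuine $L_G$-condition and a constructible extension, which follow the templates in \Cref{lem:J0orJ1,lem:H0orH1}. The main obstacle is the last paragraph: one must check that $k \mapsto \seq{x^k_0,x^k_1}$ is genuinely $(X \oplus Y)$-computable (this rests on $\ell_k$ being found by unbounded search, on the finite trees $T_{F,H,J}$ being $X$-computable from canonical indices, and on ``looks extendible'' being decidable), and then invoke $L$-hyperimmunity of $S$ correctly; but this is essentially identical to the corresponding step in \Cref{lem:H0orH1}.
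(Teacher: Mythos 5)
Your proof is correct and takes essentially the same route as the paper's: the same classes $\mathcal{C}_k$, the same appeal to \Cref{lem:Pi01_extension} for outcomes (2) and (4), and the same hyperimmunity argument for outcome (3); the only difference is cosmetic, in that you dispatch outcome (1) up front via a standing assumption, whereas the paper extracts it inside the case where every $\mathcal{C}_k$ is empty. One small point of care: the minimum and maximum defining $x^k_0,x^k_1$ should be taken over all finite $F \subseteq Y \res \ell_k$ for which $T_{D^1 \cup F, H^1 \cup \set{\seq{e_1,k}}, J^1}$ fails to look extendible (not only those $F \subseteq (Y \cap A^1) \res \ell_k$, since $A^1$ need not be $(X \oplus Y)$-computable), which is exactly what your computability justification already presupposes.
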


\begin{proof}
	For each $k$, let $\mathcal{C}_k$ be the class of all $2$-partitions $\seq{Y_0,Y_1}$ of $Y$ such that
	\[
		(\forall F \subseteq Y_0 \text{ finite})[T_{D^0 \cup F,H^0,J^0 \cup \set{e_0}} \text{ looks extendible}]
	\]
	and
	\[
		(\forall F \subseteq Y_1 \text{ finite})[T_{D^1 \cup F,H^1 \cup \set{\seq{e_1,k}},J^1} \text{ looks extendible}],
	\]
	which is a $\Pi^0_1(X \oplus Y)$ class. If there is a $k$ for which $\mathcal{C}_k$ is non-empty we apply \Cref{lem:Pi01_extension} to find a $\seq{Y_0,Y_1} \in \mathcal{C}_k$ and an $i < 2$ such that $(D^0,D^1,Y_i)$ is a constructible extension of $(D^0,D^1,Y)$. The $L_G$-condition obtained by replacing $Y$ with $Y_i$ and either $J^0$ with $J^0 \cup \set{e_0}$ if $i = 0$, or $H^1$ with $H^1 \cup \set{\seq{e_1,k}}$ if $i = 1$, is then a constructible extension of $(D^0,D^1,Y,H^0,J^0,H^1,J^1)$. If $i = 0$ this extension forces $e_0 \notin L_\G'$ on the $0$ side, and if $i = 1$ it forces $\Phi_{e_1}^{L_\G}$ is finite on the $1$ side.
	
	Now suppose $\mathcal{C}_k$ is empty for every $k$. This means there is an $\ell_k$ such that for every $2$-partition $\seq{Y_0,Y_1}$ of $Y$, one of the following holds:
	\begin{enumerate}
		\item[(a)] there is a finite set $F \subseteq Y_0 \res \ell_k$ such that $T_{D^0 \cup F,H^0,J^0 \cup \set{e_0}}$ does not look extendible;
		\item[(b)] there is a finite set $F \subseteq Y_1 \res \ell_k$ such that $T_{D^1 \cup F,H^1 \cup \set{\seq{e_1,k}},J^1}$ does not look extendible.
	\end{enumerate}
	In particular, for each $k$, (a) or (b) holds for the $2$-partition $\seq{Y \cap A^0,Y \cap A^1}$. If there is a $k$ such that (a) holds for $\seq{Y \cap A^0,Y \cap A^1}$, we can fix the corresponding $F \subseteq Y_0 \res \ell$ and proceed as in the proof of \Cref{lem:J0orJ1} to obtain a finite extension of $(D^0,D^1,Y,H^0,J^0,H^1,J^1)$ that forces $e_0 \in L_\G'$ on the $0$ side. If (b) holds for every $k$, then we can instead proceed as in the proof of \Cref{lem:H0orH1} and obtain a finite extension of $(D^0,D^1,Y,H^0,J^0,H^1,J^1)$ that forces $L^\G_{e_1}$ is not a non-hyperimmunity witness for $S$ on the $1$ side.
\end{proof}

\begin{lemma}\label{lem:H0orJ1}
	Let $(D^0,D^1,Y,H^0,J^0,H^1,J^1)$ be a condition, and fix $e_0,e_1 \in \omega$. There exists a constructible extension of $(D^0,D^1,Y,H^0,J^0,H^1,J^1)$ that forces one of the following:
	\begin{enumerate}
		\item $\Phi_{e_0}^{L_G}$ is not a non-hyperimmunity witness for $S$ on the $0$ side;
		\item $\Phi_{e_0}^{L_G}$ is finite on the $0$ side;
		\item $e_1 \in L_G'$ on the $1$ side;
		\item $e_1 \notin L_G'$ on the $1$ side.
	\end{enumerate}
\end{lemma}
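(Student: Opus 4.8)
The plan is to prove \Cref{lem:H0orJ1} by transcribing the proof of \Cref{lem:J0orH1} with the roles of the $0$ side and the $1$ side interchanged. Fix the condition $(D^0,D^1,Y,H^0,J^0,H^1,J^1)$ and the numbers $e_0,e_1$. For each $k \in \omega$, let $\mathcal{C}_k$ be the class of all $2$-partitions $\seq{Y_0,Y_1}$ of $Y$ such that
\[
  (\forall F \subseteq Y_0 \text{ finite})[T_{D^0 \cup F, H^0 \cup \set{\seq{e_0,k}}, J^0} \text{ looks extendible}]
\]
and
\[
  (\forall F \subseteq Y_1 \text{ finite})[T_{D^1 \cup F, H^1, J^1 \cup \set{e_1}} \text{ looks extendible}].
\]
Each $\mathcal{C}_k$ is a $\Pi^0_1(X \oplus Y)$ class, and the argument splits according to whether some $\mathcal{C}_k$ is nonempty or every $\mathcal{C}_k$ is empty.

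If $\mathcal{C}_k$ is nonempty for some $k$, I would apply \Cref{lem:Pi01_extension} to obtain $\seq{Y_0,Y_1} \in \mathcal{C}_k$ and an $i < 2$ with $(D^0,D^1,Y_i)$ a constructible $G$-extension of $(D^0,D^1,Y)$. By \Cref{G_to_LG_extensions}, replacing $Y$ by $Y_i$ and then adjoining $\seq{e_0,k}$ to $H^0$ (if $i = 0$) or $e_1$ to $J^1$ (if $i = 1$) yields a constructible extension; membership of $\seq{Y_0,Y_1}$ in $\mathcal{C}_k$ guarantees the result is genuinely a condition, and by \Cref{def:L_G_forcing_types} it forces alternative (2) (if $i=0$) or alternative (4) (if $i=1$). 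If instead every $\mathcal{C}_k$ is empty, then, exactly as in \Cref{lem:J0orH1}, compactness of Cantor space gives for each $k$ a least $\ell_k$ such that for every $2$-partition $\seq{Y_0,Y_1}$ of $Y$ one of the following holds: (a) some finite $F \subseteq Y_0 \res \ell_k$ has $T_{D^0 \cup F, H^0 \cup \set{\seq{e_0,k}}, J^0}$ not looking extendible; (b) some finite $F \subseteq Y_1 \res \ell_k$ has $T_{D^1 \cup F, H^1, J^1 \cup \set{e_1}}$ not looking extendible. I would apply this to the $2$-partition $\seq{Y \cap A^0, Y \cap A^1}$. If (b) holds for some $k$, fix the witnessing $F \subseteq (Y \cap A^1) \res \ell_k$ and pass to the finite (hence constructible) extension obtained by replacing $D^1$ with $D^1 \cup F$ and $Y$ with $Y \setminus [0,\max F]$, just as in \Cref{lem:J0orJ1}; this forces alternative (3).

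The remaining case — (a) holds for every $k$ — is where the real work lies, and I would handle it by running the hyperimmunity argument of \Cref{lem:H0orH1} confined to the $0$ side. For each $k$ and each witnessing finite $F \subseteq (Y \cap A^0) \res \ell_k$, every string $\sigma$ of maximal length in $T_{D^0 \cup F, H^0, J^0}$ (which looks extendible, since the given tuple is a condition) fails the $\seq{e_0,k}$-clause, so there are $x_1 > x_0 > k$ with $\Phi_{e_0}^\sigma(x_0) \converges = \Phi_{e_0}^\sigma(x_1) \converges = 1$; let $x^k_0$ be the least such $x_0$ and $x^k_1$ the largest such $x_1$ over all these $F$ and $\sigma$. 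Since $\ell_k$, the finitely many relevant $F$, and their maximal-length strings are all obtainable $(X \oplus Y)$-computably uniformly in $k$, the map $k \mapsto \seq{x^k_0,x^k_1}$ is $(X \oplus Y)$-computable; as $L \Tabove X \oplus Y$, $S$ is $L$-hyperimmune, and $k < x^k_0 < x^k_1$ for all $k$, some $k$ has $[x^k_0,x^k_1) \subseteq \overline{S}$. Fixing such a $k$ together with a witnessing $F$, the finite extension obtained by replacing $D^0$ with $D^0 \cup F$ and $Y$ with $Y \setminus [0,\max F]$ then forces alternative (1): for this new $D^0$, every maximal-length $\sigma \in T_{D^0 \cup F, H^0, J^0}$ lies outside the non-extendible $T_{D^0 \cup F, H^0 \cup \set{\seq{e_0,k}}, J^0}$ and hence yields $x_1 > x_0 > k$ with $\Phi_{e_0}^\sigma(x_0) \converges = \Phi_{e_0}^\sigma(x_1) \converges = 1$ and $[x_0,x_1) \subseteq [x^k_0,x^k_1) \subseteq \overline{S}$.

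The main obstacle is exactly this last step. One must check that the resulting tuple is still a condition — i.e.\ that $T_{D^0 \cup F \cup F', H^0, J^0}$ and $T_{D^1 \cup F', H^1, J^1}$ look extendible for every finite $F' \subseteq Y \setminus [0,\max F]$, which holds since the original tuple is a condition and $F \cup F' \subseteq Y$ — and that the pair $x_0,x_1$ extracted from an arbitrary witnessing maximal-length string really lands in $[x^k_0,x^k_1)$, so that \Cref{def:L_G_forcing_types}~(3) is met. Both points are handled precisely as in \Cref{lem:H0orH1}, and everything else is a routine mirror image of \Cref{lem:J0orH1}.
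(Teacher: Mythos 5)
Your proposal follows exactly the route the paper intends: the paper's proof of this lemma is literally the remark that it is symmetric to \Cref{lem:J0orH1}, and your transcription --- the classes $\mathcal{C}_k$ with the $\seq{e_0,k}$-clause on the $0$ side and the $e_1$-clause on the $1$ side, the split into the nonempty and empty cases, and the further appeal to the partition $\seq{Y\cap A^0, Y\cap A^1}$ --- is the right mirror image, with the condition-checks and the forcing definitions verified correctly in the first two cases.

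There is, however, one step that fails as written, in the case where (a) holds for every $k$. You define $x^k_0$ and $x^k_1$ by taking the minimum/maximum only over the witnessing finite sets $F \subseteq (Y \cap A^0) \res \ell_k$, and then assert that ``the finitely many relevant $F$'' are obtainable $(X\oplus Y)$-computably uniformly in $k$. They are not: deciding which finite $F \subseteq Y \res \ell_k$ are contained in $A^0$ requires $A$, which is only given with $A \Tred X'$ and need not be computable from $X \oplus Y$ (nor from $L$). Since the appeal to $L$-hyperimmunity of $S$ needs the sequence of intervals $[x^k_0, x^k_1)$ to be computable from $L$, the argument as stated does not go through. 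The repair is exactly the device used in \Cref{lem:H0orH1}: take the minimum and maximum over \emph{all} finite $F \subseteq Y \res \ell_k$ for which $T_{D^0\cup F,\, H^0\cup\set{\seq{e_0,k}},\, J^0}$ fails to look extendible --- this collection is uniformly $(X\oplus Y)$-computable, and it is nonempty for each $k$ because it contains the witness lying inside $Y \cap A^0$ --- and only afterwards, having fixed a $k$ with $[x^k_0,x^k_1)\subseteq\overline{S}$, choose the specific witness $F \subseteq (Y\cap A^0)\res\ell_k$ to form the finite extension (so that $D^0 \cup F \subseteq A^0$). The enlarged interval still contains every pair $x_0 < x_1$ extracted from that $F$ and its maximal-length strings, so the rest of your verification is unaffected.
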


\begin{proof}
	The proof is symmetric to that of \Cref{lem:J0orH1}.
\end{proof}

\subsubsection{Forcing notion for $P_G$.}\label{sec:P_G_conditions}

We now come to our final forcing notion, which will be used to construct the set $P_G$. Here, we first define a notion of \emph{precondition}, in terms of which we then define the actual conditions.

\begin{definition}[$P_G$-preconditions]
	\
	\begin{enumerate}
		\item A \emph{$P_G$-precondition} is a tuple $(p,b^0,b^1)$ such that $p$ is an $L_G$-condition, $b^0,b^1 \in \omega$, and for each $i < 2$ and each $e < b^i$, $p$ either forces $e \in L_\G'$ or $e \notin L_\G'$ on the $i$ side.
		\item A precondition $(q,c^0,c^1)$ \emph{extends} $(p,b^0,b^1)$ if $q$ extends $p$ as $L_G$-conditions, and if for each $i < 2$, $c^i \geq b^i$. It is a \emph{constructible} extension if $q$ is a constructible extension of $p$ as $L_G$-conditions.
	\end{enumerate}
\end{definition}

\noindent Notice that if $p$ is any $L_G$-condition then $(p,0,0)$ is a $P_G$-precondition. For precondition $(p,b^0,b^1)$ and each $i < 2$, let $F^i_{p,b^i} = \set{e < b^i : p \text{ forces } e \in L_\G' \text{ on the } i \text{ side}}$. Given a finite set $V$, recall that $T_{F^i_{p,b^i},V,\emptyset}$ is the tree of all $\sigma \in T_{F^i_{p,b^i}}$ such that for all $\seq{e,k} \in V$,  $\neg (\exists\, x_0,x_1 \leq |\sigma|)[x_1 > x_0 > k \wedge \Phi_e^{\sigma}(x_0) \converges = \Phi_e^{\sigma}(x_1) \converges = 1]$.

\begin{definition}[$P_G$-conditions]\
	\begin{enumerate}
		\item A \emph{$P_G$-condition} is a tuple $(p,b^0,b^1,V^0,V^1)$ such that $(p,b^0,b^1)$ is a $P_G$-precondition and for each $i < 2$, $V^i$ is a finite set for which there is no sequence of preconditions
		\[
			(p,b^0,b^1) = (p_0,b^0_0,b^0_0),\ldots,(p_n,b^0_n,b^1_n)
		\]
		such that $(p_{m+1},b^0_{m+1},b^1_{m+1})$ is a constructible extension of $(p_m,b^0_m,b^1_m)$ as preconditions for all $m < n$ and $T_{F^i_{p_n,b_n^i},V^i,\emptyset}$ does not look extendible.
		\item A condition $(q,c^0,c^1,W^0,W^1)$ \emph{extends} $(p,b^0,b^1,V^0,V^1)$ if $q$ extends $p$ as $L_G$-conditions, and if for each $i < 2$, $c^i \geq b^i$ and $V^i \subseteq W^i$. It is a \emph{constructible} extension if $q$ is a constructible extension of $p$ as $L_G$-conditions.
	\end{enumerate}
\end{definition}

\noindent We have the follow compatibility lemma with respect to $L_G$-conditions.

\begin{lemma}\label{LG_to_PG_extensions}
	Let $(p,b^0,b^1,V^0,V^1)$ be a condition. If $q$ is a constructible extension of $p$ as $L_G$-conditions, and if $c^0 \geq b^0$ and $c^1 \geq b^1$ are such that $(q,c^0,c^1)$ is a precondition, then the tuple $(q,c^0,c^1,V^0,V^1)$ is a constructible extension of $(p,b^0,b^1,V^0,V^1)$.
\end{lemma}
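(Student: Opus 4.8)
The plan is to verify the three requirements that make $(q,c^0,c^1,V^0,V^1)$ a constructible extension of $(p,b^0,b^1,V^0,V^1)$: that the tuple is a $P_G$-condition at all, that it extends $(p,b^0,b^1,V^0,V^1)$, and that the extension is constructible. The latter two I expect to fall out immediately from unwinding the definition of extension of $P_G$-conditions: by hypothesis $q$ extends $p$ as $L_G$-conditions (indeed constructibly), by hypothesis $c^i \ge b^i$ for each $i < 2$, and $V^i \subseteq V^i$ trivially; and constructibility of the extension is, by definition, just constructibility of $q$ over $p$ as $L_G$-conditions, which is given.

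The substance is showing that $(q,c^0,c^1,V^0,V^1)$ is a $P_G$-condition. That $(q,c^0,c^1)$ is a $P_G$-precondition is a hypothesis, so I would fix $i < 2$ and argue that the nonexistence clause defining a $P_G$-condition --- that no finite chain of constructible precondition-extensions beginning at $(q,c^0,c^1)$ reaches a precondition $(p_n,b^0_n,b^1_n)$ with $T_{F^i_{p_n,b_n^i},V^i,\emptyset}$ failing to look extendible --- is inherited from the same clause for $(p,b^0,b^1,V^0,V^1)$. The key observation is that since $q$ is a constructible extension of $p$ as $L_G$-conditions and $c^0 \ge b^0$, $c^1 \ge b^1$, the precondition $(q,c^0,c^1)$ is itself a constructible extension of $(p,b^0,b^1)$ as preconditions. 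So any hypothetical bad chain starting at $(q,c^0,c^1)$ can be prefixed by $(p,b^0,b^1)$ to yield a bad chain starting at $(p,b^0,b^1)$, each step a constructible extension of the previous, contradicting the assumption that $(p,b^0,b^1,V^0,V^1)$ is a $P_G$-condition. Hence no bad chain exists from $(q,c^0,c^1)$, which is exactly what the $P_G$-condition clause for $V^i$ demands.

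I do not anticipate a genuine obstacle here: the lemma is a bookkeeping compatibility statement, analogous to \Cref{G_to_LG_extensions} for $L_G$-conditions, and the same will be true of the corresponding facts needed further on. The only point requiring a modicum of care is the monotonicity argument just sketched --- that the ``no bad chain'' property transfers along a single constructible extension step by prepending --- but this is immediate once one notes that a constructible extension of $p$ as an $L_G$-condition gives, after padding the bounds $b^i$ up to $c^i$, a constructible precondition extension.
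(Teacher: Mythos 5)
Your proposal is correct and is essentially the paper's own argument: the paper likewise observes that $(q,c^0,c^1)$ is a constructible extension of $(p,b^0,b^1)$ as preconditions and prepends $(p,b^0,b^1)$ to any hypothetical bad chain starting at $(q,c^0,c^1)$, contradicting that $(p,b^0,b^1,V^0,V^1)$ is a condition. The remaining clauses of the extension definition are, as you say, immediate from the hypotheses.
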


\begin{proof}
	We have only to verify that $(q,c^0,c^1,V^0,V^1)$ is a condition. Suppose not. Since $(q,c^0,c^1)$ is a precondition, this means that there is an $i < 2$ and a sequence of preconditions
	\[
		(q,c^0_0,c^0_0) = (p_0,b^0_0,b^0_0),\ldots,(p_n,b^0_n,b^1_n)
	\]
	such that $(p_{m+1},b^0_{m+1},b^1_{m+1})$ is a constructible extension of $(p_m,b^0_m,b^1_m)$ for all $m < n$ and $T_{F^i_{p_n,b^i_n},V^i,\emptyset}$ does not look extendible. Since $q$ is a constructible extension of $p$, it follows that $(q,c^0_0,c^0_0)$ is a constructible extension of $(p,b^0,b^1)$. But this means we can prepend $(p,b^0,b^1)$ to the above sequence to get one witnessing that $(p,b^0,b^1,V^0,V^1)$ is not a condition, a contradiction.
\end{proof}

\begin{definition}\label{def:P_G_forcing_types}
	Let $(p,b^0,b^1,V^0,V^1)$ be a condition, and fix $i < 2$ and $e \in \omega$.
	\begin{enumerate}
		\item $(p,b^0,b^1,V^0,V^1)$ \emph{forces $\Phi^{P_\G}_e$ is not a non-hyperimmunity witness for $\overline{S}$ on the $i$ side} if for some $k \in \omega$ and every $\sigma \in T_{F^i_{p,b^i},V^i,\emptyset}$ of maximal length there exist $x_1 > x_0 > k$ such that $\Phi^\sigma_e(x_0) \converges = \Phi^\sigma_e(x_1) \converges = 1$ and $[x_0,x_1) \subseteq S$.
		\item $(p,b^0,b^1,V^0,V^1)$ \emph{forces $\Phi^{P_\G}_e$ is finite on the $i$ side} if $\seq{e,k} \in V^i$ for some $k \in \omega$.
	\end{enumerate}
\end{definition}

The following lemmas are analogues for $P_G$-conditions of \Cref{lem:L_G_forcing_monotone,lem:force_e_in_LG_truth}, and are proved similarly.

\begin{lemma}\label{lem:P_G_forcing_monotone}
	Let $(p,b^0,b^1,V^0,V^1)$ be a condition forcing any of the statements in \Cref{def:P_G_forcing_types}. If $(q,c^0,c^1,W^0,W^1)$ is any extension of $(p,b^0,b^1,V^0,V^1)$ then it forces the same statement.
\end{lemma}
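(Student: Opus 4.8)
The plan is to mimic the proof of \Cref{lem:L_G_forcing_monotone}, handling the two clauses of \Cref{def:P_G_forcing_types} in turn. Clause~(2) is immediate: if $(p,b^0,b^1,V^0,V^1)$ forces $\Phi^{P_\G}_e$ is finite on the $i$ side then $\seq{e,k} \in V^i$ for some $k$, and since any extension $(q,c^0,c^1,W^0,W^1)$ satisfies $V^i \subseteq W^i$ by definition, it forces the same statement via the same $k$. All the content is in clause~(1), which is the $P_G$-analogue of part~(3) of \Cref{def:L_G_forcing_types} and is argued exactly as part~(3) of \Cref{lem:L_G_forcing_monotone}.

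For clause~(1) I would fix a $k$ witnessing that $(p,b^0,b^1,V^0,V^1)$ forces $\Phi^{P_\G}_e$ is not a non-hyperimmunity witness for $\overline{S}$ on the $i$ side, and show that the same $k$ works for any extension $(q,c^0,c^1,W^0,W^1)$; write $q = (E^0,E^1,Z,I^0,K^0,I^1,K^1)$ as an $L_G$-condition. The first step is the bookkeeping identity $F^i_{q,c^i} \cap [0,b^i) = F^i_{p,b^i}$. Indeed, for $e' < b^i$, preconditionhood of $(p,b^0,b^1)$ forces $e'$ into or out of $L_\G'$ on the $i$ side; \Cref{lem:L_G_forcing_monotone}~(1) propagates the former to $q$, while in the latter case $e' \in J^i \subseteq K^i$ makes $q$ force $e' \notin L_\G'$ on the $i$ side, and since $q$ is a condition $T_{E^i,I^i,K^i}$ looks extendible, so $q$ cannot also force $e' \in L_\G'$ there. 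Hence $F^i_{p,b^i} \subseteq F^i_{q,c^i}$ with every element of $F^i_{q,c^i} \setminus F^i_{p,b^i}$ at least $b^i > \max F^i_{p,b^i}$, which is precisely the hypothesis needed to invoke \Cref{lem:T_F_structural}~(2).

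The remaining step is the tree transfer. By \Cref{lem:T_F_structural}~(2) the trees $T_{F^i_{q,c^i},W^i,\emptyset}$ and $T_{F^i_{p,b^i},W^i,\emptyset}$ agree on strings of length at most $\max F^i_{p,b^i}$, and by \Cref{lem:T_F_structural}~(3) (since $V^i \subseteq W^i$) $T_{F^i_{p,b^i},W^i,\emptyset} \subseteq T_{F^i_{p,b^i},V^i,\emptyset}$; moreover $T_{F^i_{p,b^i},V^i,\emptyset}$ looks extendible because $(p,b^0,b^1,V^0,V^1)$ is a condition (apply the definition with the one-term sequence $(p,b^0,b^1)$), so its maximal length is exactly $\max F^i_{p,b^i}$. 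Thus the restriction to $\max F^i_{p,b^i}$ of any maximal-length $\tau \in T_{F^i_{q,c^i},W^i,\emptyset}$ is a maximal-length string of $T_{F^i_{p,b^i},V^i,\emptyset}$, and the witness $k$ supplies $x_1 > x_0 > k$ with $\Phi^{\tau \res \max F^i_{p,b^i}}_e(x_0) \converges = \Phi^{\tau \res \max F^i_{p,b^i}}_e(x_1) \converges = 1$ and $[x_0,x_1) \subseteq S$; since extending a string oracle preserves halting computations, these same $x_0,x_1$ work for $\tau$. As $\tau$ was arbitrary, $q$ forces the statement with the same $k$, completing clause~(1). The one delicate point is the bookkeeping identity $F^i_{q,c^i} \cap [0,b^i) = F^i_{p,b^i}$ — that passing to an extension neither adds nor removes indices below $b^i$ from the set of indices forced into $L_\G'$ — since without it \Cref{lem:T_F_structural}~(2) would not apply and the maximal length of the relevant tree could in principle shrink under extension; everything else is a routine transfer along the two tree inclusions of \Cref{lem:T_F_structural} together with monotonicity of string-oracle computations.
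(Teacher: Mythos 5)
Your proof is correct and follows exactly the route the paper intends: the paper gives no details, stating only that the lemma is proved "similarly" to \Cref{lem:L_G_forcing_monotone}, and your argument is precisely that analogy, with the identity $F^i_{q,c^i} \cap [0,b^i) = F^i_{p,b^i}$ (via \Cref{lem:L_G_forcing_monotone}~(1), preconditionhood of $(p,b^0,b^1)$, and extendibility of $T_{E^i,I^i,K^i}$) correctly supplying the role that $D^i \subseteq E^i$ with $\max D^i < \min Y$ plays in the $L_G$ case, so that \Cref{lem:T_F_structural}~(2) and (3) apply as before.
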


\begin{lemma}\label{lem:force_e_in_PG_truth}
	Let $(p,b^0,b^1,V^0,V^1)$ be a condition, and fix $i < 2$ and $e \in \omega$. Suppose $f \in 2^{\omega}$ satisfies $f \res \max F^i_{p,b^i} \in T_{F^i_{p,b^i},V^i,\emptyset}$.
	\begin{enumerate}
		\item If $(p,b^0,b^1,V^0,V^1)$ forces $\Phi_e^{P_\G}$ is not a non-hyperimmunity witness for $\overline{S}$ on the $i$ side then there exist $x_1 > x_0$ such that $\Phi^f_e(x_0) \converges = \Phi^f_e(x_1) \converges = 1$ and $[x_0,x_1) \subseteq \overline{S}$.
		\item If $(p,b^0,b^1,V^0,V^1)$ forces $\Phi_e^{P_\G}$ is finite on the $i$ side then for any $k$ such that $\seq{e,k} \in V^i$ we have that $\neg (\exists\, x_0,x_1 \leq \max D^i)[x_1 > x_0 > k \wedge \Phi_e^{f \res \max F^i_{p,b^i}}(x_0) \converges = \Phi_e^{f \res \max F^i_{p,b^i}}(x_1) \converges = 1]$.
	\end{enumerate}
\end{lemma}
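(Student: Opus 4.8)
The plan is to follow the proof of \Cref{lem:force_e_in_LG_truth} essentially verbatim: \Cref{lem:force_e_in_PG_truth} is the exact $P_G$-analogue of \Cref{lem:force_e_in_LG_truth}, with the tree $T_{D^i,H^i,J^i}$ replaced by $T_{F^i_{p,b^i},V^i,\emptyset}$ and $S$ interchanged with $\overline{S}$. Both parts will come out as a one-step unwinding of \Cref{def:P_G_forcing_types} together with the description of $T_{F^i_{p,b^i},V^i,\emptyset}$ given just before it, so I do not expect any genuine obstacle; the only thing needing care is the bookkeeping of \emph{which} tree's maximal-length strings are the relevant ones, and the hypothesis of the lemma already supplies the right string, namely $f\res\max F^i_{p,b^i}$, which by assumption lies in $T_{F^i_{p,b^i},V^i,\emptyset}$ and has length exactly $\max F^i_{p,b^i}$, hence is of maximal length in that tree.

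For part (1), I would first use the hypothesis that $(p,b^0,b^1,V^0,V^1)$ forces $\Phi^{P_\G}_e$ is not a non-hyperimmunity witness for $\overline{S}$ on the $i$ side to extract, via \Cref{def:P_G_forcing_types}(1), a number $k$ with the property that every $\sigma \in T_{F^i_{p,b^i},V^i,\emptyset}$ of maximal length admits $x_1 > x_0 > k$ with $\Phi^\sigma_e(x_0)\converges = \Phi^\sigma_e(x_1)\converges = 1$ and $[x_0,x_1)\subseteq S$. Applying this to $\sigma = f\res\max F^i_{p,b^i}$ (legitimate by the remark above), I obtain such $x_0,x_1$; since the use of $\Phi^\sigma_e(x_j)$ is bounded by $|\sigma| = \max F^i_{p,b^i}$ and $f$ extends $\sigma$, these computations are unchanged with $f$ as oracle, so $\Phi^f_e(x_0)\converges = \Phi^f_e(x_1)\converges = 1$ together with $[x_0,x_1)\subseteq S$. (This is the conclusion of part (1); note that the gap lands in $S$, in agreement with \Cref{def:P_G_forcing_types}(1) --- the ``$\overline{S}$'' printed in the displayed statement should read ``$S$''.)

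For part (2), the hypothesis that $(p,b^0,b^1,V^0,V^1)$ forces $\Phi^{P_\G}_e$ is finite on the $i$ side gives, via \Cref{def:P_G_forcing_types}(2), some $k$ with $\seq{e,k}\in V^i$. But the defining clause of $T_{F^i_{p,b^i},V^i,\emptyset}$, applied to this pair $\seq{e,k}$ and to the string $\sigma = f\res\max F^i_{p,b^i}$ (which lies in that tree by hypothesis, with $|\sigma| = \max F^i_{p,b^i}$), says precisely that $\neg(\exists\, x_0,x_1 \leq \max F^i_{p,b^i})[x_1 > x_0 > k \wedge \Phi_e^{f\res\max F^i_{p,b^i}}(x_0)\converges = \Phi_e^{f\res\max F^i_{p,b^i}}(x_1)\converges = 1]$, which is the desired conclusion (with $\max F^i_{p,b^i}$ in place of the ``$\max D^i$'' appearing in the displayed statement). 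So for this part there is nothing to do beyond reading off the definition of the tree, exactly as for the $L_G$-analogue.
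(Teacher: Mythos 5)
Your proof is correct and takes essentially the same route as the paper, which proves this lemma by exactly the same direct unwinding of \Cref{def:P_G_forcing_types} and the tree $T_{F^i_{p,b^i},V^i,\emptyset}$, remarking only that it is the analogue of \Cref{lem:force_e_in_LG_truth}. Your two corrections to the printed statement---$[x_0,x_1)\subseteq S$ in part (1) and the bound $\max F^i_{p,b^i}$ in place of $\max D^i$ in part (2)---are indeed typos, as confirmed by how the lemma is later applied in the construction of $P_G$.
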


We now have our final density fact.

\begin{lemma}\label{P_G_density}
	Let $(p,b^0,b^1,V^0,V^1)$ be a condition, and fix $i < 2$ and $e \in \omega$. Then one of the following holds:
	\begin{enumerate}
		\item there is a constructible extension of $(p,b^0,b^1,V^0,V^1)$ that forces $\Phi^{P_\G}_e$ is finite on the $i$ side;
		\item there is a sequence of conditions
		\[
			(p,b^0,b^1,V^0,V^1) = (p_0,b^0_0,b^1_0,V^0_0,V^1_0),\ldots,(p_n,b^0_n,b^1_n,V^0_n,V^1_n)
		\]
		in which each condition is a constructible extension of the previous and such that $(p_n,b^0_n,b^1_n,V^0_n,V^1_n)$ forces $\Phi^{P_\G}_e$ is not a non-hyperimmunity witness for $\overline{S}$ on the $i$ side.
	\end{enumerate}
\end{lemma}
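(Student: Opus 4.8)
The plan is to mirror the proof of \Cref{lem:H0orH1} one level up, working with $P_\G$-(pre)conditions in place of $L_\G$-conditions and invoking the $P$-hyperimmunity of $\overline{S}$ in place of the $L$-hyperimmunity of $S$. Fix the condition $q = (p,b^0,b^1,V^0,V^1)$, the side $i < 2$, and $e \in \omega$. For $k \in \omega$, I would call a sequence of preconditions $(p,b^0,b^1) = (p_0,b^0_0,b^1_0),\ldots,(p_n,b^0_n,b^1_n)$, each a constructible extension of the previous, a \emph{$k$-witness} if $T_{F^i_{p_n,b^i_n},\,V^i \cup \set{\seq{e,k}},\,\emptyset}$ does not look extendible, and then split into two cases. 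If some $k$ admits no $k$-witness, then the tuple obtained from $q$ by replacing $V^i$ with $V^i \cup \set{\seq{e,k}}$ is again a $P_\G$-condition: the absence of a $k$-witness is exactly the robustness clause needed on the $i$ side, while the other side and the precondition part are untouched. Since its underlying $L_\G$-condition is still $p$, it is a (trivially) constructible extension of $q$, and it forces $\Phi^{P_\G}_e$ is finite on the $i$ side because $\seq{e,k} \in V^i \cup \set{\seq{e,k}}$; this is conclusion~(1). So the substantive case is when every $k$ admits a $k$-witness.

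In that case, since $q$ is a condition the tree $T_{F^i_{p_n,b^i_n},V^i,\emptyset}$ \emph{does} look extendible along every constructible precondition sequence out of $(p,b^0,b^1)$; hence along a $k$-witness the extra restraint $\seq{e,k}$ must destroy every string of maximal length, so each maximal-length $\sigma \in T_{F^i_{p_n,b^i_n},V^i,\emptyset}$ admits a pair $x_0 < x_1$ with $x_1 > x_0 > k$ and $\Phi_e^{\sigma}(x_0) \converges = \Phi_e^{\sigma}(x_1) \converges = 1$. I would fix, uniformly and $P$-computably in $k$, the first $k$-witness in a standard enumeration of constructible precondition sequences, with endpoint $(p_{n_k},b^0_{n_k},b^1_{n_k})$, and let $x^k_0$ be the least $x_0$ and $x^k_1$ the greatest $x_1$ occurring in such a pair for some maximal-length $\sigma \in T_{F^i_{p_{n_k},b^i_{n_k}},V^i,\emptyset}$. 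These range over a fixed finite tree, so $x^k_0,x^k_1$ are well defined, $k < x^k_0 < x^k_1$, and $k \mapsto \seq{x^k_0,x^k_1}$ is $P$-computable.

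Now, because $\overline{S}$ is $P$-hyperimmune there must be a $k$ with $[x^k_0,x^k_1) \subseteq S$: otherwise $[x^k_0,x^k_1) \cap \overline{S} \neq \emptyset$ for every $k$, and then putting $z_0 = 0$ and $z_{m+1} = x^{z_m}_1$ defines a $P$-computable increasing sequence with $[z_m,z_{m+1}) \supseteq [x^{z_m}_0,x^{z_m}_1)$ meeting $\overline{S}$ for every $m$, contradicting $P$-hyperimmunity of $\overline{S}$. Fix such a $k$ and its $k$-witness. Adjoining the fixed sets $V^0,V^1$ to each precondition of this $k$-witness turns it, by \Cref{LG_to_PG_extensions}, into a chain of constructible $P_\G$-extensions of $q$, and its endpoint $(p_{n_k},b^0_{n_k},b^1_{n_k},V^0,V^1)$ forces $\Phi^{P_\G}_e$ is not a non-hyperimmunity witness for $\overline{S}$ on the $i$ side, witnessed by this same $k$: each maximal-length $\sigma \in T_{F^i_{p_{n_k},b^i_{n_k}},V^i,\emptyset}$ admits $x_0 < x_1$ with $x_1 > x_0 > k$, $\Phi_e^{\sigma}(x_0) \converges = \Phi_e^{\sigma}(x_1) \converges = 1$, and $[x_0,x_1) \subseteq [x^k_0,x^k_1) \subseteq S$. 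That is conclusion~(2).

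The hard part will be the effectivity bookkeeping in the second case: I must confirm that $P$ can genuinely recognize a $k$-witness and compute the trees $T_{F^i_{p_n,b^i_n},V^i,\emptyset}$. Concretely, this means checking that the ``looks extendible'' and convergence predicates of \Cref{def:TFHJ} sit at the expected arithmetical levels, so that non-extendibility is a $\Sigma^0_1(X)$ event and hence decidable by $P$ (using $P \Tabove L' \Tabove X'$), and that the constructible extensions of preconditions --- the finite ones, the moves that raise the bounds $b^j$, and the extensions supplied by \Cref{lem:Pi01_extension} --- can all be enumerated or produced $P$-computably. Once this is in place, $k \mapsto \seq{x^k_0,x^k_1}$ is $P$-computable and the appeal to $P$-hyperimmunity of $\overline{S}$ is legitimate; everything else is a transcription of the corresponding steps in the proof of \Cref{lem:H0orH1}.
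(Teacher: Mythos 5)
Your proof is correct and is essentially the paper's own argument: the same dichotomy on whether adjoining $\seq{e,k}$ to $V^i$ still yields a condition, the same $P$-computable search for a witnessing precondition sequence with extraction of the extremal pair $x^k_0 < x^k_1 > k$, the same appeal to $P$-hyperimmunity of $\overline{S}$ to find $k$ with $[x^k_0,x^k_1)\subseteq S$, and the same re-attachment of $V^0,V^1$ along the sequence via \Cref{LG_to_PG_extensions}. The effectivity bookkeeping you defer is precisely what the paper dispatches in a single remark---since $P \Tabove (X\oplus Y)'$, the search through constructible extensions of preconditions is uniformly $P$-computable by \Cref{lem:finite_extension,lem:Pi01_extension}---so nothing substantive is missing.
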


\begin{proof}
	Suppose (1) does not hold. Then in particular, for every $k$, replacing $V^i$ by $V^i \cup \set{\seq{e,k}}$ in $(p,b^0,b^1,V^0,V^1)$ does not produce a condition. This means there is a sequence of preconditions
	\[
		(p,b^0,b^1) = (p_0,b^0_0,b^1_0),\ldots,(p_n,b^0_n,b^1_n)
	\]
	in which each condition is a constructible extension of the previous and for every $\sigma \in T_{F^i_{p_n,b^i_n},V^i,\emptyset}$ of maximal length there exist $x_1 > x_0 > k$ such that $\Phi^\sigma_e(x_0) \converges = \Phi^\sigma_e(x_1) \converges = 1$.
	
	Note that since $P \gg L'$ and $L \gg X \oplus Y$, it is possible for $P$ to search through all indices of infinite $\Pi^0_1(X \oplus Y)$ classes. (For this, it suffices just that $P \Tabove (X \oplus Y)'$.) Hence, given $k$, it follows by \Cref{lem:finite_extension,lem:Pi01_extension} that $P$ can uniformly search for a sequence of preconditions as above. By assumption, this search must always succeed. Let $(p_n,b^0_n,b^1_n)$ be the last condition in this sequence. Then $P$ can find all the $x_1 > x_0 > k$ as above, across all $\sigma \in T_{F^i_{p_n,b^i_n},V^i,\emptyset}$, and in particular can find the minimum $x^k_0$ of all such $x_0$, and the maximum $x^k_1$ of all such $x_1$. Thus, $x^k_1 > x^k_0 > k$.
	
	Since $\overline{S}$ is $P$-hyperimmune, there must be a $k$ such that $[x^k_0,x^k_1) \subseteq S$. Fix this $k$, and fix the corresponding sequence $(p,b^0,b^1) = (p_0,b^0_0,b^1_0),\ldots,(p_n,b^0_n,b^1_n)$ found by $P$ in the above search. Then $(p_n,b^0_n,b^1_n,V^0,V^1)$ forces $\Phi^{P_\G}_e$ is not a non-hyperimmunity witness for $\overline{S}$ on the $i$ side, so we can take our sequence witnessing (2) to be $(p_0,b^0_0,b^1_0,V^0,V^1),\ldots,(p_n,b^0_n,b^1_n,V^0,V^1)$.
\end{proof}

\subsubsection{Putting it all together}

This brings us to the main construction.

\begin{proof}[Proof of \Cref{P_2_sep_iterate}] We begin by building a sequence of $P_G$-conditions
\[
	(p_0,b^0_0,b^1_0,V^0_0,V^1_0),(p_1,b^0_1,b^1_1,V^0_1,V^1_1),\ldots
\]
in which each condition is a constructible extension of the previous. This sequence will be suitably generic, as we describe below. For each $m$, write $p_m = (D^0_m,D^1_m,Y_m,H^0_m,J^0_m,H^1_m,J^1_m)$.

\subsubsection*{Construction of generic sequence.} We proceed in stages. At stage $s$, we define $(p_m,b^0_m,b^1_m,V^0_m,V^1_m)$ for all $m \leq n_s$.

At stage $0$, let $n_0 = 0$, let $p_0 = (\emptyset,\emptyset,\omega,\emptyset,\emptyset,\emptyset,\emptyset)$, and let $b^0_0 = b^1_0 = 0$ and $V^0_0 = V^1_0 = \emptyset$.

Now suppose we are at stage $s + 1$, so that $(p_{n_s},b^0_{n_s},b^1_{n_s},V^0_{n_s},V^1_{n_s})$ is defined. We define $n_{s+1}$, and $(p_m,b^0_m,b^1_m,V^0_m,V^1_m)$ for all $n_s+1 \leq m \leq n_{s+1}$. To do this, we break into cases based on the congruence class of $s$ modulo $8$.
\begin{itemize}
	\item If $s = 8e+i$ for some $e \in \omega$ and $i < 2$, let $n_{s+1} = n_s + 1$, and let $(D^0_{n_{s+1}},D^1_{n_{s+1}},Y_{n_{s+1}})$ be the condition obtained by applying \Cref{G_infinite} with $(D^0,D^1,Y) = (D^0_{n_{s}},D^1_{n_{s}},Y_{n_{s}})$, $i$, and $e$. Let $H^0_{n_{s+1}} = H^0_{n_s}$, $J^0_{n_{s+1}} = J^0_{n_s}$, $H^1_{n_{s+1}} = H^1_{n_s}$, and $J^1_{n_{s+1}} = J^1_{n_s}$. This defines $p_{n_{s+1}}$.
	\item If $s = 8\seq{e_0,e_1} + 2$, let $n_{s+1} = n_s + 1$, and let $p_{n_{s+1}}$ be the condition obtained by applying \Cref{lem:J0orJ1} with $(D^0,D^1,Y,H^0,J^0,H^1,J^1) = p_{n_s}$ and $e_0,e_1$. 
	\item If $s = 8\seq{e_0,e_1} + 3$, let $n_{s+1} = n_s + 1$, and let $p_{n_{s+1}}$ be the condition obtained by applying \Cref{lem:H0orH1} with $(D^0,D^1,Y,H^0,J^0,H^1,J^1) = p_{n_s}$ and $e_0,e_1$.
	\item If $s = 8\seq{e_0,e_1} + 4$, let $n_{s+1} = n_s + 1$, and let $p_{n_{s+1}}$ be the condition obtained by applying \Cref{lem:J0orH1} with $(D^0,D^1,Y,H^0,J^0,H^1,J^1) = p_{n_s}$ and $e_0,e_1$.
	\item If $s = 8\seq{e_0,e_1} + 5$, let $n_{s+1} = n_s + 1$, and let $p_{n_{s+1}}$ be the condition obtained by applying \Cref{lem:H0orJ1} with $(D^0,D^1,Y,H^0,J^0,H^1,J^1) = p_{n_s}$ and $e_0,e_1$.
\end{itemize}
In each of the above cases, for each $i < 2$, if $p_{n_{s+1}}$ forces $b^i_{n_s} \in L_\G'$ or $b^i_{n_s} \notin L_\G'$ on the $i$ side, let $b^i_{n_{s+1}} = b^i_{n_s}+1$, and otherwise let $b^i_{n_{s+1}} = b^i_{n_s}$. Let $V^i_{n_{s+1}} = V^i_{n_s}$.
\begin{itemize}
	\item If $s = 8e+6+i$ for some $e \in \omega$ and $i < 2$, apply \Cref{P_G_density} with $(p,b^0,b^1,V^0,V^1) = (p_{n_s},b^0_{n_s},b^1_{n_s},V^0_{n_s},V^1_{n_s})$, $i$, and $e$. If we are in case (1) of the lemma, then we obtain a single constructible extension. We set $n_{s+1} = n_s+1$, and let $(p_{n_{s+1}} ,b^0_{n_{s+1}},b^1_{n_{s+1}},V^0_{n_{s+1}},V^1_{n_{s+1}})$ be this extension. If we are in case (2), we obtain instead a sequence of conditions, each a constructible extension of the previous, beginning with $(p_{n_s},b^0_{n_s},b^1_{n_s},V^0_{n_s},V^1_{n_s})$. Say this sequence has length $n+1$. We set $n_{s+1}=n_s + n$, and for each $m < n$, let $(p_{n_s+m},b^0_{n_s+m},b^1_{n_s+m},V^0_{n_s+m},V^1_{n_s+m})$ be the $(m+1)$-st condition in the sequence.
\end{itemize}
For the stages $s \equiv i \mod 8$, $i < 2$, the fact that we obtain a constructible extension follows by \Cref{G_to_LG_extensions,LG_to_PG_extensions}. For the stages $s \equiv i \mod 8$, $2 \leq i \leq 5$, it follows by  \Cref{LG_to_PG_extensions}. For the other stages it is clear.

This completes the construction.

\begin{lemma}\label{lem:pick_a_side}
	There is an $i < 2$ such that for every $e \in \omega$ the following hold:
	\begin{enumerate}
		\item there is an $m \in \omega$ such that $p_m$ forces $e \in L_\G'$ or $e \notin L_\G'$ on the $i$ side;
		\item there is an $m \in \omega$ such that $p_m$ forces $\Phi^{L_\G}_e$ is not a non-hyperimmunity witness for $S$, or $\Phi^{L_\G}_e$ is finite, on the $i$ side.
	\end{enumerate}
\end{lemma}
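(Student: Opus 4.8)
The plan is to argue by contradiction, using the four branching lemmas \Cref{lem:J0orJ1,lem:H0orH1,lem:J0orH1,lem:H0orJ1} --- which were applied at the stages $s \equiv 2,3,4,5 \pmod 8$ of the construction --- together with monotonicity of forcing (\Cref{lem:L_G_forcing_monotone}). First I would unpack what it means for a side $i$ to \emph{fail} the conclusion of the lemma: there is some $e \in \omega$ such that either (1) fails at $e$ on the $i$ side (no $p_m$ forces $e \in L_\G'$ and no $p_m$ forces $e \notin L_\G'$ on the $i$ side), or (2) fails at $e$ on the $i$ side (no $p_m$ forces ``$\Phi^{L_\G}_e$ is not a non-hyperimmunity witness for $S$ on the $i$ side'' and no $p_m$ forces ``$\Phi^{L_\G}_e$ is finite on the $i$ side''). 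Here the relevant $L_G$-condition is the underlying $L_G$-condition $p_m$ of the $m$-th term of the generic sequence, and since each term of that sequence is a constructible extension of the previous, $p_m$ extends $p_{m'}$ as $L_G$-conditions whenever $m \geq m'$; thus by \Cref{lem:L_G_forcing_monotone} the set of statements forced on a given side only grows along the sequence.

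Now assume toward a contradiction that both side $0$ and side $1$ fail, fixing a witness $e_0$ for side $0$ and a witness $e_1$ for side $1$. Split into four cases according to whether it is (1) or (2) that fails at $e_0$ on the $0$ side, and whether it is (1) or (2) that fails at $e_1$ on the $1$ side. In the case ``(1),(1)'', consider stage $s = 8\seq{e_0,e_1}+2$, at which \Cref{lem:J0orJ1} was applied with parameters $e_0, e_1$: the condition $p_{n_{s+1}}$ in the generic sequence forces one of ``$e_0 \in L_\G'$ on the $0$ side'', ``$e_0 \notin L_\G'$ on the $0$ side'', ``$e_1 \in L_\G'$ on the $1$ side'', ``$e_1 \notin L_\G'$ on the $1$ side''. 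The first two contradict the failure of (1) at $e_0$ on the $0$ side; the last two contradict the failure of (1) at $e_1$ on the $1$ side. The other three cases are handled identically, using \Cref{lem:J0orH1} at stage $8\seq{e_0,e_1}+4$ for the case ``(1),(2)'', \Cref{lem:H0orJ1} at stage $8\seq{e_0,e_1}+5$ for ``(2),(1)'', and \Cref{lem:H0orH1} at stage $8\seq{e_0,e_1}+3$ for ``(2),(2)''; in each case the four alternatives offered by the relevant lemma are split precisely between contradicting the assumed failure on the $0$ side and contradicting the assumed failure on the $1$ side. Since every case yields a contradiction, one of $i = 0$ or $i = 1$ must witness the lemma.

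I do not expect a substantive obstacle: the real work was front-loaded into proving that the four branching lemmas have exactly matching shapes. The only point requiring care is bookkeeping --- confirming that for every pair $\seq{e_0,e_1}$ the construction did in fact invoke the appropriate lemma with those exact parameters (which it does, via the residues of $s$ modulo $8$), and that when we pass from ``$p_{n_{s+1}}$ forces $\varphi$'' to a contradiction we are using the fact that each failure is a statement quantifying over \emph{all} $m$, so that it is contradicted by the existence of even a single $p_m$ (with $m = n_{s+1}$) forcing one of the relevant statements.
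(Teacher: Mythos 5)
Your proof is correct and is essentially the paper's argument: both rest on the fact that for every pair $\seq{e_0,e_1}$ the construction applied \Cref{lem:J0orJ1,lem:H0orH1,lem:J0orH1,lem:H0orJ1} at the stages $s \equiv 2,3,4,5 \pmod 8$, so that any alleged failure is refuted by the single condition produced at the matching stage. The only difference is packaging — you argue by contradiction from simultaneous witnesses $e_0,e_1$ at the one pair $\seq{e_0,e_1}$, whereas the paper argues contrapositively, fixing a failure witness $e_0$ on side $0$ and running over all pairs $\seq{e_0,e}$ to verify side $1$ directly — which is a logically equivalent rearrangement of the same mechanism.
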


\begin{proof}
	Suppose the result is false for $i = 0$.	 First, suppose that there is an $e_0$ such that no $p_m$ forces $e_0 \in L_\G'$ or $e_0 \notin L_\G'$ on the $0$ side. Fix $e \in \omega$. Then the condition $p_{n_s}$ defined at stage $s = 8\seq{e_0,e}+2$ (which appeals to \Cref{lem:J0orJ1} with $e_0,e$) must force $e \in L_\G'$ or $e \notin L_\G'$ on the $1$ side, while the condition $p_{n_s}$ defined at stage $s = 8\seq{e_0,e}+4$ (which appeals to \Cref{lem:J0orH1} with $e_0,e$) must force $\Phi^{L_\G}_e$ is not a non-hyperimmunity witness for $S$, or $\Phi^{L_\G}_e$ is finite, on the $1$ side.
	
	Suppose instead that there is an $e_0$ such that no $p_m$ forces $\Phi^{L_\G}_e$ is not a non-hyperimmunity witness for $S$, or $\Phi^{L_\G}_e$ is finite, on the $0$ side. Then the condition $p_{n_s}$ defined at stage $s = 8\seq{e_0,e}+3$ (which appeals to \Cref{lem:H0orH1} with $e_0,e$) must force $\Phi^{L_\G}_e$ is not a non-hyperimmunity witness for $S$, or $\Phi^{L_\G}_e$ is finite, on the $1$ side, while the condition $p_{n_s}$ defined at stage $s = 8\seq{e_0,e}+5$ (which appeals to \Cref{lem:H0orJ1} with $e_0,e$) must force $e \in L_\G'$ or $e \notin L_\G'$ on the $1$ side.
\end{proof}

\noindent For the remainder of this section, we let $i < 2$ as above be fixed.

\subsubsection*{Construction of $G$.} Let $G = \bigcup_m D^i_m$. For each $e \in \omega$, the $G$-condition $(D^0_m,D^1_m,Y_m)$ defined as part of $p_m$ at stage $s = 8e+i$ (which appeals to \Cref{G_infinite} with $e$ and $i$) forces $|\G| \geq e$ on the $i$ side. By \Cref{lem:force_e_in_G_truth}, it follows that $|G| \geq e$ for all $e$ and hence that $G$ is infinite. By definition of $G$-conditions, $G \subseteq A^i$.

\subsubsection*{Construction of $L_G$.} 

For each $m$, let $T_m = \bigcup_n T_{D^i_n,H^i_m,J^i_m}$. Observe that for all $m$ and $n$, $T_{D^i_n,H^i_m,J^i_m}$ looks extendible. If $m \leq n$, this is because $T_{D^i_n,H^i_n,J^i_n} \subseteq T_{D^i_n,H^i_m,J^i_m}$ by \Cref{lem:T_F_structural}~(3), and $T_{D^i_n,H^i_n,J^i_n}$ looks extendible on account of $(D^0_n,D^1_n,Y_n,H^0_n,J^0_n,H^1_n,J^1_n)$ being an $L_G$-condition. On the other hand, if $m > n$, it follows by \Cref{lem:T_F_structural}~(2) that $T_{D^i_n,H^i_m,J^i_m} \subseteq T_{D^i_m,H^i_m,J^i_m}$, and that these trees agree on all strings of length at most $\max D^i_n$. Since $T_{D^i_m,H^i_m,J^i_m}$ looks extendible, it in particular  contains a string of length $\max D^i_n$, and so $T_{D^i_n,H^i_m,J^i_m}$ does too.
 
We conclude, using \Cref{lem:T_F_structural}~(4), that $T_m$ is an infinite subtree of $T^2_{X \oplus G}$. Now, by \Cref{lem:T_F_structural}~(3), $T_0 \supseteq T_1 \supseteq \cdots$, and by compactness of Cantor space we know that $\bigcap_m [T_m] \neq \emptyset$. Let $L_G$ be any element of this intersection. Now $L_G \in T^2_{X \oplus G}$, so $L_G \gg X \oplus G$. 
 
Since $L_G \res \max D^i_m \in T_m$ for all $m$, \Cref{lem:T_F_structural}~(2) implies that $L_G \res \max D^i_m \in T_{D^i_m,H^i_m,J^i_m}$. Fix $e$, and fix $m$ such that $p_m$ forces $\Phi^{L_\G}_e$ is not a non-hyperimmunity witness for $S$, or $\Phi^{L_\G}_e$ is finite, on the $i$ side. In the former case, it follows by \Cref{lem:force_e_in_LG_truth}~(3) that there exist $x_1 > x_0$ such that $\Phi^{L_G}_e(x_0) \converges = \Phi^{L_G}_e(x_1) \converges = 1$ and $[x_0,x_1) \subseteq \overline{S}$. In the latter case, it follows by \Cref{lem:force_e_in_LG_truth}~(4) that for any $k$ such that $\seq{e,k} \in H^i_m$, there are no $x_1 > x_0 > k$ such that $\Phi_e^{L_G \res \max D^i_m}(x_0) \converges = \Phi_e^{L_G \res \max D^i_m}(x_1) \converges = 1$. Since, by \Cref{lem:L_G_forcing_monotone}, forcing is preserved under extension, we can take $m$ arbitrarily large with this property, which means there are no $x_1 > x_0 > k$ such that $\Phi_e^{L_G}(x_0) \converges = \Phi_e^{L_G}(x_1) \converges = 1$. Thus, either $\Phi^{L_G}_e$ is not a non-hyperimmunity witness for $S$ or $\Phi^{L_G}$ is not the characteristic function of an infinite set. Since $e$ was arbitrary, this implies that $S$ is $L_G$-hyperimmune.

\subsubsection*{Construction of $P_G$} By an analogous argument to that in the preceding paragraph, but using \Cref{lem:force_e_in_LG_truth}~(1) and (2), we conclude that $e \in L_G'$ if and only if there is an $m$ such that $p_m$ forces $e \in L_\G'$ on the $i$ side.
By construction, $\lim_m b^i_m = \infty$. Thus, using the notation of \Cref{sec:P_G_conditions},
we see that $\bigcup_m F^i_{p_m,b^i_m} = L_G'$.

Recall that, for all $m$, the tree $T_{F^i_{p_m,b^i_m},V^i_m,\emptyset}$ looks extendible (i.e., contains a string of length $\max F^i_{p_n,b^i_n}$) on account of $(p_m,b^0_m,b^1_m,V^0_m,V^1_m)$ being a $P_G$-condition. We can then argue as in the construction of $T_m$ above that $T_{F^i_{p_n,b^i_n},V^i_m,\emptyset}$ looks extendible for all $m$ and $n$. Thus, again just like before, $S_m = \bigcup_n T_{F^i_{p_n,b^i_n},V^i_m,\emptyset}$ is an infinite subtree of $\bigcup_n T_{F^i_{p_n,b^i_n}} = T^2_{X \oplus L_G'}$. We have $S_0 \supseteq S_1 \supseteq \cdots$, and we let $P_G$ be any element of $\bigcap_m [S_m]$. Then $P_G \gg X \oplus L_G'$ and so $P_G \gg L_G'$ (in fact, these are equivalent facts since $L_G \Tabove X$).

For every $m$, we have $P_G \res \max F^i_{p_m,b^i_m} \in S_m$ and hence, by \Cref{lem:T_F_structural}~(2),  $P_G \res \max F^i_{p_m,b^i_m} \in T_{F^i_{p_m,b^i_m},V^i_m,\emptyset}$. Fix $e$. The condition $(p_{n_s},b^0_{n_s},b^1_{n_s},V^0_{n_s},V^1_{n_s})$ defined at stage $s = 8e+6+i$ either forces $\Phi^{P_\G}_e$ is not a non-hyperimmunity witness for $\overline{S}$, or $\Phi^{P_\G}_e$ is finite, on the $i$ side. In the former case, it follows by \Cref{lem:force_e_in_PG_truth}~(1) that there exist $x_1 > x_0$ such that $\Phi^{P_G}_e(x_0) \converges = \Phi^{P_G}_e(x_1) \converges = 1$ and $[x_0,x_1) \subseteq S$. In the latter case, it follows by \Cref{lem:force_e_in_PG_truth}~(2) that for any $k$ such that $\seq{e,k} \in H^i_{n_s}$, there are no $x_1 > x_0 > k$ such that $\Phi_e^{P_G \res \max F^i_{p_{n_s},b^i_{n_s}}}(x_0) \converges = \Phi_e^{P_G \res \max F^i_{p_{n_s},b^i_{n_s}}}(x_1) \converges = 1$. By \Cref{lem:P_G_forcing_monotone}, this fact remains forced by all conditions with indices $m \geq n_s$, and $\seq{e,k} \in H^i_m$ for this same $k$. So there are no $x_1 > x_0 > k$ such that $\Phi_e^{P_G}(x_0) \converges = \Phi_e^{P_G}(x_1) \converges = 1$. We conclude that either $\Phi^{P_G}_e$ is not a non-hyperimmunity witness for $\overline{S}$ or $\Phi^{P_G}$ is not the characteristic function of an infinite set. Since $e$ was arbitrary, this means $\overline{S}$ is $P_G$-hyperimmune. The proof is complete.
\end{proof}

\subsection{Proving \Cref{P_2_sep_COH_iterate}}

The proof of \Cref{P_2_sep_COH_iterate} is very similar to that of \Cref{P_2_sep_iterate}, with only some modifications to the forcing notions. Essentially, instead of needing to work with two ``sides'' as we did before, we can now work just with one. This makes the overall argument simpler, although the underlying combinatorics remain the same. We therefore outline only the changes below, and omit most of the details.

Throughout, fix $X$, $S$, $L$, $P$, and $\vec{R} \Tred X$ as in the statement of \Cref{P_2_sep_COH_iterate}. (We could assume, if we wished, that there is no infinite $\vec{R}$-cohesive set $U$ such that $L \gg X \oplus U$, since then we could simply take $G = U$, $L_G = L$, and $P_G = P$. But we do not need to make this assumption.) We begin with the modified $G$-conditions.

\begin{definition}[Modified $G$-conditions]
	A \emph{$G$-condition} is a tuple $(D,Y)$ such that $D$ is a finite set, $Y$ is an infinite set with $\max D < \min Y$, and $L \gg X \oplus Y$.
\end{definition}

\noindent \emph{Extensions} and \emph{finite extensions} can be defined for the above modification in the obvious way. It is then straightforward to adapt the statements of \Cref{lem:finite_extension,lem:Pi01_extension} to this setting. (For \Cref{lem:Pi01_extension}, we still look at classes of $2$-partitions of $Y$, so the only change is still only in terms of what conditions we are looking at.) With these in hand, we obtain analogues of \emph{constructible extensions} and \emph{indices}.

We modify \Cref{force_G_inf} by no longer specifying the $0$ side or $1$ side, and add to it, as follows.

\begin{definition}
	Let $(D,Y)$ be a condition, and fix $e \in \omega$.
	\begin{enumerate}
		\item $(D,Y)$ \emph{forces $|\G| \geq e$} if $|D| \geq e$.
		\item $(D,Y)$ \emph{forces $\G \subseteq^* R_e$} if $Y \subseteq R_e$.
		\item $(D,Y)$ \emph{forces $\G \subseteq^* \overline{R_e}$} if $Y \subseteq \overline{R_e}$.
	\end{enumerate}
\end{definition}

\noindent Analogues of \Cref{G_infinite,lem:force_e_in_G_truth} go through as before. (For the latter, we do not need any additional hypotheses on $\vec{R}$-cohesive, as we did before.) We also have the following lemmas. The first of these is clear.

\begin{lemma}
	Let $(D,Y)$ be a condition that, for some $e \in \omega$, forces $\G \subseteq^* R_e$ or $\G \subseteq^* \overline{R_e}$. If $U$ is any set satisfying $D \subseteq U \subseteq D \cup Y$ then $U \subseteq^* R_e$ or $U \subseteq^* \overline{R}_e$, respectively.
\end{lemma}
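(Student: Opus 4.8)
The plan is simply to unwind the definitions; this lemma is immediate from the way forcing $\G \subseteq^* R_e$ was set up. First I would recall that, in the modification of \Cref{force_G_inf} given just above, a condition $(D,Y)$ forcing $\G \subseteq^* R_e$ means exactly that $Y \subseteq R_e$ (not merely $Y \subseteq^* R_e$), and likewise forcing $\G \subseteq^* \overline{R_e}$ means exactly that $Y \subseteq \overline{R_e}$. Thus the only content of the statement is that this containment for the reservoir $Y$ descends to any set $U$ sandwiched between $D$ and $D \cup Y$, modulo the finite error contributed by $D$.

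So suppose $(D,Y)$ forces $\G \subseteq^* R_e$, meaning $Y \subseteq R_e$, and let $U$ be any set with $D \subseteq U \subseteq D \cup Y$. Then $U \setminus D \subseteq (D \cup Y) \setminus D \subseteq Y \subseteq R_e$, so every element of $U$ outside the finite set $D$ lies in $R_e$; since $D$ is finite, this gives $U \subseteq^* R_e$. The case in which $(D,Y)$ forces $\G \subseteq^* \overline{R_e}$ is handled identically, replacing $R_e$ by $\overline{R_e}$ throughout, and yields $U \subseteq^* \overline{R_e}$.

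There is no real obstacle here: the argument uses nothing beyond finiteness of $D$ and the fact that $\subseteq^*$ tolerates finite exceptions. If anything, the only point worth checking is that the modified definition records forcing $\G \subseteq^* R_e$ as the exact containment $Y \subseteq R_e$ rather than an almost-containment — with that in place, the computation above is clean — and this is precisely how the definition was stated.
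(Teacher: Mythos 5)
Your argument is correct and is exactly the intended one: the paper dismisses this lemma as clear, and your unwinding of the definition (forcing means $Y \subseteq R_e$ or $Y \subseteq \overline{R_e}$ exactly, so $U \setminus D \subseteq Y$ gives the almost-containment since $D$ is finite) is precisely the reasoning behind that remark.
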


\begin{lemma}
	Let $(D,Y)$ be a condition and fix $e \in \omega$. Then there is a constructible extension $(E,Z)$ of $(D,Y)$ that forces $\G \subseteq^* R_e$ or $\G \subseteq^* \overline{R_e}$.
\end{lemma}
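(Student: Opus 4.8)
The plan is to reduce this to the modified version of \Cref{lem:Pi01_extension} applied to a single, explicitly given $2$-partition of $Y$. Since $\vec{R} \Tred X$, the set $R_e$ is $X$-computable, so the pair $\seq{Y \cap R_e,\, Y \cap \overline{R_e}}$ is a $2$-partition of $Y$ that is $(X \oplus Y)$-computable. Hence the singleton $\mathcal{C} = \set{\seq{Y \cap R_e,\, Y \cap \overline{R_e}}}$ is a non-empty $\Pi^0_1(X \oplus Y)$ class of $2$-partitions of $Y$ (in fact a $\Delta^0_1(X \oplus Y)$ class), and an index for it as such can be computed uniformly from an index for $R_e$, hence from $X$.

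Next I would invoke the analogue of \Cref{lem:Pi01_extension} for modified $G$-conditions, with this $\mathcal{C}$ and the condition $(D,Y)$. This yields some $\seq{Y_0,Y_1} \in \mathcal{C}$ — necessarily $Y_0 = Y \cap R_e$ and $Y_1 = Y \cap \overline{R_e}$, as $\mathcal{C}$ is a singleton — together with an $i < 2$ such that $(D,Y_i)$ is a constructible extension of $(D,Y)$. As in the proof of \Cref{lem:Pi01_extension}, at least one of $Y_0,Y_1$ is infinite because $Y$ is, so $(D,Y_i)$ is genuinely a $G$-condition, and the witnessing $i$ and the new index are produced uniformly $P$-computably. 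Taking $(E,Z) := (D,Y_i)$ then finishes the argument: $E = D$ is a legitimate choice since $D \subseteq E \subseteq D \cup Y$, and if $i = 0$ then $Z = Y \cap R_e \subseteq R_e$, so $(E,Z)$ forces $\G \subseteq^* R_e$, while if $i = 1$ then $Z = Y \cap \overline{R_e} \subseteq \overline{R_e}$, so $(E,Z)$ forces $\G \subseteq^* \overline{R_e}$. The extension is constructible by construction.

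I do not expect a genuine obstacle here — this is the standard cohesiveness step — so the only care needed is bookkeeping: confirming that the singleton class really is presentable as a $\Pi^0_1(X \oplus Y)$ class (which it is, since $R_e \Tred X$), and that the modified \Cref{lem:Pi01_extension} returns a \emph{constructible} extension together with a usable index, so that this lemma can later be chained with the $L_G$- and $P_G$-side density facts in the genericity construction for \Cref{P_2_sep_COH_iterate}. Note that nothing about $S$, $L$-hyperimmunity, or $P$ enters this lemma; those hypotheses are used only in the analogues of the \Cref{lem:H0orH1}-style density facts.
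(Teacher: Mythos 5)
Your proof is correct and matches the paper's argument: the paper likewise takes $\mathcal{C}$ to be the (singleton) $\Pi^0_1(X \oplus Y)$ class of $2$-partitions $\seq{Y_0,Y_1}$ of $Y$ with $Y_0 \subseteq R_e$ and $Y_1 \subseteq \overline{R}_e$, and applies the modified \Cref{lem:Pi01_extension} to obtain the constructible extension forcing $\G \subseteq^* R_e$ or $\G \subseteq^* \overline{R_e}$. Your additional bookkeeping remarks (uniform index for $\mathcal{C}$, no use of $S$, $L$, or $P$ here) are accurate but not needed beyond what the paper states.
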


\begin{proof}
	Let $\mathcal{C}$ be the class of all $2$-partitions $\seq{Y_0,Y_1}$ of $Y$ such that $Y_0 \subseteq R_e$ and $Y_1 \subseteq \overline{R}_e$. Then $\mathcal{C}$ is a $\Pi^0_1(X \oplus Y)$ class, and it is nonempty since, in fact, $\mathcal{C} = \set{\seq{Y \cap R_e,Y \cap \overline{R}_e}}$. We can thus apply the modified version of \Cref{lem:Pi01_extension} to obtain the desired extension.
\end{proof}

We move on to the modified $L_G$-conditions.

\begin{definition}[Modified $L_G$-conditions]
	An \emph{$L_G$-condition} is a tuple $(D,Y,H,J)$ such that $(D,Y)$ is a $G$-condition, and $H$ and $J$ are finite subsets of $\omega$ such that $T_{D \cup F,H,J}$ looks extendible for every finite set $F \subseteq Y$.
\end{definition}

\noindent The definition of \emph{extensions} and \emph{constructible extensions} of $L_G$-conditions are again clear. The analogue of \Cref{G_to_LG_extensions} in this setting is straightforward.

\begin{definition}
	Let $p = (D,Y,H,J)$ be a condition, and fix $e \in \omega$.
	\begin{enumerate}
		\item $p$ \emph{forces $e \in L_\G'$} if $T_{D,H,J \cup \set{e}}$ does not look extendible.
		\item $p$ \emph{forces $e \notin L_\G'$} if $e \in J^i$.
		\item $p$ \emph{forces $\Phi_e^{L_\G}$ is not a non-hyperimmunity witness for $S$} if for some $k \in \omega$ and every $\sigma \in T_{D,H,J}$ of maximal length there exist $x_1 > x_0 > k$ such that $\Phi^\sigma_e(x_0) \converges = \Phi^\sigma_e(x_1) \converges = 1$ and $[x_0,x_1) \subseteq \overline{S}$.
		\item $p$ \emph{forces $\Phi_e^{L_\G}$ is finite} if $\seq{e,k} \in H$ for some $k \in \omega$.
	\end{enumerate}
\end{definition}

\noindent We can now formulate and prove analogues of \Cref{lem:force_e_in_LG_truth,lem:L_G_forcing_monotone} as before. In place of \Cref{lem:J0orJ1,lem:H0orH1,lem:J0orH1,lem:H0orJ1}, we have only the following two.

\begin{lemma}
	Let $(D,Y,H,J)$ be a condition, and fix $e \in \omega$. There exists a constructible extension of $(D,Y,H,J)$ forcing $e \in L_\G'$ or $e \notin L_\G'$.
\end{lemma}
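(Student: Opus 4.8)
The plan is to follow the proof of \Cref{lem:J0orJ1}, simplified to the one-sided setting: since the modified $L_G$-conditions carry no ``sides,'' there is nothing to choose, and consequently no need for a $\Pi^0_1$ class of $2$-partitions or an appeal to (the modified) \Cref{lem:Pi01_extension}. The entire argument comes down to a single dichotomy on whether adjoining $e$ to $J$ is compatible with extendibility.

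First I would ask whether $T_{D \cup F, H, J \cup \set{e}}$ looks extendible for \emph{every} finite $F \subseteq Y$. If it does, then by definition $(D, Y, H, J \cup \set{e})$ is an $L_G$-condition; its underlying $G$-condition is just $(D,Y)$, which is a finite (hence constructible) extension of itself, so $(D, Y, H, J \cup \set{e})$ is a constructible extension of $(D,Y,H,J)$, and since $e \in J \cup \set{e}$ it forces $e \notin L_\G'$. That disposes of the first case.

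Otherwise I would fix a finite $F \subseteq Y$ for which $T_{D \cup F, H, J \cup \set{e}}$ does not look extendible; I may assume $F \neq \emptyset$, since in the degenerate case where $F = \emptyset$ works the original condition $(D,Y,H,J)$ already forces $e \in L_\G'$. I would then pass to $(D \cup F,\, Y \setminus [0,\max F],\, H,\, J)$. The routine checks are that this is a $G$-condition (here $\max(D \cup F) = \max F < \min(Y \setminus [0,\max F])$, and $L \gg X \oplus (Y \setminus [0,\max F])$ since $Y \setminus [0,\max F] \Tred Y$) and an $L_G$-condition (for finite $F' \subseteq Y \setminus [0,\max F]$ we have $T_{(D \cup F) \cup F', H, J} = T_{D \cup (F \cup F'), H, J}$, which looks extendible because $F \cup F' \subseteq Y$ and $(D,Y,H,J)$ is a condition); it is moreover a finite, hence constructible, extension of $(D,Y,H,J)$, with indices tracked by the modified \Cref{lem:finite_extension}. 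Finally it forces $e \in L_\G'$, precisely because $T_{D \cup F, H, J \cup \set{e}}$ does not look extendible. There is no genuine obstacle here: the one-sided version collapses the compactness and $2$-partition bookkeeping of \Cref{lem:J0orJ1} entirely, and the only things to watch are the $F = \emptyset$ degeneracy and the bookkeeping of indices.
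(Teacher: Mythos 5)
Your proof is correct, but it takes a shorter route than the paper's. The paper obtains this lemma from \Cref{lem:J0orJ1} ``mutatis mutandis'': it still works with the $\Pi^0_1(X \oplus Y)$ class of $2$-partitions $\seq{Y_0,Y_1}$ of $Y$ (now requiring $T_{D \cup F,H,J \cup \set{e}}$ to look extendible for all finite $F \subseteq Y_i$ and both $i$), applies the modified \Cref{lem:Pi01_extension} to shrink $Y$ to one part and adjoin $e$ to $J$ when the class is nonempty, and uses compactness to extract a bad finite $F \subseteq Y$ and make a finite extension when the class is empty. You observe, correctly, that with only one ``side'' this partition-and-compactness machinery is dispensable: either every finite $F \subseteq Y$ leaves $T_{D \cup F,H,J \cup \set{e}}$ looking extendible, in which case $(D,Y,H,J \cup \set{e})$ is already a condition whose $G$-part is a trivial (hence finite, hence constructible) extension and which forces $e \notin L_\G'$; or some finite $F \subseteq Y$ witnesses non-extendibility, and the finite extension $(D \cup F, Y \setminus [0,\max F], H, J)$ forces $e \in L_\G'$, with the degenerate case $F = \emptyset$ handled as you say. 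Your two cases do not align exactly with the paper's (the partition class may be nonempty even though a bad $F \subseteq Y$ exists, so from the same condition the two arguments may force opposite outcomes), but the lemma only asks for one of the two alternatives, so this is harmless. What your version buys is economy: no appeal to \Cref{lem:Pi01_extension}, hence no use of $P$ (or even of $L$) in this step, with index bookkeeping handled entirely by the modified \Cref{lem:finite_extension}; what the paper's version buys is uniformity of exposition with the two-sided \Cref{lem:J0orJ1}, where the class of $2$-partitions is genuinely needed to decide which of $A^0,A^1$ absorbs the finite extension. Your verifications that the new tuples are conditions and that the extensions are constructible are the right ones and go through as written.
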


\begin{lemma}
	Let $(D,Y,H,J)$ be a condition, and fix $e \in \omega$. There exists a constructible extension of $(D,Y,H,J)$ forcing $\Phi_e^{L_\G}$ is not a non-hyperimmunity witness for $S$ or $\Phi_e^{L_\G}$ is finite.
\end{lemma}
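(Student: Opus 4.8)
The plan is to follow the proof of \Cref{lem:H0orH1}, erasing the two-sided bookkeeping since a modified $L_G$-condition carries only a single tuple $(D,Y,H,J)$. Fix such a condition and $e \in \omega$. For each $k \in \omega$ I would introduce the $\Pi^0_1(X \oplus Y)$ class $\mathcal{C}_k$ of all $2$-partitions $\seq{Y_0,Y_1}$ of $Y$ for which $T_{D \cup F, H \cup \set{\seq{e,k}}, J}$ looks extendible for every $i < 2$ and every finite $F \subseteq Y_i$. The argument then splits on whether some $\mathcal{C}_k$ is non-empty.

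If $\mathcal{C}_k \neq \emptyset$ for some $k$, I would apply the modified form of \Cref{lem:Pi01_extension} to extract $\seq{Y_0,Y_1} \in \mathcal{C}_k$ together with an $i < 2$ for which $(D,Y_i)$ is a constructible extension of $(D,Y)$ as $G$-conditions. Membership in $\mathcal{C}_k$ ensures $T_{D \cup F, H \cup \set{\seq{e,k}}, J}$ looks extendible for every finite $F \subseteq Y_i$, so $(D, Y_i, H \cup \set{\seq{e,k}}, J)$ is a legitimate $L_G$-condition, and it is a constructible extension of $(D,Y,H,J)$ that forces $\Phi_e^{L_\G}$ is finite, witnessed by $k$.

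If instead $\mathcal{C}_k = \emptyset$ for every $k$, then, exactly as in \Cref{lem:H0orH1}, compactness of the space of $2$-partitions of $Y$ gives for each $k$ a least $\ell_k$ such that every $2$-partition of $Y$ admits an $i < 2$ and a finite $F \subseteq Y_i \res \ell_k$ with $T_{D \cup F, H \cup \set{\seq{e,k}}, J}$ not looking extendible; applying this to the partition $\seq{Y,\emptyset}$ shows in particular that such an $F \subseteq Y \res \ell_k$ always exists. For any such $F$ the tree $T_{D \cup F, H, J}$ still looks extendible, as $(D,Y,H,J)$ is a condition and $F \subseteq Y$, so it contains a string $\sigma$ of maximal length; since $T_{D \cup F, H \cup \set{\seq{e,k}}, J}$ contains no string of maximal length, each such $\sigma$ must admit $x_1 > x_0 > k$ with $\Phi_e^\sigma(x_0) \converges = \Phi_e^\sigma(x_1) \converges = 1$. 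Letting $x_0^k$ and $x_1^k$ be respectively the minimum of all such $x_0$ and the maximum of all such $x_1$, taken over every finite $F \subseteq Y \res \ell_k$ with $T_{D \cup F, H \cup \set{\seq{e,k}}, J}$ not looking extendible and every maximal-length $\sigma \in T_{D \cup F, H, J}$, I note that $\ell_k$ and hence $x_0^k, x_1^k$ are computable from $X \oplus Y$ uniformly in $k$, with $k < x_0^k < x_1^k$ for all $k$. Since $S$ is $L$-hyperimmune and $L \Tabove X \oplus Y$, there is a $k$ with $[x_0^k, x_1^k) \subseteq \overline{S}$. Fixing such a $k$ and a corresponding $F$, the finite extension of $(D,Y,H,J)$ obtained by replacing $D$ with $D \cup F$ and $Y$ with $Y \setminus [0, \max F]$ forces $\Phi_e^{L_\G}$ is not a non-hyperimmunity witness for $S$, because every maximal-length $\sigma \in T_{D \cup F, H, J}$ yields $x_1 > x_0 > k$ with $[x_0, x_1) \subseteq [x_0^k, x_1^k) \subseteq \overline{S}$.

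The single delicate point is the uniformity claim in the last case, just as in \Cref{lem:H0orH1}: one must verify that $X \oplus Y$ really does compute $\ell_k$, $x_0^k$, and $x_1^k$ uniformly in $k$. This holds because, for fixed $k$, there are only finitely many finite sets $F \subseteq Y \res \ell_k$, and extendibility of the trees $T_{D \cup F, H, J}$ and $T_{D \cup F, H \cup \set{\seq{e,k}}, J}$ — along with the relevant halting facts and values $\Phi_e^\sigma(x)$ — is decidable from $X \oplus Y$ and this finite data. With that uniformity in hand, the failure of $L$-hyperimmunity of $S$ is precisely what produces the good $k$; the remainder is a direct transcription of the two-sided proof with the side indices suppressed.
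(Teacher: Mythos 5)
Your proposal is correct and takes essentially the same route as the paper, which proves this lemma precisely by replacing the classes $\mathcal{C}_k$ of \Cref{lem:H0orH1} with the one-sided classes you define and then repeating that argument mutatis mutandis. Your one deviation—extracting the finite set $F \subseteq Y \res \ell_k$ via the trivial partition $\seq{Y,\emptyset}$ rather than the partition $\seq{Y \cap A^0, Y \cap A^1}$ used in the two-sided proof—is exactly the right adaptation, since in the cohesive setting the reservoir side carries no set $A$ and any finite $F \subseteq Y$ may be added to $D$.
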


\noindent To prove the former, we replace the class $\mathcal{C}$ by the class of all $2$-partitions $\seq{Y_0,Y_1}$ of $Y$ such that
\[
	(\forall i < 2)(\forall F \subseteq Y_i \text{ finite})[T_{D \cup F,H,J \cup \set{e}} \text{ looks extendible}],
\]
and to prove the latter we replace, for each $k$, the class $\mathcal{C}_k$ by the class of all $2$-partitions $\seq{Y_0,Y_1}$ of $Y$ such that
\[
	(\forall i < 2)(\forall F \subseteq Y_i \text{ finite})[T_{D \cup F,H \cup \set{\seq{e,k}},J} \text{ looks extendible}].
\]
The rest of the proofs are then exactly as before, mutatis mutandis.

Finally, we define the modified $P_G$-preconditions and $P_G$-conditions.

\begin{definition}[Modified $P_G$-preconditions]
	A \emph{$P_G$-precondition} is a pair $(p,b)$ such that $p$ is an $L_G$ condition, $b \in \omega$, and each $e < b$, $p$ either forces $e \in L_\G'$ or $e \notin L_\G'$.
\end{definition}

\noindent \emph{Extensions} and and \emph{constructible extensions} are then defined, and for each precondition $(p,b)$ we let $F_{p,b} = \set{e < b : p \text{ forces } e \in L_\G'}$. 

\begin{definition}[Modified $P_G$-conditions]
	A \emph{$P_G$-condition} is a tuple $(p,b,V)$ such that $(p,b)$ is a $P_G$-precondition and $V$ is a finite set for which there is no sequence of preconditions
	\[
		(p,b) = (p_0,b_0),\ldots,(p_n,b_n)
	\]
	such that $(p_{m+1},b_{m+1})$ is a constructible extension of $(p_m,b_m)$ as preconditions for all $m < n$ and $T_{F_{p_n,b_n},V,\emptyset}$ does not look extendible.
\end{definition}

\noindent The analogue of \Cref{LG_to_PG_extensions} is straightforward.

We modify \Cref{def:P_G_forcing_types} as follows.

\begin{definition}
	Let $(p,b,V)$ be a condition, and $e \in \omega$.
	\begin{enumerate}
		\item $(p,b,V)$ \emph{forces $\Phi^{P_\G}_e$ is not a non-hyperimmunity witness for $\overline{S}$} if for some $k \in \omega$ and every $\sigma \in T_{F_{p,b},V,\emptyset}$ of maximal length there exist $x_1 > x_0 > k$ such that $\Phi^\sigma_e(x_0) \converges = \Phi^\sigma_e(x_1) \converges = 1$ and $[x_0,x_1) \subseteq S$.
		\item $(p,b,V)$ \emph{forces $\Phi^{P_\G}_e$ is finite} if $\seq{e,k} \in V$ for some $k \in \omega$.
	\end{enumerate}
\end{definition}

\noindent \Cref{P_G_density,lem:force_e_in_PG_truth,lem:P_G_forcing_monotone} can now be appropriately modified for the modified definitions above, and can be proved entirely similarly.

We can now combine all of the modified definitions and lemmas to build a sequence
\[
	(p_0,b_0,V_0),(p_1,b_1,V_1),\ldots
\]
of $P_G$-conditions, each a constructible extension of the previous one. Write $p_m = (D_m,Y_m,H_m,J_m)$. At stage $s+1$, we break into cases based on the congruence class of $s$ modulo $5$, instead of $8$. (We have half as many cases, since we are only working on one ``side'', but one additional case from needing to force cohesiveness.)
\begin{itemize}
	\item If $s = 5e$, we let $n_{s+1} = n_s+1$, and ensure that $	(D_{n_s+1},Y_{n+1})$ forces $|\G| \geq e$.
	\item If $s = 5e + 1$, we let $n_{s+1} = n_s+1$, and ensure that $	(D_{n_s+1},Y_{n+1})$ forces $\G \subseteq^* R_e$ or $\G \subseteq^* \overline{R}_e$.
	\item If $s = 5e + 2$, we let $n_{s+1} = n_s + 1$, and ensure that $p_{n_{s+1}}$ forces $e \in L_\G'$ or $e \notin L_\G'$.
	\item If $s = 5e + 3$, we let $n_{s+1} = n_s + 1$, and ensure that $p_{n_{s+1}}$ forces $\Phi_e^{L_\G}$ is not a non-hyperimmunity witness for $S$ or $\Phi_e^{L_\G}$ is finite.
	\item If $s = 5e + 4$, we apply the modified version of \Cref{P_G_density} to obtain $n_{s+1}$ and the finite sequence $(p_{n_s+1},b_{n_s+1},V_{n_s+1}),\ldots,(p_{n_{s+1}},b_{n_{s+1}},V_{n_{s+1}})$ whose last member forces forces $\Phi^{P_\G}_e$ is not a non-hyperimmunity witness for $\overline{S}$ or $\Phi^{P_\G}_e$ is finite.	
\end{itemize}

\noindent Note that we have no need for any analogue of \Cref{lem:pick_a_side}. The construction of $G$, $L_G$, and $P_G$ can proceed much as it did before.

\section{Summary and questions}\label{sec:qs}

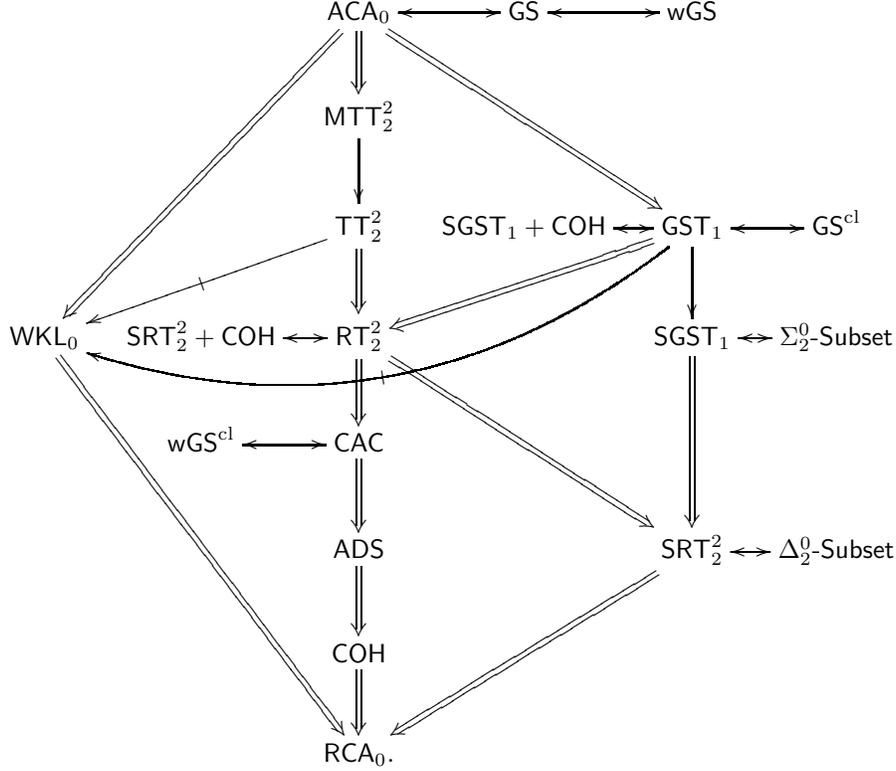
\begin{figure}
\[
\xymatrix @R=2pc @C=1pc{
& & \ACA_0 \ar@2[d] \ar@2[ddrr] \ar@2[dddll] \ar@{<->}[r] & \GS \ar@{<->}[r] & \wGS\\
& & \mathsf{MTT}^2_2 \ar@1[d]\\
& & \TT^2_2 \ar|@{|}[dll] \ar@2[d] & \SGST_1 + \COH \ar@{<->}[r] & \GST_1 \ar@/^3pc/|@{|}[dllll] \ar@2[dll] \ar@1[d] \ar@{<->}[r] & \GS^{\cl}\\
\WKL_0 \ar@2[ddddrr] & \SRT^2_2 + \COH \ar@{<->}[r]& \RT^2_2 \ar@2[d] \ar@2[ddrr] & & \SGST_1 \ar@2[dd] \ar@{<->}[r] & \Sigma^0_2\SubP\\
& \wGS^{\cl} \ar@{<->}[r] & \CAC \ar@2[d]\\
& & \ADS \ar@2[d] & & \SRT^2_2 \ar@{<->}[r] \ar@2[ddll] & \Delta^0_2\SubP\\
& & \COH \ar@2[d]\\
& & \RCA_0.
}
\]
\caption{The state of affairs in the reverse mathematics zoo surrounding Ramsey's theorem for pairs. Arrows indicate implications over $\RCA_0$; double arrows are strict; interrupted arrows are non-implications.}\label{fig:zoo}
\end{figure}

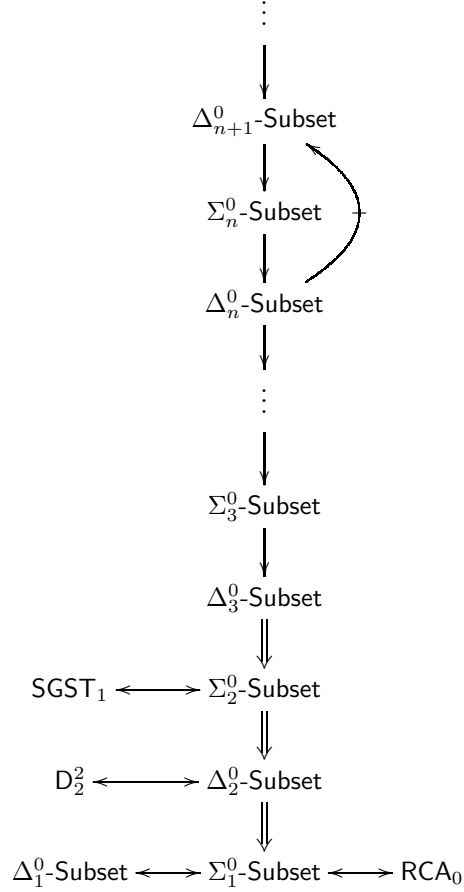
\begin{figure}[h]
\[
\xymatrix @R=1.5pc @C=1.5pc{
& \raisebox{0pt}[0.9\height][0.3\height]{\vdots} \ar@1[d]\\
& \Delta^0_{n+1}\SubP \ar@1[d]\\
& \Sigma^0_n\SubP \ar@1[d]\\
& \Delta^0_n\SubP \ar@1[d] \ar@/_3pc/|@{|}[uu]\\
& \raisebox{0pt}[0.9\height][0.3\height]{\vdots} \ar@1[d]\\
& \Sigma^0_3\SubP \ar@1[d]\\
& \Delta^0_3\SubP \ar@2[d]\\
\SGST_1 & \Sigma^0_2\SubP \ar@2[d] \ar@{<->}[l]\\
\D^2_2 & \Delta^0_2\SubP \ar@2[d] \ar@{<->}[l]\\
\Delta^0_1\SubP & \Sigma^0_1\SubP \ar@{<->}[l] \ar@{<->}[r] & \RCA_0
}
\]
\caption{The hierarchy of subset principles, with $n \geq 4$ arbitrary. Arrows indicate implications over $\RCA_0$; double arrows are strict; interrupted arrows are non-implications.}\label{fig:GammaSubP}
\end{figure}

We summarize our main reverse mathematical results, and how they fit into the literature, in \Cref{fig:zoo}. Our investigation leaves several questions unanswered, and raises some new ones. The first of these concerns the one case of the Ginsburg--Sands theorem for Hausdorff spaces we were not able to fully characterize.

\begin{question}
	What is the strength of the statement that every infinite effectively Hausdorff CSC space has an infinite effectively discrete subspace?
\end{question}

\noindent By \Cref{cor:Q1_partial}, the statement is not provable in $\RCA_0$. But it seems difficult to push this up. The methods we employed in \Cref{prop:ACA_disc_to_eff_disc}, for example, to code the jump into the effectively discrete subspaces of a discrete space $X$ seem incompatible with also making $X$ effectively Hausdorff.

Questions remain about the precise location of $\GST_1$ within the interval of principles strictly in-between $\ACA_0$ and $\RT^2_2$. In particular, we have the following.

\begin{question}
	Does $\GST_1$ imply $\mathsf{MTT}^2_2$? Does it imply $\TT^2_2$? Do either of $\mathsf{MTT}^2_2$ or $\TT^2_2$ imply $\GST_1$?	
\end{question}

With regard to \Cref{cor:GST_1_low_2_solutions}, which employed a modification of the second jump control proof from \cite{CJS-2001} to show that $\GST_1$ admits low$_2$ solutions, we can ask whether the same result can also be obtained by first jump control. More precisely, in \cite[Section 4]{CJS-2001} this method was employed to show that, given any $P \gg \emptyset'$ and any computable $c : [\omega]^2 \to 2$, there exists an infinite homogeneous set $H$ for $c$ satisfying $H' \Tred P$.

\begin{question}
	Given $P \gg \emptyset'$ and an infinite computable $T_1$ CSC space $\seq{X,\topol{U},k}$, does $X$ have an infinite subspace $Y$ which is either discrete or has the cofinite topology, and which satisfies $Y' \Tred P$?
\end{question}

\noindent We conjecture the answer is no, a proof of which would yield another separation of $\GST_1$ from $\RT^2_2$. The problem is in how $P$ could be used to produce a solution to an arbitrary computable instance of $\Sigma^0_2\SubP$. In the case of the proof for $\RT^2_2$, it is crucial that $P$ be able to find, for a given $\Delta^0_2$ set $A$, arbitrarily large elements of both $A$ and $\overline{A}$. This is impossible if $A$ is merely $\Sigma^0_2$.

The second jump control proof in \cite{CJS-2001} was also used to establish a number of conservativity results, perhaps most notably that $\RCA_0 + \mathsf{I}\Sigma^0_2 + \RT^2_2$ is $\Pi^1_1$-conservative over $\RCA_0 + \mathsf{I}\Sigma^0_2$ (\cite{CJS-2001}, Theorem 10.2). This had broadly the same outline as the low$_2$ proof, but differed in one important respect. Recall that this proof only required $A$ to be $\Delta^0_2$ (actually, $\Sigma^0_2$) when building a subset of $A$; it worked equally well for $\Delta^0_3$ (and hence in particular, $\Pi^0_2$) sets when building a subset of $\overline{A}$. This is what made it possible for us to adapt the proof and obtain \Cref{thm:Sig2_low2}. The conservativity proof, by contrast, requires $A$ to be $\Delta^0_2$ for both parts, and thus does not similarly lift to $\Sigma^0_2$ sets.

\begin{question}
	Is $\RCA_0 + \mathsf{I}\Sigma^0_2 + \Sigma^0_2\SubP$ $\Pi^1_1$-conservative over $\RCA_0 + \mathsf{I}\Sigma^0_2$?
\end{question}

\noindent The standard way to produce an affirmative answer would be to show that every model of $\RCA_0 + \mathsf{I}\Sigma^0_2$ is an $\omega$-submodel of $\RCA_0 + \mathsf{I}\Sigma^0_2 + \Sigma^0_2\SubP$. If this could be proved, then it could be combined with the corresponding result for $\COH$ to show that $\RCA_0 + \mathsf{I}\Sigma^0_2 + \GST_1$ is $\Pi^1_1$-conservative over $\RCA_0 + \mathsf{I}\Sigma^0_2$ as well.

Along the same lines, we can ask the following.

\begin{question}
	Does $\Sigma^0_2\SubP$ or $\GST_1$ imply $\mathsf{I}\Sigma^0_2$ over $\RCA_0$?	
\end{question}

\noindent It was shown by Chong, Slaman, and Yang \cite[Corollary 4.2]{CSY-2017} that $\RT^2_2$ does not imply $\mathsf{I}\Sigma^0_2$.

We can also ask whether the decomposition of $\GST_1$ into $\SGST_1 + \COH$ is strict, which harkens back to the famous $\SRT^2_2$ vs.\ $\COH$ problem.

\begin{question}\label{Q:SGST1_COH}
	Does $\Sigma^0_2\SubP$ (or equivalently, $\SGST_1$) imply $\GST_1$? Equivalently, does $\Sigma^0_2\SubP$ imply $\COH$?	
\end{question}

\noindent In their aformentioned paper \cite{MP-2021}, Monin and Patey construct an $\omega$-model satisfying $\D^2_2$ but not $\COH$ by establishing that $\D^2_2$ admits \emph{jump PA avoidance}. This means that for every set $X$ such that $X' \not\gg \emptyset'$, every $\Delta^{0,X}_2$ set $A$ has an infinite subset $Y$ in it or its complement such that $(X \oplus Y)' \not\gg \emptyset'$. They leave open whether a ``strong'' version of this property holds, i.e., whether the same is true of every $A$ (not just those that are $\Delta^0_2$ in $X$). An affirmative answer would, in particular, imply that the answer to \Cref{Q:SGST1_COH} is no.

Finally, we can ask some more general questions about the $\Gamma\SubP$ principles. By \Cref{RT22_not_implies_Sig2}, $\Delta^0_2\SubP$ does not imply $\Sigma^0_2\SubP$, but our proof does not relativize to show that $\Delta^0_n\SubP$ does not imply $\Sigma^0_n\SubP$ for any higher $n$.

\begin{question}\label{Q:GSub_impl1}
	Is there an $n \geq 3$ such that $\Delta^0_n\SubP$ implies $\Sigma^0_n\SubP$?
\end{question}

\noindent It is also true that $\Sigma^0_2\SubP$ does not imply $\Delta^0_3\SubP$. Indeed, by relativizing the main result of Downey, Hirschfeldt, Lempp, and Solomon \cite{DHLS-2001} it follows that for all $n \geq 2$ there is a $\Delta^0_n$ set with no infinite subset in it or its complement that is low over $\emptyset^{(n-2)}$ (see also the discussion in \cite[p.~1372]{DHLS-2001}). Taking $n = 3$ implies, in particular, that there is a $\Delta^0_3$ set with no infinite low$_2$ subset in it or its complement. Hence, $\Delta^0_3\SubP$ fails in the model of $\GST_1$ (and also $\Sigma^0_2\SubP$) constructed in \Cref{low_2_model} above. Again, we do not know if this holds more generally.

\begin{question}\label{Q:GSub_impl2}
	Is there an $n \geq 3$ such that $\Sigma^0_n\SubP$ implies $\Delta^0_{n+1}\SubP$?
\end{question}

\noindent Monin and Patey \cite[Theorem 1.4]{MP-2021b} have shown that for every $n \geq 2$, every $\Delta^0_n$ set has an infinite low$_n$ subset in it or its complement. Combined with the aforementioned result from \cite{DHLS-2001}, this implies that $\Delta^0_n\SubP$ does not imply $\Delta^0_{n+1}\SubP$ for all $n \geq 2$. Thus, we at least know that \Cref{Q:GSub_impl1} and \Cref{Q:GSub_impl2} cannot both be answered affirmatively for the same $n$. The hierarchy of $\Gamma\SubP$ principles is summarized in \Cref{fig:GammaSubP}.

We end with a somewhat open ended question.

\begin{question}
	Are there other natural principles from topology or combinatorics or other areas that can be characterized in terms of the $\Gamma\SubP$ principles? Is there such a principle equivalent to $\Sigma^0_n\SubP + \COH$ for some $n \geq 3$?
\end{question}

\end{document}